\numberwithin{equation}{section}
\newtheorem{theorem}{Theorem}[section]
\newtheorem{corollary}[theorem]{Corollary}
\newtheorem{lemma}[theorem]{Lemma}
\newtheorem{proposition}[theorem]{Proposition}
\newtheorem{definition}[theorem]{Definition}
\newtheorem{example}[theorem]{Example}
\newtheorem{remark}[theorem]{Remark}
\newcommand{\N}{\mathbb{N}}
\newcommand{\Q}{\mathbb{Q}}
\newcommand{\R}{\mathbb{R}}
\newcommand{\Z}{\mathbb{Z}}
\newcommand{\restr}[1]{\lower3pt\hbox{$|_{#1}$}}
\newcommand{\TV}{{\sf TV}}
\newcommand{\X}{{\rm X}}
\newcommand{\Y}{{\rm Y}}
\renewcommand{\Z}{{\rm Z}}
\newcommand{\sfd}{{\sf d}}
\newcommand{\mm}{{\mathfrak m}}
\newcommand{\z}{{\sf z}}									%costante rinormalizzazione misure
\newcommand{\Tr}{{\rm tr}}								% traccia
\renewcommand{\L}{{\rm Lin}}
\newcommand{\CD}{{\sf CD}}
\newcommand{\TCD}{{\sf TCD}}
\newcommand{\RCD}{{\sf RCD}}
\newcommand{\MCP}{{\sf Mcp}}
\newcommand{\sMCP}{{\sf sMcp}}
\newcommand{\W}{{\rm W}}
\renewcommand{\d}{{\rm d}}
\newcommand{\lims}{\varlimsup}
\newcommand{\limi}{\varliminf}
\newcommand{\nchi}{{\raise.3ex\hbox{$\chi$}}}
\newcommand{\eps}{\varepsilon}
\newcommand{\Id}{{\rm Id}}
\newcommand{\fr}{\penalty-20\null\hfill$\blacksquare$}         %quadratino nero alla fine del remark
\DeclareMathOperator*{\essinf}{\rm ess-inf}
\newcommand\la{\langle}
\newcommand\ra{\rangle}
\newcommand{\Hom}{{\rm Hom}}
\newcommand{\Lin}{{\rm Lin}}
\newcommand{\ls}{{\sf l}}
\newcommand{\us}{{\sf u}}
\newcommand{\LSC}{{\sf LSC}}
\newcommand{\tip}{{\sf tip}\,}
\newcommand{\p}{{\rm p}}
\newcommand{\lattice}{cone with joins}
\newcommand{\lattices}{cones with joins}
\newcommand{\Lattice}{Cone with joins}
\newcommand{\mcp}{{\sf Mcp}}
\newcommand{\LL}{{\sf L}}
\renewcommand{\O}{{\mathcal O}}
\newcommand{\C}{{\mathbb C}}
\DeclareMathOperator*{\Lsup}{{{\rm Lin}\,-\sup}}
\DeclareMathOperator*{\Linf}{{{\rm Lin}\,-\inf}}
\DeclareMathOperator*{\psup}{{\p\,-\sup}}
\DeclareMathOperator*{\pinf}{{\p\,-\inf}}
\newcommand{\hn}{{\sf hn}}
\renewcommand{\W}{{\rm W}}
\renewcommand{\C}{{\rm C}}
\newcommand{\J}{{\rm J}}
\renewcommand{\t}{{\sf t}}
\newcommand{\s}{{\sf s}}
\title{Hyperbolic Banach spaces  }
\author{Nicola Gigli\ \thanks{SISSA, ngigli@sissa.it}  }
\begin{document}

\maketitle

\begin{abstract}
The standard theory of Banach spaces is built upon the notions of vector space, triangle inequality and Cauchy completeness. Here we propose a `hyperbolic' variant of this `elliptic' framework where general linear combinations are replaced by linear combinations with non-negative coefficients, triangle inequality is replaced by reverse triangle inequality and Cauchy completeness is replaced by the order-theoretic notion of directed completeness.

%The main message of this manuscript is that, perhaps, such a theory is possible.

\medskip

The motivation for our investigation is in  non-smooth Lorentzian geometry: we believe that to unlock the full potential of the field, and ultimately extract more informations about the smooth world, some version of `Lorentzian functional analysis' is needed, especially in relation to timelike lower Ricci curvature bounds.
%
%An example of structure investigated here is obtained by starting with a Banach space, multiplying it by $\R$ and considering the `future cone' in there. Because of this, the results obtained in this manuscript  might be read through the lenses of  classical Banach spaces theory; from this perspective, some foundational results in standard functional analysis can be seen as consequences  of statements obtained here. This is the case for the Hahn-Banach theorem and the Baire category theorem for complete metric spaces.

An example of structure we investigate is obtained by starting with a Banach space, multiplying it by $\R$ and considering the `future cone' in there. Because of this, some of the results  in this manuscript  might be read through the lenses of  standard Banach spaces theory. From this perspective, the classical Hahn-Banach and Baire category theorems can be seen as consequences  of statements obtained here.

A different kind of  example is that of  $L^p$ spaces for $p\in[-\infty,1]$. Their structure and  natural duality relations for `H\"older conjugate' exponents $p,q\leq 1$ with $\tfrac1p+\tfrac1q=1$ fit particularly well in our framework, to the extent that they have been  an important  source of inspiration for the axiomatization chosen in this paper.

We also investigate the notion of directed completeness regardless of any algebraic structure, as we believe it is central even in the finite-dimensional non-smooth Lorentzian framework, for instance to achieve a compactness theorem \`a la Gromov. This study unveils  connections between  Geroch--Kronheimer--Penrose's concept of ideal point in a spacetime, Beppo Levi's monotone convergence theorem and certain aspects of domain theory.

\end{abstract}
\newpage

\tableofcontents

\newpage

\begin{section}{Introduction}

\begin{subsection}{Motivation and overview}
Functional analysis is a cornerstone of modern mathematics. Among other things,  it offers powerful existence results valid in a wide variety of settings.

With this paper we want to  lay the ground for a functional-analytic theory valid in Lorentzian signature, aiming at giving a solid framework to non-smooth Lorentzian calculus. Our main message is that, perhaps, such theory is actually possible.

Let us explain what we mean and our motivation by giving some context. In the seminal paper \cite{KS18} the authors introduced a non-smooth analog of the concept of spacetime, ideally playing the same role that the concept of metric space has in Riemannian geometry. The precise axiomatization of these objects is part of an ongoing intense conversation within the community (see e.g.\ \cite{MinSuh24},  \cite{CavMonTCD}, \cite{Octet}) but there is consensus over the fact that, as postulated in \cite{KS18},  the geometry of these objects is encoded by a `time separation' (or Lorentzian distance) function satisfying the reverse triangle inequality. Inspired by the successes of the classical metric theory, one of the main motivations behind this definitions was that of having a purely synthetic theory of curvature bounds in Minkoswkian signature, ultimately leading to a better understanding of smooth Lorentzian manifolds and thus of the mathematics of General Relativity.

This latter scope has been one of the reasons behind the introduction in \cite{CavMonTCD} of the notion of $\TCD$ spaces, i.e.\ nonsmooth spacetimes with Ricci curvature bounded from below in the timelike directions (in analogy with \cite{Lott-Villani09}, \cite{Sturm06I}, \cite{Sturm06II}). Even though already such paper contains non-trivial geometric statements, it is clear that to make full use of the curvature assumptions a sophisticated differential calculus is needed. In this direction some further steps have been made in \cite{Octet}, where a non-smooth causal differential calculus has been developed. Among other things, such calculus allows to define `infinitesimally Minkowskian spacetimes', i.e.\ those structures that from an infinitesimal perspective look like Minkowskian spacetimes, as opposed to more general Finsler-Minkowskian ones (much like infinitesimal Hilbertianity \cite{Gigli12} selects those metric measure spaces that resemble Riemannian manifolds, as opposed to Finsler manifolds, on small scales).

Further development of such non-smooth calculus requires new theoretical framework: building such framework is the main scope of this manuscript. To emphasize the need for this, we notice that virtually all the existence results pertaining differential operators  in the setting of $\CD$/$\RCD$ spaces are ultimately related to existence theorems in functional analysis, such as Banach-Alaoglu's theorem, Riesz' theorem, Lax-Milgram's theorem, and gradient flow theory, just to mention a few. None of these is anymore available in the current setting, as the presence of the reverse triangle inequality prevents any Banach-like notion classically intended to emerge. To give a concrete example, consider the question:
\[
\text{What is the differential of a causal function on a non-smooth spacetime?}
\]
that surely sooner or later will need an answer. The analog one in elliptic signature had a number of technically different answers (see e.g.\ \cite{Cheeger00}, \cite{Weaver01}, \cite{Gigli14}, \cite{EBS24}), depending on the precise set of underlying assumptions, but all of these are grounded on Banach spaces theory. This simply cannot be the case in our setting, that instead requires a reverse triangle inequality to be in place. 

Giving up the standard triangle inequality in favour of the reverse one opens the key question: which sort of completeness are we going to assume for our Banach-like structures, now that the concept of Cauchy sequence makes, de facto, no sense  anymore? Inspired by the study of the causal structure of smooth spacetime and of the basic examples of $L^p$ spaces for $p\leq 1$ we propose to endow our objects with an order structure (naturally induced by their algebraic properties, see  \eqref{eq:deforder} below) and to require such partial order to be directed complete, i.e.\ any directed subset must have a supremum. Reasons behind this proposal are:
\begin{itemize}
\item[i)] The fact that it mirrors Cauchy-completeness of metric spaces (as opposed to global hyperbolicity, that mirrors properness, i.e.\ compactness of closed balls), making it viable in infinite dimensional settings such as those we study here. 

In other words, we have in mind the rough analogy
\begin{equation}
\label{eq:schemino}
\begin{array}{cc}
\text{Riemannian signature } \qquad& \qquad\text{ Lorentzian signature}\\
\hline
\text{proper metric space}  \qquad& \qquad\text{globally hyperbolic spacetime}\\
\text{complete  metric space} \qquad& \qquad\text{(locally) causally directed complete  spacetime}
\end{array}
\end{equation}
see Proposition \ref{prop:2completi} for more on this. In particular, much like Cauchy completeness, directed completeness has nothing to do with the underlying linear structure and thus can be imposed on more general geometries.
\item[ii)] The existence of a general well-behaved completion procedure. This gives robustness to the theory, as one might always start from an incomplete object and then suitably complete it.  See Theorem \ref{thm:excompl}. 

Notably, the two-sided completion  leads to structures undesirable from the perspective of a `working analyst' (see Section \ref{se:twosided}), a fact that is in line with known difficulties  one encounters in adding at the same time  ‘future’ and
‘past’ infinity to a spacetime (see e.g.\ \cite{FHM11} and \cite{MR03}).
\item[iii)] Its natural link, via the concept of reflector in category theory, with maps between partial orders that respect directed suprema. This kind of maps, called Scott continuous in domain theory, emerge in a number of relevant examples ($L^p$ spaces for $p\leq 1$ and in the duality between measures and functions) via Beppo Levi's monotone convergence theorem. For this reason, and also because we are not endowing our spaces with the Scott topology, we call this property  Monotone Convergence Property, or $\mcp $ in short. 

\item[iv)] The fact that the directed completion of Minkowski spacetime consists in adding future time and future null infinity to it, see Example \ref{ex:mink} in Section \ref{se:exbelli}. We regard this as an important sanity check of our proposal, as it shows that such notion is linked to key concepts in General Relativity, from which this whole paper is somehow coming from. We conjecture that equality between directed completion and that coming from Geroch--Kronheimer--Penrose's notion of ideal points is valid in more general physically relevant spacetimes.

In relation to the above,  we recall that categorical aspects of Geroch-Kronheimer-Penrose's construction have already been investigated in \cite{Har98}.

\item[v)] All this is relevant not only in the infinite-dimensional framework studied here, but also in finite-dimensional metric geometry. For instance, in looking for a compactness theorem for spacetimes \`a la Gromov, it is reasonable to expect that one will find a limit spacetime via a completion procedure starting from a dense (possibly countable) structure. 
\end{itemize}
%As a side remark we point out that the study of directed completion uncovers some key underlying commonalities between apparently far concepts such as Beppo Levi's theorem (appearing in the very definition of map with the Monotone Convergence Property that is behind the notion of completion and, for instance, in the study of the duality relation of $L^p$ spaces for $p\leq 1$) and Geroch--Kronheimer--Penrose's causal completion of spacetimes (at least for Minkowski spacetimes, but conjecturally for other physically relevant spacetimes, the causal completion coincides with the directed completion). We also emphasize that order-completeness is by far not an alien concept in real analysis. On the contrary, real numbers are often constructed from the rational either via Cauchy-completion, related to Cauchy sequences, or via Dedekind-completion, related to the order relation (a note on terminology: the directed completion studied here is not the  Dedekind-MacNeille completion, that produces complete lattices).

\bigskip

This paper owns a lot to \cite{Octet}, as the initial motivation for initiating this investigation was the desire to better understand $L^p-L^q$ duality for H\"older conjugate exponents $p,q\in[-\infty, 1]$ that was surfacing there. Conversely, some of the findings in  here helped shaping some aspects in  \cite{Octet}. In particular, the above considerations about directed completeness have been a source of inspiration for the concept of `forward complete' spacetime introduced in  \cite{Octet}, there showed to be sufficient to get very general existence results, such as a limit curve theorem valid in absence of global hyperbolicity.

The procedure of taking - some version of - forward completion has been an important ingredient in the development of a Gromov pre-compactness theorem in Lorentzian setting  \cite{MS25}. More precisely, it has been useful to obtain uniqueness of limits and recast a continuous spacetime out of a countable one, as discussed above.

\end{subsection}
\begin{subsection}{Main results}

Our {\bf first main result} is a 
\[
\text{general construction of directed completion of partial orders,}
\]
see Theorem \ref{thm:excompl}. Notice that neither the existence result nor the actual construction are new: in \cite{Mark76} and \cite{DCPO10} the same construction has been given and these two papers offer different proofs, also different from ours, that it produces the desired completion. In a nutshell, the construction consists in taking the Scott-closure of the image of the given partial order into the collection of its Scott-closed subsets.

The added value of our study is in a cleaner understanding of such completion procedure via, in particular, the notion of \emph{set with tip}, see Definition \ref{def:tip}, that can be seen as a sort of generalization of the well known notion of directed set. It is mainly thanks to this concept that we are able to conduct our general study of directed completion and, more importantly, to link it to the algebraic structure of cones, see in particular the definition \ref{def:tdp} of directed decomposition property and the related discussion we give below.

The kind of algebraic structures we want to pair with this concept of completeness  is that of \emph{wedge}, namely a set coming with an operation of `sum' and `product by positive scalars'. Such structures come with a canonical reflexive and transitive relation $\leq$ defined as
\begin{equation}
\label{eq:deforder}
v\leq w\qquad\text{ whenever there is $z$ such that }\qquad v+z=w.
\end{equation}
A key condition that we impose is that this is a partial order, i.e.\ that $v\leq w$ and $w\leq v$ occurs only when $v=w$, see Definition \ref{def:wedge} for the precise formulation.

Notably, we are \emph{not} insisting, and for good reasons, see the discussion  below and Remark \ref{re:perchenocanc}, on the sum being cancellative, i.e.\ we are not imposing that $a+v=b+v$ must imply $a=b$. Still, in this generality we can prove -- see Lemma \ref{le:canc1} -- that
\begin{equation}
\label{eq:canc1intro}
a+v\leq b+v\qquad\Rightarrow\qquad a+\tfrac1n v\leq b+\tfrac1nv\quad\forall n\in\N,\ n>0,
\end{equation}
that in turn can be used to show that the operation of sum has the $\mcp$ -- see Proposition \ref{prop:operazioniMCP}.

We shall call \emph{cone} a wedge that is directed complete.  Cones are the main objects studied in the paper and relevant maps between them are linear ones with the $\mcp$. Surely proposing a functional analytic theory grounded on the concept of cone (:=directed complete wedge), as opposed to that of Banach space (:=Cauchy complete normed vector space) is a relevant change, so we should offer some good reasons for doing so. These are:
\begin{itemize}
\item[i)] Morphisms easily take the value $+\infty$. When this happens, dealing with vector space structures can be rather problematic, as if $L$ is a linear map with $L(v)=+\infty$ and $L(w)=-\infty$, then what is it the value of $L(v+w)$? Dealing only with positive maps  solves this problem.
\item[ii)] The completion procedure works well, from the perspective of the theory we are trying to build, only if mono-directional, which suggests we should either work with directed complete or filtered complete partial orders. See Section \ref{se:twosided} for more on this.
\item[iii)] The choice of directed completeness as opposed to filtered one has to do with the `fields of scalars'. If one accepts to deal with cone structures, then she is naturally led to deal, in the  1-dimensional space, with either $[-\infty,0]$ or $[0,+\infty]$. Of these, only the latter one is closed w.r.t.\ multiplication, somehow breaking the symmetry and hinting at a theory where `increasing to $+\infty$' is more natural than `decreasing to $-\infty$'. A further clue in this direction is given by Beppo Levi's theorem, that somehow tells that  positive functionals on positive objects respect suprema of increasing sequences, not infima of decreasing ones.
\item[iv)] Cones are extremely natural from the perspective of General Relativity, from which this  work is coming from, as  the admissible directions of motions for an object in a spacetime  are those  in its future cone.
\end{itemize}
See also Remark \ref{re:perchenocanc} for more on these. We also point out that such natural emergence of the concept of cone leads to the consideration that, perhaps, instead of  studying tangent `spaces' to non-smooth Lorentzian structures, we should instead consider  tangent `cones' and, more generally, that one might investigate pointed convergence of spacetimes with the chosen point being the order-theoretical minimum.

We mentioned a general completion procedure and the relevance of complete objects paired with some basic algebraic structure. It is then natural to ask whether if we start with a wedge and directly-complete it we obtain a cone. Unfortunately this is not entirely clear due to the possible conflict of two natural partial orders in the completion. See Remark \ref{re:abstrcompl} for comments in this direction and for the  (unsatisfactory) description of an abstract wedge-completion in the same spirit of the classical Stone-\v{C}ech compactification / Freyd’s General Adjoint Functor Theorem /  Yoneda's lemma. See also the next Section \ref{se:venue} for a possible solution to this problem.

Coming back to key definitions, we notice that any cone $\C$ is a directed subspace of itself, because for any $v,w\in \C$ we have $v,w\leq v+w$ by definition. Being the cone complete, it follows that there exists a maximal element in $\C$, that we shall denote $\infty$. By nature, $\infty$ is an absorbing element of the sum, i.e.\ it satisfies
\[
v+\infty=\infty\qquad\forall v\in \C.
\]
Because of this, the Groethendick group associated to a cone is trivial and thus  we should be a bit   careful if we want to associate a vector space to a cone. In many practical situations this is nevertheless possible by first removing elements with non-zero \emph{part at infinity}, see below, and the procedure is  in some sense the inverse one of that associating  to a vector space a positive cone and (directed) completing it, see Section \ref{se:finitedim} for more on this and also the recent \cite{KO25}.

The important concept of part at infinity $\eps v$ of an element $v$ of a cone is defined as the supremum of the set of $w$'s  such that $w\leq \lambda v$ for every $\lambda \in(0,+\infty)$: being this collection closed by addition, it is directed, so the supremum exists. The name and notation are chosen to reflect the fact that  in some sense $\eps v$  is what remains of $v$ after multiplying it by small positive numbers. This concept plays a crucial role in the work, especially in the more restrictive context of \emph{cones with joins}, i.e.\ cones that are also  lattices in the order theoretic sense and in which the `inf' operation is consistent with the sum, see Definition \ref{def:conejoins} for the precise notion.

In a cone with  joins the cancellation property \eqref{eq:canc1intro} above takes the particularly useful form
\begin{equation}
\label{eq:canc2intro}
a+v\leq b+v\qquad\Rightarrow\qquad a+\eps v\leq b+\eps v.
\end{equation}
Among other things, this will be crucial to prove our {\bf second main result}, namely a 
\[
\text{general extension theorem \`a la Hahn-Banach for maps from a wedge to a cone with joins.}
\]
Shortly said, such extension theorem allows to find a linear map $M$ satisfying 
\begin{equation}
\label{eq:sandwich}
\varphi\leq M\leq\psi
\end{equation}
 for given $\varphi$ superadditive and $\psi$ subadditive, assuming $M$ to be initially defined only on a subwedge and satisfying \eqref{eq:sandwich} in there, see Theorem \ref{thm:extnuovo} for the precise formulation.  The lack of the cancellation property as classically intended  makes the proof of our extension statement quite more complicated  than  analogous preexisting ones.  In this direction, it is worth to point out that  applying our  result to the `future cone' of a vector space we naturally recover the classical Hahn-Banach theorem, see Remark \ref{re:implicaHB}. 
 
 Interestingly, and unfortunately, despite this latter remark our extension result is not fully satisfactory in our setting. The problem is that, unlike in the classical `elliptic' world, a bound like \eqref{eq:sandwich} does \emph{not} imply the desired continuity of $M$, i.e.\ the $\mcp$, regardless of what we know about $\varphi$ and $\psi$. In fact, more generally, it seems that the $\mcp$ \emph{cannot be quantified}, posing relevant conceptual challenges in the field, as classical `$\eps-\delta$ arguments' seem to have no role here.% If anything, it seems that regularity  might be related to bounds from below on the hyperbolic norms -- discussed below -- as these somehow measure how far we are from the \emph{light cone}, that in some sense is the boundary of the admissible region (this abstract consideration is in line with the very practical observation that the $p$-d'Alemberian, that is degenerate elliptic for $p<1$, is uniformly elliptic on functions $f$ with $|\d f|\geq c>0$, see \cite{LorSpl}).
 
 It is surely true that in a variety of settings, for a linear operator `positivity' implies some sort of `continuity', a fact that possibly might have a role also in our setting, but it is not hard in our framework, using precisely our extension theorem, to find linear maps without the $\mcp$.
 
 The question is thus:
 \[
\begin{split}
&\text{Under which conditions on $\varphi,M,\psi$ as in \eqref{eq:sandwich} we can replace $M$ with}\\
&\text{a new linear map $\tilde M$ that still satisfies \eqref{eq:sandwich} but also has the $\mcp$?}
\end{split}
\]
 Out {\bf third main result}, loosely speaking, is
 \[
\begin{split}
\text{having identified a general set of conditions for the above to be possible.}
\end{split}
\]
The crucial assumption is on the source cone: it should have the \emph{directed decomposition property}, so named for the similarity with the Riesz decomposition property. Technicalities apart, this means that if $(v_n)$ is an increasing sequence with supremum $v$ and  $v=w+z$, then there must be increasing sequences $(w_n),(z_n)$ with sup $w,z$ respectively such that  $w_n+z_n\leq v_n$ for every $n$ (see Definition \ref{def:tdp} for the precise notion). 

The last bit of structure we shall consider is a \emph{hyperbolic norm}, i.e.\ a map $\hn$ from a given wedge $\W$ into $[0,+\infty]$ such that
%An \emph{hyperbolic Banach space} is a cone $C$ equipped with an \emph{hyperbolic norm}, i.e.\ a superadditive map $\hn:C\to[0,+\infty]$. In other words, we simply ask that
\[
\hn(\lambda  v+\eta w)\geq\lambda \hn(v)+\eta \hn(w)\qquad\forall \lambda,\eta\in[0,+\infty),\ v,w\in\W.
\]
A hyperbolic Banach space is simply a cone equipped with a hyperbolic norm, see Section \ref{se:hbs}. In this paper we shall not impose any additional condition on the norm. In particular, the boring choices $\hn\equiv 0$ and $\hn(v)=+\infty$ for any $v\neq 0$ are admissible, though surely not particularly interesting. Understanding the role of the norm in dictating the geometric/analytic structure of hyperbolic Banach spaces is a topic left open to further explorations in the area. We emphasize that we are not even asking the norm to have the $\mcp$, as it  fails for some basic example we want to keep within the borders of the theory (such as  the $L^p$-norm for $p<0$, see Remark \ref{rem:lpmcp}).

In particular, and unlike the classical case, the norm has nothing to do with the concept of continuity we care about, as here the $\mcp$ property is only related to the order structure. Still, it is natural to use the norm to define the `chronological' topology as that generated by sets of the form 
\[
\begin{split}
I^+(v)&:=\{w\in \C: w=v+z\text{ for some $z$ s.t.\ }\hn(z)>0\},\\
I^-(v)&:=\{w\in \C: w+z=v\text{ for some $z$ s.t.\ }\hn(z)>0\},
\end{split}
\]
as $v$ varies in $\C$. Our {\bf fourth main result} is an
\[
\text{order-theoretic  Baire category theorem  applicable to chronological topologies of cones with joins,}
\]
see Theorem \ref{thm:Baire} and Theorem \ref{thm:BaireJ} and  notice that our version of the Baire category theorem implies the standard one for complete metric spaces, see Remark \ref{re:Baire}.

\medskip

We conclude pointing out that in this manuscript we shall make full use of the Axiom of Choice. This is relevant in particular, unsurprisingly, in the proof of our extension theorem \ref{thm:extnuovo}. Still, we believe and hope that in suitable separable settings, only Countable Dependent Choice will be needed, in line with traditional functional analysis. See Definition \ref{def:sep} for some notions of separability that emerge naturally in our framework.

\end{subsection}
\begin{subsection}{Some venues of further research}\label{se:venue}

The material collected here leaves open several key problems. We collect the main ones.
\begin{itemize}
\item[i)] The value of a new definition is measured on how much it allows to better understand   pre-existing concepts. In this sense, even though this paper leaves open  several natural `internal' problems, we developed this theory with the goal of building a more sophisticated calculus on metric measure spacetimes. Much like the notions given in \cite{Gigli14} had a role in studying $\RCD$ spaces that in turn helped answering questions formulated in the smooth category, so we hope that the notions presented here will be useful for the study of metric spacetimes as introduced in the seminal paper \cite{KS18} (see also the surveys \cite{CavMonLorSurvey}, \cite{McCannSurvey} and \cite{BraunPers} for presentations that are both more recent, more linked to the lower Ricci bounds that this paper is hinting to, and for relevant bibliographical references) and then ultimately helping addressing questions about the shape of smooth Lorentzian manifolds. 

\item[ii)] An important goal we aim to achieve with this paper is to convince the reader that a functional-analytic theory based on the concepts of wedge and linear maps with the $\mcp$ is both possible and interesting. This does not mean, though, that we believe that ours is the only axiomatization one could provide for these notions, nor that it is the best one. Actually, we expect that some fine tuning of the  definitions will occur. 

In particular, perhaps the most questionable choice we made is that of insisting that  the partial order is dictated by the sum operation, as in \eqref{eq:deforder}, rather than it coming as extra structure having some natural compatibility  with the algebraic one. We made this choice both because it is natural and to favour simplicity. Yet, this simplicity might backfire in some case. Most notably, when dealing with wedges of functions, one might prefer to deal with the pointwise order rather than with the sum-induced one. An example where this happens is the `non-linear' wedge of lower semicontinuous and non-negative functions on a given Polish space, that surely we want to be covered by the theory: equipped with the pointwise order this is a very natural complete lattice, but the sum-induced partial order is not even directed complete (see Remark \ref{re:lscnocone} -- still, this does not prevent us from having a hyperbolic interpretation of the classical Riesz' theorem, see Theorem \ref{thm:riesz}).

We point out that there is some wiggle room in our fundamental results, that might therefore accomodate different choices for the partial orders involved, see for instance Remark \ref{re:flexext}.

As stimulus for potential future investigations in the area, we mention a couple of advantages one could gain by giving up the sum-induced partial order in favour of a more general choice:
\begin{itemize}
\item[1)] The operation of directed completion and the algebraic structure interact well, thus producing cones out of wedges can become simpler and more natural. Compare with Remark \ref{re:abstrcompl}.
\item[2)] The space of morphisms (i.e.\ linear maps with the $\mcp$) between two cones is trivially directed-complete w.r.t.\ the pointwise order, thus a cone itself if we accept this more general interpretation of the definition. In particular, the dual of a cone equipped with the pointwise order would always be a cone and every cone would map in its bidual  (not necessarily injectively, though). See Proposition \ref{prop:Wstarcone} and its proof.
\end{itemize}
In line with what said above, we believe that the best axiomatization will be that producing the most relevant results pertaining structures that live outside the theory.

\item[iii)] We carefully avoided proposing a `best' topology and understanding if there is any is crucial for the development of the subject. The difficulties we face in picking one are reminiscent of a known dichotomy one deals with even when studying smooth spacetimes: on one hand, the manifold topology is clearly the correct one, on the other any event sees/is seen only events in its past/future, and these do not form a neighbourhood of the manifold topology. 

We briefly comment on the two most obvious candidates for a topology of a hyperbolic Banach space.   One is the Scott topology (see \cite{CompendiumLattices}) that is naturally linked to the notion of continuity we care about, i.e.\ the $\mcp$. However, the fact that this is typically just $T_0$  (and in particular not Hausdorff) makes it an hard call from an analyst' perspective. The other is the chronological topology, that emerges  also in the  smooth category and for which we have the existence result Theorem \ref{thm:BaireJ}. Still,  this in general has no connection with the $\mcp$ and in simple situations can be the discrete topology (see the discussion at the end of Section \ref{se:Baire}).

We also notice that, even though the definitions are very natural,  the weak$^*$-like topologies that we discuss in Section \ref{se:wstar} appear to be a bit artificial and as of now we do not have relevant existence results  related to them.

This problem is related to the difficulties in associating an intrinsically defined `nice' topology to sets equipped with just a time separation function (but see \cite[Section 1]{McCann24} for some positive results), that in our setting is related  to the unclear role of the hyperbolic norm. In this direction, notice that our choice of insisting on the causal structure  is reminiscent of Minguzzi's idea of regarding a spacetimes through the eyes of Nachbin's notion of  topological spaces equipped with closed orders (see \cite{Ming10}): further studies are needed to understand whether this is just a formal analogy or there is a deeper connection.

Related to all this, it is worth to repeat that the lack of a classically intended triangle inequality makes it complicated to give a meaning to `quantitative estimates'. For instance, there seems to be no way of making quantitative the notion of $\mcp$, not even for linear functionals. This of course is a big difference with  the classical Banach setting, where  the operator norm quantifies the continuity of a linear map.% For this reason,  there seems to be no room for classical `$\eps-\delta$' arguments (but recall that, perhaps surprisingly, elliptic operators can arise from a reverse triangle inequality on tangent spaces: see yyy,yyy,yyyy).
 
%\item[iii)] It would be nice to have a cleaner theory providing a notion that is stable under passage to duals and, more generally, to wedges of morphisms, much like the space of linear continuous maps between two given Banach spaces   is canonically a Banach space itself. In order for the theory to be rich, the results here suggest that one should concentrate on cones with the directed decomposition property, but it seems complicated to prove that a dual cone has such property, no matter  the set of additional natural assumptions on the original cone one imposes. 
\end{itemize}

\end{subsection}

\noindent{\bf Acknowledgments} 
I am  grateful to D. Trevisan for the example discussed in Section \ref{se:dario}, to A. Lerario for the one in Section \ref{se:antonio} and to R. Oleinik for conversations on the basic structure of wedges and in particular for Example \ref{ex:Roman} in Section \ref{se:finitedim}. I thank them also for the permission of  sharing these.

This study was funded by the European Union - NextGenerationEU, in the framework of the PRIN Project Contemporary perspectives on geometry and gravity (code 2022JJ8KER – CUP G53D23001810006). The views and opinions expressed are solely those of the author and do not necessarily reflect those of the European Union, nor can the European Union be held responsible for them.

\end{section}

\begin{section}{Directed completion of partial orders}

\begin{subsection}{Definition and construction}
\label{se:constr}
A {\bf partially ordered} set $(\X,\leq)$ is a set $\X$ equipped with a relation $\leq$ that is reflexive, antisymmetric and transitive, i.e.
\[
\begin{split}
x&\leq x,\\
x&\leq y\quad \text{and}\quad y\leq x\qquad\Rightarrow\qquad x=y,\\
x&\leq y\quad\text{and}\quad y\leq z\qquad\Rightarrow\qquad x\leq z,
\end{split}
\]
for every $x,y,z\in\X$.  A {\bf directed} subset $D\subset\X$ is a subset so that for any finite subset $F\subset D$ there is $z\in D$ with $x\leq z$ for any $x\in F$ (notice that this applies also to $F$ being the empty set, forcing directed subsets to be non-empty).  A  {\bf lower} set $L\subset \X$ is  subset so that  $x\in L$ and $y\in \X$ with $y\leq x$ imply $y\in L$.

Given $A\subset\X$ the {\bf supremum} (or join) $\sup A\in\X$ is an element   $s\in \X$  so that:
\begin{itemize}
\item[i)] $x\leq s$ for any  $x\in A$  
\item[ii)] if $s'\in\X$ is such that $x\leq s'$ for any $x\in A$, then $s\leq s'$.
\end{itemize}
A supremum does not necessarily exists, but it is trivially unique: if $s_1,s_2$ are both suprema, then   $s_1\geq s_2$ and $s_2\geq s_1$, from which it  follows $s_1=s_2$.

A {\bf directed-complete} partial order (dcpo) is a partial order $(\X,\leq)$ for which every directed subset has a supremum.

A subset $A$ of a partial order $\X$ is called  {\bf directed-sup-closed} if for any directed subset of $A$ that admits supremum in $\X$ we have that such supremum belongs to $A$. 

We define two closure operators: given a partial order $(\X,\leq)$ and an arbitrary $A\subset\X$ we put
\begin{equation}
\label{eq:chiusure}
\begin{split}
\bar A&:=\text{smallest $B\subset\X$ containing $A$ and directed-sup-closed},\\
\widehat A&:=\text{smallest $B\subset\X$ containing $A$, lower set and directed-sup-closed},
\end{split}
\end{equation}
where in both cases `smallest' is intended in the sense of inclusion. The definitions are well posed because, trivially, the intersection of any arbitrary family of directed-sup-closed (resp.\ lower) subsets is still a directed-sup-closed set (resp.\ lower set). Notice, though, that   the intersection of directed subsets is not necessarily directed. It is simple to check, and well known, that these are closure operators in the sense of Kuratowski. We quickly give the details. Since the empty set is both directed-sup-closed and a lower set, we see that $\bar\emptyset=\widehat\emptyset=\emptyset$. The inclusion $A\subset\bar A$ and  identity $\bar{\bar A}=\bar A$ are obvious from the definition; same for $\widehat A$. Thus it remains to prove that $\overline{A\cup B}\subset \bar A\cup\bar B$ and that $\widehat{A\cup B}\subset \widehat A\cup\widehat B$. For this it suffices to prove that the union of two directed-sup-closed  sets is still directed-sup-closed (applying this to $\bar A$ and $\bar B$ we conclude that $\overline{A\cup B}\subset \bar A\cup\bar B$  and applying to $\widehat A$ and $\widehat B$ and noticing that union of lower sets is still a lower set we get $\widehat{A\cup B}\subset \widehat A\cup\widehat B$). Thus say that $A,B\subset\X$ are directed-sup-closed and let $D\subset A\cup B$ be directed and having sup $s$ in $\X$. We want to prove that $s\in A\cup B$. If there is $d\in D$ such that every $e\in D$ with $d\leq e$ belongs to $A$, then clearly the collection of such $e$'s is a directed set with $\sup E=\sup D=s$ and since $A$ is directed-sup-closed we conclude $s\in A$. Otherwise, for any $d\in D$ there is $e\in D\cap B$ with $d\leq e$: this suffices to conclude that $D\cap B$ is directed with $\sup (D\cap B)=\sup D=s$, and since $D\cap B\subset B$ we also get $s\in B$.

The topology induced by $A\mapsto\widehat A$ is known as Scott topology (see e.g.\ \cite{CompendiumLattices}) and that induced by $A\mapsto \bar A$ known as $D$-topology (see \cite{DCPO10}).

In studying the operator $A\mapsto \bar A$ it is natural to define the somewhat analog operator $A\mapsto A^\uparrow$ as
\[
A^\uparrow:=\{\text{suprema of directed sets $D\subset A$ admitting supremum in $\X$}\}.
\]
It is clear that $A^\uparrow$ can be used to detect if a set is directed-sup-closed:
\begin{equation}
\label{eq:easybar}
A=\bar A\quad\text{(i.e.\ $A$ is directed-sup-closed)}\qquad\Leftrightarrow\qquad A=A^\uparrow.
\end{equation}
It is also clear that $A\subset A^\uparrow\subset \bar A$. It is simple to see that in general the second inclusion may be strict - see for instance Example \ref{ex:doppiafreccia} in Section  \ref{se:techex} (that it is essentially the same example as in  \cite{JohnstoneScott}), so one might iterate the operation and produce the sequence $A\subset A^\uparrow\subset (A^\uparrow)^\uparrow\subset\cdots$. More generally,  we can proceed by (transfinite) induction and  define $A^{\uparrow^\alpha}$ for every ordinal $\alpha$ by putting:
\begin{equation}
\label{eq:arrowalpha}
\begin{array}{lll}
A^{\uparrow^0}&\!\!\!:=A,\\
A^{\uparrow^{\alpha+1}}&\!\!\!:=(A^{\uparrow^{\alpha}})^\uparrow,&\qquad\text{ for every ordinal $\alpha$},\\
A^{\uparrow^\alpha}&\!\!\!:=\bigcup_{\beta<\alpha}A^{\uparrow^\beta},&\qquad\text{ for every limit ordinal $\alpha>0$.}
\end{array}
\end{equation}
A trivial transfinite induction argument shows that $A^{\uparrow^\alpha}\subset \bar A$ for every ordinal $\alpha$.  The  map $\alpha\mapsto A^{\uparrow^\alpha}$ cannot be injective (as is defined from a proper class to the set of subsets of $\X$), hence for some $\alpha\lneq \beta $ we must have $A^{\uparrow^\alpha}=A^{\uparrow^\beta}$. Since  $\alpha\mapsto A^{\uparrow^\alpha}$ is also non-decreasing (w.r.t.\ inclusion), we see that $A^{\uparrow^\alpha}=A^{\uparrow^\gamma}$ for every $\alpha\leq\gamma\leq\beta$, and in particular for $\gamma:=\alpha+1$. Thus from \eqref{eq:easybar} we see that $\bar A=A^{\uparrow^\alpha}$. In other words, we established that
\begin{equation}
\label{eq:barordinali}
\bar A=A^{\uparrow^\alpha}\qquad\text{for every ordinal $\alpha$ sufficiently big}
\end{equation}
(of course, this is just an instance of the general phenomenon telling that a monotone map from the ordinals to a set must be eventually constant).
Notice that Example \ref{ex:alphafreccia} in Section \ref{se:techex} shows that for any ordinal $\alpha$ we can find a partial order $(\X,\leq)$ and $A\subset\X$ such that $A^{\uparrow^\alpha}\neq \bar A$. 

A similar construction shows that 
\begin{equation}
\label{eq:hatordinali}
\text{iterating the operator $\qquad A\ \mapsto\ \downarrow\!(A^\uparrow)\qquad$ we eventually end up in $\widehat A$,}
\end{equation}
where $\downarrow \!A:=\{x\in\X:x\leq a\text{ for some }a\in A\}$ (if $A=\{a\}$ is a singleton we simply write $\downarrow\!a$).

 A different viewpoint on \eqref{eq:barordinali} is obtained via the concept   of {\bf tip} of a set: 
\begin{definition}[Tip of a set]\label{def:tip}
We say that $A\subset\X$ has a tip provided $\bar A$ has a maximum and in this case such maximum is the tip of $A$. The tip of $A$ will be denoted $\tip A$. 
\end{definition}
For instance, singletons have tips (the element itself) and more generally directed sets having the sup also have the tip (it being the same as the sup).  Example \ref{ex:doppiafreccia}  in Section \ref{se:techex}  shows that there are sets admitting tips that are not directed. 

We point out a key conceptual difference between the property of being directed and that of admitting tip: the former is intrinsic, i.e.\ only concerns the order relation between elements of the subset considered, while the latter is extrinsic, i.e.\ it depends on whether an element with certain given properties exists possibly  outside the subset.

 It is obvious that if $A$ has a tip it also has supremum, it being the tip itself. The viceversa might not hold: if $x,y\in\X$ are two unrelated points admitting supremum, the set $\{x,y\}=\overline{\{x,y\}}$ does not have a maximum, and thus $\{x,y\}$ does not have a tip.

We claim that
\begin{equation}
\label{eq:bartip}
\bar A=\{\text{tips of sets $B\subset A$ admitting a tip}\}.
\end{equation}
Indeed, since $\tip B\in\bar B\subset \bar A$ for every  $B$ as above, the inclusion  $\supset$ holds. For  $\subset$ we notice that since singletons have tips, the right hand side contains $A$, hence to conclude it suffices to prove that it is directed-sup-closed. Let thus $D$ be directed, contained in the right hand side and admitting sup in $\X$, call it $s$. Then for every $d\in D$ there is $B_d\subset A$ with $\tip B_d=d$. We put   $B:=\cup_{d\in D}B_d\subset A$ and claim that  $\tip B=s$. If we show this we are done. Since clearly $s$ is an upper bound for $B$, to conclude we need to prove that $s\in\bar B$. To see this notice that by the choice of $B_d$ we know that for every $d\in D$ we have $d\in \bar B_d\subset\bar B$. In other words we have $D\subset \bar B$ and thus $s=\sup D\in \bar B$.

In the following we shall often use the following trivial fact:
\begin{equation}
\label{eq:tiparrow}
\text{$B\subset\X$ has tip}\qquad\Rightarrow\qquad \widehat B=\,\downarrow\!\tip B.
\end{equation}
Indeed, since $\tip B$ is an upper bound of $B$ we have $B\subset \,\downarrow\!\tip B$ and thus $ \widehat B\subset\,\downarrow\!\tip B$. Conversely, from $\tip B\in \bar B\subset\widehat B$ we deduce that $\downarrow\!\tip B\subset \widehat B$. In particular, if $B$ has tip then $\widehat B$ has a maximum. The viceversa does not hold, see Example \ref{ex:notip} in Section \ref{se:techex}.

\begin{definition}[Monotone Convergence Property] Let $(\X_1,\leq_1)$ and $(\X_2,\leq_2)$   be two partial orders and $T:\X_1\to\X_2$  a map. We say that $T$ has the Monotone Convergence Property ($\mcp$ in short) if it respects directed suprema, i.e.\ whenever for any $D\subset\X_1$ directed having supremum the set $T(D)\subset\X_2$ also has supremum and it holds
\begin{equation}
\label{eq:defmcp}
T(\sup D)=\sup T(D).
\end{equation}
\end{definition}
The choice $D:=\{x,y\}$ with $x\leq_1 y$ in \eqref{eq:defmcp} shows that any such $T$ must be monotone. Some equivalent characterizations of maps with the $\mcp$ are collected in the following lemma. Below for a set $A$ and element $u$ we shall write $A\leq u$ meaning that $u$ is an upper bound for $A$, i.e.\ that $a\leq u$ for any $a\in A$.
\begin{lemma}\label{le:MCP}
Let $(\X_1,\leq_1)$ and $(\X_2,\leq_2)$ be two partial orders and $T:\X_1\to\X_2$. Then the following are equivalent:
\begin{itemize}
\item[a)] $T$ has the Monotone Convergence Property,
\item[b)] $T$ is monotone and for every $A\subset\X_1$ and $u\in\X_2$ with $T(A)\leq_2 u$ we have $T(\widehat A)\leq_2 u$,
\item[c)] $T$ is monotone and for every $A\subset\X_1$ we have $T(\bar A)\subset\overline{T(A)}$,
\item[d)] For every $A\subset\X_1$ we have $T(\widehat A)\subset\widehat{T(A)}$,
\item[e)] For every $A\subset\X_1$ having tip the set $T(A)\subset\X_2$ has tip and $T(\tip A)=\tip T(A)$.
\end{itemize}
\end{lemma}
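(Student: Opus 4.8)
The plan is to prove the cyclic chain of implications (a)$\Rightarrow$(b)$\Rightarrow$(c)$\Rightarrow$(d)$\Rightarrow$(e)$\Rightarrow$(a), exploiting at each step the closure-operator formalism and the characterization \eqref{eq:bartip} of $\bar A$ in terms of tips, together with \eqref{eq:tiparrow}. The recurring principle is that both $\bar{\cdot}$ and $\widehat{\cdot}$ are generated by a transfinite iteration of the single operation ``add suprema of directed subsets'' (respectively, ``add such suprema and then close downward''), so that a property phrased for a single directed sup propagates to the whole closure by transfinite induction.

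First, for (a)$\Rightarrow$(b): monotonicity is the observation already recorded after the definition of $\mcp$ (apply \eqref{eq:defmcp} to $D=\{x,y\}$). Given $A$ with $T(A)\leq_2 u$, I would show by transfinite induction along the iteration \eqref{eq:arrowalpha} applied to the operator $A\mapsto\;\downarrow\!(A^\uparrow)$ from \eqref{eq:hatordinali} that $T$ maps each stage below $u$: at successor stages a new point is either $\leq$ some already-treated point (use monotonicity for the downward closure) or is the sup of a directed set $D$ of already-treated points, whence $T(\sup D)=\sup T(D)\leq_2 u$ by the $\mcp$ and the fact that $u$ bounds $T(D)$; limit stages are trivial unions. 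Hence $T(\widehat A)\leq_2 u$.

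Next, (b)$\Rightarrow$(c): take $u$ to be any upper bound of $T(A)$ — but more efficiently, note $\overline{T(A)}$ is directed-sup-closed, so it suffices to show $T$ sends directed-sup-limits of $\bar A$ into $\overline{T(A)}$; I would again run the transfinite induction on $A^{\uparrow^\alpha}$, using at a successor stage that if $D\subset A^{\uparrow^\alpha}$ is directed with sup $s$ and $T(A^{\uparrow^\alpha})\subset\overline{T(A)}$, then $T(D)\subset\overline{T(A)}$ is directed (by monotonicity) and has a sup which by (b) — applied with $u$ any upper bound and the set $T(D)$ — lies in $\overline{T(A)}$; actually the cleanest route is to invoke (b) with $A$ replaced by the directed set $D$ and use that its sup is $s$, giving $T(s)\in\overline{T(D)}\subset\overline{T(A)}$. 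Then (c)$\Rightarrow$(d) is immediate after observing $\widehat A=\;\downarrow\!\overline{A}$-type compatibility: from monotonicity $T(\downarrow\!A)\subset\;\downarrow\!T(A)$, and combining with (c) gives $T(\widehat A)\subset\widehat{T(A)}$ (one checks $\widehat A$ is built from $\bar A$ by one downward closure plus re-iteration, and $\widehat{\cdot}$ absorbs both operations on the target side). For (d)$\Rightarrow$(e): if $B$ has tip $t:=\tip B$, then by \eqref{eq:tiparrow} $\widehat B=\;\downarrow\!t$, so $t\in\widehat B$ and $T(t)\in T(\widehat B)\subset\widehat{T(B)}$; since $\widehat{T(B)}=\;\downarrow\!(\text{stuff})$ and $t$ is an upper bound of $B$ so $T(t)$ is an upper bound of $T(B)$, one deduces $T(t)$ is the maximum of $\overline{T(B)}$, i.e.\ $T(B)$ has tip and $\tip T(B)=T(\tip B)$. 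Finally (e)$\Rightarrow$(a): a directed set $D$ with supremum has a tip equal to $\sup D$, so by (e) $T(D)$ has a tip, hence a supremum, equal to $T(\sup D)$; that is exactly \eqref{eq:defmcp}.

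The main obstacle I anticipate is making the transfinite-induction bookkeeping in (a)$\Rightarrow$(b) and (b)$\Rightarrow$(c) genuinely watertight rather than hand-wavy: one must be careful that the intermediate stages $A^{\uparrow^\alpha}$ (or the downward-closed variant) are exactly what \eqref{eq:barordinali}–\eqref{eq:hatordinali} describe, that the inductive hypothesis is the right uniform statement (``$T$ of this stage is $\leq_2 u$'', resp.\ ``$\subset\overline{T(A)}$''), and that at successor steps every newly added point really is either below an old one or a directed sup of old ones — the subtlety being that a directed subset of $A^{\uparrow^\alpha}$ need not itself be a subset of $A$, which is precisely why the iterated construction, and not a one-step argument, is needed. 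An alternative that sidesteps some of this is to prove (a)$\Leftrightarrow$(e) directly (both are clean) and then (e)$\Rightarrow$(b)$\Rightarrow\cdots\Rightarrow$(e), using \eqref{eq:bartip} and \eqref{eq:tiparrow} to convert ``tip'' statements into ``$\bar{\cdot}$'' and ``$\widehat{\cdot}$'' statements without any explicit ordinals; I would lean toward this route in the write-up for brevity.
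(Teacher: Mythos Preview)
Your cycle (a)$\Rightarrow$(b)$\Rightarrow$(c)$\Rightarrow$(d)$\Rightarrow$(e)$\Rightarrow$(a) has a genuine gap at the step (d)$\Rightarrow$(e). From (d) you correctly extract that $T(t)\in\widehat{T(B)}$ and, via monotonicity, that $T(t)$ is an upper bound of $T(B)$. But these two facts together only yield $T(t)=\max\widehat{T(B)}$, \emph{not} $T(t)=\max\overline{T(B)}$. The paper's own Example~\ref{ex:notip} shows precisely that $\widehat{C}$ can have a maximum while $\bar C$ does not, i.e.\ $C$ has no tip; so the inference ``$m\in\widehat C$ and $m$ an upper bound of $C$ $\Rightarrow$ $m\in\bar C$'' is invalid in general. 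Your argument uses only that implication, not the full strength of (d), so it does not go through as written.

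The fix is to route through (c) before reaching (e): from (d) one gets (a) cheaply (for directed $D$ with sup $s$, $T(s)\in\widehat{T(D)}$ together with $T(s)$ being an upper bound of $T(D)$ does give $T(s)=\sup T(D)$, because here one only needs the sup, not membership in $\overline{T(D)}$), then (a)$\Rightarrow$(c) by your transfinite induction, and finally (c)$\Rightarrow$(e) is clean because $\tip B\in\bar B$ (not merely $\in\widehat B$), so (c) gives $T(\tip B)\in\overline{T(B)}$ directly. This is essentially how the paper organizes things: it proves two triangles (a)$\Rightarrow$(d)$\Rightarrow$(b)$\Rightarrow$(a) and (a)$\Rightarrow$(c)$\Rightarrow$(e)$\Rightarrow$(a), rather than a single pentagon, precisely so that (e) sits downstream of (c) rather than of (d). Your alternative suggestion of proving (a)$\Leftrightarrow$(e) directly and then closing a cycle through the others would also work and avoids the pitfall.
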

\begin{proof}\ \\
\noindent{$(a)\Rightarrow(d)$}  Fix $A\subset\X$, put $A^0:=A$ and recursively define $A^{\alpha+1}:=\downarrow\! ((A^\alpha)^\uparrow)$ and, for limit ordinals $\alpha$, put $A^\alpha:=\cup_{\beta<\alpha}A^\beta$. Then \eqref{eq:hatordinali} tells that $A^\alpha=\widehat A$ for any ordinal $\alpha$ sufficiently big, so to conclude it suffices to prove that $T(A^\alpha)\subset \widehat{T(A)}$ for any $\alpha$. This will be done by transfinite recursion. It is obvious for $\alpha=0$ and that $T(A^\beta)\subset \widehat{T(A)}$ for any $\beta<\alpha$ implies $T(A^\alpha)\subset \widehat{T(A)}$ if $\alpha$ is a limit ordinal. Now assume that $T(B)\subset \widehat{T(A)}$ holds for some set $B$ and notice that the $\mcp$ and the fact that $ \widehat{T(A)}$ is directed-sup-closed ensure that $T(B^\uparrow)\subset  \widehat{T(A)}$. Also, we have already noticed that maps with the $\mcp$ are monotone and since $ \widehat{T(A)}$ is a lower set we also deduce that $T(\downarrow\!(B^\uparrow))\subset  \widehat{T(A)}$. This shows that if $T(A^\alpha)\subset \widehat{T(A)}$ then we have $T(A^{\alpha+1})\subset \widehat{T(A)}$, concluding the proof by induction.\\
\noindent{$(d)\Rightarrow(b)$}  Picking $A:=\{a\}$ in $(d)$ and noticing that, trivially, we have $\hat{a}=\downarrow\!a$, we see that $T$ is monotone. The same observation ensures that  $T(A)\leq_2 u$ (i.e.\ $T(A)\subset\,\downarrow\!u$) implies $\widehat{T(A)}\leq_2 u$, as  $\widehat{T(A)}\subset\widehat{\downarrow\!u}=\downarrow\!u$. \\
\noindent{$(b)\Rightarrow(a)$} Let $D\subset\X$ be directed admitting supremum, call it $s$. The assumed monotonicity implies $T(D)\leq_2 T(s)$, thus to conclude we need to prove that if $T(D)\leq_2 u$ , then $T(s)\leq_2 u$. But this is obvious, because our assumption tells that  $T(\widehat D)\leq_2 u$, so the claim follows noticing that $s\in \bar D\subset\widehat D$.\\
\noindent{$(a)\Rightarrow(c)$} Monotonicity of maps with the $\mcp$ has been observed before. Then observe that if $B\subset\X_2$ is directed-sup-closed and contains $T(A)$, then the $\mcp$ implies that $T(A^\uparrow)\subset B$ as well. The conclusion follows by transfinite recursion as in the implication $(a)\Rightarrow(d)$ (recall \eqref{eq:barordinali}).\\
\noindent{$(c)\Rightarrow(e)$} By monotonicity $T(\tip A)$ is an upper bound of $T(A)$ and thus of $\overline{T(A)}$. On the other hand, since $\tip A\in \bar A$, from the assumption we also get $T(\tip A)\in\overline{T(A)}$. We thus proved that $T(\tip A)=\max\overline{T(A)}=\tip T(A)$, as desired.\\
\noindent{$(e)\Rightarrow(a)$} Let $A\subset\X_1$ be directed admitting supremum. Then such supremum is also the tip and the assumption that $T(A)$ has tip implies in particular that $T(A)$ has supremum. Then $T(\tip A)=\tip T(A)$ reduces to \eqref{eq:defmcp}.
\end{proof}

The characterization in item $(d)$ shows that $T$ has the $\MCP$ if and only if it is continuous w.r.t.\ the Scott topologies on both the source and the target spaces.  For this reason, maps of this kind are called Scott continuous in the literature, especially that related to domain theory (see e.g.\ \cite{CompendiumLattices} and references therein).

We are interested in the completion process associated to the concept of directed complete partial order and maps with the Monotone Convergence Property. The following definition is very natural, being that of reflector of the inclusion of the full subcategory of dcpo into that of partial orders (see e.g.\ \cite{AbstrConcrCat} for more on this concept), where in both cases morphisms are maps with the $\MCP$ - notice that this makes sense because the identity map on any partial order has the $\MCP$ and that composition of maps with the $\MCP$ has the $\MCP$:
\begin{definition}[Directed completion]\label{def:completamento}
Let $(\X,\leq)$ be a partial order. A directed completion of $(\X,\leq)$ is given by a directed complete partial order $(\bar\X,\bar\leq)$ and a map $\iota:\X\to\bar\X$ with the Monotone Convergence Property that is universal in the following sense: for any directed complete partial order $(\Z,\leq_\Z)$ and map $T:\X\to\Z$ with  the $\MCP$ there is a unique map $\bar T:\bar\X\to\Z$ with the $\MCP$ so that $\bar T\circ\iota=T$, i.e.\ making the following diagram commute
\begin{equation}
\label{eq:diagrammino}
\begin{tikzcd}[node distance=1.5cm, auto]
\X\arrow{dr}[swap]{T} \arrow{r}{\iota}&\bar\X\arrow{d}{\bar T}\\
& \Z
\end{tikzcd}
\end{equation}
\end{definition}
A standard and easy to check property of these sort of definitions is that the resulting object is unique up to unique isomorphism. More precisely, if $(\tilde\X,\tilde\leq)$ and $j:\X\to\tilde\X$ is another directed completion, then there is a unique monotone bijection $\varphi:\bar \X\to\tilde\X$ whose inverse is also monotone (in particular both $\varphi$ and its inverse have the $\MCP$) such that $\varphi\circ \iota=j$. Indeed, the universal properties of the two completions produce maps $\varphi:\bar\X\to\tilde\X$ and $\psi:\tilde\X\to\bar\X$ with the $\MCP$ such that $\varphi\circ\iota=j$ and $\psi\circ j=\iota$. Hence $\psi\circ\varphi:\bar\X\to\bar\X$ has the $\MCP$ and satisfies $(\psi\circ\varphi)\circ\iota=\iota$, thus by the uniqueness of $\bar T$ (with $T=\iota$) we deduce that $\psi\circ\varphi$ is the identity on $\bar\X$. Similarly, $\varphi\circ\psi$ is the identity on $\tilde\X$, proving that $\varphi,\psi$ are one the inverse of the other and the claim.

The question is thus existence. As usual, an explicit construction is found within the power set, that is complete w.r.t.\ inclusion. In our case, given the partial order $(\X,\leq)$ we shall actually look at
\begin{equation}
\label{eq:defY}
\Y:=\Big\{A\subset \X\ :\ \widehat A=A\Big\}.
\end{equation}
Notice that $\Y$, equipped with the order given by the inclusion, is a directed complete partial order: even more, any family $(A_i)\subset\Y$ admits a supremum, it being given by $\widehat{\cup_iA_i}$ (the proof that this is the supremum is trivial --  this claim shows that $\Y$ is a complete lattice).

There is a natural embedding of $\X$ into $\Y$, defined as
\begin{equation}
\label{eq:iotay}
\iota(x):=\,\downarrow\! x.
\end{equation}
The couple given by $\Y$ and $\iota$ is not the completion of $\X$, the problem being that $\Y$ is too big. For instance, to an $\X$ that only contains  two unrelated points, and is therefore already complete, is associated a $\Y$ that also contains a `bottom' and a `top' point.

Still, completion can be found within $\Y$, as illustrated in the following theorem, see also \cite{Mark76} and \cite{DCPO10} for earlier, different, proofs of existence of directed completion:
\begin{theorem}[Existence of directed completion]\label{thm:excompl}
Let $(\X,\leq)$ be a partial order. 

Then a directed completion $(\bar\X,\bar\leq)$ and $\iota:\X\to\bar\X$ exists. Moreover:
\begin{itemize}
\item[i)] For any $x\in\bar\X$ the set $\iota(\X)\cap\downarrow\! x\subset\bar\X$ has tip, it being precisely $x$,
\item[ii)] For any $(\Z,\leq_\Z)$ directed complete partial order and $T:\X\to\Z$ with the Monotone Convergence Property, the map $\bar T:\bar\X\to\Z$ as in \eqref{eq:diagrammino} given by the very definition of completion is given by
\begin{equation}
\label{eq:chieetbar}
\bar T(\bar x)=\sup_{x\in\X\atop \iota(x)\leq\bar x}T(x)\qquad\forall \bar x\in\bar\X,
\end{equation}
where it is part of the claim the fact that the supremum exists. 
\item[iii)] An explicit definition of $\bar\X$ and $\iota$ is given by the choice
\begin{equation}
\label{eq:defbax}
\bar\X:=\overline{\iota(\X)}\ \subset\ \Y
\end{equation}
where here $\iota:\X\to\Y$ is the map defined in \eqref{eq:iotay} and the directed-sup-closure of $\iota(\X)$ is taken in $\Y$.

 With this choice, an element of $\Y$, i.e.\ a subset $A$ of $\X$ with $A=\widehat A$, belongs to $\bar \X$ if and only if the set $\{\iota(a):a\in A\}\subset\Y$ has a tip. In this case the tip is exactly $A$ (compare with \eqref{eq:bartip}).
\end{itemize}
\end{theorem}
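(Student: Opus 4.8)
### Proof plan

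The plan is to verify the three listed properties for the concrete model $\bar\X := \overline{\iota(\X)} \subset \Y$ with $\iota(x) := \,\downarrow\!x$, and then deduce the universal property from them. The key structural fact I would establish first is the description at the end of item (iii): an element $A \in \Y$ lies in $\bar\X$ if and only if $\{\iota(a) : a \in A\} \subset \Y$ has a tip, and in that case the tip equals $A$. One inclusion follows from \eqref{eq:bartip} applied in $\Y$, since $\bar\X = \overline{\iota(\X)}$ consists exactly of tips of sets $B \subset \iota(\X)$ admitting a tip; writing such a $B$ as $\{\iota(a) : a \in A_0\}$ for some $A_0 \subset \X$ and using \eqref{eq:tiparrow} (so that $\widehat{B}$ in $\Y$ is $\,\downarrow\!\tip B = \,\downarrow\! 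A = \widehat{A}$ as a subset of $\Y$, hence $A = \widehat{A_0}$), one identifies the tip with $A$ itself. Conversely, if $\{\iota(a):a\in A\}$ has a tip, that tip must be $A$ (since $A = \widehat A \supset \,\downarrow\!\iota(a)$-type reasoning shows $A$ is the least upper bound of $\{\iota(a):a\in A\}$ in $\Y$) and $A$ then lies in $\bar\X$ by \eqref{eq:bartip} again. This also immediately gives item (i): for $\bar x = A \in \bar\X$, the set $\iota(\X)\cap \,\downarrow\!A = \{\iota(a) : a \in A\}$ has tip $A$.

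Next I would check that $\iota : \X \to \bar\X$ is well-defined (i.e.\ $\iota(x) = \,\downarrow\!x$ is indeed of the form $\widehat{A}$ and sits inside $\bar\X$ — it is the tip of the singleton $\{\iota(x)\}$) and has the $\mcp$. For the $\mcp$: if $D \subset \X$ is directed with supremum $s$, I must show $\sup_\Y \iota(D) = \iota(s)$ and that $\iota(s)$ is this supremum inside $\bar\X$. Since $\iota$ is visibly monotone, $\iota(s)$ is an upper bound of $\iota(D)$; and any $A \in \Y$ with $\iota(d) \subset A$ for all $d \in D$ contains $D$, hence contains $\sup D = s$ (as $A = \widehat A$ is directed-sup-closed), hence contains $\,\downarrow\!s = \iota(s)$. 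So $\iota(s) = \sup_\Y \iota(D)$, and since this supremum lies in $\bar\X$, it is also the supremum in $\bar\X$. Because $\bar\X$ is directed-sup-closed in the dcpo (indeed complete lattice) $\Y$, it is itself a dcpo, with directed suprema computed as in $\Y$.

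For item (ii) and the universal property: given a dcpo $(\Z, \leq_\Z)$ and $T : \X \to \Z$ with the $\mcp$, define $\bar T(\bar x) := \sup_{x \in \X,\ \iota(x) \leq \bar x} T(x)$. Writing $\bar x = A$ with $A = \widehat A$, the index set is $\{T(a) : a \in A\} = T(A)$. I need this supremum to exist: here is where item (i) does the work — $A \in \bar\X$ means $\{\iota(a):a\in A\}$ has a tip, and I would argue that $T$ having the $\mcp$, combined with Lemma \ref{le:MCP}(e) transported through $\iota$, forces $T(A) = T(\iota^{-1}$-preimage data$)$ to have a tip in $\Z$, hence a supremum; concretely, the set $A$ is the tip in $\Y$ of $\{\iota(a):a\in A\}$, and one shows $T(A)$ has tip $\bar T(A)$ by a transfinite-recursion argument mirroring the $(a)\Rightarrow(e)$ direction of Lemma \ref{le:MCP}. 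Then $\bar T \circ \iota = T$ is immediate ($\iota(x) = \,\downarrow\!x$ gives index set $\{T(y) : y \leq x\}$, whose sup is $T(x)$ by monotonicity), $\bar T$ has the $\mcp$ (directed suprema in $\bar\X$ are $\widehat{\bigcup \cdot}$-suprema, and one checks $\bar T$ respects them using that $T$ does and that $\widehat{\cdot}$ is built by iterating $\downarrow(\cdot^\uparrow)$, invoking \eqref{eq:hatordinali}), and uniqueness follows because $\bar\X = \overline{\iota(\X)}$ means any $\mcp$-map out of $\bar\X$ is determined on $\iota(\X)$ by Lemma \ref{le:MCP}(c) (its values on $\overline{\iota(\X)}$ lie in, and are forced by, the closure of its values on $\iota(\X)$).

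The main obstacle I anticipate is the existence of the supremum in \eqref{eq:chieetbar}, i.e.\ showing $T(A)$ has a supremum in $\Z$ whenever $A \in \bar\X$. The subtlety is that $A$, as a subset of $\X$, need not be directed (tips are extrinsic, as the paper stresses right after Definition \ref{def:tip}), so one cannot simply apply directed-completeness of $\Z$ to $T(A)$ directly. The resolution is to run the transfinite construction: $A = \widehat{A_0}$ where $\{\iota(a) : a \in A_0\}$ has a tip in $\Y$, so $A$ arises from $A_0$ by iterating $\downarrow(\cdot^\uparrow)$ finitely/transfinitely many times (\eqref{eq:hatordinali}), and at each stage one uses that $T$ (extended along $\iota$) sends the genuinely-directed suprema appearing in $(\cdot)^\uparrow$ to suprema in $\Z$, while down-closures are harmless for computing suprema. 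Carefully, this is exactly the bookkeeping in the proof of Lemma \ref{le:MCP}, $(a) \Rightarrow (d)$, redone with $\Z$ in place of a Scott-closed target — the only genuinely new point being to track that the running suprema exist in $\Z$ at every ordinal stage, which holds because $\Z$ is a dcpo and each new sup is of a directed set over previously-constructed sups.
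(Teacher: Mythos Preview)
Your strategy matches the paper's: realize $\bar\X$ as $\overline{\iota(\X)}$ inside $\Y$, establish the tip characterization of item~(iii), and obtain formula \eqref{eq:chieetbar} together with the universal property by transfinite induction. The one place that needs sharpening is the induction you propose for the existence of $\sup T(A)$.

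You write that $A$ arises from some $A_0$ by iterating $\downarrow(\cdot^\uparrow)$ in $\X$ via \eqref{eq:hatordinali}, and that at each stage $T$ sends the directed suprema appearing in $(\cdot)^\uparrow$ to suprema in $\Z$. But those directed suprema already exist in $\X$: the $\mcp$ of $T$ gives $T(\sup D) = \sup T(D)$, an identity between elements already present in $\Z$, and produces nothing new. All this iteration establishes is that $T(A)$ and $T(A_0)$ have the same upper bounds (essentially Lemma~\ref{le:MCP}(b)); it does not manufacture the supremum you need. The transfinite induction that works --- and that your final clause (``each new sup is of a directed set over previously-constructed sups'') correctly describes --- runs in $\Y$, not in $\X$: set $\X_\alpha := \iota(\X)^{\uparrow^\alpha}$ in $\Y$, so that $\bar\X = \X_\alpha$ for large $\alpha$ by \eqref{eq:barordinali}. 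At the successor stage an $A \in \X_{\alpha+1}$ is the $\Y$-supremum of a directed family $(A_i) \subset \X_\alpha$; by induction each $\sup T(A_i)$ exists in $\Z$, the family $\{\sup T(A_i)\}$ is directed, and its supremum in the dcpo $\Z$ is then shown to equal $\sup T(A)$ (here the $\mcp$ of $T$ enters, via $A = \widehat{\bigcup_i A_i}$ in $\X$ and Lemma~\ref{le:MCP}(b)). This is exactly how the paper argues, defining $T_\alpha$ on $\X_\alpha$ and verifying \eqref{eq:Talpha} inductively, so once you reorient the induction from $\X$ to $\Y$ your plan goes through unchanged.
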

\begin{proof}
We shall prove that $\bar\X$ as defined in \eqref{eq:defbax} and the map $\X\ni x\mapsto \iota(x)=\,\downarrow\! x\in\bar\X\subset\Y$ has the required universal property. We shall prove so via transfinite induction, proving at the same time property \eqref{eq:chieetbar}.  For every $\alpha$ ordinal let us put for brevity $\X_\alpha:=\iota(\X)^{\uparrow^\alpha}\subset\Y$, where $\iota(\X)^{\uparrow^\alpha}$ is defined as in \eqref{eq:arrowalpha}. Then by \eqref{eq:barordinali} we know that $\X_\alpha=\bar\X$ for every $\alpha$ sufficiently big.

Let us  now fix a directed complete partial order $(\Z,\leq_\Z)$ and a map $T:\X\to\Z$ with the Monotone Convergence Property. We shall prove existence, uniqueness and property \eqref{eq:chieetbar} of $\bar T$  by induction. Start defining $T_0:\X_0\to\Z$ as $T_0(\iota(x)):=T(x)$. Trivially, this is the only definition for $T_0$ for which we have $T_0\circ\iota=T$. Also, since $T$ has the $\MCP$, and in particular is monotone, we see that the property 
\begin{equation}
\label{eq:Talpha}
A\in \X_\alpha\qquad\Rightarrow\qquad T_\alpha(A)= \sup_{a\in A}T(a)
\end{equation}
holds for $\alpha=0$. We shall prove by induction that for every $\alpha$ there is a unique map $T_\alpha:\X_\alpha\to\Z$ with the $\MCP$ that extends those previously defined and that such extension satisfies \eqref{eq:Talpha}. 

Suppose this has been proved for $T_\alpha:\X_\alpha\to\Z$ and let us prove the claims for $T_{\alpha+1}:\X_{\alpha+1}\to\Z$. Let $(A_i)_{i\in I}\subset\X_\alpha$ be a directed family and $A\in\X_{\alpha+1}$ its supremum, i.e.\ $A=\widehat{\cup_iA_i}\subset\X$.  Since $T_\alpha$ is monotone, the family $(T(A_i))_{i\in I}$ is directed in $\Z$ and thus admits supremum: clearly, any extension $T_{\alpha+1}$ of $T_\alpha$ with the $\MCP$ must satisfy $T_{\alpha+1}(A)=\sup_iT(A_i)$. It is a priori not clear, though, whether $\sup_iT(A_i)$ only depends on $A$, and not on the particular choice of the directed family $(A_i)$ having $A$ as supremum. To prove that this is the case we notice that
\begin{equation}
\label{eq:quasiapiu1}
\sup_iT_\alpha(A_i)\stackrel{\eqref{eq:Talpha}}=\sup_{a\in\cup_iA_i}T(a)\stackrel*=\sup_{a\in A}T(a),
\end{equation}
where in $*$ we use the $\MCP$ of $T:\X\to\Z$. This suffices to prove that the definition $T_{\alpha+1}(A):=\sup_iT(A_i)$ is well posed, showing at once existence and uniqueness of the desired extension as well as property \eqref{eq:Talpha} for $\alpha+1$.

Suppose now that $\alpha>0$ is a limit ordinal and that $T_\beta$ has been defined and satisfies \eqref{eq:Talpha} for each $\beta<\alpha$. In this case, being $\X_\alpha$ equal to $\cup_{\beta<\alpha}\X_\beta$ there is a unique possible extension $T_\alpha$ of the $T_\beta$'s to $\X_\alpha$ and clearly $T_\alpha$ satisfies \eqref{eq:Talpha}. Let us check that $T_\alpha:\X_\alpha\to\Z$ has the $\MCP$. Thus let $(A_i)_{i\in I}\subset \X_\alpha$ be a directed family having supremum $A$ in $\X_\alpha$. As before, we have $A=\widehat{\cup_iA_i}$. We have
\[
T_\alpha(A)\stackrel{\eqref{eq:Talpha}}=\sup_{a\in A} T(a)\stackrel{*}=\sup_{a\in\cup_iA_i}T(a)\stackrel{\eqref{eq:Talpha}}=\sup_iT_\alpha(A_i),
\]
where in $*$ we use the $\MCP$ of $T:\X\to\Z$. We thus established that the only extension of the $T_\beta$'s has the $\MCP$ and satisfies \eqref{eq:Talpha}.

This concludes the proof of the universal property of $\bar\X$ and $\iota$. Also, with this choice of $\bar\X$ and $\iota$ the claimed property \eqref{eq:chieetbar} is a restatement of \eqref{eq:Talpha}. The fact that  \eqref{eq:chieetbar} holds regardless of the particular representation of $\bar\X,\iota$ trivially follows by the universal property (equivalently: from  uniqueness of the completion).

Claim $(iii)$ is now a direct consequence of the definition \eqref{eq:defbax} and the property \eqref{eq:bartip} applied with $A:=\iota(\X)$. For this explicit version of the completion, the claim $(i)$ is also clear for the same reasons just recalled, and it is obvious that the claim is independent on the particular representation of $\bar\X,\iota$. 
\end{proof}
\end{subsection}

\begin{subsection}{Some comments}\label{se:commcompl}

Let us start noticing that recognising whether a map has the $\mcp$ is a task that  in some cases gets simplified by the structure of the source space:
\begin{definition}\label{def:sep}
We say that a partial order $(\X,\leq)$ is \emph{first countable} provided for any $D\subset\X$ directed admitting supremum there is a non-decreasing sequence $(x_n)\subset\, \downarrow\! D$ having the same supremum.

We say that is is \emph{second countable} if there is an at most countable set $N$ such that for any $D\subset\X$ directed admitting supremum there is a non-decreasing sequence $(x_n)\subset N\cap \!\downarrow\! D$ having the same supremum. 
\end{definition}
It is easy to see that for first countability we might equivalently ask $(x_n)\subset D$.

The relevance of first countability is due to the following statement. Second countability, instead, will not play a role in this manuscript.
\begin{proposition}\label{prop:sepmcp}
Let $\X,\Y$ be partial orders, with $\X$ first countable. Then $T:\X\to\Y$ has the $\mcp$ if and only if  
\begin{equation}
\label{eq:mcpsep}
T(\sup_nx_n)=\sup_nT(x_n)\qquad\forall (x_n)\subset \X\text{ non decreasing and admitting supremum.}
\end{equation}
\end{proposition}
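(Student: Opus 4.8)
The plan is to treat the two implications separately, the forward one being essentially free. For the \textbf{``only if''} direction: a non-decreasing sequence $(x_n)\subset\X$, regarded as a subset, is directed — any finite subfamily has a largest element — and is non-empty, so whenever it admits a supremum, \eqref{eq:mcpsep} is just the instance of \eqref{eq:defmcp} with $D:=\{x_n:n\in\N\}$.

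For the \textbf{``if''} direction I would first extract monotonicity of $T$ from \eqref{eq:mcpsep}: given $x\leq y$, apply the hypothesis to the eventually constant non-decreasing sequence $x,y,y,y,\dots$, whose supremum is $y$; this gives that $\sup\{T(x),T(y)\}$ exists and equals $T(y)$, i.e.\ $T(x)\leq T(y)$.

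Now let $D\subset\X$ be directed with supremum $s$. I would invoke first countability — in the strengthened form recorded just before the statement — to pick a non-decreasing sequence $(x_n)\subset D$ with $\sup_n x_n=s$. Hypothesis \eqref{eq:mcpsep} then yields that $\sup_n T(x_n)$ exists and equals $T(s)$. It remains to identify $T(s)$ with $\sup T(D)$: by monotonicity $T(s)$ is an upper bound for $T(D)$, and if $u\in\Y$ is any upper bound for $T(D)$ then in particular $T(x_n)\leq u$ for every $n$, whence $T(s)=\sup_n T(x_n)\leq u$. This is precisely \eqref{eq:defmcp}, so $T$ has the $\mcp$.

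The only point needing a little care — and the one I expect to be the main, if modest, obstacle — is the passage from ``$(x_n)\subset\,\downarrow\! D$'' in the definition of first countability to ``$(x_n)\subset D$''. If this is not simply borrowed from the remark preceding the proposition, I would argue directly: for each $n$ pick $d_n\in D$ with $x_n\leq d_n$, set $e_1:=d_1$ and recursively choose $e_n\in D$ dominating both $e_{n-1}$ and $d_n$ (possible by directedness of $D$); then $(e_n)\subset D$ is non-decreasing, lies below $s$, and dominates $(x_n)$, so $\sup_n e_n=s$, and one runs the argument above with $(e_n)$ in place of $(x_n)$.
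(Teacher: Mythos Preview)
Your proof is correct and follows essentially the same route as the paper's: extract monotonicity from constant/finite-range sequences, invoke first countability to get a non-decreasing sequence with the right supremum, then verify $T(s)$ is the least upper bound of $T(D)$. The only cosmetic difference is that the paper works directly with $(x_n)\subset\,\downarrow\! D$ (using monotonicity to conclude $T(x_n)\leq u$ from $x_n\leq d_n\in D$ and $T(d_n)\leq u$), whereas you first upgrade to $(x_n)\subset D$ --- but this is exactly the equivalence the paper records just before the proposition, and your recursive argument for it is the standard one.
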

\begin{proof}
The `only if' is clear. For the `if' we notice that picking $(x_n)$ with finite range, from assumption \eqref{eq:mcpsep} we   get that $T$ is monotone. Then we pick $D\subset\X$  directed with sup $s$ and $(x_n)\subset\,\downarrow\! D$ non-decreasing with $\sup_nx_n=s$. Then clearly $T(s)$ is an upper bound for $T(D)$ and if $y\in\Y$ is another upper bound then it is also an upper bound for $T(x_n)$ for every $n\in\N$ and we conclude noticing that the assumption \eqref{eq:mcpsep} gives $T(s)=\sup_nT(x_n)\leq y$.
\end{proof}
\begin{remark}{\rm Notably, many examples of hyperbolic Banach spaces  we are going to consider are first countable  in the above sense, see for instance Lemma \ref{le:esssup}, Proposition \ref{prop:2completi} and Proposition \ref{prop:lsc}.  Easy examples show that the directed completion of a first countable  is not necessarily first countable, see e.g.\ Example \ref{eq:complsubset} in Section \ref{se:exbelli} or the closely related Remark \ref{re:noalcomplet}. 
}\fr\end{remark}
We now investigate further the concept of directed completion. A subset $A$ of a partial order $\X$  is called {\bf dense} provided $\bar A=\X$. It is not hard to find examples of dense subsets such that $\X$ is not the directed completion of $A$ (being intended that $A$ has the induced partial order and the map $\iota$ is the inclusion). Indeed, if one calls  $B,j$ the directed completion of $A$ and $\bar\iota:B\to\X$ the map associated to the inclusion $\iota:A\to \X$, then  $\bar\iota$ is, in general, neither injective nor surjective (see e.g.\ Example \ref{eq:densecompl} in Section \ref{se:techex}).

Because of this, it is important to have a criterion to recognize directed completion: this is the scope of the following proposition.
\begin{proposition}[Recognising the completion]\label{prop:criterio} Let $(\X,\leq)$ and $(\Y,\leq)$ be partial orders, with $\Y$ directed complete and $\j:\X\to\Y$  a map.

Then the following are equivalent:
\begin{itemize}
\item[i)] $(\Y,\j)$ is the directed completion of $\X$, 
\item[ii)]  $\j(\X)$ is dense in $\Y$ and for any $B\subset\X$  such that $\j(B)\subset\Y$ has tip we have $\j^{-1}(\widehat{\j(B)})=\widehat B$, i.e.\ for any $a\in\X$ we have
\begin{equation}
\label{eq:criterio}
\j(a)\in\widehat{\j(B)}\subset\Y\qquad\Leftrightarrow\qquad a\in\widehat B\subset\X.
\end{equation}
\end{itemize}
\end{proposition}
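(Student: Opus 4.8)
The strategy is to prove the two implications of Proposition \ref{prop:criterio} by exploiting the explicit model of the directed completion constructed in Theorem \ref{thm:excompl}, namely $\bar\X=\overline{\iota(\X)}\subset\Y$ with $\iota(x)=\,\downarrow\! x$, together with the characterisations \eqref{eq:bartip}, \eqref{eq:tiparrow} and \eqref{eq:chieetbar}. Since the completion is unique up to unique isomorphism, it is enough to compare an arbitrary candidate $(\Y,\j)$ with this model.

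\textbf{From (i) to (ii).} Assume $(\Y,\j)$ is a directed completion. First I would note that density of $\j(\X)$ is automatic: applying the universal property to $T=\iota\colon\X\to\bar\X$ (the model) yields $\bar T\colon\Y\to\bar\X$ with the $\mcp$, and $\bar T$ is the isomorphism; since $\iota(\X)$ is dense in $\bar\X$ by construction and isomorphisms (being monotone bijections with monotone inverse) preserve the closure operator $\,\bar{\cdot}\,$, $\j(\X)$ is dense in $\Y$. For \eqref{eq:criterio}: the direction ``$a\in\widehat B\Rightarrow\j(a)\in\widehat{\j(B)}$'' is just the fact that $\j$ has the $\mcp$, hence by Lemma \ref{le:MCP}(d) $\j(\widehat B)\subset\widehat{\j(B)}$. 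For the converse, suppose $\j(a)\in\widehat{\j(B)}$ with $\j(B)$ having tip. By \eqref{eq:tiparrow}, $\widehat{\j(B)}=\,\downarrow\!\tip\j(B)$, so $\j(a)\le\tip\j(B)$. The key move is to transport this to the model via the isomorphism $\Phi\colon\Y\to\bar\X$ with $\Phi\circ\j=\iota$: then $\iota(a)\le\Phi(\tip\j(B))=\tip\Phi(\j(B))=\tip\iota(B)$, where I use that isomorphisms commute with tips. Now in the model, $\tip\iota(B)$ is, by \eqref{eq:bartip}/Theorem \ref{thm:excompl}(iii), literally the set $\widehat B\subset\X$, and $\iota(a)=\,\downarrow\! a$; the relation $\,\downarrow\! a\subset\widehat B$ in $\Y$ (i.e.\ inclusion of subsets of $\X$) gives exactly $a\in\widehat B$. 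So (ii) holds.

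\textbf{From (ii) to (i).} This is the substantive direction. Given (ii), I would build the comparison map to the model $\bar\X$ directly. Density of $\j(\X)$ means every $y\in\Y$ is the tip of some $\j(B)$, $B\subset\X$; set $\Phi(y):=\widehat B\in\bar\X$. The first task is well-definedness: if $\j(B)$ and $\j(B')$ have the same tip $y$, then $\j(B)\le\tip\j(B')$ so (using $\widehat{\j(B')}=\,\downarrow\! y$) $\j(B)\subset\widehat{\j(B')}$, hence by \eqref{eq:criterio} applied to $B'$ we get $B\subset\widehat{B'}$, so $\widehat B\subset\widehat{B'}$, and symmetrically; thus $\Phi$ is well-defined. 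Next, $\Phi\circ\j=\iota$: taking $B=\{a\}$ gives $\Phi(\j(a))=\widehat{\{a\}}=\,\downarrow\! a=\iota(a)$. Then one checks $\Phi$ is monotone with monotone inverse — again via \eqref{eq:criterio}, translating $y\le y'$ in $\Y$ into the inclusion $\widehat B\subset\widehat{B'}$ and back — and surjective, since every element of $\bar\X$ is $\widehat B$ for some $B$ with $\iota(B)$ having a tip, and density plus \eqref{eq:criterio} lets one realise that tip inside $\Y$. Having an order-isomorphism $\Phi\colon\Y\to\bar\X$ with $\Phi\circ\j=\iota$, the universal property of $\bar\X$ transfers to $(\Y,\j)$ in the standard way, giving (i).

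\textbf{Main obstacle.} The delicate point is the back-and-forth bookkeeping between ``tip inside $\Y$'' and the set-theoretic closure $\widehat{\,\cdot\,}$ inside $\X$: one must be careful that the hypothesis ``$\j(B)$ has tip'' in (ii) is exactly what is available (not ``$\j(B)$ is directed'', which is weaker information), and that \eqref{eq:criterio} is invoked only for such $B$. Showing that $\Phi$ is well-defined and that its inverse is monotone — both of which require \eqref{eq:criterio} in the nontrivial direction — is where all the work sits; the rest is formal manipulation of the universal property and of the closure operators established before the statement.
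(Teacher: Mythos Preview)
Your $(i)\Rightarrow(ii)$ via transport along the isomorphism $\Y\cong\bar\X$ is correct; the paper acknowledges this route is available but instead argues purely from the universal property (the two-point dcpo $\{0,1\}$ as test target for the hard direction of \eqref{eq:criterio}, and a gluing of two copies of $\Y$ along the closures of the images for density). Your route is arguably more transparent once Theorem~\ref{thm:excompl}(iii) is in hand; the paper's has the virtue of never touching the concrete model.

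For $(ii)\Rightarrow(i)$ there is a genuine gap. When you set $\Phi(y):=\widehat B$, your well-definedness check only shows that $\widehat B$ is independent of the choice of $B$ with $\tip\j(B)=y$; it does not show that $\widehat B$ lies in $\bar\X$ at all. By Theorem~\ref{thm:excompl}(iii), membership of $\widehat B$ in $\bar\X$ is equivalent to $\iota(B)$ having a tip in the model, and nothing you have established links ``$\j(B)$ has a tip in $\Y$'' to ``$\iota(B)$ has a tip in $\bar\X$'' --- that link \emph{is} the isomorphism you are trying to build. The same circularity afflicts your surjectivity sketch (``density plus \eqref{eq:criterio} lets one realise that tip inside $\Y$'' presupposes a map $\bar\X\to\Y$). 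You also omit the verification that $\j$ has the $\mcp$, which is a necessary first step; the paper extracts it from the $\Leftarrow$ direction of \eqref{eq:criterio}. The natural repair of your strategy is to reverse the direction of the comparison map: once $\j$ has the $\mcp$, the universal property of $\bar\X$ furnishes $\bar\j\colon\bar\X\to\Y$ with $\bar\j\circ\iota=\j$; then the $\Rightarrow$ direction of \eqref{eq:criterio} shows $\bar\j$ is order-reflecting (hence injective), and since its image contains the dense set $\j(\X)$ and is directed-sup-closed (a directed family in the image pulls back, by order-reflection, to a directed family in the complete $\bar\X$), it is all of $\Y$. The paper itself takes yet a third path for this implication, directly verifying the universal property of $(\Y,\j)$ by transfinite induction along $\j(\X)^{\uparrow^\alpha}$, using \eqref{eq:criterio} to show the extension formula $T_\alpha(y)=\sup_{\j(x)\leq y}T(x)$ is well-posed at each stage.
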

\begin{proof}\ \\
\noindent{$(i)\Rightarrow(ii)$} Even though a direct proof based on the explicit construction provided by Theorem \ref{thm:excompl} is possible, we shall argue via universal property. We start proving  \eqref{eq:criterio}. The implication $\Leftarrow$ comes from the $\MCP$ of $\j$. The converse implication might be proved directly from  the explicit construction of $\bar\X$ in Theorem \ref{thm:excompl}; we give also an argument based on the universal property.  Suppose $\Rightarrow$ in  \eqref{eq:criterio} does not hold, i.e.\ that there are $B\subset\X$ so that $\j(B)$ has tip ${\sf t}$ and $a\in\X$ with $\j(a)\in\widehat{\j(B)}=\,\downarrow\!\!{\sf t}$ and $a\notin \widehat B$. Let $\Z:=\{0,1\}$ with the order $0\leq 1$ (this is obviously complete) and $T:\X\to\Z$ be defined as $0$ on $\widehat B$ and 1 on $\X\setminus\widehat B$. Since $\widehat B$ is a lower set and directed-sup-closed, the map $T$ has the $\MCP$ and thus by the universal property there is $\bar T:\Y\to \Z$ with the $\MCP$ so that $\bar T\circ\j=T$. Since $\j(a)\leq{\sf t}$ we must have $1=T(a)=\bar T(\j (a))\leq \bar T({\sf t})$. On the other hand, the $\MCP$ of $\bar T$ yields $\bar T({\sf t})=\sup_{y\in\j (B)}\bar T(y)=\sup_{x\in B}\bar T(\j(x))=\sup_{x\in B}T(x)=0$, giving the desired contradiction.

We now prove that $\j(\X)$ is dense. Let $\Y_1,\Y_2$ be two copies of $(\Y,\leq)$ and $\j_1,\j_2$ the corresponding maps from $\X$ to $\Y_1,\Y_2$, respectively. Let  $\hat\Y$ be obtained by  identifying the directed sup-closures of the images of $\X$ via $\j_1,\j_2$ in  the disjoint union  $\Y_1\sqcup\Y_2$, i.e.\ we put the equivalence relation $\sim$ on $\Y_1\sqcup\Y_2$ declaring two different points equivalent iff they are of the form $\tip \j_1(A)$ and $\tip \j_2(A)$ for some $A\subset\X$. We put a partial order on $\hat\Y$ by declaring $y_1\leq y_2$ if there are representatives $y_1',y_2'$ of their equivalence classes that both belong  to either  $\Y_1$ or $\Y_2$ and so that $y_1'\leq y_2'$ in such partial order. It is readily verified that this is well posed, and is a directed complete partial order. Now consider the two maps $I_1,I_2:\Y\to\hat\Y$ obtaining by composing the natural identification of $\Y$ with $\Y_1,\Y_2$ respectively with the projection of $\Y_1\sqcup\Y_2$ in $\hat\Y$. It is clear that $I_1,I_2$ both have the $\MCP$ and satisfy $I_1\circ\j=I_2\circ\j=:\hat\j$. By the (uniqueness part of the) universal property of $(\Y,\j)$ applied with $\Z:=\hat\Y$ and $T:=\hat \j$ it follows that $I_1=I_2$, proving that for every $y_1\in\Y_1$ there is $y_2\in\Y_2$ so that $y_1\sim y_2$, and viceversa. This is (equivalent to) the density claim.\\
\noindent{$(ii)\Rightarrow(i)$}  Start picking $a\leq b$ in $\X$ and choosing $B:=\{b\}$ in \eqref{eq:criterio}: the implication $\Leftarrow$ tells that $\j(a)\leq\j(b)$ proving that $\j$ is monotone. Now let $B\subset\X$ be directed and admitting supremum $s$. The monotonicity of $\j$ shows that $\j(s)$ is an upper bound of $\j(B)$, thus also of $\widehat{\j(B)}$. Also,  the implication $\Leftarrow$ tells that $\j(s)\in \widehat{\j(B)}$ proving that $\j(s)$ is the maximum of $\widehat{\j(B)}$. This proves also that $\j(s)$ is the supremum of $\j(B)$ (as it is an upper bound and any other upper bound is also an upper bound for $\widehat{\j(B)}$ and thus $\geq\j(s)$); in other words we proved that if $B$ is directed and admitting supremum, then $\j(B)$ has supremum and $\j(\sup B)=\sup\j(B)$, i.e.\ we established the $\MCP$ of $\j$.
 
To conclude, we need to prove that $(\Y,\j)$ has the universal property. Thus let $(\Z,\leq_\Z)$ be a directed complete partial order and $T:\X\to\Z$ be with the $\MCP$. Define $\X_0:=\j(\X)\subset\Y$ and then $\X_\alpha:=\X_0^{\uparrow^\alpha}\subset\Y$. By \eqref{eq:barordinali} and the assumed density of $\j(\X)$ we see that for $\alpha$ sufficiently big we have $\X_\alpha=\Y$. We define $T_0:\X_0\to\Z$ as $T_0:=T\circ\j^{-1}$. Since \eqref{eq:criterio} tells that $a\leq b$ if and only if $\j(a)\leq \j(b)$,  we see that $\j$ is injective so that $T_0$ is well defined and has   the $\MCP$. We claim that for every ordinal $\alpha$ there is a unique $T_\alpha:\X_\alpha\to\Z$ with the $\MCP$ extending $T_0$ and that such map is given by the fomula
\begin{equation}
\label{eq:formulatalpha}
T_\alpha(y)=\sup_{x\in\X:\j(x)\leq y}T(x)\qquad\forall y\in \X_\alpha,
\end{equation}
where it is part of the claim the fact that the supremum on the right hand side exists. We prove this by induction. For $\alpha=0$ the claims are all true. Suppose they hold for some $\alpha$ and let us prove them for $\alpha+1$. Let $D\subset\X_\alpha$ directed and $y:=\sup D\in\X_{\alpha+1}$. In order for the $\MCP$ of $T_{\alpha+1}$ to hold we must impose $T_{\alpha+1}(y):=\sup T_\alpha (D)$ (such supremum exists because $T_\alpha(D)\subset\Z$ is directed and $\Z$ is complete), so we must check that this is a good definition, i.e.\ that it depends only on $y$ and not on the particular choice of $D$. To prove this let $B:=\{x\in\X:\j(x)\leq_\Y d\text{ for some }d\in D\}=\j^{-1}(\downarrow\!D)$ and notice that applying \eqref{eq:formulatalpha} to $d\in D$ we deduce that $s:=\sup T(B)\in\Z$ exists and is equal to $\sup T_\alpha(D)$. On the other hand, we have $s=\sup T(\widehat B)$ (the inclusion $T( B)\subset T(\widehat B)$ tells that any upper bound of $T(\widehat B)$ is also an upper bound of $T( B)$ while the $\MCP$ of $T$ grants that any upper bound of $T( B)$ is an upper bound of $T(\widehat B)$  - recall item $(b)$ of Lemma \ref{le:MCP}). By construction $y$ is the tip of $\j(B)$ (recall also \eqref{eq:bartip}), hence  
\begin{equation}
\label{eq:duevolte}
\sup T_\alpha (D)=\sup T(B)=\sup T(\widehat B)\stackrel{\eqref{eq:criterio}}=\sup T(\j^{-1}(\widehat{\j(B)}))=\sup T(\j^{-1}(\downarrow\!y))=\sup_{x\in\X:\j(x)\leq y}T(x),
\end{equation}
proving at once that $\sup T_\alpha (D)$ only depends on $y$, that formula \eqref{eq:formulatalpha} for $\alpha+1$ holds and that $T_{\alpha+1}$ has the $\MCP$.

For the case of limit ordinals, assume all the stated properties hold for every $\beta<\alpha$. Then the definition of $T_\alpha$ on $\X_\alpha=\cup_{\beta<\alpha}\X_\beta$ is forced and the identity \eqref{eq:formulatalpha} holds trivially. The only thing left to prove is the $\MCP$ for $T_\alpha$. Thus let $D\subset \X_\alpha$ be directed with supremum $y\in\X_\alpha$. Let, as before $B:=\{x\in\X:\j(x)\leq d\text{ for some }d\in D\}$  and notice that the equalities in \eqref{eq:duevolte} are still justified and that the last term on the right is equal, by \eqref{eq:formulatalpha} to $T_\alpha(y)$. This  proves that $\sup T_\alpha (D)=T_\alpha(y)$, as desired, and   concludes the proof.
\end{proof}

We have already mentioned that Example \ref{ex:alphafreccia}  in Section \ref{se:techex} shows that for any ordinal $\alpha$ there are a partial order $\X$ and a subset $A\subset \X$ such that $A^{\uparrow^\alpha}=\bar A$ and $A^{\uparrow^\beta}\subsetneq\bar A$  for all $\beta<\alpha$. We can ask a similar question for the directed completion of a partial order: if $\bar\X,\iota$ is the completion of $\X$, is it the case that $\iota(\X)^\uparrow=\bar\X$ or might it be necessary to take more steps? Example \ref{eq:doppiafrecciacompl} in Section \ref{se:techex}  shows that in general we might indeed have $\iota(\X)^\uparrow\subsetneq\bar\X$.

It is therefore interesting to know when the construction simplifies and one step suffices. Inspecting the above mentioned example  we see that there are directed subsets $D$ of $\X$ such that $D^\uparrow$ is not directed. If this does not happen, the construction simplifies. Before turning to the statement notice that if $D$ is a directed subset then so is $\downarrow\! D$. Thus if in a given partial order $D^\uparrow$ is directed whenever $D$ is, by \eqref{eq:hatordinali} we see that we also have
\begin{equation}
\label{eq:hatdirected}
D\subset\X\text{ directed}\qquad\Rightarrow\qquad\widehat D\subset\X\text{ directed.}
\end{equation}
With this said, we have:
\begin{proposition}\label{prop:complunostep}
Let $(\X,\leq)$ be a partial order satisfying \eqref{eq:hatdirected} and let $(\bar\X ,\iota)$ be its completion. 

Then $\iota(\X)^\uparrow$ is the whole $\bar\X$. Also, an explicit example of $(\bar\X,\iota)$ is given by 
\begin{equation}
\label{eq:completionsemplice}
\bar\X:=\big\{A\subset\X\ :\ A=\widehat A\text{ and $A$ is directed}\big\}
\end{equation}
ordered by inclusion, with $\iota:\X\to\bar\X$ given by $\iota(x):=\,\downarrow\!x$, as in \eqref{eq:iotay}. 

Moreover, we can replace the criterion $(ii)$ in Propositon \ref{prop:criterio} with:
\begin{itemize}
\item[ii')] $\j(\X)$ is dense in $\Y$ and \eqref{eq:criterio} holds for any $B\subset\X$ directed.
\end{itemize}
\end{proposition}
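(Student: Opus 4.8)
\textbf{Proof plan for Proposition \ref{prop:complunostep}.}

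The strategy is to show that under assumption \eqref{eq:hatdirected} the explicit set $\bar\X$ in \eqref{eq:completionsemplice} is directed-sup-closed inside $\Y$ (hence equals $\overline{\iota(\X)}$ from Theorem \ref{thm:excompl}), and that moreover every element of it is already a tip of $\iota$ of a directed set, so that only one $\uparrow$-step is needed. First I would check that the right-hand side of \eqref{eq:completionsemplice} contains $\iota(\X)$: for $x\in\X$ the set $\downarrow\!x$ is a lower set, is trivially directed (it has maximum $x$), and is directed-sup-closed since any directed $D\subset\,\downarrow\!x$ has $x$ as an upper bound, so $\sup D\leq x$, i.e.\ $\sup D\in\,\downarrow\!x$. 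Hence $\downarrow\!x=\widehat{\downarrow\!x}$ and $\downarrow\!x$ is directed.

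Next I would prove that the set in \eqref{eq:completionsemplice} is closed under suprema of directed families in $\Y$, where recall the supremum in $\Y$ of a family $(A_i)$ is $\widehat{\cup_iA_i}$. So let $(A_i)_{i\in I}$ be a family, directed w.r.t.\ inclusion, with each $A_i$ a directed lower set with $A_i=\widehat{A_i}$, and set $A:=\widehat{\cup_iA_i}$. We already know $A=\widehat A$ is a lower set; the point is directedness of $A$. By \eqref{eq:hatdirected} it suffices to show $\cup_iA_i$ is directed: given a finite $F\subset\cup_iA_i$, each element lies in some $A_i$, and by directedness of the family $(A_i)$ there is a single index $j$ with $F\subset A_j$; since $A_j$ is directed there is $z\in A_j\subset\cup_iA_i$ dominating $F$. (The empty $F$ is handled since each $A_i\neq\emptyset$.) Thus $\cup_iA_i$ is directed, so $\widehat{\cup_iA_i}=A$ is directed by \eqref{eq:hatdirected}, and $A$ belongs to the set in \eqref{eq:completionsemplice}. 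This shows the set in \eqref{eq:completionsemplice} is directed-sup-closed in $\Y$; since it contains $\iota(\X)$, it contains $\overline{\iota(\X)}=\bar\X$. For the reverse inclusion, take $A$ directed with $A=\widehat A$; then $\{\iota(a):a\in A\}$ is directed in $\Y$ (monotonicity of $\iota$, shown as in Proposition \ref{prop:criterio}, together with directedness of $A$), its supremum $\widehat{\cup_{a\in A}\downarrow\!a}$ is readily seen to equal $\widehat A=A$ — indeed $\cup_{a\in A}\downarrow\!a\subset A$ since $A$ is a lower set, and conversely $A\subset\cup_{a\in A}\downarrow\!a$ trivially — so $A=\sup\{\iota(a):a\in A\}\in\iota(\X)^\uparrow\subset\overline{\iota(\X)}=\bar\X$. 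This simultaneously establishes \eqref{eq:completionsemplice} as an explicit model and shows $\iota(\X)^\uparrow=\bar\X$.

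Finally, for the replacement of criterion $(ii)$ by $(ii')$: in the proof of $(ii)\Rightarrow(i)$ in Proposition \ref{prop:criterio}, the only place where \eqref{eq:criterio} is invoked for a general $B$ (rather than a directed one) is inside the transfinite induction, at the step producing \eqref{eq:duevolte}, where $B:=\j^{-1}(\downarrow\!D)$ for a directed $D\subset\X_\alpha$. But $\downarrow\!D$ is directed in $\Y$, and under \eqref{eq:hatdirected} — which holds for $\X$, hence we should argue it transfers, or rather work directly — the preimage $B=\j^{-1}(\downarrow\!D)$ is directed in $\X$: given finite $F\subset B$, the images $\j(F)$ lie in $\downarrow\!D$, pick $d\in D$ above all of them, and since $D\subset\X_\alpha=\iota(\X)^\uparrow$ with each point of $\X_\alpha$ a tip of $\iota$ of a directed subset of $\X$, one shows $d$ itself is a directed sup of $\j$-images, whence one finds an element of $B$ above $F$. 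So in $(ii')$ the hypothesis \eqref{eq:criterio} for directed $B$ suffices; density is unchanged. The main obstacle I anticipate is precisely this last verification that $\j^{-1}(\downarrow\!D)$ is directed using only \eqref{eq:criterio}-for-directed-$B$ and the one-step structure of $\bar\X$; the bookkeeping of "tip of a directed set" through the induction must be done carefully, but conceptually it is exactly the content of \eqref{eq:bartip} combined with the already-proved $\iota(\X)^\uparrow=\bar\X$, so no new idea is required — just attention to which sets are known to be directed at each stage.
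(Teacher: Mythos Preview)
Your argument for the first two claims (that the set in \eqref{eq:completionsemplice} is exactly $\overline{\iota(\X)}$ and that $\iota(\X)^\uparrow=\bar\X$) is correct and matches the paper's: both show that the union of a directed family of directed sets is directed, then invoke \eqref{eq:hatdirected}.

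The last paragraph, however, has a genuine gap. You write ``$D\subset\X_\alpha=\iota(\X)^\uparrow$'' and justify this by ``the already-proved $\iota(\X)^\uparrow=\bar\X$''. But the $\X_\alpha$ appearing in the proof of Proposition~\ref{prop:criterio} is $\j(\X)^{\uparrow^\alpha}$ computed \emph{inside $\Y$}, while the identity you proved in the first part concerns $\iota(\X)^\uparrow$ inside the abstract completion $\bar\X$. These are different ambient orders, and you have not yet established that $(\Y,\j)$ \emph{is} a completion --- that is precisely what you are trying to prove. So the assertion that every $d\in\X_\alpha$ is a directed sup of $\j$-images is unproved at this point. (It can be proved: one shows directly, using $(ii')$ and \eqref{eq:hatdirected}, that $\j(\X)^\uparrow$ is directed-sup-closed in $\Y$; but this is a separate argument you have not given, and it is not ``just bookkeeping''.)

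The paper sidesteps this entirely. Rather than redo the transfinite induction in $\Y$, it uses the universal property of the already-constructed completion $\bar\X$ to produce $\bar\j:\bar\X\to\Y$ with the $\mcp$. It then shows $\bar\j$ is an order-embedding: given $D_1,D_2\in\bar\X$ (directed, $\widehat{D_i}=D_i$) with $\bar\j(D_1)\leq\bar\j(D_2)$, for each $a\in D_1$ one has $\j(a)\leq\sup\j(D_2)\in\widehat{\j(D_2)}$, and now \eqref{eq:criterio} \emph{for the directed set} $D_2$ gives $a\in\widehat{D_2}=D_2$. Finally $\bar\j(\bar\X)$ is directed-sup-closed in $\Y$ (because $\bar\X$ is complete and $\bar\j$ has the $\mcp$), hence equals $\Y$ by density. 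This route makes transparent why only directed $B$ are needed: every element of $\bar\X$ is already represented by a directed subset of $\X$, by the first part of the proposition.
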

\begin{proof} Let $\Y,\iota$ and $\X_\alpha$ be as in the proof of Theorem \ref{thm:excompl}. Then $\X_1$ is, by the very definition, equal to $\{\widehat D\subset\X:\text{$D$ is directed}\}$, so that by the assumption \eqref{eq:hatdirected} we see that $\X_1$ is the right hand side of \eqref{eq:completionsemplice}. Thus if we prove that $\X_2\subset\X_1$ we are done. The construction tells that $\X_2=\{\widehat{\cup_iA_i}:A_i\in\X_1\text{ form a directed family}\}$, thus let $(A_i)\subset\X_1$ be a directed family, put $A:=\cup_iA_i\subset\X$ and pick $a,b\in A$. Given that the $A_i$'s are a directed family, for some index $k$ we have $a,b\in A_k$ and given that $A_k$ is directed there is $c\in A_k\subset A$ with $a,b\leq c$. This shows that $A$ is a directed subset of $\X$, hence by \eqref{eq:hatdirected} $\widehat A=\widehat{\cup_iA_i}$ is also directed, showing that  $\X_2\subset\X_1$, as desired.

We pass to the last claim. Since $(ii')$ is weaker than $(ii)$ of Proposition  \ref{prop:criterio}, we clearly have $(i)\Rightarrow(ii')$. For the converse implication  the same arguments given in the proof of $(ii)\Rightarrow(i)$ in  Proposition  \ref{prop:criterio}  prove that $\j$ has the $\MCP$. Now let $(\bar\X,\iota)$ be the completion of $\X$. Since $\j:\X\to \Y$ has the $\MCP$ and $\Y$ is complete, there is (a unique) $\bar\j:\bar\X\to\Y$ with the $\MCP$ such that $\bar j\circ \iota=\j$. We claim that
\[
\text{$\bar \j:\bar\X\to\Y$ is a bijection and that for any $b_1,b_2\in\bar \X$ we have $b_1\leq b_2\qquad\Leftrightarrow\qquad \bar\j(b_1)\leq  \bar\j(b_2)$.}
\]
The implication `$b_1\leq b_2\ \Rightarrow\ \bar\j(b_1)\leq\bar\j(b_2)$' is obvious. For the opposite one it is convenient to take $\bar\X$ as in \eqref{eq:completionsemplice}. Thus we have $D_1,D_2\subset\X$ directed with $\widehat D_i=D_i$ and so that $\bar\j(D_1)\leq \bar\j(D_2)$ and our goal is to prove that $D_1\subset D_2$. Since $\bar\j$ has the $\MCP$, we have $\bar\j(D_i)=\sup \j(D_i)$ hence  for any $a\in D_1$ we have $j(a)\leq \sup \j(D_2)$ and since $\j(D_2)$ is directed we have  $\sup \j(D_2)\in \widehat{\j(D_2)}$. We thus have that $a\in D_1$ implies $\j(a)\in  \widehat{\j(D_2)}$, hence by \eqref{eq:criterio} we deduce $a\in D_2$, thus proving that $D_1\subset D_2$, as desired.

We thus proved that $\bar\j$ is an isomorphism with the $\MCP$ from $\bar\X$ to its image in $\Y$. Since $\bar\j(\bar\X)\supset \j(\X)$ we also know that $\bar\j(\X)$ is dense in $\Y$, so to conclude it suffices to prove that $\bar\j(\bar\X)^\uparrow=\bar\j(\bar\X)$. Thus let $(y_i)\subset \j(\bar\X)$ be directed and put $\bar x_i:=\j^{-1}(y_i)$. Then $(\bar x_i)$ is directed in $\bar\X$, hence admits supremum $\bar x$. Since  $\bar\j$ has the $\MCP$, we know that $\bar\j(\bar x)$ is the supremum of $(y_i)$, proving that such supremum belongs to $\bar\j(\bar\X)$, as desired.
\end{proof}
The supremum of a subset of a partial order is sometimes called {\bf join} of the subset, and the infimum {\bf meet}. The join and meet of two elements $a,b$ are typically denoted by $a\vee b$ and $a\wedge b$ respectively. One says that the partial order $(\X,\leq)$ has finite joins (resp.\ meets) if for any $a,b\in\X$ the join (resp.\ meet) of $a,b$ exists. Notice that this is the same as requiring that any {non-empty} finite subset of $\X$ has a join (resp.\ meet).

If $(\X,\leq)$ is so that any arbitrary subset has both join and meet, i.e.\ supremum and infimum, then $(\X,\leq)$ is called {\bf complete lattice}. The following is well known and easy to prove:
\begin{proposition}\label{prop:joinlattice}
Let $(\X,\leq)$ be a complete partial order admitting finite joins and having minimum. Then it is  a complete lattice.
\end{proposition}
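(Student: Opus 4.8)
The plan is to show that an arbitrary subset $A \subset \X$ has both a supremum and an infimum. The supremum is the easy half: since $\X$ has a minimum, the set $U$ of upper bounds of $A$ is nonempty (if $A = \emptyset$, then $U = \X$), and in fact $U$ is a directed subset of $\X$ — indeed, given finitely many upper bounds $u_1, \dots, u_n$ of $A$, their join $u_1 \vee \cdots \vee u_n$ exists by the hypothesis of finite joins (using that $\X$ has a minimum to cover the empty finite subset as well), and this join is again an upper bound of $A$, lying in $U$. By directed completeness, $s := \sup U$ exists. I claim $s = \inf U$: it is clearly an upper bound of $U$ by construction, and to see it is the \emph{least} one, note that every $a \in A$ is a lower bound for $U$, hence $a \leq u$ for all $u \in U$ means each $a$ is below every element of $U$; thus each $a$ is a lower bound of $U$, so... more carefully: each $a \in A$ satisfies $a \leq u$ for all $u \in U$, which exhibits $a$ as a lower bound of the directed set $U$, and since $s = \sup U$ is the \emph{least} upper bound we instead directly argue that $s \in U$, i.e. $s$ is an upper bound of $A$. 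This holds because for each fixed $a \in A$, every $u \in U$ satisfies $a \leq u$, so $a$ is a lower bound of $U$; but we want $a \leq s = \sup U$, which is immediate since $s$ is an upper bound of $U$ only gives $u \leq s$ — so instead I should phrase it as: $a$ is a lower bound of $U$ and $s = \sup U \geq$ every element of $U$, and since $U \neq \emptyset$...

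Let me restructure the supremum argument cleanly. The cleanest route: take $U = $ set of upper bounds of $A$, which is directed and nonempty as above. Set $s := \sup U$. First, $s$ is an upper bound of $A$: for $a \in A$, $a$ is a lower bound of $U$, and since $U$ is directed with supremum $s$, one checks $a \leq s$ by noting $a \leq u_0$ for any chosen $u_0 \in U$ and... actually the genuinely clean statement is that $\sup A = \inf U$ whenever $\inf U$ exists, and $\inf U$ exists here because $U$ has a lower bound (any $a \in A$, or the minimum if $A = \emptyset$) — but this is circular since we are trying to prove infima exist. So the correct elementary argument is: for each $a \in A$, the element $a$ is below every $u \in U$; since $s = \sup U$ and directed suprema are monotone limits, but more simply: let $a \in A$; then $a$ together with $U$... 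I will instead just verify $s$ is an upper bound of $A$ directly from antisymmetry by showing $a \vee s$ equals $s$ — but $a \vee s$ exists by finite joins and is itself an upper bound of $A$ (being $\geq a$ and $\geq s \geq$ every element of $U \ni$ upper bounds), hence $a \vee s \in U$, hence $a \vee s \leq s$ (as $s = \sup U$), forcing $a \vee s = s$ and so $a \leq s$. Good — that is the key trick. Then $s$ is an upper bound of $A$ lying in $U$, and it is below every element of $U$... no: $s = \sup U$ means $s \geq u$ for all $u \in U$. Combined with $s \in U$, this makes $s = \max U$, the \emph{largest} upper bound, which is the \emph{wrong} extreme.

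I see the genuine subtlety now, so let me fix the argument definitively. Reconsider: we want $\sup A$, the \emph{least} upper bound. Given $A$, if $A$ has an upper bound at all, let $U$ be the (nonempty, since $\X$ has a minimum guarantees $A=\emptyset$ is fine, and for nonempty $A$ we do genuinely need an upper bound — but wait, $\X$ itself need not have a maximum a priori, though a complete partial order with finite joins... does it? $\X$ directed-complete: $\X$ is a directed subset of itself iff $\X$ has finite joins, which it does, so $\sup \X$ exists, giving a maximum). So $U \neq \emptyset$ always, and $U$ is directed as shown. Now the right object: $\sup A$ should be $\inf U$. To build $\inf U$, consider $L := \{x \in \X : x \leq u \ \forall u \in U\}$, the lower bounds of $U$; this contains $A$ and the minimum, so $L \neq \emptyset$. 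The set $L$ is directed (by finite joins: a join of lower bounds of $U$ is again $\leq$ each $u$). Let $m := \sup L$. Then each $u \in U$ is an upper bound of $L$, so $m \leq u$ for all $u \in U$, meaning $m \in L$, so $m = \max L = \inf U$. Finally $\inf U$ is the least upper bound of $A$: it is $\geq$ each $a \in A$ since $a \in L$ means... no, $a \in L$ gives $a \leq m$? We have $A \subseteq L$ and $m = \sup L$, so $a \leq m$ for all $a \in A$ — yes, $m$ is an upper bound of $A$. And if $u'$ is any upper bound of $A$ then $u' \in U$, so $m = \inf U \leq u'$. Hence $m = \sup A$. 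The dual argument (or rather, the same one run once more starting from the upper bounds of an arbitrary set) gives infima, but in fact infima come for free now: for arbitrary $B \subseteq \X$, the set of lower bounds of $B$ is nonempty (contains the minimum) and directed (finite joins), so its supremum exists and is readily checked to be $\inf B$.

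\textbf{Main obstacle.} The only real subtlety — and where the two structural hypotheses (finite joins; a minimum, equivalently directed-completeness supplying a top and making degenerate cases work) are jointly used — is in verifying that the relevant sets of bounds are \emph{directed} so that directed-completeness applies, and in correctly identifying which supremum computes the infimum. Once the "join of finitely many bounds is again a bound" observation is in hand, everything else is a short chase through the defining properties of sup, using antisymmetry to conclude. I would present it as: (1) note $\X$ has a maximum and minimum; (2) for arbitrary $B$, its lower bounds form a nonempty directed set whose sup is $\inf B$; (3) dually or directly, its upper bounds form a nonempty directed set whose sup is $\sup B$; conclude $\X$ is a complete lattice.
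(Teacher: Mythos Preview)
Your detailed argument (once you arrive at it) is correct, but your final summary misstates point~(3): the supremum of the set $U$ of upper bounds of $B$ is the \emph{maximum} of $\X$, not $\sup B$ --- you caught exactly this error earlier and fixed it by passing to $L$, the lower bounds of $U$. The summary should read: ``$\sup B = \inf(\text{upper bounds of }B)$, which exists by~(2) since the upper bounds are nonempty by~(1).''

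Your route differs from the paper's in the order of operations. The paper first proves every nonempty $A$ has a supremum by observing that the set $\{a_1 \vee \cdots \vee a_n : n\in\N,\ a_i \in A\}$ of finite joins of elements of $A$ is directed and has the same upper bounds as $A$, so its (directed) supremum is $\sup A$; the empty case is handled by the minimum. Infima then follow immediately: the set of lower bounds has a supremum (just proved), and that supremum is the infimum. You instead establish infima first --- lower bounds are nonempty (the minimum) and directed (finite joins of lower bounds are lower bounds), so their directed supremum is the infimum --- and then recover $\sup A$ as $\inf U$, using that $U$ is nonempty because $\X$ has a maximum. Both arguments are short and valid; the paper's is marginally more direct because it builds $\sup A$ from $A$ itself rather than via a double passage through bounds, while yours has the minor virtue of making the role of the minimum hypothesis (ensuring lower-bound sets are nonempty) more visible.
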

\begin{proof} We start proving that any subset $A$ of $\X$ has a supremum. If $A$ is empty, the supremum is the minimum of $\X$, otherwise we consider the set $B:=\{a_1\vee\ldots\vee a_n:n\in\N,\ (a_i)\subset A\}$ and notice that it is  directed. Hence the supremum of $B$  exists in $\X$ and since an element is an upper bound for $B$ if and only if it is an upper bound for $A$, such supremum is also the supremum of $A$, which therefore also exists.
 
 For the infimum, given $A\subset\X$ we let $C\subset\X$ be the set of lower bounds: the supremum of $C$ is the infimum of $A$.
\end{proof}
In situations where the existence of joins is relevant, it is natural to consider maps between partial orders preserving existing joins, and not just directed joins (or directed suprema, as we are calling them), and due to Proposition \ref{prop:joinlattice}, the corresponding natural notion of complete order is that of complete lattice. In this setting one  is therefore led to consider the category of partial orders with morphisms the maps that preserve existing joins, denoted ${\bf JPos}$ in the literature, and the full subcategory of complete lattices. Also ixn this framework, surely the notion of `completion as reflector' makes sense and it can be shown that any partial order $(\X,\leq)$, not necessarily admitting joins, has a completion in this sense. An explicit construction is given by the lattice $(\bar\X^{\sf \j},\leq)$ defined as
\begin{equation}
\label{eq:compljoin}
\bar\X^{\sf \j}:=\big\{A\subset\X\ :\ \text{$A$ is a lower set closed by arbitrary existing suprema}\big\}
\end{equation}
ordered by inclusion, together with the map $\iota:\X\to\bar\X^{\sf \j}$ defined as in \eqref{eq:iotay}. For this, see for instance \cite[Chapter I.4, Example C10]{AbstrConcrCat}.

It is natural to compare the above $\bar\X^{\sf \j}$ with the directed completion $\bar\X$. In doing so one quickly notices that $\bar\X^{\sf \j}$ always has a minimum element, while $\bar\X$ has minimum if and only if $\X$ does. It is therefore better to consider the {\bf enhanced directed completion}, say, $\bar\X_o$ of $\X$ defined as $\bar\X$ if $\X$ has minimum and as $\{\perp\}\cup \bar\X$ otherwise, where $\perp$ is an extra term declared to be smaller than all the others. Notice that this is precisely the directed completion we would obtain if only the empty set was declared to be directed, as in this case the extra term $\perp$ in the completion would be its supremum (in the literature, a set that is either directed or empty is called semidirected). 

Notice also that  if $\X$ has no minimum, then the least element of $\bar\X^{\sf \j}$ is the empty set, while if $\X$ has a minimum $m$, then the least element of  $\bar\X^{\sf \j}$ is $\{m\}$ (the empty set does not belong to  $\bar\X^{\sf \j}$ in this case, because its supremum exists, being it $m$, so that $\emptyset $ is not closed by existing suprema).

With all this said, we have the following result:
\begin{proposition}[Comparison of completions - 1]\label{prop:compljoin}
Let $(\X,\leq)$ be a partial order admitting finite joins. Then 
\[
D\text{ directed}\qquad\Rightarrow\qquad (\downarrow\! D)^\uparrow\text{ directed.}
\]
In particular, \eqref{eq:hatdirected} holds. Moreover, the enhanced directed completion of $\X$ is a complete lattice and coincides with the completion in ${\bf JPos}$  recalled above.
\end{proposition}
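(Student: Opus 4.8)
The plan is to prove the three assertions in turn, the first being the technical heart that feeds the other two. First I would establish the implication $D$ directed $\Rightarrow (\downarrow\!D)^\uparrow$ directed. Let $s_1,s_2\in(\downarrow\!D)^\uparrow$, so $s_i=\sup E_i$ for directed sets $E_i\subset\,\downarrow\!D$. I want an upper bound of $\{s_1,s_2\}$ inside $(\downarrow\!D)^\uparrow$. The natural candidate is $s_1\vee s_2$: this join exists \emph{a priori} only if we know the ambient order has finite joins, which is exactly our hypothesis, so $s_1\vee s_2\in\X$. It remains to exhibit $s_1\vee s_2$ as the supremum of a directed subset of $\downarrow\!D$. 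Consider $E:=\{e_1\vee e_2: e_1\in E_1,\ e_2\in E_2\}$. Using that $D$ is directed and $E_i\subset\,\downarrow\!D$, for any $e_1\in E_1$ and $e_2\in E_2$ there is $d\in D$ with $e_1,e_2\leq d$, hence $e_1\vee e_2\leq d$, so $E\subset\,\downarrow\!D$. Directedness of $E$ follows from directedness of $E_1$ and $E_2$ together with the monotonicity of $\vee$. Finally $\sup E=s_1\vee s_2$: any upper bound of $E$ dominates every $e_1\in E_1$ (take $e_2$ arbitrary and use $e_1\leq e_1\vee e_2$) hence dominates $s_1$, similarly $s_2$, hence $s_1\vee s_2$; and conversely $s_1\vee s_2$ is an upper bound of $E$ since $e_1\vee e_2\leq s_1\vee s_2$. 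This gives $s_1\vee s_2\in(\downarrow\!D)^\uparrow$, proving directedness. Then \eqref{eq:hatdirected} is immediate: by \eqref{eq:hatdirected}'s derivation in the text (iterating $A\mapsto\,\downarrow\!(A^\uparrow)$ lands in $\widehat A$, and we just showed this operator preserves directedness starting from a directed $D$, since $\downarrow\!D$ is directed when $D$ is), a transfinite induction shows $\widehat D$ is directed whenever $D$ is.

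Next, for the enhanced directed completion $\bar\X_o$: I would note that $\X$ (or $\X$ with a bottom adjoined) still admits finite joins, so by the first part \eqref{eq:hatdirected} holds, whence Proposition~\ref{prop:complunostep} applies and gives $\bar\X_o=\{A\subset\X_o: A=\widehat A,\ A\text{ directed}\}$ ordered by inclusion, together with the bottom element $\perp$ (i.e.\ $A=\emptyset$ now allowed). This $\bar\X_o$ is directed complete by construction and has a minimum, so by Proposition~\ref{prop:joinlattice} it suffices to check it has finite joins: given $A,B\in\bar\X_o$, I claim $\widehat{A\cup B}$ is the join. It is directed-sup-closed and a lower set by construction of $\widehat{(\cdot)}$; it is directed because $A\cup B$ is directed (for $a\in A$, $b\in B$, pick $d\in\,\downarrow\!D$ type argument — more simply, $A$ and $B$ are each directed with tips $\tip A,\tip B$ in $\bar\X_o$, and $A\cup B$ is directed toward $\tip A\vee\tip B$ which exists by the first part), then apply \eqref{eq:hatdirected}; and it is clearly the least upper bound of $\{A,B\}$ in $\bar\X_o$. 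Hence $\bar\X_o$ is a complete lattice.

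Finally, to identify $\bar\X_o$ with the ${\bf JPos}$-completion $\bar\X^{\sf\j}$ of \eqref{eq:compljoin}, I would argue via universal properties rather than comparing the set-theoretic descriptions directly. Both are reflections of $\X$ into the subcategory of complete lattices, \emph{but} with respect to potentially different morphism classes ($\mcp$-maps versus join-preserving maps). The key observation is that on a complete lattice a map preserves all existing joins iff it preserves directed joins \emph{and} finite joins, and via Proposition~\ref{prop:joinlattice} the relevant maps out of $\X$ that extend to the completion can be reconciled: a $\mcp$-map $\X\to L$ into a complete lattice $L$ whose domain has finite joins, which moreover preserves those finite joins, is exactly a $\bf JPos$-morphism; and the enhanced directed completion is precisely set up (by allowing $\emptyset$ as semidirected, adding $\perp$) so that its $\mcp$-morphisms into complete lattices coincide with join-preserving maps. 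I would then invoke uniqueness of reflections up to unique isomorphism to conclude $\bar\X_o\cong\bar\X^{\sf\j}$.

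\textbf{Main obstacle.} I expect the genuine subtlety is not the directedness computation in part one (which is routine once one spots the set $E=\{e_1\vee e_2\}$), but the last part: carefully matching the two universal properties, since the morphism classes differ and one must verify that on the completion side "preserves directed suprema" upgrades to "preserves all suprema" — this uses Proposition~\ref{prop:joinlattice} and the finite-joins hypothesis on $\X$ in an essential way, and the bookkeeping of the bottom element $\perp$ versus $\emptyset\in\bar\X^{\sf\j}$ must be handled exactly as the text's preceding discussion indicates.
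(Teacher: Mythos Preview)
Your treatment of the first claim is correct and matches the paper's argument essentially line for line: the set $E=\{e_1\vee e_2:e_1\in E_1,\ e_2\in E_2\}$ and the verification that $\sup E=s_1\vee s_2$ are exactly what the paper does.

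There is a gap in your treatment of the complete-lattice claim. You propose $\widehat{A\cup B}$ as the join of $A,B\in\bar\X_o$, but to land in $\bar\X_o$ this set must be directed, and $A\cup B$ need not be: given $a\in A$ and $b\in B$ there is no reason for a common upper bound to lie in $A\cup B$. Your parenthetical appeal to $\tip A\vee\tip B$ is circular, since that join in $\bar\X_o$ is precisely what you are constructing. The paper uses instead $A\vee B:=\{a\vee b:a\in A,\ b\in B\}$, which \emph{is} directed (by the same computation as in the first part, since $A$ and $B$ are directed), so $\widehat{A\vee B}$ is directed by \eqref{eq:hatdirected} and is then easily seen to be the required join.

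Your approach to the final identification $\bar\X_o=\bar\X^{\sf\j}$ via universal properties does not go through as written. The two reflections are with respect to genuinely different morphism classes, so uniqueness of reflectors does not apply directly; and your assertion that $\mcp$-morphisms out of $\bar\X_o$ into complete lattices coincide with join-preserving maps is false in general (preserving directed suprema does not force preservation of finite joins). The paper instead compares the explicit set-theoretic descriptions furnished by Proposition~\ref{prop:complunostep} and \eqref{eq:compljoin}: both completions are realized as collections of subsets of $\X$ ordered by inclusion, and one checks that for a lower, directed-sup-closed subset $A\subset\X$, being directed is equivalent to being closed under all existing suprema. Indeed, if $A$ is directed and $a,b\in A$, pick $c\in A$ with $a,b\leq c$; then $a\vee b\leq c$, so $a\vee b\in A$ since $A$ is a lower set. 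Thus $A$ is closed under finite joins, and combining with directed-sup-closure gives closure under arbitrary joins as in Proposition~\ref{prop:joinlattice}. This two-line argument replaces the universal-property bookkeeping entirely.
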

\begin{proof}
Let $D\subset\X$ be directed and $x,y\in  (\downarrow\! D)^\uparrow$. Then there are directed families $(x_i)_{i\in I},(y_j)_{j\in J}\subset \,\downarrow\! D$ with $x=\sup_ix_i$ and $y=\sup_jy_j$. Since, trivially, $\downarrow\!D$ is directed, for every $i,j$ there is $a_{ij}\in \downarrow\!D$ with $x_i,y_j\leq a_{ij}$ and thus with $x_i\vee y_j\leq a_{ij}$. It follows that 
$x_i\vee y_j\in\,\downarrow\!D$ as well. It is easy to see that the set $(x_i\vee y_j)_{i,j}\subset \downarrow\! D$ is directed. Indeed, for every $i_1,i_2\in I$ and $j_1,j_2\in J$ there are $i\in I$ 
and $j\in J$ so that $x_{i_1}$ and  $x_{i_2}$ are both $\leq x_i$ and also  $y_{j_1}$ and $y_{j_2}$ that are both $\leq y_j$. Hence $x_{i_1}\vee y_{j_1}$ and $x_{i_2}\vee y_{j_2}$ are both $\leq x_{i}\vee y_{j}$, proving the claim. We now claim that $x\vee y$ is the supremum of $(x_i\vee y_j)_{i,j}$ (in particular: such supremum exists even if $\X$ is not complete). The claim is obvious because $x\vee y$ is clearly an upper bound, and any upper bound $u$ must also be an upper bound for the $x_i$'s, and thus $\geq x$, and an upper bound for the $y_j$'s, and thus $\geq y$. Hence $x\vee y\leq u$, proving the claim.

We thus established that for $x,y\in  (\downarrow\! D)^\uparrow$ we have  $x\vee y\in  (\downarrow\! D)^\uparrow$ as well, proving that $ (\downarrow\! D)^\uparrow$ is directed, as claimed. Now the fact that  \eqref{eq:hatdirected} holds is a direct consequence of \eqref{eq:hatordinali}.

To prove that the enhanced directed completion $\bar\X_o$ is a complete lattice it suffices,  by Proposition \ref{prop:joinlattice},  to prove that it has joins. In turn, by (the representation given in) Proposition \ref{prop:complunostep} to this aim it suffices to prove that given two directed subset $A,B\subset\X$ with $\widehat A=A$ and $\widehat B=B$ there exists their join in $\bar\X_o$. We claim that $C:=\widehat{A\vee B}$ is the required join, where $A\vee B:={\{a\vee b:a\in A,\ b\in B\}}$. Indeed, it is obvious that any set as in the right hand side of \eqref{eq:completionsemplice} that contains $A$ and $B$ must also contain $C$. On the other hand the discussion made above shows that $A\vee B$ is directed, hence so is $C$ by  \eqref{eq:hatdirected}. It follows that $C$ is as in  the right hand side of \eqref{eq:completionsemplice} and thus that it is the join of $A$ and $B$, proving that such join  exists.

For the last claim it now suffices to prove that the set $\bar\X^{\sf \j}$ defined in \eqref{eq:compljoin} coincides with the one $\bar\X$ defined in \eqref{eq:completionsemplice}, possibly with the addition of a lowest element in case it didn't have it already. The inclusion $\bar\X^{\sf \j}\subset\bar\X$ is obvious. For the opposite one let $A\in\bar \X$ and $a,b\in A$. Then since $A$ is directed there is $c\in A$ bigger than both $a$ and $b$, and thus of their join. Since $A$ is a lower set, we have $a\vee b\in A$. It follows by induction that $A$ is closed by finite joins and since it is closed by directed suprema, the same arguments in Proposition \ref{prop:joinlattice}  show that $A$ is closed by arbitrary joins, proving that $A\in \bar\X^{\sf \j}$, as desired.
\end{proof}
Another situation in which recognizing the directed completion is particularly simple is that of spaces having the truncation property, defined as:
\begin{definition}[Truncation property]\label{def:trunc} Let $(\Y,\leq)$ be a partial order and $a\in\Y$. We say that $a$ truncates $\Y$ provided
\begin{equation}
\label{eq:truncpropr}
\text{$B\subset\Y$ with tip and  }a\in\widehat B\qquad\text{(i.e.\ $a\leq\tip B$)}\qquad\Rightarrow\qquad a\in\widehat{(\downarrow\!B)\cap(\downarrow\!a)}.
\end{equation}
If \eqref{eq:truncpropr} only holds whenever $B$ is directed, we shall say that $a$ directly truncates $\Y$.

If \eqref{eq:truncpropr} holds for every $a$ in a given subset $\X$ of $\Y$ we shall say that $\X$ truncates $\Y$ (or directly truncates $\Y$ in case only directed sets are involved).

 If $\X=\Y$ we say that $\Y$ has the truncation (resp.\ directed truncation) property.
\end{definition}
The name comes from the following rough intuition. Say for simplicity that the space has meets, i.e.\ minima, of any couple of points and that for some non-decreasing sequence $(b_n)$ admitting sup $b$ and some  element $a$ we have $a\leq b$. Then one might expect that the `truncated' sequence $n\mapsto a\wedge b_n$, which is also clearly non-decreasing, has $a$ as supremum. This, though, is not necessarily the case, as shown by simple examples (see e.g.\ Example \ref{ex:notrunc} in Section \ref{se:techex}) and it is therefore interesting to consider cases where this sort of examples do not occur. The truncation property goes in this direction, as also shown in the next statement:
\begin{proposition}\label{prop:checktrunc}
Let $(\Y,\leq)$ be a complete partial order having meets and $\X\subset\Y$ be a lower subset (i.e.\ $\X=\,\downarrow\!\X$). Then the following are equivalent:
\begin{itemize}
\item[i)] For every  $a\in\X$ and directed set $D\subset\Y$  with $a\leq \sup D$ we have $a=\sup\{a\wedge d:d\in D\}$,
\item[ii)] For every $b\in\X$ the map $x\mapsto x\wedge b$ from $\Y$ into itself has the $\MCP$.
\end{itemize} 
Moreover, if these hold then $\X$ truncates $\Y$.
\end{proposition}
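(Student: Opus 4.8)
The plan is to prove each of the two equivalences by feeding the relevant hypothesis a single, well-chosen element, and then to obtain the truncation statement as a corollary of the Scott-continuity characterization in Lemma~\ref{le:MCP}. Throughout I will use that binary meets are associative and commutative wherever they exist, and that, $\Y$ being a complete partial order with meets, each map $T_b:=(\,\cdot\,)\wedge b:\Y\to\Y$ is monotone, hence sends directed subsets to directed subsets, which then have suprema in $\Y$.

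For the implication $(i)\Rightarrow(ii)$ I would fix $b\in\X$, take $D\subset\Y$ directed, and set $s:=\sup D$. The element to use in $(i)$ is $a:=s\wedge b$: it belongs to $\X$ since $a\leq b\in\X$ and $\X$ is a lower set, and it satisfies $a\leq s=\sup D$. Hypothesis $(i)$ then gives $s\wedge b=\sup_{d\in D}\big((s\wedge b)\wedge d\big)$, and since $d\leq s$ for every $d\in D$ one checks $(s\wedge b)\wedge d=b\wedge d=T_b(d)$; hence $T_b(\sup D)=\sup T_b(D)$, i.e.\ $T_b$ has the $\mcp$. The converse $(ii)\Rightarrow(i)$ is immediate: given $a\in\X$ and $D$ directed with $a\leq\sup D$, apply $(ii)$ with $b:=a$, so that $T_a$ has the $\mcp$, and use $a\wedge\sup D=a$ (because $a\leq\sup D$) to read off precisely the identity in $(i)$.

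For the final assertion I fix $a\in\X$ and a set $B\subset\Y$ with a tip such that $a\in\widehat B$, i.e.\ $a\leq\tip B$; recall $\widehat B=\,\downarrow\!\tip B$ by \eqref{eq:tiparrow}. The truncation map $T_a=(\,\cdot\,)\wedge a$ has the $\mcp$ by $(ii)$, so Lemma~\ref{le:MCP}(d) gives $T_a(\widehat B)\subset\widehat{T_a(B)}$. Since $a\in\widehat B$ and $T_a(a)=a$, this already places $a$ inside $\widehat{T_a(B)}=\widehat{\{a\wedge b:b\in B\}}$. It then remains to observe that $(\downarrow\!B)\cap(\downarrow\!a)=\,\downarrow\!\{a\wedge b:b\in B\}$ (a one-line check, since $y\leq a$ and $y\leq b$ hold exactly when $y\leq a\wedge b$) together with $\widehat{\,\downarrow\! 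E}=\widehat E$ for any $E$, whence $a\in\widehat{(\downarrow\!B)\cap(\downarrow\!a)}$, which is condition \eqref{eq:truncpropr} defining truncation.

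I do not expect a real obstacle here. The only points that need care are the bookkeeping with ternary meets — collapsing $(s\wedge b)\wedge d$ to $b\wedge d$ using $d\leq s$ — and tracking exactly where the assumption that $\X$ is a lower set enters, namely precisely to guarantee that $s\wedge b$ (and the other elements to which $(i)$ or Lemma~\ref{le:MCP} are applied) remain in $\X$ so that the hypotheses are usable. Everything else is a direct application of the equivalences already established in Lemma~\ref{le:MCP}.
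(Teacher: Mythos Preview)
Your proof is correct and follows essentially the same approach as the paper. The only cosmetic difference is in the truncation argument: the paper invokes the tip-characterization (item (e) of Lemma~\ref{le:MCP}) to place $a$ inside $\overline{\{a\wedge b:b\in B\}}$ and then uses the inclusion $\{a\wedge b:b\in B\}\subset(\downarrow\!B)\cap(\downarrow\!a)$, whereas you use the $\widehat{\,\cdot\,}$-characterization (item (d)) together with the exact identity $(\downarrow\!B)\cap(\downarrow\!a)=\,\downarrow\!\{a\wedge b:b\in B\}$; both routes are equivalent applications of the same lemma.
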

\begin{proof} The implication $(ii)\Rightarrow(i)$ is obvious. For the converse one, fix $b\in\X$,  $D\subset\Y$  directed and let $a:=b\wedge(\sup D)\leq\sup D$. Then $a\in\X$ and the assumption $(i)$ tells that $b\wedge \sup(D)=a=\sup\{a\wedge d:d\in D\}=\sup\{b\wedge d:d\in D\}$ (the last equality follows from $b\wedge d=a\wedge d$ for any $d\in D$, that in turn is an easy consequence of the definition of $a$). By the arbitrariness of $D$, this is precisely the $\MCP$ of $x\mapsto x\wedge b$.

Let us now prove that if these hold then $\X$ truncates $\Y$. Let $B\subset\Y$ be with tip and $a\in \X$ with  $a\leq \tip B$. The fact that $x\mapsto x\wedge a$ has the $\MCP$ yields $a=(\tip B)\wedge a$ is equal to the tip of $\{b\wedge a:b\in B\}$, hence belongs to $\overline{\{b\wedge a:b\in B\}}$. Since clearly $\{b\wedge a:b\in B\}\subset (\downarrow\!B)\cap(\downarrow\!a)$, the conclusion $a\in\widehat{(\downarrow\!B)\cap(\downarrow\!a)}.$ follows.
\end{proof}
The  truncation property is related to directed completion via  the next result:
\begin{proposition}\label{prop:trunccompl}
Let $(\Y,\leq)$ be a complete partial order  and $\X\subset\Y$ a dense lower subset that truncates $\Y$.

Then $\Y$, together with the inclusion $\j$, is the directed completion of $\X$.

If $\X$ also has joins, we can weaken the assumption that $\X$ truncates $\Y$ into $\X$ directly truncates $\Y$ to still deduce  that $(\Y,\j)$ is the directed completion of $\X$.
\end{proposition}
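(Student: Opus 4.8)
The plan is to verify the criterion from Proposition \ref{prop:criterio}, part $(ii)$, namely that $\j(\X) = \X$ is dense in $\Y$ (which is assumed) and that for any $B\subset\X$ such that $\j(B) = B$ has a tip in $\Y$, equation \eqref{eq:criterio} holds, i.e.\ for $a\in\X$ we have $a\in\widehat{B}^{\Y}$ (closure taken in $\Y$) iff $a\in\widehat{B}^{\X}$ (closure taken in $\X$). The implication `$a\in\widehat B^{\X}\Rightarrow a\in\widehat B^{\Y}$' is the easy direction: since $\X$ is a lower subset of $\Y$ and is directed-sup-closed inside $\Y$ in the sense that directed subsets of $\X$ having a supremum in $\Y$ — wait, this needs care. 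Actually the cleanest route: note that because $\X$ is a lower set in $\Y$, for $a\in\X$ and $B\subset\X$, the set $\widehat B^{\X}$ and $\widehat B^{\Y}\cap\X$ should be compared directly; the inclusion $\widehat B^{\X}\subset\widehat B^{\Y}\cap\X$ is routine since $\widehat B^{\Y}\cap\X$ is a lower set in $\X$ and directed-sup-closed in $\X$ (a directed subset of $\X$ with sup in $\X$ also has that element as its sup in $\Y$, hence stays in $\widehat B^{\Y}$). So the first direction of \eqref{eq:criterio} is automatic.

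The substantive direction is: if $a\in\X$ and $a\leq\tip B$ (equivalently $a\in\widehat B^{\Y}$, using \eqref{eq:tiparrow}), then $a\in\widehat B^{\X}$. Here is where the truncation hypothesis enters. Since $\X$ truncates $\Y$ and $a\in\X$, $B\subset\X\subset\Y$ has a tip with $a\leq\tip B$, the definition \eqref{eq:truncpropr} gives $a\in\widehat{(\downarrow\!B)\cap(\downarrow\!a)}^{\Y}$. Now $(\downarrow\!B)\cap(\downarrow\!a)$ is a subset of $\downarrow\!a\subset\X$ (using that $\X$ is a lower set), and in fact it is contained in $\widehat B^{\X}$: indeed $(\downarrow\!B)\cap(\downarrow\!a)\subset\,\downarrow\!B$, and $\downarrow\!B\subset\widehat B^{\X}$ since $\widehat B^{\X}$ is a lower set containing $B$. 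So I would argue: $\widehat{(\downarrow\!B)\cap(\downarrow\!a)}^{\Y}\cap\X\subset\widehat{\widehat B^{\X}}^{\Y}\cap\X = \widehat B^{\X}$, where the last equality again uses that $\widehat B^{\X}$ is directed-sup-closed and lower inside $\Y$ (restricted to $\X$), i.e.\ $\widehat B^{\X} = \widehat{B}^{\Y}\cap\X$ is false in general but $\widehat{\widehat B^{\X}}^{\Y}\cap\X = \widehat B^{\X}$ should hold because any directed subset of $\widehat B^{\X}$ with supremum in $\Y$ lying in $\X$ actually has that supremum in $\X$ (as $\X$ is a lower set and... here one must be slightly careful that sups in $\X$ agree with sups in $\Y$ when the sup happens to lie in $\X$, which is true since $\X$ carries the induced order). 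Since $a$ lies in $\X$ and in $\widehat{(\downarrow\!B)\cap(\downarrow\!a)}^{\Y}$, we conclude $a\in\widehat B^{\X}$, establishing \eqref{eq:criterio}. Then Proposition \ref{prop:criterio} yields that $(\Y,\j)$ is the directed completion of $\X$.

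For the second assertion, when $\X$ has joins: the goal is to weaken `truncates' to `directly truncates'. The key additional input is Proposition \ref{prop:complunostep}, item $(ii')$, which says that under hypothesis \eqref{eq:hatdirected} on $\X$ — and by Proposition \ref{prop:compljoin} (or directly), a partial order with finite joins satisfies \eqref{eq:hatdirected} — it suffices to check the density of $\j(\X)$ together with \eqref{eq:criterio} only for $B\subset\X$ \emph{directed}. So I would first invoke that $\X$, having finite joins, satisfies \eqref{eq:hatdirected}, so the criterion $(ii')$ applies. Then I rerun the argument of the previous paragraph but only for directed $B$: for such $B$, if $a\in\X$ with $a\leq\tip B$, the \emph{directed} truncation property \eqref{eq:truncpropr} (which now is assumed to hold, since $B$ is directed) gives $a\in\widehat{(\downarrow\!B)\cap(\downarrow\!a)}^{\Y}$, and the same chain of inclusions concludes $a\in\widehat B^{\X}$. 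Thus \eqref{eq:criterio} holds for all directed $B$, and $(ii')$ of Proposition \ref{prop:complunostep} finishes the proof.

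The main obstacle I anticipate is bookkeeping the two different closure operators $\widehat{\cdot}$ — one computed inside $\X$ with its induced order, one inside $\Y$ — and cleanly justifying the identities of the form $\widehat{\widehat B^{\X}}^{\Y}\cap\X = \widehat B^{\X}$. This rests on the elementary but easily-bungled fact that for a lower subset $\X\subset\Y$, a directed subset $D\subset\X$ has a supremum in $\X$ if and only if its supremum in $\Y$ (when it exists) lies in $\X$, and then the two suprema coincide; once that is pinned down, everything reduces to the formal manipulations of Kuratowski closure operators and the criterion of Proposition \ref{prop:criterio}. A secondary point to get right is confirming that density of $\X$ in $\Y$ together with $\X$ being a lower set is genuinely what is needed (nothing more), so that no hidden hypothesis about $\Y$'s meets — which figured in Proposition \ref{prop:checktrunc} but not here — is smuggled in.
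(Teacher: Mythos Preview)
Your overall strategy matches the paper's: apply Proposition~\ref{prop:criterio}, get the easy direction of \eqref{eq:criterio} from the fact that $\j$ has the $\mcp$ (because $\X$ is a lower set in the complete $\Y$), and use the truncation hypothesis for the hard direction. The second part, invoking Propositions~\ref{prop:compljoin} and~\ref{prop:complunostep} to reduce to directed $B$, is exactly what the paper does.

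There is, however, a genuine gap in your justification of the key technical step. You aim to show $\widehat{\widehat B^{\X}}^{\Y}\cap\X=\widehat B^{\X}$, and your stated reason is that a directed subset of $\widehat B^{\X}$ whose $\Y$-supremum lies in $\X$ has that same element as its $\X$-supremum. That fact is correct, but it is not enough: in building $\widehat{\,\cdot\,}^{\Y}$ one iterates $A\mapsto\,\downarrow\!(A^{\uparrow})$, and a directed subset of $\widehat B^{\X}$ may have $\Y$-supremum $s\notin\X$; then $\downarrow\!s$ can contain points of $\X$ that were not in $\widehat B^{\X}$. Your argument does not rule this out.

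The paper avoids this entirely by never leaving a bounded set. Instead of passing through $\widehat{\widehat B^{\X}}^{\Y}$, it shows directly that $\widehat C^{\Y}=\widehat C^{\X}$ for $C:=(\downarrow\!B)\cap(\downarrow\!a)$. The point is that $C\subset\,\downarrow\!a$ with $a\in\X$, so by completeness of $\Y$ every directed subset of $\downarrow\!a$ has a $\Y$-supremum which is $\leq a$, hence lies in $\X$, hence (by your own sup-agreement fact) equals the $\X$-supremum. A transfinite induction then gives $\widehat C^{\Y}\subset\,\downarrow\!a\subset\X$ and $\widehat C^{\Y}=\widehat C^{\X}$; since $C\subset\widehat B^{\X}$ this yields $a\in\widehat C^{\X}\subset\widehat B^{\X}$ immediately. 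This is the point where completeness of $\Y$ genuinely enters, and it is what your sketch is missing. Once you replace your intermediate claim with this bounded-set argument, the rest of your proof goes through.
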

\begin{proof}

We apply Proposition \ref{prop:criterio}. $\X=\j(\X)$ is dense in $\Y$ by assumption. Since $\X$ is a lower set in $\Y$, the inclusion $\j:\X\to\Y$ has the $\MCP$, because if $D\subset\X$ is directed with supremum $a$ as subset of $\X$ and supremum $b$ as subset of $\Y$ (the latter exists as $\Y$ is complete) then obviously $b\leq a$, as in $\Y$ there is a larger pool of upper bounds of $D$, and thus $b\in\X$, which forces $a=b$.

It follows that the implication $\Leftarrow$ in \eqref{eq:criterio} holds. For $\Rightarrow$, let  $B\subset\X\subset\Y$ be having tip in $\Y$ and $a\in\X$ be in $\widehat B^\Y$ (meaning that the operation $B\mapsto \widehat B$ is performed in $\Y$). Since $\X$ truncates $\Y$   it follows that $a\in \widehat{(\downarrow\!B)\cap(\downarrow\!a)}^\Y$ (since $\X$ is a lower set and $B\subset\X$, the meaning of $\downarrow\!B$ is unambiguous --  same for $\downarrow\!a$). If we prove that $ \widehat{(\downarrow\!B)\cap(\downarrow\!a)}^\Y= \widehat{(\downarrow\!B)\cap(\downarrow\!a)}^\X$ the proof is complete, as we then have $a\in  \widehat{(\downarrow\!B)\cap(\downarrow\!a)}^\X\subset\widehat B^\X$, as desired.

Since $C:=(\downarrow\!B)\cap(\downarrow\!a)$ has $a$ as upper bound,  to conclude it suffices to prove that if $\X$ is a lower set of the complete partial order $\Y$ and $a\in\X$ is an upper bound of $C\subset\X$, then $\widehat C^\X=\widehat C^\Y$. This is a consequence of the completeness of $\Y$: here the truncation property has no role. To see this, recalling \eqref{eq:hatordinali} to conclude it suffices to prove that for such $C$ the operation  $C\mapsto\,\downarrow\! (C^\uparrow)$ performed in $\X$ and $\Y$ produces the same result. Since $\X$ is a lower set, this holds if and only if the operation  $C\mapsto  C^\uparrow$ performed in $\X$ and $\Y$ produces the same result and in turn this reduces to proving that any directed set $D\subset \downarrow\!a$ admits  supremum in $\X$ and such supremum  agrees with the supremum in $\Y$ (that exists as $\Y$ is complete). Let thus $d$ be the supremum of $D$ in $\Y$ and notice that since $a$ is an upper bound of $D$, we must have $d\leq a$ and thus $d\in\X$. If $u\in\X\subset\Y$ is another upper bound of $D$, then we must have $d\leq u$, or else $d$ is not the supremum of $D$ in $\Y$. Thus $d$ is also the supremum of $D$ in $\X$, as desired.

The last  statement now follows easily. The fact that the inclusion has the $\MCP$ can be proved in the same way, and this settles the implication $\Leftarrow$ in \eqref{eq:criterio}. Then according to Propositions \ref{prop:compljoin} and \ref{prop:complunostep} to conclude it suffices to prove $\Rightarrow$ for $B\subset\X\subset\Y$ directed. Thus, as before, let $a\in\X$ be in $\widehat B^\Y$ and notice that since $\X$ directly truncates $\Y$ we have $a\in \widehat{(\downarrow\!B)\cap(\downarrow\!a)}^\Y$. The conclusion now follows as above, as the truncation property is not anymore required.
\end{proof}
It might be worth to point out that in this last proposition we cannot conclude that $\Y$ has the truncation property (i.e.\ that it truncates itself), see Remark \ref{re:noalcomplet} for a counterexample.

We conclude the section comparing the directed completion with the completion in the sense of   Dedeking-MacNeille, that much like the completion in ${\bf JPos}$ produces a complete lattice out of any partial order. In general the directed and the  Dedeking-MacNeille completions do not coincide, see Example \ref{ex:compldiverse} in Section \ref{se:techex} and Proposition \ref{prop:compcompl} below.

Unlike the directed completion, the Dedeking-MacNeille does not arise as reflector (but rather as injective hull in the category of partial orders and monotone maps). Here we shall be satisfied with the following practical presentation of it:
\begin{definition}[Dedeking--MacNeille  completion]
Let $(\X,\leq)$ be a partial order. For any $A\subset\X$ we define $A^\us,A^\ls$ as  the set of upper bounds and  lower bounds of $A$, i.e.:
\begin{equation}
\label{eq:usls}
\begin{split}
A^\us&:=\{x\in\X\ :\ a\leq x\quad\forall a\in A\},\\
A^\ls&:=\{x\in\X\ :\ a\geq x\quad\forall a\in A\}.
\end{split}
\end{equation}
A \emph{cut} of $\X$ is a couple $(A,B)$ with $A^\us=B$ and $B^\ls=A$.

The Dedeking-MacNeille  completion $\bar\X^{DM}$ is the collection of all cuts, where $(A_1,B_1)\leq (A_2,B_2)$ whenever $A_1\subset A_2$. The original order $\X$ is embedded in $\bar\X^{DM}$ via the map $x\mapsto \iota_{DM}(x):=(\downarrow \!x,\,(\downarrow 
\!x)^\us)$ (the fact that this is a cut is easy to check).
\end{definition}
We collect some comments.  Notice that $A_1\subset A_2$ implies  $A_1^\us\supset A_2^\us$ and that $A\subset A^{\us\ls}$. Hence for any $A\subset\X$ we have $A^{\us\ls\us}=(A^\us)^{\ls\us}\supset A^\us$ and $A^{\us\ls\us}=(A^{\us\ls})^\us\subset A^\us$, whence $A^{\us\ls\us}=A^\us$. It follows that 
\begin{equation}
\label{eq:ululA}
\text{for any $A\subset \X$ we have $A^{\us\ls}=A^{\us\ls\us\ls}$}
\end{equation}
and then that the Dedekind-MacNeille completion can be equivalently described as the collection of subsets $A\subset\X$ with $A=A^{\us\ls}$, with the order given by inclusion: to each such $A$ we can associate the cut $(A,A^\us)$ and viceversa to  each cut $(A,B)$ we associate the set $A$.

It is then clear why such completion is a complete lattice: the supremum (resp.\ infimum) of the arbitrary family $(A_i)$ is given by $(\cup_iA_i)^{\us\ls}$ (resp.\ $(\cap_iA_i)^{\us\ls}$). This makes also transparent the fact that the embedding of $\X$ into  $\bar\X^{DM}$ is both join-dense and meet-dense, i.e.\ any element of  $\bar\X^{DM}$ is the supremum and the infimum of appropriate subsets of $\X$.

We want to compare $\bar \X^{DM}$ with the directed completion $\bar \X$ and since the former always has a minimum, as in the case of the completion $\bar\X^{\sf \j}$ in ${\bf JPos}$ it is better to compare  $\bar \X^{DM}$ with the enhanced directed completion $\bar\X_o$ as discussed before Proposition \ref{prop:compljoin}. Notice that the least element of  $\bar \X^{DM}$ is $\emptyset^{\us\ls}=\X^\ls$ and this is the empty set if $\X$ has no minimum, otherwise it is the singleton containing just the minimum (in analogy with $\bar\X^{\sf \j}$).

\medskip

With this said, we always have a natural map  ${\sf T}:\bar\X_o\to\bar\X^{DM}$ defined as
\[
{\sf T}(\bar x):=A^{\us\ls}\qquad\text{where } A:=\{x\in\X:\iota(x)\leq\bar x\}\subset\X.
\]
A natural map ${\sf S}:\bar\X^{DM}\to \bar\X_o$ exists at least if $\X$ has finite joins, as   in this case we have
\begin{equation}
\label{eq:uldir}
\text{$A=A^{\us\ls}\qquad\Rightarrow \qquad A=\widehat A$ and $A $ is either directed or empty.}
\end{equation}
Let us verify this. The set $(A^\us)^\ls$ is a clearly a lower set and directed sup closed, so that it being equal to $A$ forces $A=\widehat A$. Also, if $a,b\in A$ are $\leq u$, then also $a\vee b\leq u$, proving that if $A$ is not empty then it is directed. In particular, we can define ${\sf S}:\bar\X^{DM}\to \bar\X_o$ as
\[
{\sf S}(A):=\sup \iota(A)\in\bar\X_o.
\]
Here the existence of the sup follows from   \eqref{eq:uldir} as it shows that  each $A\in \bar\X^{DM} $ is either empty (in which case its supremum is the least element in $\bar\X_o$) or directed, in which case its supremum exists in $\bar\X_o$ by definition of directed completion.

The following is now easy:
\begin{proposition}[Comparison of completions - 2]\label{prop:compcompl}
Let $(\X,\leq)$ be a partial order with joins. 

Then with the above notation we have ${\sf T}\circ{\sf S}={\rm Id}_{\bar \X^{DM}}$. In particular, ${\sf S}:\bar\X^{DM}\to\bar\X$ is injective and 
${\sf T}:\bar\X\to\bar\X^{DM}$ is surjective.

 Moreover, ${\sf T}$ is injective if and only if ${\sf S}$ is surjective and if and only if for every $A\subset\X$ directed with $\widehat A=A$ and $a\notin A$ there is an upper bound $u$ of $A$ such that $a\nleq u$.
\end{proposition}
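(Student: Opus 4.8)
The strategy is to verify the three assertions in turn, starting from the two explicit constructions $\mathsf{S}$ and $\mathsf{T}$ and the representation \eqref{eq:completionsemplice} of the (enhanced) directed completion valid under the assumption that $\X$ has joins (which gives \eqref{eq:hatdirected} via Proposition \ref{prop:compljoin}).

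\textbf{Step 1: $\mathsf{T}\circ\mathsf{S}=\mathrm{Id}$.} Fix $A\in\bar\X^{DM}$, so $A=A^{\us\ls}$ and by \eqref{eq:uldir} $A=\widehat A$ and $A$ is directed or empty. By definition $\mathsf{S}(A)=\sup\iota(A)\in\bar\X_o$; using the representation \eqref{eq:completionsemplice} this supremum is exactly $\widehat A=A$ (viewed as an element of $\bar\X_o$), or the bottom $\perp$ if $A=\emptyset$. Now $\mathsf{T}(A)=B^{\us\ls}$ where $B=\{x\in\X:\iota(x)\leq A\}=A$ (since $\iota(x)=\,\downarrow\!x\subseteq A$ iff $x\in A$, using that $A$ is a lower set). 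Hence $\mathsf{T}(\mathsf{S}(A))=A^{\us\ls}=A$. The empty/bottom case is checked separately and is immediate. From $\mathsf{T}\circ\mathsf{S}=\mathrm{Id}$ one reads off at once that $\mathsf{S}$ is injective and $\mathsf{T}$ is surjective.

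\textbf{Step 2: the equivalences.} That $\mathsf{T}$ injective $\iff$ $\mathsf{S}$ surjective is formal from Step 1: two maps with $\mathsf{T}\circ\mathsf{S}=\mathrm{Id}$ satisfy that either one is bijective iff the other is, and $\mathsf{T}$ injective together with $\mathsf{T}$ surjective means $\mathsf{T}$ bijective with inverse $\mathsf{S}$, hence $\mathsf{S}$ surjective; conversely $\mathsf{S}$ surjective plus $\mathsf{S}$ injective forces $\mathsf{S}$ bijective with inverse $\mathsf{T}$. For the third equivalence I would unwind when $\mathsf{T}$ fails to be injective. Two elements $A_1\neq A_2$ of $\bar\X$ (taken as directed lower-and-sup-closed subsets via \eqref{eq:completionsemplice}) have $\mathsf{T}(A_i)=A_i^{\us\ls}$, so $\mathsf{T}(A_1)=\mathsf{T}(A_2)$ exactly when $A_1^{\us\ls}=A_2^{\us\ls}$, equivalently $A_1^\us=A_2^\us$. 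Since always $A\subseteq A^{\us\ls}$, the failure of injectivity of $\mathsf{T}$ is precisely the existence of some directed $A$ with $\widehat A=A$ and some $a\in A^{\us\ls}\setminus A$ — and $a\in A^{\us\ls}$ means exactly that every upper bound $u$ of $A$ also satisfies $a\leq u$. So $\mathsf{T}$ is injective iff there is \emph{no} such pair, i.e.\ iff for every directed $A$ with $\widehat A=A$ and every $a\notin A$ there exists an upper bound $u$ of $A$ with $a\nleq u$. (One should note $a\in A^{\us\ls}\setminus A$ automatically lies in $\X$ since $A^{\us\ls}\subseteq\X$, so the statement is correctly phrased over $\X$.)

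\textbf{Main obstacle and remarks.} There is no deep obstacle here — the proposition is a bookkeeping exercise once \eqref{eq:uldir} and the representation \eqref{eq:completionsemplice} are in hand. The one point requiring care is the handling of the bottom elements: $\bar\X^{DM}$ always has a least element $\X^\ls$ and $\bar\X_o$ always has $\perp$ (or an honest minimum of $\X$), and one must check $\mathsf{S}$ and $\mathsf{T}$ match these up correctly, so that the composition identity and the surjectivity/injectivity statements are not spoiled by an off-by-one at the bottom. A second mild subtlety is keeping track of which ambient order the closure operators $(\cdot)^{\us\ls}$ and $\widehat{(\cdot)}$ are computed in, but since we only ever apply them to subsets of $\X$ with an upper bound in $\X$, the argument in the proof of Proposition \ref{prop:trunccompl} shows these agree whether performed inside a dense sublattice or its completion, so no confusion arises.
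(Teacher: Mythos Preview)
Your proof is correct and follows essentially the same approach as the paper's (very terse) proof: both reduce $\mathsf{T}\circ\mathsf{S}=\mathrm{Id}$ to \eqref{eq:uldir} and \eqref{eq:ululA}, and the three equivalences in the second part to the converse implication in \eqref{eq:uldir}. Your closing remark invoking Proposition~\ref{prop:trunccompl} is unnecessary, as all the closure operators $(\cdot)^{\us\ls}$ and $\widehat{(\cdot)}$ here act on subsets of $\X$ and are computed in $\X$ throughout, so no ambient-order ambiguity ever arises.
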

\begin{proof} The fact that  ${\sf T}({\sf S}(A))=A$ for every $A\in  {\bar \X^{DM}}$ is a direct consequence of \eqref{eq:uldir} and \eqref{eq:ululA}. The statements in the second part of the claim are all equivalent to the converse implication in \eqref{eq:uldir} and it is trivial to check  that this holds if and only if the stated property is true.
\end{proof}
 see Example \ref{ex:compldiverse} in Section \ref{se:techex} for a case where the map ${\sf T}$ is not injective.

\end{subsection}

\begin{subsection}{Examples and further comments}
\begin{subsubsection}{Technical examples}\label{se:techex}

\begin{enumerate}

\item A complete partial order can be isomorphically embedded in another complete partial order with dense image without the image being the whole target set.  Consider for instance the embedding $\mathcal I$ of $\X:=[0,1]$ in $\Y:=[0,1]\cup\{2\}$ that is the identity on $[0,1)$ and sends 1 to 2. Notice that this map has not the $\MCP$ and compare with the last part of the proof of Proposition \ref{prop:complunostep}.

This also shows that the directed-sup-closure of a subset strongly depends on the whole space. Indeed, for   $A:=[0,1)\subset\X$ we have $\bar A^\X=\X$ and $\overline{ \mathcal I(A)}^\Y=[0,1]\neq \mathcal I(\X)$. In particular, $\mathcal I(A)$ is not dense in $\mathcal I(\X)$.

\item\label{ex:notip} In general a subset of the form $\downarrow\!\bar A$ is not directed-sup-closed, hence it might differ from $\widehat A$. Let for instance $\X:=[0,1]\times\{0,1\}$ be with the complete order
\[
(t,i)\leq (s,j)\qquad\Leftrightarrow\qquad \text{either \ ($i=0$ and $t\leq s$) \ or \  ($(s,j)=(1,0)$) \ or \  ($(t,i)=(s,j)$)}.
\]
In particular there is no relation between the points in $A:=[0,1)\times\{1\}$ which therefore is trivially directed-sup-closed. Still, $\downarrow A=[0,1)\times\{0,1\}$ contains the directed subset $[0,1)\times\{0\}$ whose supremum is  $(1,0)\notin A$.

Notice that we also have $\widehat A=\,\downarrow\!(1,0)$, i.e.\ $\widehat A$ has maximum, but $A$ has no tip.
\item\label{ex:doppiafreccia} We give an example of $A^\uparrow\neq \bar A$. Let $\X:=\N^2\cup\N\cup\{\top\}$ be ordered as:
\[
\begin{split}
(n,m)\leq (n',m')\qquad&\Leftrightarrow\qquad n=n'\quad\text{ and }\quad m\leq m',\\
(n,m)\leq n'\qquad&\Leftrightarrow\qquad n\leq n'\\
n\leq n'\qquad&  \text{ in the usual sense  },\\
\top\qquad&\text{ is the maximal element.}
\end{split}
\]
It is clear that this is a partial order. Let $A:=\N^2\subset\X$. Then $A^\uparrow= \N^2\cup\N$ and  $A^{\uparrow^2}=\X\supsetneq A^\uparrow$. To see that   $\top\neq A^\uparrow$  notice that a directed subset of $A$ must be entirely contained in a `branch' of the form  $\N\times\{n\}$ for some  $n\in\N$.

Observe that $(\X,\leq)$ has joins.
\item\label{ex:alphafreccia} More generally, we claim that for every ordinal $\alpha$ there is a complete partial order $\X_\alpha$ and a subset $A_\alpha\subset\X_\alpha$ so that $A_\alpha^{\uparrow^\alpha}=\X_\alpha$ but for no $\beta<\alpha$ we have $A_\alpha^{\uparrow^\beta}=\X_\alpha$. We prove this by transfinite induction.

The case $\alpha=0$ is obvious: just take $A_0=\X_0=\{\top_0\}$. 

Now suppose that the construction has been done for $\alpha$ and that $\X_\alpha$ has a maximum $\top_\alpha$ and let us build the sets for $\alpha+1$. Put $\X_{\alpha+1}:=(\N\times \X_\alpha) \cup\{\top_{\alpha+1}\}$ and $A_{\alpha+1}:=\N\times A_\alpha$, the partial order $\leq $ on $\X_{\alpha+1}$ is so that $\top_{\alpha+1}$ is the maximal element and then $(n,x)\leq (n',x')$ whenever
\[
\begin{split}
\text{either}\qquad \big(\ n=n'\ \text{ and }\  x\leq x'\text{ on $\X_\alpha$}\ \big)\qquad \text{or}\qquad \big(\ 
 n\leq n'\ \text{ and }\   x'=\top_\alpha \ \big).
\end{split}
\]
It is clear that the restriction of this order to $\N\times(\X_\alpha\setminus\{\top_\alpha\})$ consists of countable many unrelated copies of $\X_\alpha\setminus\{\top_\alpha\}$ and thus that for $\beta\leq\alpha$ we have  $A_{\alpha+1}^{\uparrow^\beta}=\N\times A_\alpha^{\uparrow^\beta}$, being intended that $A_\alpha^{\uparrow^\beta}$ is computed in $\X_\alpha$. In particular $A_{\alpha+1}^{\uparrow^\beta}\neq \X_{\alpha+1}$ for any such $\beta$ and any directed subset of  $A_{\alpha+1}^{\uparrow^\beta}$ must be included in $\{n\}\times \X_\alpha$ for some $n$. It follows that $A_{\alpha+1}^{\uparrow^\alpha}=\N\times\X_\alpha\subsetneq\X_{\alpha+1}$. On the other hand, $\{\top_\beta:\beta\leq\alpha\}\subset \N\times\X_\alpha$ is linearly ordered, hence directed, subset of $\X_{\alpha+1}$ with supremum $\top_{\alpha+1}$. Therefore $\X_{\alpha+1}=(\N\times\X_\alpha)^\uparrow=A_{\alpha+1}^{\uparrow^{\alpha+1}}$.

Suppose now that $\alpha$ is a limit ordinal and that $\X_\beta,A_\beta$ have been constructed for any $\beta<\alpha$. In this case we simply put
\[
\X_\alpha:= \bigsqcup_{\beta<\alpha}\X_\beta\qquad\text{and}\qquad A_\alpha:= \bigsqcup_{\beta<\alpha}A_\beta,
\]
with the order $\leq$ on $\X_\alpha$ being so that the order on $\X_\beta$ coincides with the existing one and elements of different $\X_\beta$'s are never related. It is then obvious that $A_\alpha^{\uparrow^\beta}$ is never $\X_\alpha$ (because it does not contain $\X_{\beta+1}$), but $A_\alpha^{\uparrow^\alpha}$ is the whole $\X_\alpha$. Notice that $\X_\alpha$ has no maximum.

If $\alpha$ is an infinite limit ordinal, we define $\X_{\alpha+1}$ as $\X_\alpha\cup\{\top_{\alpha+1}\}$ with $\top_{\alpha+1}$ being the maximum element. We also redefine the order of $\X_\alpha$ by adding the following relation:  for any $\beta<\alpha$ for which $\X_\beta$ has maximum $\top_\beta$ and any $x\in\X_{\beta'}$ for some $\beta'<\beta$, we put $x\leq\top_\beta$. It is clear that this new relation is still a partial order and that the family $\{\top_\beta:\beta<\alpha\text{ and $\top_\beta$ exists}\}$ is linearly ordered with supremum $\top_{\alpha+1}$. Putting $A_{\alpha+1}:=A_\alpha$, the construction also ensures that $A_{\alpha+1}^{\uparrow^\alpha}=\X_\alpha\subsetneq\X_{\alpha+1}$ and $A_{\alpha+1}^{\uparrow^{\alpha+1}}=\X_{\alpha+1}$, as desired.

\item\label{eq:densecompl} On the completion of dense subsets. Let $\X$ be two copies of $[0,1]$ with the 1's identified and no relation between elements in the two copies of $[0,1)$. Let $A:=\X\setminus\{1\}$, i.e.\ two copies of $[0,1)$. Then $A$ is dense in $\X$, its completion $(\Y,\iota)$ is made by two copies of $[0,1]$ and the map $T:\Y\to\X$ with the $\MCP$ so that $T\circ\iota$  is  the inclusion of $\Y$ in $\X$ is not injective (it sends both the 1's in $\Y$ to $1\in\X$). 

On the other hand, for $A,\X$ as in the Example \ref{ex:doppiafreccia} above it is easy to see that the completion $(\Y,\iota)$  of $A$ consists of countably many distinct copies of $\N\cup\{\infty\}$ and the map $T:\Y\to \X$ with the $\MCP$ so that $T\circ\iota$  is  the inclusion of $A$ in $\X$ is not surjective (the point $\top\in\X$ is not in the image).

These two examples can be combined into a single one having $T$ being neither injective nor surjective: just create a new partial order as disjoint union of these examples with no relation between the points in the different sets.

\item\label{eq:doppiafrecciacompl} We give an example of partial order $(\X,\leq)$ whose completion is not obtained in a single step in the induction procedure discussed in the proof of Theorem \ref{thm:excompl}. In other words, for $\Y$ as in \eqref{eq:defY} and $\iota:\X\to\Y$ as in \eqref{eq:iotay} we have $\bar \X\neq (\iota(\X))^\uparrow$. Equivalently, in the construction of $\bar\X$ as in Theorem \ref{thm:excompl} we have that it strictly contains $\{\widehat D:D\subset\X\text{ is directed}\}$. Notice that  choosing as $\X$ the set $A$ in Example \ref{ex:doppiafreccia} does not work, as the abstract completion of such order is simply given by countably many independent copies of $\N\cup\{\infty\}$. Still, a suitable modification of such example obtained by `linking' the different branches of $A$ does the job.

Let $\X:=\N\times\N\times(\N\cup\{\infty\})$  be ordered so that $(n,m,l)\leq (n',m',l')$ if and only if one of these hold:
\[
\begin{array}{lll}
n=n'&\qquad m=m'&\qquad l\leq l'\\
n=n'&\qquad  m\leq m'&\qquad l'=\infty \\
n< n'&\qquad m,l\leq m'&\qquad l'=\infty
\end{array}
\]
It is clear that this is a partial order. If a directed subset $D$ does not contain elements of the form  $(n,m,\infty)$, then it must be contained in $\{(n,m,l): l\in \N\cup\{\infty\} \}$ for some fixed $n,m$. Thus either it has a maximum, or the supremum is  $(n,m,\infty)$. If instead $D$ contains an element of the form $(n,m,\infty)$ but no maximum, then all the other elements of the directed set must have the same first component `$n$'. It follows that  its supremum in the completion will be a new element, that we call $s_n$ (as it is easy to check that it depends solely on $n$). It is also easy to check that in the completion we have $s_{n'}\leq s_n$ if and only if $n'\leq n$, so that the $s_n$'s form a totally ordered, hence directed, set in the completion: their supremum does not arise as supremum of a directed subset of $\X$.

We parse the same argument following the notation of the proof of Theorem \ref{thm:excompl}. The elements of $\X_1\setminus\X_0$ are subsets of  $\X$ of the form $\widehat D$ with $D\subset\X$ directed. The arguments above show that these are precisely sets of the form  
\[
A_n:=\{(n,m,\infty):m\in\N\}\cup\{(n',m,l):n'\leq n,\ m\in\N,\ l\in\N\cup\{\infty\}\}
\] 
for some fixed $n\in\N$. These arise as supremum of the directed set  $\{\iota(n,m,\infty):m\in\N\}$. It is clear that $A_{n'}\subset A_n$ for $n'\leq n$. The element of $\X_2\setminus \X_1$ is the whole $\X$, that arise as supremum of the $A_n$'s. 

In particular, $\X$ is a lower set and directed-sup-closed (trivially: any order is so within itself) and is not of the form $\widehat D$ for any $D\subset\X$ directed.

\item Adding a single point can destroy the notion of directed-sup-closure and thus that of completeness. Let for instance $\X:=[0,1]$ be with its standard order and $\Y:=\X\cup\{1'\}$ with $1'$ being an upper bound of $[0,1)$ and with no relation to $1$. Then the directed set $[0,1)$ has supremum $1$ in $\X$ (and thus is dense in $\X$) but no supremum in $\Y$, so its directed-sup-closure in $\Y$ coincides with itself. Compare with the proof of  Proposition \ref{prop:trunccompl}.
\item\label{ex:notrunc} Let $\X$ be made by two copies of $[0,1]$, each with the standard order, having the two 0's identified and so the two 1's. This is a complete lattice, in particular has meets, but does not have the truncation property. Indeed, if $B\subset\X$ is one of the two copies of $(0,1)$ we have $\widehat B=\X$ but if $a$ is an element of `the other' $(0,1)$ we have $(\downarrow\! B)\cap(\downarrow\! a)=\{0\}$.
\item\label{ex:compldiverse} The directed completion and Dedekind-MacNeille completion might differ. Let for instance $\X:= (0,1)\times\{1,2,3,4\}$ be equipped with the order
\begin{equation}
\label{eq:esempietto}
(t,n)\leq (s,m)\qquad\Leftrightarrow\qquad t\leq s\text{ and either $m=4$ or $m=n$ or $n=1$.}
\end{equation}
Notice that $\X$ has joins and meets of any couple of elements (but no maximum or minimum). 

The directed completion is $(0,1]\times \{1,2,3,4\}$  ordered as  in \eqref{eq:esempietto}. On the other hand the Dedekind-MacNeille completion is $\X\cup\{\perp,\top\}$ with $\perp,\top$ being respectively the smallest and largest element. Notice that the map ${\sf T}$ as in Proposition \ref{prop:compcompl} is not injective because for $A_i:=(0,1)\times\{i\}$ we have $A_i^\us=\emptyset$ for any $i$ (and $A_i=\{x\in\X:\iota(x)\leq (1,i)\}$).
\item\label{ex:extell} Let $\X$ be a set and $\ell:\X^2\to\{-\infty\}\cup[0,+\infty)$ be a \emph{time separation} function on it, i.e.\ a map satisfying 
\begin{equation}
\label{eq:timesep}
\begin{split}
\ell(x,x)&=0,\\
\ell(x,y)+\ell(y,z)&\leq\ell(x,z),\\
\ell(x,y),\ell(y,x)&\geq0\qquad\Rightarrow\qquad x=y,
\end{split}
\end{equation}
for any $x,y,z\in\X$. Define the relation $\leq $ by declaring $x\leq y$ if and only if $\ell(x,y)\geq 0$ and notice that the above ensures that this is a partial order. We can therefore complete it and then wonder whether $\ell$ can be extended to the completion. Notice that for any $x\in\X$ the map $\X\ni y\mapsto \ell(x,y)\in\{-\infty\}\cup[0,+\infty)$ is monotone. If it has the $\MCP$ then we can uniquely extend $\ell$ to a map, still denoted $\ell(x,\cdot)$, from $\bar\X$ to $\{-\infty\}\cup[0,+\infty]$. Similarly, for every $y\in\X$ the map $\X\ni x\mapsto -\ell(x,y)\in (-\infty,0]\cup\{+\infty\}$ is monotone and if it has the $\MCP$ it can be extended to a map,  still denoted $-\ell(\cdot,y)$, from $\bar\X$ to  $\{-\infty\}\cup[0,+\infty]$.

Thus under the stated assumptions, all in all quite natural in settings where the time separation is continuous, we can extend $\ell$ to a relevant portion of the completion. Still, such function is not yet defined   on $(\bar\X\setminus\X)^2\subset\bar\X^2$ and in general there seems to be no obvious way to do so. Consider for instance the case $\X:=\N\times\{0,1\}$ with 
\[
\begin{split}
\begin{array}{rl}
\ell\big((n,i),(m,i)\big)&:=\ \left\{\begin{array}{rl}
0,&\qquad\text{if }n\leq m,\\
-\infty,&\qquad\text{if }n>m,
\end{array}
\right.\quad i=0,1,\\
\ \\
\ell\big((n,1),(m,0)\big)&:=\ -\infty\qquad\forall n,m\in\N,\\
\ \\
\ell\big((n,0),(m,1)\big)&:=\ \left\{\begin{array}{rl}
1,&\qquad\text{if }n\leq m,\\
0,&\qquad\text{if }n>m.
\end{array}
\right.
\end{array}
\end{split}
\]
It takes only a moment to realize that $\ell$ satisfies \eqref{eq:timesep} and that the partial order  induced by $\ell$ is so that the two subsets $\{(n,0)\}_{n\in\N}$ and $\{(m,1)\}_{m\in\N}$  have the natural order  and with $(n,0)\leq (m,1)$ for every $n,m$. Then the completion $\bar\X$ of $\X$ simply adds the suprema of these two subsets, call them $(\infty,0)$ and $(\infty,1)$ respectively and the extended $\ell$ isatisfies $\ell((\infty,0),(m,1))=0$ and $\ell((n,0),(\infty,1))=1$ for every $n,m\in\N$.  We then see that letting $n\to\infty$ in the first identity or $m\to\infty$ in the second one gives the two different values $0$ and $1$ for $\ell((\infty,0),(\infty,1))$. Notice that both the choices, as well as that of any number in between, satisfy \eqref{eq:timesep}.
\end{enumerate}
\end{subsubsection}

\begin{subsubsection}{Examples more related to the geometry and analysis we are interested in}\label{se:exbelli}

We turn to some examples of completions that are relevant for our discussion. 

We point out that the first three of these, the directed completions coincide with that of Dedekind-MacNeille, as can be proved by  a direct application of Proposition \ref{prop:compcompl}. Still, to illustrate how the concepts we discussed work, we describe them relying on notions presented in this note.

\begin{enumerate}\setcounter{enumi}{10}
\item\label{eq:complsubset} \underline{Completion of space of finite subsets of a given set}. Let $\Y$ be any set and let $\X$ be the collection of finite subsets of $\Y$, ordered by inclusion. Then the completion is the whole power set $\mathcal P(\Y)$ of $\Y$ still ordered by inclusion,  with $\iota$  being the inclusion of $\X$ into  $\mathcal P(\Y)$. To see why we shall apply Proposition \ref{prop:trunccompl}. It is clear that $\X$ is a lower subset of $\mathcal P(\Y)$ and since the supremum of any  family in  $\mathcal P(\Y)$  is the union of its members, it is also clear that $\X$ is dense in $\mathcal P(\Y)$. To conclude it therefore suffices to prove that $\mathcal P(\Y)$ has the truncation property (being $\X$ stable by unions it would suffice the directed truncation property) and to this aim we shall apply Proposition \ref{prop:checktrunc}.  Let $\mathcal B\subset \mathcal P(\Y)$ be any family and $A\in  \mathcal P(\Y)$: since clearly $A\wedge B=A\cap B$ for any $B\in \mathcal P(\Y)$, the $\MCP$ property of $B\mapsto B\wedge A$ follows by the distributive law $A\cap (\cup_{B\in\mathcal B}B)=\cup_{B\in\mathcal B}(A\cap B)$.

Notice that $\X$ is first countable (second countable iff $\Y$ is countable) in the sense of Definition \ref{def:sep} (because any directed subset of $\X$ is finite), however if $\Y$ is uncountable, then $\mathcal P(\Y)$ is not so (because there is a chain of subsets of $\Y$ without countable cofinality).

 \item\label{eq:ell1} \underline{Completion of  spaces of sequences}. Let $\X:=\ell_1^+$, namely the set of non-negative and summable sequences of real numbers, ordered componentwise, i.e.\ for $a=(a_n), b=(b_n)\in\X$ we have $a\leq b$ iff $a_n\leq b_n$ for every $n\in\N$. We claim that the completion is the space $\Y$ of $[0,+\infty]$-valued sequences, still ordered componentwise, with the inclusion $\j$. It is trivial to check that $\Y$ is a complete lattice, the supremum of any family being obtained componentwise. Indeed, given   $B\subset\Y$, if we denote by $\bar b_n$ the supremum of  $\{b_n:(b_i)\in B\}\subset[0,+\infty]$ for every $n\in\N$, then $\bar b:=(\bar b_n)$ is the supremum of $B$ in $\Y$ (it is an upper bound and for any other upper bound $u$ and $n\in\N$ we must have $u_n\geq \bar b_n$). 
 
 Since clearly  $\X$ is a dense lower set in $\Y$, according to  Proposition \ref{prop:trunccompl} to conclude it suffices to prove that $\Y$ has the truncation property. As above, this easily follows from Proposition \ref{prop:checktrunc}, as the characterization of suprema we just discussed  shows that for any fixed $a\in\Y$  and $B\subset\Y$ the supremum of $\{b\wedge a:b\in B\}$ is the sequence $\bar b\wedge a=(\bar b_n\wedge a_n)$, proving that $x\mapsto x\wedge a$ has the truncation property.
 
In this example working with summable sequences has little to no role (but will play a role in Section \ref{se:twosided}): the spaces of non-negative sequences in $\ell^p$ have the same completion for every $p\in[1,\infty]$, with the same proof. If instead we complete the whole $\ell^p$, thus  including  sequences with negative terms, than the completion is given by the sequences in $\R\cup\{+\infty\}$ bounded from below by some element of $\ell^p$.
\item\label{ex:c} \underline{Completion of continuous functions}. Let $(M,\tau)$ be a metrizable topological space (the metrizability assumption can be weakened, but doing so would deviate from the main point of this example). We shall denote by $\leq_\p$ the pointwise order of functions from $M$ to $[0,+\infty]$, i.e.\ for  $f,g:M\to[0,+\infty]$ we write $f\leq_\p g$ to mean that $f(x)\leq g(x)$ for every $x\in M$.  We also denote by $\pinf$ and $\psup$ the pointwise inf and sup of a family of functions.

Let $\X:=C(M,[0,+\infty])$ be equipped with the pointwise order $\leq_\p$. 

We use the metrizability of $M$ via the following well-known and easy to prove consequence:
\begin{equation}
\label{eq:usclsc}
\begin{split}
\text{$f:M\to[0,+\infty]$ is upper semicontinuous }\quad&\Leftrightarrow\quad f=\pinf\{g\in \X\ :\ f\leq_\p g \},\\
\text{$f:M\to[0,+\infty]$ is lower semicontinuous }\quad&\Leftrightarrow\quad f=\psup\{g\in \X \ :\ f\geq_\p g \}.
\end{split}
\end{equation}
Let $\Y$ be the set of lower semicontinuous functions from $M$ to $[0,+\infty]$  equipped with the pointwise order. Since the pointwise supremum of any family of lower semicontinuous functions is still lower semicontinuous, the pointwise supremum is also the supremum in $\Y$, which therefore according to Proposition \ref{prop:joinlattice} is a complete lattice (of course, the infimum of an arbitrary family is not the pointwise infimum, but rather its lower semicontinuous envelope - see also the discussion below). For any $f\in\Y$ the collection of $g\in\X$ with $g\leq_\p f$ is stable by joins, hence a directed family. Then \eqref{eq:usclsc} shows that $f$ is the supremum in $\Y$ of such family, proving that $\X$ is dense in $\Y$.

However, in general $\Y$ (with the inclusion) is NOT the completion of $\X$, the problem being that the inclusion of $\X$ in $\Y$ in general does not have the $\MCP$. Consider for instance the case $M:=[0,1]$ and let $f_n(x):=1\wedge (nx)$, so that $f_n\in \X$ with pointwise supremum (and thus supremum in $\Y$) the function $f\in\Y\setminus\X$ equal to 0 in 0 and 1 everywhere else. This sequence also has supremum in $\X$, it being the function identically 1: this   follows noticing that any continuous function $h$ that is $\geq_\p f_n$ for every $n$ must be $\geq 1$ on $(0,1]$ and thus everywhere. This shows that the suprema in $\X$ and $\Y$ of the same increasing sequence differ, so that the inclusion of $\X$ into $\Y$ does not have the $\MCP$, as claimed.

To identify the completion it is better to introduce the upper/lower semicontinuous envelopes: given $f:M\to[0,+\infty]$ we put
\[
\begin{split}
f^\us:=(\pinf)\{g\in \X\ :\ f\leq_\p g\}\qquad\text{i.e.}\qquad f^\us(x):=\lims_{y\to x}f(y)\quad\forall x\in M,\\
f^\ls:=(\psup)\{g\in \X\ :\ f\geq_\p g\}\qquad\text{i.e.}\qquad f^\ls(x):=\limi_{y\to x}f(y)\quad\forall x\in M.
\end{split}
\]
The stated equivalences are easy to check (e.g.\ from  \eqref{eq:usclsc}). Also, it is easy to check that $f^\us$ (resp.\ $f^\ls$) is the pointwise smallest upper semicontinuous function that is $\geq_\p f$ (resp.\ pointwise biggest lower semicontinuous function that is $\leq_\p f$). Hence if $f$ is upper semicontinuous we have $f\geq_\p f^{\ls\us}$ and if it is lower semicontinuous we have $f\leq_\p f^{\us\ls}$. From the monotonicity of $f\mapsto f^\us,f^\ls$ it then follows that
\begin{equation}
\label{eq:ulul}
f^{\us\ls\us\ls}=f^{\us\ls}\qquad\text{ for any $f:M\to[0,+\infty]$.}
\end{equation}
It is now easy to see that the set
\[
\Z:=\{f:M\to[0,+\infty]\ :\ f=f^{\us\ls}\}
\]
ordered with $\leq_\p $ is a complete lattice: 
\begin{equation}
\label{eq:eqnum}
\text{the supremum of any family $(f_i)\subset\Z$ is given by $(\psup_if_i)^{\us\ls}$,}
\end{equation}
 as it follows from the definitions and \eqref{eq:ulul}. Notice that $\X\subset\Z\subset\Y$. We claim that $\Z$, with the inclusion, is the completion of $\X$ and to prove this we shall apply Propositions \ref{prop:complunostep} and \ref{prop:compljoin} (this is duable as $\X$ admits finite joins).  

We have already observed that $\X$ is dense in $\Y$ and this easily implies that it is also dense in $\Z$. Let us now prove that the embedding of $\X$ into $\Z$ has the $\MCP$. This amounts at proving that for any directed (but in fact the argument works for  arbitrary) family $(f_i)\subset\X$ admitting supremum $f$ in $\X$ we have that $f$ coincides with the supremum of $(f_i)$ in $\Z$, that is  $(\psup_if_i)^{\us\ls}$.  Indeed, we have $f\geq_\p f_i$ for every $i$, hence $f\geq_\p \psup_if_i$ and thus $f=f^{\us\ls}\geq_\p (\psup_if_i)^{\us\ls}$. On the other hand, $(\psup_if_i)^{\us}$ is the pointwise infimum of all the continuous functions $h$ that are $\geq_\p\psup_if_i$: any such $h$ must be $\geq_\p f$ (as $f$ is the supremum in $\X$ of the $f_i$'s)  proving that $(\psup_if_i)^{\us}\geq_\p f$ and thus that $ (\psup_if_i)^{\us\ls}\geq_\p f^\ls=f$.

Keeping in mind Propositions \ref{prop:criterio}, the fact that the embedding $\j$ of $\X$ into $\Z$ has the $\MCP$ proves implication $\Leftarrow$ in \eqref{eq:criterio}. According to Propositions  \ref{prop:complunostep} and  \ref{prop:compljoin} to conclude it remains to prove that if $B=(f_i)\subset\X$ is directed with $f:=  (\psup_if_i)^{\us\ls}$ being its supremum in $\Z$ and $g\in \X$ is $\leq_\p f$, then $g\in \widehat B^\X$. Since $\{f'\in\X:f'\leq_\p f_i\text{ for some }f_i\in B\}$ contains the family $(f_i\wedge g)$, the claim will be proved if we show that the supremum in $\X$ of this latter family is $g$ and in turn this will follow if we show that the supremum in $\Z$ of $(f_i\wedge g)$ is $g$.

Thus to conclude it will suffice to prove the following slightly more general claim (notice the analogy with the truncation property and Proposition \ref{prop:checktrunc}): for $(f_i)\subset\X$ and $h\in\X$ arbitrary we have
\begin{equation}
\label{eq:claimulg}
(\psup_if_i)^{\us\ls}\wedge h=(\psup_if_i\wedge h)^{\us\ls}.
\end{equation}
This is a trivial consequence of the continuity of $h$, indeed for any $f:M\to[0,+\infty]$ and $x\in M$ we have $(f\wedge h)^\us(x)=\lims_{y\to x}f\wedge h(y)=(\lims_{y\to x}f(y))\wedge h(x)=f^\us\wedge h(x)$, having used the continuity of $h$   in the second equality. Similarly we have  $(f\wedge h)^\ls(x)=f^\ls\wedge h(x)$ and the claim \eqref{eq:claimulg} easily follows.

We end this example pointing out that, rather trivially by symmetry, another completion of $\X$ is given by the space 
\[
\Z':=\{f:M\to[0,+\infty]\ :\ f=f^{\ls\us}\}
\]
and the inclusion. To see this simply notice that the maps $f\mapsto f^\us$ and $f\mapsto f^\ls$ from $\Z$ to $\Z'$ and from $\Z'$ to $\Z$ respectively are order isomorphism, one the inverse of the other and respect the inclusion of $\X$ into both $\Z$ and $\Z'$.

\item\label{ex:mink} \underline{Completion of Minkowski spacetime}. Let $(V,\|\cdot\|)$ be a  normed vector  space. Put $M:=\R\times B$  and equip it  with the partial order $\preceq$ defined as
\[
(t,v)\preceq (s,w)\qquad\Leftrightarrow\qquad \|v-w\|\leq s-t.
\]
If $V=\R^3$ then $M$ is the standard Minkowski space and $\preceq$ the causal order. In relation with the rough scheme \eqref{eq:schemino} in the introduction, we notice that $M$ with the product topology is globally hyperbolic if and only if $V$ is finite dimensional, while it is (locally, see Definition \ref{def:localcompl} below) directed complete if and only if $V$ is Banach (see   Proposition \ref{prop:2completi} for the simple proof).

We are interested in studying the directed completion of $M$ when $V$ is a Hilbert space, that we shall henceforth denote $H$. We claim that  in this case the directed completion of $M$ is  $M\cup ``\text{future null infinity}"\cup``\text{future time infinity}"$, where $``\text{future time infinity}"$ consists of one point that is the maximal element of the completion, and $``\text{future null infinity}"=\R\times S$, where $S\subset H$ is the unit sphere and the order $\preceq$ on $M\cup``\text{future null infinity}" $ is defined as
\begin{equation}
\label{eq:mbarord}
\begin{array}{rlll}
(c,w)&\!\!\!\!\!\preceq (c',w')\qquad&\Leftrightarrow\qquad w=w'\quad\text{and}\quad c\leq c',&\quad\forall (c,w),(c',w')\in \R\times S \\
(t,v)&\!\!\!\!\!\preceq (c,w)\qquad&\Leftrightarrow \qquad \la v,w\ra\geq t-c,&\quad\forall (c,w)\in \R\times S,\ (t,v)\in M.
\end{array}
\end{equation}
Here the corresponding map $\iota$ is the inclusion. It is immediate to verify that this is a directed complete partial order and that   this completion fully reproduces the causal completion of the standard Minkowski space if $H=\R^3$. The proof  that this is really the the directed completion of $M$ follows by a simple  study  of the geometry of the spacetime. The key point is that the collection $\bar M:=\{\widehat D:D\subset M\text{ is directed}\}$ ordered by inclusion is isomorphic to $M\cup ``\text{future null infinity}"\cup``\text{future time infinity}"$  and in particular is complete. Since $\bar M$ as just defined   corresponds at what we obtain in the first step in the proof of Theorem \ref{thm:excompl}, the completeness claim proves that $\bar M$, and thus $M\cup ``\text{future null infinity}"\cup``\text{future time infinity}"$, is the directed completion of $M$.

To study $\bar M$ amounts at studying sets of the form $\widehat D$ for $D\subset M$ directed. Let us thus fix such $D$ and notice that since $(t',v')\preceq (t,v)$ implies $t-t'\geq |v-v'|\geq ||v|-|v'||\geq 0$, the functions $D\ni (t,v)\mapsto t,t-|v|$ are both monotone, hence admit limits $T,c\in(-\infty,+\infty]$ respectively (the limits are intended as limit of nets). We distinguish the following cases:
\begin{itemize}
\item[a)] `Future time infinity'. Suppose $c=+\infty$. Then from $t-\bar t-|v-\bar v|\geq t-\bar t-|v|-|\bar v|$ we deduce that for any $(\bar t,\bar v)\in M$ we have  $\lim_{(t,v)\in D}t-\bar t-|v-\bar v|=+\infty$, proving that $(\bar t,\bar v)\in\downarrow\! D$ and thus that $\widehat D=M$.
\item[b)] `$M$'. Suppose $c,T<+\infty$. Let $(t_1.v_1),(t_2.v_2)\in D$ and then $(t,v)\in D$ be with $(t_i,v_i)\preceq (t,v)$, $i=1,2$. We have $t-t_i\geq |v-v_i|$ and thus $2T-t_1-t_2\geq 2t-t_1-t_2\geq |v_1-v_2|$, hence $\lims_{(t_1,v_1),(t_2,v_2)\in D}|v_1-v_2|\leq \lims_{(t_1,v_1),(t_2,v_2)}2T-t_1-t_2=0$. This proves that $D\ni (t,v)\mapsto v\in H$ is a Cauchy net, hence converging to some $\bar v\in H$. We claim that $\sup D=(T,\bar v)$ (and thus that $\widehat D=\,\downarrow\!\!(T,\bar v)$). Fix $(t',v')\in D$ and pass to the limit in $(t,v)\in D$ with $(t,v)\succeq(t',v')$ in the bound $t-t'\geq |v-v'|$ to deduce that $T-t'\geq |\bar v-v'|$, i.e.\ $(T,\bar v)\succeq (t',v')$ proving that $(T,\bar v)$ is an upper bound for $D$. Now suppose that $(\tilde t,\tilde v)\in M$ is another upper bound. Then passing to the limit in $(t,v)\in D$ in the bound $\tilde t-t\geq |\tilde v-v|$ we deduce that $(T,\bar v)\leq (\tilde t,\tilde v)$, as claimed.
\item[c)] `Future null infinity'. Suppose $c<+\infty$ and $T=+\infty$. Then the limit of $|v|$ in $D$ is $+\infty$ and  from $t-|v|=t(1-\tfrac{|v|}t)$ we deduce that the limit of $\tfrac{t}{|v|}$ in $D$ exists and is 1. Now let $(t,v),(t',v')\in D$ be with $(t,v)\preceq (t',v')$: starting from $c+|v'|-t\geq t'-t\geq |v'-v|\geq ||v'|-|v||$, squaring, dividing by $|v||v'|$ and with little manipulation we get 
\[
-\tfrac{c^2-|v|^2+t^2}{|v||v'|}-\tfrac{2c}{|v|}-\tfrac{2ct}{|v||v'|}+\tfrac {2t}{|v|}\leq 2\la\tfrac{v}{|v|},\tfrac{v'}{|v'|}\ra\leq2.
\]
Passing to the limit we obtain that $\lim_{(t.v)\in D}\lim_{(t',v')\in D}\la\tfrac{v}{|v|},\tfrac{v'}{|v'|}\ra=1$, from which it easily follows that $D\ni (t,v)\mapsto \tfrac{v}{|v|}\in H$ is a Cauchy net, thus converging to some $w\in S$. 

We claim that
\begin{equation}
\label{eq:claimdhat}
\widehat D=\{(t,v)\in M: \la v,w\ra\geq t-c\}
\end{equation}
and observe that if this holds,  the correspondence $\widehat D\to (c,w)$ is an isomorphism of partial orders (the $\widehat D$'s are ordered by inclusion and the $(c,w)$'s as in \eqref{eq:mbarord}). To prove the claim we notice that for any $(\bar t,\bar v)\in M$ we have
\[
|v-\bar v|=\sqrt{|v-\bar v|^2}=|v|(\sqrt{1-2\tfrac{\la v,\bar v\ra}{|v|^2}+\tfrac{|\bar v|^2}{|v|^2}})=|v|-\la\tfrac v{|v|},\bar v\ra+O(|v|^{-1})
\]
and therefore $\lim_{(t,v)\in D}t-\bar t-|v-\bar v|=c-\bar t+\la w,v\ra$. The claim \eqref{eq:claimdhat} easily follows.
\end{itemize}
\item\label{it:prod}\underline{Completion of products}.  If $(\X_1,\leq_1),(\X_2,\leq_2)$ are two partial orders, then the \emph{product order} $\leq_\times$ on $\X_1\times\X_2$ is defined as $(a_1,a_2)\leq_\times(b_1,b_2)$ whenever $a_1\leq_1b_1$ and $a_2\leq_2b_2$. The following is a simple, yet interesting, general fact:
\begin{proposition}[Componentwise continuity implies continuity]\label{prop:mcpprod}
Let $(\X_1,\leq_1)$, $(\X_2,\leq_2)$ and  $(\Z,\leq_\z)$ be partial orders and $T:\X_1\times\X_2\to \Z$. Then $T$ has the $\MCP$ if and only if $T(\cdot,x_2):\X_1\to\Z$ and $T(x_1,\cdot):\X_2\to \Z$ all have the $\MCP$ for any $x_1\in\X_1$ and $x_2\in\X_2$.
\end{proposition}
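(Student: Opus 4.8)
The plan is to establish the two implications separately, the forward one (from the $\MCP$ of $T$ to that of all partial maps) being essentially formal and the reverse one requiring one genuinely non-formal step.

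For the forward direction, fix $x_2\in\X_2$ and consider the inclusion $\iota_{x_2}\colon\X_1\to\X_1\times\X_2$ given by $x_1\mapsto(x_1,x_2)$. This map has the $\MCP$: if $D\subset\X_1$ is directed with supremum $s_1$, then $\iota_{x_2}(D)$ is directed and $(s_1,x_2)$ is its supremum, since it is an upper bound of $\iota_{x_2}(D)$ and any upper bound $(u_1,u_2)$ of $\iota_{x_2}(D)$ satisfies $u_1\geq s_1$ and $u_2\geq x_2$. As $T(\cdot,x_2)=T\circ\iota_{x_2}$ and the composition of maps with the $\MCP$ has the $\MCP$, the partial map $T(\cdot,x_2)$ has the $\MCP$; the argument for $T(x_1,\cdot)$ is symmetric.

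For the reverse direction, assume all partial maps have the $\MCP$. First I would record that $T$ is monotone: from $(a_1,a_2)\leq_\times(b_1,b_2)$ and monotonicity of the partial maps (a consequence of their having the $\MCP$) one gets $T(a_1,a_2)\leq T(b_1,a_2)\leq T(b_1,b_2)$. Next comes a lemma on suprema in products: if $D\subset\X_1\times\X_2$ is directed and admits a supremum $s$, then $D_i:=\pi_i(D)$ is directed and $s=(\sup D_1,\sup D_2)$; indeed $\pi_1(s)$ is an upper bound of $D_1$, and if $u_1$ is any upper bound of $D_1$ then $(u_1,\pi_2(s))$ is an upper bound of $D$, hence $\geq s$, so $u_1\geq\pi_1(s)$, and symmetrically for $D_2$. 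Writing $s_i:=\sup D_i$, the $\MCP$ of $x_1\mapsto T(x_1,s_2)$ applied along the directed set $D_1$ gives $T(s_1,s_2)=\sup_{d_1\in D_1}T(d_1,s_2)$, and for each $d_1\in D_1$ the $\MCP$ of $x_2\mapsto T(d_1,x_2)$ applied along $D_2$ gives $T(d_1,s_2)=\sup_{d_2\in D_2}T(d_1,d_2)$; in particular $T(s_1,s_2)$ is an upper bound of the family $\{T(d_1,d_2):d_1\in D_1,\ d_2\in D_2\}$, which contains $T(D)$.

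The step I expect to be the main obstacle is to show that any upper bound $v$ of $T(D)$ satisfies $v\geq T(s_1,s_2)$, since a priori $v$ only bounds $T$ on points of $D$ and not on an arbitrary pair $(d_1,d_2)$ with $d_1\in D_1$ and $d_2\in D_2$. Directedness of $D$ repairs this: writing $d_1=\pi_1(a)$ and $d_2=\pi_2(b)$ with $a,b\in D$, one picks $c\in D$ above both $a$ and $b$, and then $(d_1,d_2)\leq_\times c\in D$, so $T(d_1,d_2)\leq T(c)\leq v$ by monotonicity. Hence $v$ is an upper bound of the whole double-indexed family, so $v\geq\sup_{d_1\in D_1}\sup_{d_2\in D_2}T(d_1,d_2)=T(s_1,s_2)$. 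Therefore $\sup T(D)$ exists and equals $T(s_1,s_2)=T(\sup D)$, i.e.\ $T$ has the $\MCP$.
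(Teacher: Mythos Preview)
Your proof is correct and follows essentially the same route as the paper's: project the directed set onto the factors, apply the partial $\MCP$ iteratively to compute $T(\sup D)$ as a double supremum over $D_1\times D_2$, and then use directedness of $D$ together with monotonicity to reduce this double supremum to $\sup T(D)$. You give more detail on the forward direction (via the inclusion $\iota_{x_2}$) and on the lemma that the projections inherit directedness and suprema, but the key mechanism for the non-trivial implication is identical.
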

\begin{proof}
The `only if' is clear. Now assume that when one variable is frozen the map has the $\MCP$ and let $D\subset\X_1\times\X_2$ be directed with $(x_1,x_2)=\sup D$. Let $D_1,D_2$ be the projections of $D$ on the first and second factor respectively and notice that, rather trivially from the definition of product order, $D_1,D_2$ are directed with suprema $x_1,x_2$ respectively. Then we have
\[
T(x_1,x_2)=\sup_{a_2\in D_2} T(x_1,a_2)=\sup_{a_2\in D_2}\sup_{\tilde a_1\in D_1} T(\tilde a_1,a_2)=\sup_{(a_1,a_2)\in D\atop (\tilde a_1,\tilde a_2)\in D} T(\tilde a_1,a_2)=\sup_{(b_1,b_2)\in D} T(b_1,b_2),
\]
where in the last equality we used that $D$ is directed and componentwise  monotonicity of $T$.
\end{proof}
The following is now expected. Notice that other completions, such as  the Dedekind-MacNeille one, do  not respect products (because the product of complete lattices is in general not a complete lattice).
\begin{proposition}[`Completion of product is  product of completions']\label{prop:complprod}
Let $(\X_1,\leq_1)$ and\linebreak  $(\X_2,\leq_2)$ be two partial orders and $(\overline\X_1,\iota_1),(\overline\X_2,\iota_2)$ be their completions. 

Then the completion of $(\X_1\times\X_2,\leq_\times)$ is $\overline \X_1\times\overline\X_2$ with the map $(x_1,x_2)\mapsto (\iota_1(x_1),\iota_2(x_2))$.
\end{proposition}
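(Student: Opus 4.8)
The plan is to check directly that $\overline\X_1\times\overline\X_2$, together with the map $\j:(x_1,x_2)\mapsto(\iota_1(x_1),\iota_2(x_2))$, has the universal property of Definition~\ref{def:completamento}, using only the universal properties of $(\overline\X_i,\iota_i)$ and the componentwise criterion of Proposition~\ref{prop:mcpprod}. First I would record that $(\overline\X_1\times\overline\X_2,\leq_\times)$ is directed complete: the two projections of a directed $D$ are directed (as noted in the proof of Proposition~\ref{prop:mcpprod}), their suprema $s_1,s_2$ exist since the $\overline\X_i$ are dcpo's, and $(s_1,s_2)$ is readily seen to be $\sup_\times D$. The same application of Proposition~\ref{prop:mcpprod} shows $\j$ has the $\MCP$: freezing the second coordinate leaves $x_1\mapsto(\iota_1(x_1),\iota_2(x_2))$, which has the $\MCP$ because $\iota_1$ does, and symmetrically for the first coordinate.

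Now fix a dcpo $\Z$ and $T:\X_1\times\X_2\to\Z$ with the $\MCP$; I would construct $\overline T$ in two stages. For each $x_2\in\X_2$ the partial map $T(\cdot,x_2)$ has the $\MCP$ (Proposition~\ref{prop:mcpprod}), so it extends uniquely to $S(\cdot,x_2):\overline\X_1\to\Z$ with the $\MCP$ and $S(\iota_1(x_1),x_2)=T(x_1,x_2)$. The key step is to propagate $\MCP$-ness of the other partial map: for every $\bar x_1\in\overline\X_1$ the map $x_2\mapsto S(\bar x_1,x_2)$ still has the $\MCP$. This is clear when $\bar x_1\in\iota_1(\X_1)$ (there it equals $T(x_1,\cdot)$), and the set of $\bar x_1$ for which it holds is directed-sup-closed: if $D_1$ is directed with supremum $\bar x_1$ and the property holds along $D_1$, then for directed $D_2\subset\X_2$ with supremum $s_2$
\[
S(\bar x_1,s_2)=\sup_{a_1\in D_1}S(a_1,s_2)=\sup_{a_1\in D_1}\sup_{x_2\in D_2}S(a_1,x_2)=\sup_{x_2\in D_2}\sup_{a_1\in D_1}S(a_1,x_2)=\sup_{x_2\in D_2}S(\bar x_1,x_2),
\]
by the $\MCP$ of $S(\cdot,s_2)$, the hypothesis along $D_1$, an interchange of iterated suprema, and the $\MCP$ of each $S(\cdot,x_2)$. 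Since $\overline\X_1=\overline{\iota_1(\X_1)}$, this holds for all $\bar x_1$. In the second stage I extend, for each $\bar x_1$, the map $S(\bar x_1,\cdot)$ uniquely to $\overline T(\bar x_1,\cdot):\overline\X_2\to\Z$ with the $\MCP$; this defines $\overline T$ on $\overline\X_1\times\overline\X_2$ with $\overline T(\bar x_1,\iota_2(x_2))=S(\bar x_1,x_2)$.

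To conclude I would invoke Proposition~\ref{prop:mcpprod} once more: the second-variable partial maps of $\overline T$ have the $\MCP$ by construction, and for the first-variable partial maps one repeats the directed-sup-closure argument above, now with both coordinates ranging over the completions — the set of $\bar x_2$ for which $\bar x_1\mapsto\overline T(\bar x_1,\bar x_2)$ has the $\MCP$ contains $\iota_2(\X_2)$ (where it equals $S(\cdot,x_2)$, which is $\MCP$ by construction) and is directed-sup-closed by the same sup-interchange. Hence $\overline T$ has the $\MCP$. The identity $\overline T\circ\j=T$ is immediate from the defining equalities, and uniqueness follows by applying uniqueness in the universal property of $\overline\X_1$ to the $\MCP$ maps $\overline T'(\cdot,\iota_2(x_2))$ — which restrict along $\iota_1$ to $T(\cdot,x_2)$, hence coincide with $S(\cdot,x_2)$ — and then of $\overline\X_2$ to $\overline T'(\bar x_1,\cdot)$, which restricts along $\iota_2$ to $S(\bar x_1,\cdot)$.

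The main obstacle is exactly the propagation step: moving from ``the partial maps are $\MCP$ on the dense image $\iota_i(\X_i)$'' to ``they are $\MCP$ on all of $\overline\X_i$''. It rests on two ingredients, that the good set of coordinates is directed-sup-closed and that iterated suprema over a product of directed sets may be interchanged; the latter is licit because $\Z$ is directed complete and the maps involved are monotone in each variable on the relevant product (so the image of the product directed set is directed and both iterated suprema equal the supremum over the product), and this monotonicity is part of the $\MCP$ hypotheses being carried along. Everything else is routine bookkeeping with the two universal properties.
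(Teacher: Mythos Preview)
Your proof is correct and shares the same core idea as the paper's: a two-stage extension (first along $\overline\X_1$, then along $\overline\X_2$), with the crucial propagation step---showing that partial maps retain the $\MCP$ after the first extension---handled by an interchange of iterated directed suprema. The packaging differs: the paper constructs a map ${\sf T}:\overline\X_1\times\overline\X_2\to\overline{\X_1\times\X_2}$ into the abstract completion (whose existence is granted by Theorem~\ref{thm:excompl}) and then exhibits its inverse, whereas you verify the universal property of Definition~\ref{def:completamento} directly for an arbitrary target $\Z$. For the propagation itself, the paper argues via sets with tip (item~(i) of Theorem~\ref{thm:excompl} together with Lemma~\ref{le:MCP}(e)) while you use that the ``good set'' of coordinates is directed-sup-closed and contains the dense image; these are equivalent formulations. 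Your route is marginally more self-contained (it does not invoke the abstract completion) and is more explicit about the second propagation---that $\overline T(\cdot,\bar x_2)$ has the $\MCP$ for \emph{every} $\bar x_2\in\overline\X_2$, not just those in $\iota_2(\X_2)$---which the paper dispatches in a single sentence.
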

\begin{proof} Let $(\overline{\X_1\times\X_2},\j)$ be the completion of $\X_1\times\X_2$. We claim that there exists a unique map ${\sf T}:\overline \X_1\times\overline\X_2\to \overline{\X_1\times\X_2}$ such that ${\sf T}\circ\iota=\j$. From the density of $\iota(\X_1\times\X_2)$ in $\overline \X_1\times\overline\X_2$ (that is an easy consequence of density in the factors) uniqueness of ${\sf T}$ is clear. For existence start fixing $x_2\in\X_2$ and notice that the map $\X_1\ni x_1\mapsto \j(x_1,x_2)\in\overline{\X_1\times\X_2}$ has, for trivial reasons, the $\MCP$. By the universal property of $(\X_1,\iota_1)$ we see that there is a (unique) map ${\sf T}_{x_2}:\overline\X_1\to \overline{\X_1\times\X_2}$ with the $\MCP$ such that ${\sf T}_{x_2}(\iota_1(x_1))=\j(x_1,x_2)$ for every $x_1\in\X_1$. We claim that for every  $y_1\in\overline\X_1 $ the map $\X_2\ni x_2\mapsto {\sf T}_{x_2}(y_1)\in  \overline{\X_1\times\X_2}$ has the $\MCP$. Indeed, fix $y_1\in\overline\X_1$ and let $A\subset\X_1$ with tip be so that $y_1=\sup\iota_1(A)$ (the choice $A:=\{x_1:\iota_1(x_1)\leq y_1\}$ does the job). Let $D\subset\X_2$ be directed admitting sup in $\X_2$. Then we have
\[
\begin{split}
\sup_{x_2\in D}{\sf T}_{x_2}(y_1)&=\sup_{x_2\in D}\sup_{x_1\in A}{\sf T}_{x_2}(\iota_1(x_1))=\sup_{x_2\in D}\sup_{x_1\in A}\j(x_1,x_2)=\sup_{x_1\in A}\sup_{x_2\in D}\j(x_1,x_2)\\
&=\sup_{x_1\in A}\j(x_1,\sup D)=\sup_{x_1\in A}{\sf T}_{\sup D}(x_1)={\sf T}_{\sup D}(y_1),
\end{split}
\]
 proving the claim. By the universal property of $(\X_2,\iota_2)$ it follows that there is ${\sf T}(y_1,\cdot):\overline\X_2\to\overline{\X_1\times\X_2}$ such that ${\sf T}(y_1,\iota_2(x_2))={\sf T}_{x_2}(y_1)$ for every $x_2\in\X_2$. The map $\overline\X_1\times\overline\X_2\ni (y_1,y_2)\mapsto {\sf T}(y_1,y_2)\in\overline{\X_1\times\X_2}$ has the $\MCP$ in each component, thus by Proposition \ref{prop:mcpprod} above it has the $\MCP$. The fact that ${\sf T}\circ\iota=\j$ is clear from the construction.

Having proved existence and uniqueness of such  ${\sf T}$, the rest easily follows. Indeed, since $\iota$ has the $\MCP$ (again, by Proposition \ref{prop:mcpprod} above), by the universal property of $\overline{\X_1\times\X_2}$ we know that there is a unique ${\sf S}:\overline{\X_1\times\X_2}\to\overline\X_1\times\overline\X_2$ such that ${\sf S}\circ\j=\iota$.

The map ${\sf S}\circ{\sf T}:\overline\X_1\times\overline\X_2\to \overline\X_1\times\overline\X_2$ has the $\MCP$ and is the identity on the dense subset $\iota(\X_1\times\X_2)$ of $\overline\X_1\times\overline\X_2$. Hence ${\sf S}\circ{\sf T}$ is the identity on $\overline\X_1\times\overline\X_2$ (this claim can be proved, e.g., by transfinite recursion on $\iota(\X_1\times\X_2)^{\uparrow^\alpha}$). Similarly, the map ${\sf T}\circ{\sf S}:\overline{\X_1\times\X_2}\to\overline{\X_1\times\X_2}$ has the $\MCP$ and is the identity on the dense subset  $\j(\X_1\times\X_2)$ of $\overline{\X_1\times\X_2}$, hence, as above, is the identity on $\overline{\X_1\times\X_2}$.

We thus proves that ${\sf T}$ and ${\sf S}$ are one the inverse of each other and by construction they satisfy ${\sf T}\circ\iota=\j$  and ${\sf S}\circ \j=\iota$. Since such ${\sf T}$, ${\sf S}$ are also unique, the proof is complete by the very definition of uniqueness for the completion.
\end{proof}

\end{enumerate}

\end{subsubsection}

\end{subsection}
\begin{subsection}{Variants}

\begin{subsubsection}{Sequential version}\label{se:seqcompl}

In some applications it might be that one is not interested in suprema of all directed subsets, but only of non-decreasing sequences. When this is the case, the directed completion is possibly too big, so to say, and one should stop the construction earlier. Before coming to the actual definitions and statements, it is worth to recall the following basic fact, valid in any partial order $(\X,\leq)$:
\begin{equation}
\label{eq:countabledirected}
\text{any  countable and directed $D\subset\X$ admits a cofinal non-decreasing sequence}.
\end{equation}
This means that for each such $D$ there is $(x_n)\subset D$ with $x_n\leq x_{n+1}$ so that for every $d\in D$ we have $d\leq x_n$ for some $n\in\N$ (and thus for all $n$'s sufficiently big). Because of this, asking for existence of  suprema of countable directed sets is the same as asking for suprema of non-decreasing sequences. The proof of \eqref{eq:countabledirected} is easy: let $(d_n)$ be an enumeration  of the elements of $D$, put $x_0:=d_0$ and then recursively find $x_{n+1}$ that is $\geq x_n$ and $\geq d_n$.
\begin{remark}{\rm
In fact in \cite{Mark76} it has been proved a  more general statement: a partial order is directed complete if and only if it is chain complete (i.e.\ every totally ordered subset has a supremum).

Still, recall that, as it is well known, not every directed set has a totally ordered cofinal set: in Example \ref{eq:complsubset} in Section \ref{se:exbelli} any totally ordered family of finite subsets is at most countable and if $\Y$ is uncountable no such family can be cofinal.
\fr
}\end{remark}

\begin{definition}[Sequential Monotone Convergence Property] Let $(\X_1,\leq_1)$, $(\X_2,\leq_2)$ be two partial orders and $T:\X_1\to\X_2$. We say that $T$ has the sequential Monotone Convergence Property ($\sMCP$ in short) if for any non-decreasing sequence $(x_n)\subset\X_1$ admitting supremum $x_\infty\in\X_1$ we have that $T(x_\infty)=\sup_nT(x_n)$.
\end{definition}
Clearly, composition of maps with the $\sMCP$ has the $\sMCP$ and so does the identity map on any partial order. The following definition is also natural:
\begin{definition}[Sequential directed completeness]
A partial order $(\X,\leq)$ is called sequentially directed-complete partial orders (sdcpo, in short) if any non-decreasing sequence has a supremum. 
\end{definition}
There are several analogies between the concepts just described and those in Section \ref{se:constr}. For instance, it is natural to declare a subset $A$ of a partial order to be {\bf sequentially-directed-sup-closed} if any non-decreasing sequence in $A$ admitting  supremum in $\X$ has supremum in $A$. Then  the sequential-directed-sup-closure ${\overline A}^s$ is defined as
\[
{\overline A}^s:=\text{smallest $B\subset\X$ containing $A$ and  sequentially-directed-sup-closed}.
\] 
In analogy with \eqref{eq:barordinali}, this closure can be obtained by a recursive operation starting from the definition of $A^{\uparrow_s}$ as
\[
A^{\uparrow_s}:=\big\{\text{supremum of non-decreasing sequences in }A\big\}.
\]
Indeed, defining $A^{\uparrow^\alpha_s}$ for every ordinal $\alpha$ as $A^{\uparrow^0_s}:=A$, then $A^{\uparrow^{\alpha+1}_s}:=(A^{\uparrow^\alpha_s})^{\uparrow_s}$ and finally for limit ordinals $A^{\uparrow^\alpha_s}:=\cup_{\beta<\alpha}A^{\uparrow^\beta_s}$, it is easy to see that ${\overline A}^s=A^{\uparrow^\alpha_s}$ for every $\alpha$ sufficiently big. The proof is the same as for \eqref{eq:barordinali}. In fact, in this case we can bound a priori the length of the iteration, as we always have
\begin{equation}
\label{eq:barordinali2}
{\overline A}^s=A^{\uparrow^{\omega_1}_s},
\end{equation}
where $\omega_1$ is the first uncountable ordinal. Indeed, if  $(x_n)\subset A^{\uparrow^{\omega_1}_s}$, then $x_n\in A^{\uparrow^{\alpha_n}_s}$ for suitable countable ordinals $\alpha_n$ so that putting $\alpha:=\sup_{n\in\N}\alpha_n$ we have $\alpha<\omega_1$ and thus the supremum of $(x_n)$, if it exists at all, is found in $A^{\uparrow^{\alpha+1}_s}\subset A^{\uparrow^{\omega_1}_s}$. Notice also that Example \ref{ex:alphafreccia} in Section \ref{se:techex} with $\alpha:=\omega_1$ shows a case were $A^{\uparrow^{\alpha}_s}\subsetneq {\overline A}^s$ for every $\alpha<\omega_1$.

We then  have the following analogue of Theorem \ref{thm:excompl}:
\begin{theorem}[Directed sequential completion]\label{thm:dirseqcompl}
Let $(\X,\leq)$ be a partial order. Then there is a sequentially directed complete partial order $({\overline\X}^s,\bar\leq)$ and a map $\iota:\X\to{\overline\X}^s$ with the $\sMCP$ such that the following holds. For any sequentially complete partial order $(\Z,\leq_\Z)$ and any $T:\X\to\Z$ with the $\sMCP$ there is a unique $\bar T:{\overline\X}^s\to\Z$ with the $\sMCP$ such that $\bar T\circ\iota=T$.

The couple ${\overline\X}^s,\iota$ is unique up to unique isomorphism, i.e.\ if  $(\tilde\X,\tilde\leq)$ and $\tilde\iota$ have the same properties, then there is a unique map  $\mathcal J:\bar \X\to\tilde\X$ with the $\sMCP$ so that $\tilde\iota=\mathcal J\circ\iota$.

Finally, the map $\bar T:{\overline\X}^s\to\Z$ with the $\sMCP$ corresponding to $T:\X\to\Z$ that we mentioned above has the explicit expression:
\begin{equation}
\label{eq:chieetbar2}
T(\bar x)=\sup_{x\in\X:\ \iota(x)\leq\bar x}T(x)\qquad\qquad\forall \bar x\in\bar\X,
\end{equation}
where it is part of the claim that the supremum in the right hand side exists.
\end{theorem}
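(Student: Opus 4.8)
The plan is to follow the proof of Theorem \ref{thm:excompl} almost verbatim, systematically replacing ``directed subset'' by ``non-decreasing sequence'' and the closure operators $\bar\cdot,\widehat\cdot,(\cdot)^\uparrow$ by their sequential analogues, and using \eqref{eq:barordinali2} in place of \eqref{eq:barordinali}. Concretely: write $\widehat A^s$ for the smallest lower set containing $A$ that is sequentially-directed-sup-closed (this is a Kuratowski closure operator for the same reason $\widehat\cdot$ is, since intersections of lower sets and of sequentially-directed-sup-closed sets are again such), and let $\Y^s:=\{A\subset\X:A=\widehat A^s\}$ ordered by inclusion. As with $\Y$, every family $(A_i)\subset\Y^s$ has supremum $\widehat{\bigcup_iA_i}^s$ in $\Y^s$, so $\Y^s$ is in particular sequentially directed complete. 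Put $\iota(x):=\,\downarrow\! x\in\Y^s$; this is injective and order-reflecting, and it has the $\sMCP$: if $x_n\uparrow x_\infty$ in $\X$ then $\widehat{\bigcup_n\downarrow\! x_n}^s=\,\downarrow\! x_\infty$, since $\downarrow\! x_\infty$ is a lower, sequentially-directed-sup-closed set containing every $x_n$, while conversely $(x_n)$ is a non-decreasing sequence lying in $\widehat{\bigcup_n\downarrow\! x_n}^s$ with supremum $x_\infty$. Finally I would set ${\overline\X}^s:=(\iota(\X))^{\uparrow^{\omega_1}_s}\subset\Y^s$, the operations $(\cdot)^{\uparrow_s}$ being computed in $\Y^s$; by \eqref{eq:barordinali2} this is the sequential-directed-sup-closure of $\iota(\X)$ in $\Y^s$, and being sequentially-directed-sup-closed inside the sdcpo $\Y^s$ it is itself a sdcpo.

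Given $(\Z,\leq_\Z)$ sequentially complete and $T:\X\to\Z$ with the $\sMCP$, I would then construct $\bar T$ by transfinite induction along the filtration $\X_\beta^s:=(\iota(\X))^{\uparrow^\beta_s}$, which stabilises to ${\overline\X}^s$, proving at each stage, exactly as for \eqref{eq:Talpha}, that there is a unique $\sMCP$ map $T_\beta:\X_\beta^s\to\Z$ extending the earlier ones and satisfying
\[
A\in\X_\beta^s\qquad\Longrightarrow\qquad T_\beta(A)=\sup_{a\in A}T(a),
\]
the existence of the supremum being part of the claim. The base case $T_0(\iota(x)):=T(x)$ is forced and well defined by injectivity of $\iota$; at a limit ordinal $T_\beta$ is the forced common extension and the displayed formula is immediate, only the $\sMCP$ needing verification. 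At a successor $\beta+1$, an element of $\X_{\beta+1}^s$ is $A=\widehat{\bigcup_nA_n}^s$ for some non-decreasing $(A_n)\subset\X_\beta^s$, any $\sMCP$ extension must set $T_{\beta+1}(A):=\sup_nT_\beta(A_n)$, and using the inductive formula one gets
\[
\sup_nT_\beta(A_n)=\sup_n\sup_{a\in A_n}T(a)=\sup_{a\in\bigcup_nA_n}T(a)\stackrel{(\star)}{=}\sup_{a\in A}T(a),
\]
which shows at once that the definition is well posed (it depends only on $A$), that the formula holds for $\beta+1$, and --- by the same computation applied to a non-decreasing sequence of elements of $\X_{\beta+1}^s$ --- that $T_{\beta+1}$ has the $\sMCP$; the $\sMCP$ at limit stages follows identically. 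Once the universal property of $({\overline\X}^s,\iota)$ is established this way, uniqueness up to unique isomorphism is the usual argument (compose the two comparison maps and invoke the uniqueness clause with $T=\iota$), and formula \eqref{eq:chieetbar2} is just the displayed formula read off this explicit model, hence valid for every model.

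The one genuinely load-bearing step is the identity $(\star)$, which I would isolate as a lemma: \emph{if $T:\X\to\Z$ has the $\sMCP$ and $C\subset\X$ is such that $\sup_{c\in C}T(c)$ exists in $\Z$, then $\sup_{a\in\widehat C^s}T(a)$ exists and equals it.} The proof mirrors the sequential analogue of \eqref{eq:hatordinali}: the operator $A\mapsto\widehat A^s$ is reached by transfinitely iterating $A\mapsto\,\downarrow\!(A^{\uparrow_s})$, and the supremum of $T$-values is unchanged at every step. Indeed the $\sMCP$ forces $T$ to be monotone (test it on an eventually-constant non-decreasing sequence), so passing to $\downarrow\!(\cdot)$ changes nothing; and for $(\cdot)^{\uparrow_s}$, every element is $\sup_kc_k$ with $(c_k)$ non-decreasing in the previous set, whence $T(\sup_kc_k)=\sup_kT(c_k)\leq\sup_{c\in C}T(c)$ by the $\sMCP$ and the inductive hypothesis, so again the supremum is unchanged; limit stages are unions and are trivial. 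This is precisely where the hypothesis on $T$ enters, and the point requiring care is that $A\mapsto\widehat A^s$ is in general obtained by a transfinite --- possibly $\omega_1$-long, cf.\ \eqref{eq:barordinali2} and Example \ref{ex:alphafreccia} --- iteration rather than in a single step, so the ordinal induction in the lemma (and in the construction of $\bar T$) cannot be shortcut. Everything else is a faithful transcription of the proof of Theorem \ref{thm:excompl}.
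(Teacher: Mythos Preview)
Your proposal is correct and follows the same strategy as the paper, which also mimics the proof of Theorem~\ref{thm:excompl} via transfinite induction along $\X_\beta:=(\iota(\X))^{\uparrow_s^\beta}$ and explicitly omits the details. The one noteworthy refinement is that you work in the ambient lattice $\Y^s=\{A:A=\widehat A^s\}$ rather than the paper's $\Y=\{A:A=\widehat A\}$; this is the cleaner choice, since your key lemma behind $(\star)$ --- that the $T$-sup is unchanged under $\widehat{\cdot}^s$ --- needs only the $\sMCP$, whereas invariance under the full directed closure $\widehat{\cdot}$ would seem to require the full $\MCP$.
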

\begin{proof}\ \\
{\sc Uniqueness}  The universal properties of the two completions produce maps $\varphi:{\overline\X}^s\to\tilde\X$ and $\psi:\tilde\X\to{\overline\X}^s$ with the $\sMCP$ such that $\varphi\circ\iota=\tilde\iota$ and $\psi\circ \tilde\iota=\iota$. Hence $\psi\circ\varphi:\bar\X\to\tilde\X$ has the $\sMCP$ and satisfies $(\psi\circ\varphi)\circ\iota=\iota$, thus by the uniqueness of $\bar T$ (with $T=\iota$) we deduce that $\psi\circ\varphi$ is the identity on $\bar\X$. Similarly, $\varphi\circ\psi$ is the identity on $\tilde\X$, proving that $\varphi,\psi$ are one the inverse of the other and the claim.\\
{\sc Existence} The construction is a variant of the one in Theorem \ref{thm:excompl}.  Let $\Y$ and $\iota:\X\to\Y$ be as in the proof of Theorem \ref{thm:excompl}. Put $\X_0:=\iota(\X)\subset\Y$ and then $\X_\alpha:=\X_0^{\uparrow^\alpha_s}$. We claim that  ${\overline\X}^s:=\X_{\omega_1}$, together with $\iota$, does the job.  The fact that ${\overline\X}^s$ is sequentially directed-sup-closed follows from \eqref{eq:barordinali2}, so we need only to prove the universal property and formula \eqref{eq:chieetbar2}. 

This can be proved following closely the arguments in the proof of Theorem \ref{thm:excompl}: the only difference is that here we are considering only suprema of sequences and because of this the assumed sequential completeness of $\Z$ and the fact that $T:\X\to\Z$ has the sequential $\MCP$ suffice to perform the construction. We omit the details.
\end{proof}

\end{subsubsection}

\begin{subsubsection}{Local completion}\label{se:loccompl}
In some circumstances one might be content with a completion procedure that does not add `points at infinity' but only `fills existing gaps'. In this case, the relevant concept is that of locally directed complete partial order:
\begin{definition}[Locally directed complete partial order]\label{def:localcompl} We say that a partial order $(\X,\leq)$ is locally directed complete (or simply locally complete) if any directed subset admitting an upper bound has a supremum.

Similarly, we say that it is locally sequentially directed complete if any non-decreasing sequence admitting an upper bound has a supremum.
\end{definition}
The following statement can now be easily proved:
\begin{theorem}[Local (sequential) directed completion]\label{thm:loccompl}
Let $(\X,\leq)$ be a partial order. Then there is a locally directed complete (resp.\ locally sequentially complete) partial order $(\overline\X,\bar\leq)$ and a map $\iota:\X\to{\overline\X}^s$ with the $\MCP$ (resp.\ $\sMCP$) such that the following holds. For any locally complete (resp.\ locally sequentially complete) partial order $(\Z,\leq_\Z)$ and any $T:\X\to\Z$ with the $\MCP$ (resp.\ $\sMCP$) there is a unique $\bar T:{\overline\X}^s\to\Z$ with the $\MCP$ (resp.\ $\sMCP$)  such that $\bar T\circ\iota=T$.

The couple ${\overline\X}^s,\iota$ is unique up to unique isomorphism, i.e.\ if  $(\tilde\X,\tilde\leq)$ and $\tilde\iota$ have the same properties, then there is a unique map  $\mathcal J:\bar \X\to\tilde\X$ with the $\MCP$ (resp.\ $\sMCP$)  so that $\tilde\iota=\mathcal J\circ\iota$.

Finally, the map $\bar T:{\overline\X}^s\to\Z$ with the $\MCP$ (resp.\ $\sMCP$)  corresponding to $T:\X\to\Z$ that we mentioned above has the explicit expression:
\[
T(\bar x)=\sup_{x\in\X:\ \iota(x)\leq\bar x}T(x)\qquad\qquad\forall \bar x\in\bar\X,
\]
where it is part of the claim that the supremum in the right hand side exists.
\end{theorem}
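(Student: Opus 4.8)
The plan is to mimic the proof of Theorem \ref{thm:excompl} (and its sequential variant Theorem \ref{thm:dirseqcompl}), changing only the class of subsets that the closure operator is allowed to swallow. First I would introduce, in parallel with $\Y$ in \eqref{eq:defY}, the ambient complete lattice $\Y$ as before and, given the partial order $(\X,\leq)$, work inside $\Y$ with $\iota:\X\to\Y$, $\iota(x):=\,\downarrow\!x$. The substitute for the operator $A\mapsto A^\uparrow$ is the \emph{bounded} version $A\mapsto A^{\uparrow_{\mathrm b}}$ taking suprema only of those directed (resp.\ non-decreasing) subsets of $A$ that admit an upper bound \emph{in $\Y$}; since $\Y$ is complete, such suprema exist. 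One transfinitely iterates this operator exactly as in \eqref{eq:arrowalpha}, obtaining a monotone chain of subsets that stabilizes at some ordinal; call the stable set $\widehat\X_{\mathrm{loc}}$ and set $\overline\X:=\widehat\X_{\mathrm{loc}}$ as a sub-poset of $\Y$ with the inclusion order, and keep $\iota$ as the map. The first thing to check is that $(\overline\X,\subseteq)$ is locally directed complete: a directed (resp.\ non-decreasing) family in $\overline\X$ that has an upper bound in $\overline\X$ has its supremum computed in $\Y$ lying below that bound, hence by stability it lies in $\overline\X$.

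Next I would verify that $\iota$ has the $\MCP$ (resp.\ $\sMCP$). This is the same computation as in Theorem \ref{thm:excompl}: if $D\subset\X$ is directed with supremum $s$ in $\X$, then $s$ is an upper bound for $\iota(D)$ in $\overline\X$, and because $s=\sup D$ there is no smaller one, so $\iota(s)=\sup\iota(D)$; note $\iota(D)$ is a directed family with an upper bound in $\overline\X$, so the supremum genuinely exists there. For the universal property, let $(\Z,\leq_\Z)$ be locally (sequentially) complete and $T:\X\to\Z$ have the $\MCP$ (resp.\ $\sMCP$). I would define $T_\alpha$ on $\X_\alpha:=\iota(\X)^{\uparrow_{\mathrm b}^\alpha}$ by transfinite recursion, setting $T_0(\iota(x)):=T(x)$ and, at successor steps, $T_{\alpha+1}(A):=\sup_i T_\alpha(A_i)$ whenever $A=\mathrm{(bounded\ sup\ closure\ of)}\ \bigcup_i A_i$ with $(A_i)$ a directed (resp.\ non-decreasing) family in $\X_\alpha$ having an upper bound in $\overline\X$. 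The well-posedness of this definition is the same identity as \eqref{eq:quasiapiu1}: $\sup_i T_\alpha(A_i)=\sup_{a\in\bigcup_i A_i}T(a)=\sup_{a\in A}T(a)$, using the $\MCP$ of $T$; crucially one must check that $T(\{a:\iota(a)\le \text{the bound}\})$ \emph{has} a supremum in $\Z$ — and here is precisely where local completeness of $\Z$ is used: the directed set $\{a\in\X:\iota(a)\le y\}$ for $y\in\overline\X$ maps, under $T$, to a directed subset of $\Z$ bounded above by any $T(a')$ with $\iota(a')$ an actual upper bound of the original family, so the supremum exists. Limit stages are handled by taking the forced common extension and rechecking $\MCP$ via the same chain of equalities. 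This also yields formula \eqref{eq:chieetbar2} as a restatement of the recursion invariant $T_\alpha(A)=\sup_{a\in A}T(a)$.

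Uniqueness of $\overline\X$ up to unique isomorphism is a formal consequence, identical to the argument given after Definition \ref{def:completamento} and reproduced in the ``Uniqueness'' part of Theorem \ref{thm:dirseqcompl}: two candidates produce, via their universal properties, mutually inverse $\MCP$ (resp.\ $\sMCP$) maps commuting with the $\iota$'s, and the uniqueness clause (applied with $T=\iota$) forces the composites to be identities. I expect the main obstacle to be purely bookkeeping rather than conceptual: one must be careful that ``admits an upper bound'' is consistently interpreted (an upper bound in $\Y$, equivalently — once one is inside $\overline\X$ — in $\overline\X$, since lower sets closed under the relevant suprema inherit bounds faithfully), and that at each recursion step the relevant suprema in $\Z$ exist, which is exactly the role played by the hypothesis that $\Z$ is locally (sequentially) complete. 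For the sequential case one additionally notes, as in \eqref{eq:barordinali2}, that the iteration closes by stage $\omega_1$, so $\overline\X:=\iota(\X)^{\uparrow_{\mathrm b,s}^{\omega_1}}$; I would simply remark that the proofs are verbatim the same with ``directed'' replaced by ``non-decreasing sequence'' throughout, and omit the repeated details.
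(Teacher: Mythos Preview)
Your overall strategy matches the paper's first suggested approach: redo the transfinite recursion of Theorem~\ref{thm:excompl}, but at each step only adjoin suprema of directed subsets that are \emph{bounded}. However, there is a genuine slip in your formulation. You require the directed subset to ``admit an upper bound in $\Y$'', but $\Y$ is a complete lattice with top element $\X$, so every subset of $\Y$ has an upper bound there. Taken literally, your operator $A\mapsto A^{\uparrow_{\mathrm b}}$ therefore coincides with the unrestricted $A\mapsto A^\uparrow$, and you recover the full directed completion rather than the local one. The correct condition is that the directed family have an upper bound \emph{in $\iota(\X)$} (equivalently, in the current stage $\X_\alpha$, since inductively every element of $\X_\alpha$ lies below some $\iota(x)$). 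Your later remark that the image in $\Z$ is ``bounded above by any $T(a')$ with $\iota(a')$ an actual upper bound'' shows you have the right picture in mind; but the earlier statement and the claimed equivalence ``upper bound in $\Y$, equivalently in $\overline\X$'' are wrong and must be corrected. A minor secondary point: the set $\{a\in\X:\iota(a)\le y\}$ is not in general directed, so you cannot invoke local completeness of $\Z$ directly on it; rather, as in the paper's proof of Theorem~\ref{thm:excompl}, the required suprema in $\Z$ are produced stage by stage as suprema of the directed, bounded families $(T_\alpha(A_i))_i$.

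The paper also offers a cleaner alternative you do not mention: take the full completion $(\bar\X,\iota)$ from Theorem~\ref{thm:excompl} (resp.\ Theorem~\ref{thm:dirseqcompl}) and restrict to the sub-poset $\{\bar x\in\bar\X:\bar x\le\iota(x)\text{ for some }x\in\X\}$. This is a lower set, is locally complete (a bounded directed family has its $\bar\X$-supremum below the bound, hence in the subset), and the universal property is inherited directly, the relevant suprema in $\Z$ existing because they are bounded above by some $T(x)$.
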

\begin{proof} The uniqueness part follows precisely as for the directed and sequential direct completion. Existence also follows along similar lines: it suffices, in the recursion process, to be sure   that at each step  only suprema of sets with upper bounds are added. Alternatively, start  from the completion as in Theorem \ref{thm:excompl} (resp.\ Theorem \ref{thm:dirseqcompl}) and consider the subspace of those points being $\leq \iota(x)$ for some $x\in\X$: it is clear that this is locally directed (sequentially) complete and that it has the desired universal property.
\end{proof}

\end{subsubsection}

\begin{subsubsection}{(Lack of reasonable) two-sided completion}\label{se:twosided}
The whole discussion made in this paper has been built upon the concept of supremum of directed sets, but of course every definition/result has a symmetric analogue  where one deals infima of filtered sets (a subset $A$ of a partial order is filtered if for any $a,b\in A$ there is $c\in A$ with $c\leq a$ and $c\leq b$).

It is totally obvious that  a  version of  Theorem \ref{thm:excompl} (and of its variants Theorems \ref{thm:dirseqcompl}, \ref{thm:loccompl}) can be proved in the category of partial orders where morphisms respect  filtered infima, rather than directed suprema (either mimic the arguments or  apply the given versions to the `dual' order $\leq'$ defined as $x\leq'y$ iff $y\leq x$).

A more compelling question concerns whether one can do both at the same time, i.e.\ whether there is a version of Theorem \ref{thm:excompl}, or its variants, in the category of partial orders where morphisms are maps  respecting both directed suprema and filtered infima. 

This is unclear to us: an abstract existence result can be established, see Remark \ref{re:abstrcompl}, but we are not aware of any constructive proof. More importantly,  such completion  has some  undesirable properties from the perspective of (at least some) working analyst. The example we have in mind is the partial order $\X:=\ell^1_+$, whose two-sided completion, if it exists at all,  is not $[0,\infty]^\N$ with the inclusion, despite $[0,\infty]^\N$  being both `upward' and `downward' complete and the inclusion respecting both directed suprema and filtered infima.

To see why, consider the map $T:\X\to[0,+\infty]$ sending $(a_n)$ to $T((a_n)):=\sum_{n}a_n$.  Here the target space $[0,+\infty]$ is equipped with its canonical order, that  is clearly both `upward' and `downward' complete.

It is easy to see that $T:\X\to[0,+\infty]$ respects both directed suprema and filtered infima: this can be seen for instance via an application of the  dominated convergence theorem, noticing that  any directed (and similarly filtered) set in $\X$ admits a countable cofinal subset linearly ordered (see also Lemma \ref{le:esssup}). Since we already know from Example \ref{eq:ell1} that  $[0,\infty]^\N$ is the directed completion of $\X$, we know   that $T$ can be extended to a map $\bar T:[0,\infty]^\N\to[0,+\infty]$  respecting directed suprema and from formula \eqref{eq:chieetbar} we see that such extension is given by
\[
\bar T((a_n))=\sup_{(b_n)\in\X\atop b_n\leq a_n\ \forall n}T((b_n))=\sup_{(b_n)\in\X\atop b_n\leq a_n\ \forall n}\sum_n b_n=\sum_na_n\qquad\forall (a_n)\in [0,\infty]^\N.
\]
Notice that  Beppo Levi's monotone convergence theorem ensures that $\bar T$ respects directed suprema. We thus defined $\bar T:[0,+\infty]^\N\to[0,+\infty]$ as the only possible extension of $T$ that respects directed suprema. The problem is that $\bar T$ does not respect filtered infima, proving that $[0,+\infty]^\N$ is not the desired two-sided completion, as it does not allow for an extension of $T$ respecting both directed suprema and filtered infima.  To see why $\bar T$ does not respect filtered infima just  let $A_i\in[0,+\infty]^\N$ be the sequence whose first $i$ entries are 0 and the rest $+\infty$. Then clearly $\bar T(A_i)=+\infty$ for every $i\in\N$, and $A_i\geq A_{i+1}$. Also,  we have $(0,0,\ldots)=\inf_iA_i$ but $\bar T((0,0,\ldots))=0\neq+\infty=\inf_i\bar T(A_i)$. This shows that $[0,+\infty]^\N$ is not the two-sided completion of $\X$, as claimed, and also that such completion cannot be found by iterating a process of  `upward' and `downward' completions, as in this case the first step brings us to $[0,+\infty]^\N$ which is already two-sided complete.

We stress the fact that in checking that $\bar T((a_n)):=\sum_na_n$ does not respect filtered infima we are, evidently, just witnessing the fact that for positive functions Beppo Levi's theorem works only for increasing sequences and not for decreasing ones.

\medskip

The same line of thought, brought a bit to the more abstract level, shows that the two-sided completion $(\bar\X,\iota)$ of $\X$ must be quite weird. Indeed, for fixed $i\in\N$ we can consider the directed set $D_i\subset\X$ made of sequences having 0 at the first $i$ entries. The supremum of $\iota(D_i)$ must exist in $\bar\X$: call it $A_i$. Notice that from $D_i\supset D_{i+1}$ we get $A_i\geq A_{i+1}$ in $\bar \X$ and thus there exists the infimum $A_\infty:=\inf_iA_i$ in $\bar \X$. Since $0:=(0,0,\ldots)\in\X$ belongs to each of the $D_i$'s, we have $\iota(0)\leq A_i$ for every $i$ and thus $\iota(0)\leq A_\infty$. The argument above about the extension of $T:\X\to[0,+\infty]$ to the completion shows that $\bar T(A_\infty)=+\infty$ and $\bar T(\iota(0))=T(0)=0$, so that in particular $\iota(0)\neq A_\infty$. 

In fact, this completion should contain many more objects like $A_\infty$. To see why let $I\subset\N$ be arbitrary and for $i\in\N$ define $D^I_i\subset\X$ as the set of sequences that are 0 in the first $i$ entries and also in all the entries with index outside $I$. As before, $D^I_i$ is directed, thus we can define $A^I_i:=\sup\iota(D^I_i)\geq A^I_{i+1}$ and also $A^I_\infty:=\inf_iA^I_i$  (with this notation, the element $A_\infty$ previously defined would be $A^\N_\infty$). It is easy to see that if $I\setminus J\subset \N$ is finite, then $A^I_\infty\leq A^J_\infty$. Also, if $I\setminus J\subset \N$ is infinite, then  we can see $A^I_\infty\neq A^J_\infty$. Indeed,   let $S:\X\to [0,+\infty]$ be defined as $S((a_n)):=\sum_{n\in I\setminus J}a_n$, notice that this respects directed suprema and filtered infima and let $\bar S:\bar \X\to[0,+\infty]$ the extension that also respects directed suprema and filtered infima. Then the same arguments previously used show that $\bar S(A^I_\infty)=+\infty$ and $\bar S(A^J_\infty)=0$, proving that $A^I_\infty\neq A^J_\infty$.

Now recall that we can find a family $\{I_\lambda:\lambda\in\R\}$ of subsets of $\N$ with $I_\lambda\subset I_\eta$ for $\lambda\leq \eta$ and $\#(I_\eta\setminus I_\lambda)=\aleph_0$ when $\lambda< \eta$ (put $I_\lambda:=\{n\in\N:r_n<\lambda\}$, where  $(r_n)$ is an enumeration of the rationals), to deduce that the map $\R\ni\lambda\mapsto A^{I_\lambda}_\infty\in\bar\X$ is injective, monotone and with values between $\iota(0)$ and $A^\N_\infty$.

\end{subsubsection}

\end{subsection}

\begin{subsection}{Spaces of maps with the Monotone Convergence Property}\label{se:mapsmcp}

Let $\X$ be a set and $\Y$ a partial order. Then we can surely define a \emph{pointwise partial order} $\leq_\p$ on the collection $\Y^\X$ of maps from $\X $ to $\Y $ by declaring
\[
T\leq_\p S\qquad\Leftrightarrow\qquad T(x)\leq_\Y S(x)\qquad\forall x\in\X.
\]
If both $\X$ and $\Y$ are partial orders, we will denote by  $\MCP(\X,\Y)$ the collection of maps with the $\MCP$ from  $\X$ to  $\Y$. It is easy to check that
\begin{equation}
\label{eq:mcpcomplete}
\big(\mcp(\X,\Y),\leq_\p\!\big)\qquad
\text{is a directed complete partial order if $\Y$ is so},
\end{equation}
and that  for any  directed family  $(T_i)_{i\in I}\subset\mcp(\X,\Y)$  
\begin{equation}
\label{eq:mcpcomplete2}
\text{the supremum $T:=\sup_iT_i$ is given by the pointwise formula $T(x):=\sup_iT_i(x)\quad\forall x\in\X$,}
\end{equation}
see also \cite[Lemma 4.1(i)]{CompendiumLattices} for a more general presentation of this fact.  To  see the above, it suffices to check that  $T$  as in \eqref{eq:mcpcomplete2} has the $\MCP$ as well (the fact that $T$ is well defined follows from the completeness of $\Y$). Thus let $D\subset\X$ be directed admitting supremum, call it $x$, and notice that
\[
T(x)=\sup_{i\in I}T_i(x)=\sup_{i\in I}\sup_{d\in D}T_i(d)=\sup_{(d,i)\in D\times I}T_i(d)=\sup_{d\in D}\sup_{i\in I}T_i(d)=\sup_{d\in D} T(d),
\]
as desired.

More can be said on the structure of $\mcp(\X,\Y)$ if  $\Y$ is also a lattice. Start noticing that in this case  we can consider the pointwise maximum $T\vee_\p S$ of any two maps $T,S:\X\to\Y$ and the same arguments used above show that this map has the $\MCP$ if both $T$ and $S$ have it. It follows from \eqref{eq:mcpcomplete} that 
\begin{equation}
\label{eq:mcplattice}
\big(\mcp(\X,\Y),\leq_\p\!\big)\qquad\text{is a  complete lattice if $\Y$ is so.}
\end{equation}
In particular, we can define:
\begin{definition}[Projection over maps with the $\mcp$]\label{def:prmcp}
Let $\X,\Y$ be partial orders, with $\Y$ being a complete lattice and $T:\X\to \Y$  a map.

The map ${\sf Pr}(T):\X\to\Y$ is defined as the pointwise supremum of all the $S\in\MCP(\X,\Y)$ such that $S\leq_\p T$.
\end{definition}
By \eqref{eq:mcplattice} and \eqref{eq:mcpcomplete2} we see that ${\sf Pr}(T)$ is well defined and belongs to $\MCP(\X,\Y)$. It is also clearly an idempotent operator and the identity over $\MCP(\X,\Y)$, whence the terminology of `projection' and the notation. It is also monotone, meaning that
\begin{equation}
\label{eq:monpr}
S\leq_\p T\qquad\Rightarrow\qquad {\sf Pr}(S)\leq_\p{\sf Pr}(T),
\end{equation}
as it is clear from the definition.

For later use, in order to understand the properties of ${\sf Pr}(T)$ it will be convenient to have a  more explicit construction of it. The  idea is to replace $T:\X\to\Y$ with the map $P(T):\X\to\Y$ defined as
\begin{equation}
\label{eq:defprt}
P(T)(x):=\inf_{A\subset \X\text{ directed}\atop x=\sup A}\sup\, T(A).
\end{equation}
Concretely,  it is not clear whether $P(T)$ so defined has the $\MCP$, so the actual projection is found via an iteration of this procedure (an iteration that, to the best of our understanding, is necessary even if we  work with  sets with tips rather than directed ones in formula \eqref{eq:defprt}).

\begin{proposition}[Construction of the projection]\label{prop:projmcp}
Let $\X,\Y$ be partial orders, with $\Y$ being a complete lattice and $T:\X\to \Y$  a map.

Recursively define a map $P_\alpha T:\X\to\Y$ for  any ordinal number $\alpha$ as follows: $P_0T:=T$, $P_{\alpha+1}T=P(P_\alpha T)$ (where $P$ is given by formula \eqref{eq:defprt}) and, for limit ordinals $\alpha$, we put $P_\alpha T(x):=\inf_{\beta<\alpha}P_\beta T(x)$ for any $x\in \X$.

Then the sequence eventually stabilizes on ${\sf Pr}(T)$, i.e.\  for some ordinal $\bar\alpha$ we have $P_\alpha T(x)={\sf Pr}(T)(x)$ for any $\alpha\geq\bar\alpha$.
\end{proposition}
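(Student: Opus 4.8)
The plan is to establish three things about the recursion $\alpha \mapsto P_\alpha T$: (1) it is non-increasing in $\alpha$ with respect to $\leq_\p$, (2) it is bounded below by every $S \in \MCP(\X,\Y)$ with $S \leq_\p T$, and (3) the fixed points of the map $P$ are exactly the maps in $\MCP(\X,\Y)$ lying below $T$ — or more precisely, once the sequence stabilizes, the stable value has the $\MCP$ and dominates ${\sf Pr}(T)$; combining (2) and (3) then forces the stable value to \emph{be} ${\sf Pr}(T)$. Stabilization itself is automatic from the usual principle that a monotone (here: non-increasing) map from the ordinals into a set must be eventually constant, exactly as used for \eqref{eq:barordinali}, since $\Y^\X$ is a set.

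First I would check that $P(S) \leq_\p S$ for every $S \colon \X \to \Y$: taking $A := \{x\}$ in \eqref{eq:defprt} gives one admissible directed set with supremum $x$, namely the singleton, whose $\sup S(A) = S(x)$, so the infimum over all such $A$ is $\leq_\p S(x)$. A transfinite induction then shows $\alpha \mapsto P_\alpha T$ is non-increasing: the successor step is the inequality just proved applied to $S = P_\alpha T$, and the limit step is immediate from the definition as a pointwise infimum. Next, for monotonicity of $P$ with respect to $\leq_\p$ (which I will need), if $S_1 \leq_\p S_2$ then for any directed $A$ with $\sup A = x$ we have $\sup S_1(A) \leq \sup S_2(A)$ in $\Y$, hence the infima compare the same way; so $P(S_1) \leq_\p P(S_2)$. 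This also gives, by transfinite induction, that $R \leq_\p T$ and $R \in \MCP(\X,\Y)$ imply $R \leq_\p P_\alpha T$ for all $\alpha$: indeed if $R \leq_\p P_\alpha T$ then $P(R) \leq_\p P(P_\alpha T) = P_{\alpha+1}T$, and $P(R) = R$ because the $\MCP$ of $R$ says precisely $\sup R(A) = R(\sup A) = R(x)$ for each directed $A$ with $\sup A = x$, so the infimum in \eqref{eq:defprt} defining $P(R)(x)$ is an infimum of the constant value $R(x)$. The limit step is again trivial. Applying this with $R := {\sf Pr}(T)$ shows $P_\alpha T \geq_\p {\sf Pr}(T)$ for all $\alpha$.

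It remains to show that the eventually-constant value, call it $R_\infty := P_{\bar\alpha}T$, has the $\MCP$; then $R_\infty \leq_\p T$ (by step (1) and $P_0 T = T$) forces $R_\infty \leq_\p {\sf Pr}(T)$ by Definition \ref{def:prmcp}, and combined with the reverse inequality just obtained we get $R_\infty = {\sf Pr}(T)$, finishing the proof. The stability means $P(R_\infty) = R_\infty$, i.e.\ $R_\infty(x) = \inf_{A\text{ directed},\ \sup A = x} \sup R_\infty(A)$ for every $x$. From this I want to deduce \eqref{eq:defmcp} for $R_\infty$. Monotonicity of $R_\infty$ is clear (take $A = \{x_1,x_2\}$ with $x_1 \leq x_2$ inside the fixed-point identity, or just note each $P_\alpha T$ inherits monotonicity — actually one should verify $P$ preserves monotonicity, which is a short argument using that $\sup T(A) \leq \sup T(A')$ when $\downarrow\!A \subseteq \downarrow\!A'$). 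Now fix a directed $D$ with $\sup D = x$; monotonicity gives $\sup R_\infty(D) \leq R_\infty(x)$, so the task is the reverse inequality $R_\infty(x) \leq \sup R_\infty(D)$. Using the fixed-point identity it suffices to produce \emph{one} directed set $A$ with $\sup A = x$ and $\sup R_\infty(A) \leq \sup R_\infty(D)$ — and $A := D$ works verbatim. Hence $R_\infty(x) \leq \sup R_\infty(D)$, i.e.\ $R_\infty$ has the $\MCP$.

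\textbf{Main obstacle.} The delicate point is the very last step: showing that the fixed-point equation $P(R_\infty) = R_\infty$ genuinely upgrades to the $\MCP$. It looks suspiciously easy in the sketch above because the admissible $A$ in \eqref{eq:defprt} range over \emph{all} directed sets with supremum $x$, so $D$ itself is always available; but one must be careful that $P$ preserves monotonicity along the transfinite recursion (so that the inequality $\sup R_\infty(D) \leq R_\infty(x)$ is licit) and that no issue arises at limit ordinals, where $P_\alpha T$ is defined as a pointwise infimum and one needs this to remain monotone and to still satisfy $P_\alpha T \leq_\p T$ — both follow from the limit being an infimum of monotone maps each $\leq_\p T$. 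A secondary subtlety, worth a remark rather than a fix, is whether $P(R_\infty)$ being a \emph{pointwise} infimum could fail to interact well with directed suprema; the argument above sidesteps this by never needing $P$ itself to have the $\MCP$, only its fixed points, which is exactly why the iteration (rather than a single application of $P$) is unavoidable, as the statement already warns.
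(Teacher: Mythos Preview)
Your argument tracks the paper's proof closely: the singleton trick for $P(S) \leq_\p S$ and the resulting non-increase of $\alpha \mapsto P_\alpha T$, stabilization by cardinality, and the lower bound $P_\alpha T \geq_\p {\sf Pr}(T)$ via $P(R) = R$ for $R \in \MCP(\X,\Y)$ are exactly the paper's moves. The paper is in fact terser than you at the final step, writing only that from $P(P_{\bar\alpha}T)=P_{\bar\alpha}T$ ``by the very definition \eqref{eq:defprt} we see that $P_{\bar\alpha}T$ has the $\MCP$''.

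The obstacle you flagged is real, and neither of your proposed fixes works. The fixed-point identity at $x_2$ with $A=\{x_1,x_2\}$ yields only the triviality $R_\infty(x_2)\leq R_\infty(x_1)\vee R_\infty(x_2)$. Your route to ``$P$ preserves monotonicity'' via $\downarrow\!A \subseteq\, \downarrow\!A'$ would require, for each directed $B$ with $\sup B = y$ and each $x \leq y$, a directed $A$ with $\sup A = x$ and $\downarrow\!A \subseteq\, \downarrow\!B$; such an $A$ need not exist. Concretely: let $\X$ consist of a chain $b_0 < b_1 < \cdots$ together with its supremum $y$ and one further point $x$ with $x < y$ but $x$ incomparable to each $b_n$; let $\Y = [0,1]$ and set $T(x)=T(y)=1$, $T(b_n)=0$. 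Then $T$ is monotone, yet $P(T)(x) = 1$ (only $\{x\}$ has supremum $x$) while $P(T)(y) = 0$ (witnessed by $A = \{b_n\}_n$), so $P(T)$ is not monotone; moreover $P(P(T))=P(T)$, so the iteration stabilizes at a map outside $\MCP(\X,\Y)$, different from ${\sf Pr}(T)\equiv 0$. Hence the statement as written is not correct in full generality, and the paper's one-line justification carries the same gap. In the paper's actual uses the source is a wedge and $T$ is linear, and there one shows separately (Propositions \ref{prop:prsubadditive} and \ref{prop:linpr}) that $P$ preserves sub- and superadditivity, hence linearity and in particular monotonicity, which is what makes the argument go through.
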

\begin{proof}
For any $x\in\X$ the singleton $\{x\}$ is a directed set with supremum $x$, hence admissible in the definition \eqref{eq:defprt}. It follows that the map $\alpha\mapsto P_\alpha T(x)$ is non-increasing, hence for cardinality reasons there must be some ordinal $\alpha_x$ such that the map is constant after that. Then $\bar\alpha:=\sup_{x\in\X}\alpha_x$ has the required properties.

In particular, we have $P(P_{\bar\alpha}T)=P_{\bar\alpha+1}T=P_{\bar\alpha}T$ and thus, by the very definition \eqref{eq:defprt} we see that $P_{\bar\alpha}T$ has the $\MCP$. Since clearly $P_{\bar\alpha}T\leq_\p T$, it remains to prove that if  $S\in\mcp(\X,\Y)$ is $\leq_\p T$, then it is $\leq_\p P_{\alpha}T$ for any ordinal $\alpha$. This, is trivial by construction and transfinite induction. Indeed, if $S\leq_\p T$ has the $\MCP$, then for every $x\in\X$  we have
\[
P(T)(x)=\inf_{A\subset \X\text{ directed}\atop x=\sup A}\sup\, T(A)\geq \inf_{A\subset \X\text{ directed}\atop x=\sup A}\sup\, S(A)=\inf_{A\subset \X\text{ directed}\atop x=\sup A} S(x)=S(x),
\]
i.e.\ $S\leq_\p P(T)$. This shows that $S\leq_\p P_\alpha T$ implies  $S\leq_\p P_{\alpha+1} T$. Moreover, if $\alpha$ is a limit ordinal and we know that $S\leq_\p P_\beta T$ for any $\beta<\alpha$, then it follows from the definition of $P_\alpha T$ that  $S\leq_\p P_\alpha T$ as well, concluding the proof by transfinite recursion.
\end{proof}

\end{subsection}

\end{section}

\begin{section}{Hyperbolic Banach Spaces}

\begin{subsection}{Wedges, cones and cones with joins}

\begin{definition}[Wedge]\label{def:wedge}
A prewedge $\W$ is a set equipped with:
\begin{itemize}
\item[i)] A commutative and associative binary operation of `sum'  with identity element, denoted $0$ (in the terminology of abstract algebra $(\W,+,0)$ is called a commutative monoid).
\item[ii)] An operation of product with non-negative reals, i.e.\ a map $[0,+\infty)\times \W\ni (\lambda, v)\mapsto \lambda\cdot v\in \W$ (whose result shall be typically written $\lambda v$) that satisfies
\[
\begin{array}{rll}
\lambda\cdot(\eta\cdot v)\!\!\!&=(\lambda\eta)\cdot v,\qquad&\forall \lambda,\eta\in[0,\infty),\ v\in \W,\\
0\cdot v\!\!\!&=0,\qquad&\forall v\in \W,\\
1\cdot v\!\!\!&=v,\qquad&\forall v\in \W,\\
(\lambda+\eta)\cdot v\!\!\!&=\lambda \cdot v+\eta\cdot v,\qquad&\forall \lambda,\eta\in[0,\infty),\ v\in \W,\\
\lambda\cdot(v+w)\!\!\!&=\lambda\cdot v+\lambda\cdot w,\qquad&\forall \lambda\in[0,\infty),\ v,w\in \W.
\end{array}
\]
\end{itemize}
We equip $\W$ with the relation $\leq $ defined as:
\begin{equation}
\label{eq:defpo}
\text{ $v\leq w $ if and only if for some $z\in \W$ we have $v+z=w$. }
\end{equation}
Notice that $+$ being associative and with identity ensures that $\leq$ is reflexive and transitive, hence a preorder.   

We say that $\W$ is a wedge if moreover:
\begin{itemize}
\item[iii)] The relation $\leq$ is a partial order, i.e.\  $v+z=w$ and $w+z'=v$ imply $v=w$. We remark that we are \emph{not} requiring that $z=z'=0$ here,
\item[iv)] The following compatibility between product and order relation is in place:
\begin{equation}
\label{eq:supmultbase}
 v=\sup_{\eta<1}\eta v\qquad\qquad\forall  v\in \W.
\end{equation}
\end{itemize}
\end{definition}
From the calculus properties encoded in the definition it directly follows that
\begin{equation}
\label{eq:basebase}
\begin{array}{rll}
v\leq w&\qquad\Rightarrow\qquad &\lambda v\leq \lambda w,\\
\lambda\leq\eta&\qquad \Rightarrow\qquad &\lambda v\leq\eta v,\\
v_1\leq w_1\qquad v_2\leq w_2&\qquad\Rightarrow\qquad&v_1+v_2\leq w_1+w_2
 \end{array}
\end{equation}
hold for any $v,v_1,v_2,w,w_1,w_2\in \W$ and $\lambda,\eta\in[0,+\infty)$.  We shall frequently use these relations without further notice. Observe that replacing $v$ with $\lambda v$ in \eqref{eq:supmultbase} we get
\begin{equation}
\label{eq:supmult}
\lambda v=\sup_{\eta<\lambda}\eta v\qquad\qquad\forall  v\in \W,\ \lambda\in(0,+\infty).
\end{equation}
Also,  \eqref{eq:supmult} yields the following analogous property:
\begin{equation}
\label{eq:infmult}
\lambda v=\inf_{\eta>\lambda}\eta v\qquad\qquad\forall  v\in \W,\ \lambda\in(0,+\infty).
\end{equation}
Indeed, \eqref{eq:basebase} shows that  $\lambda v$ is a lower bound for $\{\eta v:\eta>\lambda\}$. Now suppose that $w$ is another lower bound  and notice that for any $\eta>\lambda$ we have $\eta^{-1}w\leq \eta^{-1}\eta v=v$ and thus  $v\geq \sup_{\eta>\lambda}\eta^{-1}w=\sup_{\gamma<\lambda^{-1}}\gamma w=\lambda^{-1}w$, having used \eqref{eq:supmult} in the last step. It follows that $\lambda v\geq w$ and thus that \eqref{eq:infmult} holds.

Notice that since $0+v=v$ holds for every $v\in \W$, we have that $0$ is the minimum of $\W$, i.e.:
\[
0=\min \W=\sup \emptyset.
\]

In general the cancellation property does \emph{not} hold in wedges of interest for us, i.e.\ the inequality  $v+a\leq w+a$  does not imply $v\leq w$ and similarly the equality $v+a=w+a$ does not imply $v=w$  (see in particular the discussion around the definition of the infinity element $\infty$ in \eqref{eq:definfty}). Still, the following is in place:
\begin{lemma}[Cancellation property - first version]\label{le:canc1}
Let $\W$ be a prewedge and $a,b,v\in \W$. Then:
\begin{equation}
\label{eq:cancellation}
a+v\leq b+v\qquad\qquad\Leftrightarrow\qquad \qquad a+\tfrac1n v\leq b+\tfrac1n  v\qquad \forall n\in\N,\ n>0.
\end{equation}
\end{lemma}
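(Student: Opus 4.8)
The implication $(\Leftarrow)$ is immediate by specializing to $n=1$, so all the content is in $(\Rightarrow)$.

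The plan is to avoid any attempt to ``cancel'' $v$ directly, which is hopeless without a cancellation law, and instead to rescale the witnessing identity and then reassemble it. Write the hypothesis $a+v\leq b+v$ as $a+v+z=b+v$ for some $z\in\W$. Since multiplication by the scalar $\tfrac1n$ is a well-defined operation on $\W$ that distributes over $+$ (Definition \ref{def:wedge}(ii)), I would first apply $\tfrac1n\cdot(-)$ to both sides of this identity to obtain
\[
\tfrac{1}{n}a+\tfrac{1}{n}v+\tfrac{1}{n}z=\tfrac{1}{n}b+\tfrac{1}{n}v;
\]
call this relation $(\star)$. Its key feature is that the coefficient of $v$ is now $\tfrac1n$ rather than $1$.

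The heart of the argument is then a finite induction on $k$ establishing
\[
\tfrac{k}{n}a+\tfrac{1}{n}v+\tfrac{k}{n}z=\tfrac{k}{n}b+\tfrac{1}{n}v\qquad\text{for every integer }k\geq 0.
\]
The cases $k=0$ and $k=1$ are immediate ($k=1$ being exactly $(\star)$); for the inductive step one regroups using commutativity and associativity of $+$ together with the scalar identities $\tfrac{k+1}{n}w=\tfrac{k}{n}w+\tfrac{1}{n}w$ (a consequence of the axiom $(\lambda+\eta)\cdot w=\lambda\cdot w+\eta\cdot w$), so as to isolate a copy of the left-hand side of $(\star)$, which is then replaced by its right-hand side. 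Taking $k=n$ and using $\tfrac{n}{n}w=w$ (from $\lambda\cdot(\eta\cdot w)=(\lambda\eta)\cdot w$ and $1\cdot w=w$) yields
\[
a+\tfrac{1}{n}v+z=b+\tfrac{1}{n}v,
\]
which is precisely $a+\tfrac1n v\leq b+\tfrac1n v$ by the definition \eqref{eq:defpo} of $\leq$. As $n\geq 1$ was arbitrary, this gives $(\Rightarrow)$.

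I do not expect a genuine obstacle here. The only subtle point is conceptual rather than computational: one must rescale \emph{before} accumulating copies of the identity, so that the coefficient of $v$ stays frozen at $\tfrac1n$ throughout the induction. Iterating the original identity $a+v+z=b+v$ instead only produces relations of the form $a+v+(k+1)z=b+v+kz$, which never lowers the coefficient of $v$; it is precisely the preliminary rescaling that breaks this deadlock. Everything else is routine bookkeeping with the prewedge axioms for $+$ and for multiplication by non-negative scalars.
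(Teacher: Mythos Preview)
Your proof is correct and is essentially the paper's argument, just phrased with an explicit witness: the paper rescales the inequality $a+v\leq b+v$ by $\tfrac1n$ and chains the resulting inequalities $\tfrac in a+\tfrac1n v\leq \tfrac1n b+\tfrac{i-1}{n}a+\tfrac1n v$ for $i=n,\ldots,1$, whereas you carry the witness $z$ along and chain equalities. A small bonus of your formulation is that it makes visible that the \emph{same} $z$ witnesses $a+\tfrac1n v\leq b+\tfrac1n v$ for every $n$.
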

\noindent (Note: these are also equivalent to $ a+\lambda v\leq b+\lambda  v$ \ $\forall \lambda>0$, but we shall use the form in \eqref{eq:cancellation})
\begin{proof}
 $\Leftarrow$ is obvious (pick $n=1$). For $\Rightarrow$ fix $n\in\N$, $n>0$, and notice that for any $i=1,\ldots,n$ we have
\[
\begin{split}
\tfrac in a+\tfrac1nv&=\tfrac{i-1}na+\tfrac1n(a+v)\leq \tfrac{i-1}na+\tfrac1n(b+v)=\tfrac1n b+\tfrac{i-1}na+\tfrac1n v.
\end{split}
\]
The conclusion follows chaining these inequalities starting from  $i=n$ up to $i=1$.
\end{proof}

If $\W$ is a wedge and $A\subset \W$ a subset, we shall denote by $v+A$ (resp.\ $\lambda A$) the set $\{v+w:w\in A\}$ (resp.\ $\{\lambda w:w\in A\}$).
\begin{proposition}\label{prop:operazioniMCP}
Let $\W$ be a wedge, $A\subset \W$ and $\lambda\in(0,+\infty)$. Then $A$ has sup (resp.\ inf) if and only if $\lambda A$ has sup (resp.\ inf) and in this case we have
\begin{equation}
\label{eq:basemult}
\sup(\lambda A)=\lambda\sup A\qquad\qquad (\text{resp. }\inf(\lambda A)=\lambda\inf A).
\end{equation}
In particular, the map $\W\ni v\mapsto \lambda v\in \W$ has the $\MCP$.

Also, for any $v\in \W$ we have that: if $A$ has sup  then so does $v+A$ and 
\begin{equation}
\label{eq:basesomma}
\sup(v+A)=v+\sup A.
\end{equation}
In particular, the map $\W^2\ni (v,w)\mapsto v+w\in \W$ has the $\MCP$ (on the source space we are putting the product order, i.e.\ $(v_1,v_2)\leq (w_1,w_2)$ if and only if $v_1\leq w_1$ and $v_2\leq w_2$).
\end{proposition}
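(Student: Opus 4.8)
## Proof Proposal

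The plan is to establish the four claims in turn, handling the scaling statements first since they are cleanest, then the additive one, and finally deducing the $\MCP$ statements as corollaries.

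\textbf{Step 1: Scaling preserves and reflects suprema.} First I would observe that multiplication by $\lambda \in (0,+\infty)$ is a monotone bijection of $\W$, with inverse multiplication by $\lambda^{-1}$: this is immediate from the calculus axioms $\lambda\cdot(\eta\cdot v)=(\lambda\eta)\cdot v$ and $1\cdot v = v$, together with the first line of \eqref{eq:basebase}. Since both $v\mapsto\lambda v$ and its inverse are monotone, the map $v\mapsto\lambda v$ is an order-isomorphism of $(\W,\leq)$ onto itself. An order-isomorphism automatically carries suprema to suprema and infima to infima: if $s=\sup A$, then $\lambda s$ is an upper bound of $\lambda A$ (monotonicity), and if $u$ is any upper bound of $\lambda A$ then $\lambda^{-1}u$ is an upper bound of $A$ (monotonicity of the inverse), so $\lambda^{-1}u\geq s$, hence $u\geq\lambda s$; thus $\lambda s=\sup(\lambda A)$, and the same for $\inf$. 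Applying this with $\lambda^{-1}$ in place of $\lambda$ gives the "if" direction, so the equivalence ``$A$ has sup iff $\lambda A$ has sup'' and \eqref{eq:basemult} both follow. The $\MCP$ of $v\mapsto\lambda v$ is then a special case of \eqref{eq:basemult} applied to directed sets.

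\textbf{Step 2: Translation preserves suprema.} Here I would argue directly. Fix $v\in\W$ and suppose $s=\sup A$. By monotonicity (third line of \eqref{eq:basebase}), $v+s$ is an upper bound of $v+A$. Now let $u$ be any upper bound of $v+A$, i.e. $v+a\leq u$ for all $a\in A$; I want $v+s\leq u$. The key difficulty is the absence of cancellation: I cannot simply ``subtract $v$'' to conclude $a\leq u-v$. Instead I would use the cancellation property \eqref{eq:cancellation} of Lemma \ref{le:canc1} combined with the compatibility axiom \eqref{eq:supmultbase}. The inequality $v+a\leq u$ can be written as $a + v \leq (\text{something}) + v$ only if $u$ already contains a copy of $v$; since $u$ is an upper bound of $v+A$ and $A\ni a$ (taking $a$ arbitrary, and using that $0\in\W$ so... actually more carefully): from $v+a\le u$ there is $z_a$ with $v+a+z_a=u$, so $u = v + (a+z_a)$, hence $u\geq v$ and we may write $u=v+u'$ for some $u'$. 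Then $v+a\leq v+u'$ for all $a$, and Lemma \ref{le:canc1} gives $\tfrac1n v + a \leq \tfrac 1n v + u'$ for all $n>0$ and all $a\in A$; that is, $\tfrac1n v + u'$ is an upper bound of $\tfrac1n v + A$. Taking sup over $a\in A$ is not yet legitimate, so instead note $a \le \tfrac1n v + u'$ only follows after we can absorb $\tfrac 1n v$; rather, I would fix $n$ and observe $\tfrac1n v + a \le \tfrac 1n v + u'$ gives (by the same argument) that $\tfrac 1n v + u'$ dominates the sup of $\{\tfrac 1n v + a\}$ once we know that sup exists — which by Step 1-style reasoning and the fact that $s=\sup A$ we want to show. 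This circularity suggests the cleaner route: show directly $v+s\le u=v+u'$ by first showing $s\le u'+\tfrac1n v$ for every $n$. Indeed $a\le u'+\tfrac1n v$? From $\tfrac1n v + a\le \tfrac1n v+u'$ I still cannot cancel.

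\textbf{Step 2, revised approach.} The honest route, which I expect the authors take, is: from $v+a\le u$ for all $a\in A$, we get that $u$ is an upper bound for $v+A$; we must show $v+s\le u$. Write $u=v+u'$ as above. Then $v+a\le v+u'$ for all $a$. By \eqref{eq:cancellation}, $\tfrac1n v+a\le \tfrac1n v+u'$ for all $n\ge 1$ and all $a\in A$. Now for fixed $n$, the element $\tfrac1n v+u'$ is an upper bound of $\{\tfrac1n v + a: a\in A\}$, which by Step 1 (translation by the \emph{smaller} element, or rather: we do NOT yet know translation preserves sup — that's what we're proving). The resolution: it suffices to show $a\le u'$ is \emph{false} in general, so we genuinely need the $\tfrac1n$ trick to pass to a limit. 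I would show $v+s\le u$ by establishing $s\le u' + \tfrac1n v + (\text{error})$... The main obstacle is precisely this, and I would resolve it by the following trick: \eqref{eq:cancellation} gives $\tfrac1n v + a \le \tfrac1n v + u'$; since this holds for the fixed upper bound $u'$ and all $a$, and since $0\le \tfrac1n v$, we get $a \le \tfrac1n v + a \le \tfrac1n v + u'$, so $\tfrac1n v + u'$ \emph{is} an upper bound of $A$ itself, hence $s\le \tfrac1n v + u'$ for all $n\ge 1$. Therefore $v+s \le v + \tfrac1n v + u' = (1+\tfrac1n)v + u'$ for all $n$, i.e. $v+s$ is a lower bound for $\{(1+\tfrac1n)v + u' : n\ge 1\}$. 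By \eqref{eq:infmult} (with $\lambda=1$, written as $v=\inf_{\eta>1}\eta v$) and the fact that $\inf$ distributes over translation by $u'$ — which is \eqref{eq:infmult}'s analogue, provable by the same lower-bound argument — we get $\inf_n\big((1+\tfrac1n)v+u'\big) = v+u' = u$. Hence $v+s\le u$, as required. This shows $\sup(v+A)=v+\sup A$ whenever $\sup A$ exists.

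\textbf{Step 3: the $\MCP$ conclusions.} The statement that $v\mapsto\lambda v$ has the $\MCP$ is Step 1 restricted to directed sets. For the sum $\W^2\to\W$: by Proposition \ref{prop:mcpprod} (componentwise continuity implies continuity) it suffices to check that for each fixed $w$ the map $v\mapsto v+w$ has the $\MCP$ and symmetrically; but $v\mapsto v+w=w+v$ is exactly translation, whose $\MCP$ is \eqref{eq:basesomma} restricted to directed sets. This closes the proof. The one genuinely delicate point, worth isolating as a sub-lemma, is the infimum analogue ``$\inf(u'+B)=u'+\inf B$'' needed at the end of Step 2, which is proved verbatim like \eqref{eq:infmult}: $u'+\inf B$ is a lower bound, and any lower bound $w$ of $u'+B$ satisfies — again via \eqref{eq:cancellation} after writing things appropriately — $w\le u'+b$ hence (since we cannot cancel) we instead note $w$ dominated from above forces, through the $\tfrac1n$-trick and \eqref{eq:supmult}, that $w\le u'+\inf B$. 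I would state and prove this small companion fact first, right after Lemma \ref{le:canc1}-style manipulations, to keep Step 2 clean.
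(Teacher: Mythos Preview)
Your Steps 1 and 3 match the paper exactly, and your revised Step 2 follows the paper's line almost to the end: write $u=v+u'$, use Lemma~\ref{le:canc1} to get $a\le \tfrac1n v+u'$, conclude $s\le \tfrac1n v+u'$, hence $v+s\le (1+\tfrac1n)v+u'$. The divergence is in the last move. You propose to finish by invoking a ``companion fact'' $\inf(u'+B)=u'+\inf B$, claiming it is ``provable verbatim like \eqref{eq:infmult}''. This is a genuine gap: the paper explicitly remarks (just after the calculus rules \eqref{eq:epsinfty}) that in a general wedge one \emph{cannot} prove $\inf(A+v)=(\inf A)+v$; the proof of \eqref{eq:infmult} works because scaling by $\eta$ has an inverse (scaling by $\eta^{-1}$), while translation by $u'$ does not, so your ``same lower-bound argument'' has no traction.

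The paper sidesteps this cleanly: from $v+s\le (1+\tfrac1n)v+u'$ observe that $(1+\tfrac1n)v+u'\le (1+\tfrac1n)v+(1+\tfrac1n)u'=(1+\tfrac1n)u$, so $v+s\le (1+\tfrac1n)u$ for every $n$, and now \eqref{eq:infmult} applied to the single element $u$ gives $v+s\le u$. No translation-infimum identity is needed. Replace your proposed sub-lemma by this one-line estimate and your proof is complete and identical to the paper's.
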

\begin{proof} By the first in \eqref{eq:basebase} we see that $u$ is an upper bound of $A$ (resp.\ lower bound) if and only if $\lambda u$ is an upper bound of $\lambda A$ (resp.\ lower bound). The first claim \eqref{eq:basemult} follows.

For \eqref{eq:basesomma} notice that $v+\sup(A)\geq v+w$ for any $w\in A$, showing that $v+\sup A$ is an upper bound for $v+A$. Conversely, let $u$ be an upper bound for $v+A$. Then in particular $u\geq v$, i.e.\ $u=v+z$ for some $z\in \W$. Then for any $w\in A$ we have $v+z\geq v+w$, so that the cancellation property \eqref{eq:cancellation} yields  $\tfrac1nv+z\geq\tfrac1nv+w\geq w$ and thus ultimately that $\tfrac1nv+z\geq\sup A$ for every $n\in\N$, $n>0$. It follows that   $(1+\tfrac1n)u\geq v+\tfrac1nv+z\geq v+\sup A$ and being thus true for any $n\in\N$, taking that $\inf$ in $n$ and using  \eqref{eq:infmult} we deduce $u\geq v+\sup A$, concluding the proof of \eqref{eq:basesomma}. The claim about addition having the $\MCP$ now follows from Proposition \ref{prop:mcpprod}.
\end{proof}

\begin{definition}[Cone]\label{def:cone}
A cone $\C$ is a wedge such that the order relation $\leq$ is directed complete.
\end{definition}
For any $v,w\in \C$ we have $v\leq v+w$ and $w\leq v+w$, showing that the whole cone $\C$ is directed, hence admits a supremum which, being an element of $\C$, is a  maximum. We shall denote it $\infty$, so that
\begin{equation}
\label{eq:definfty}
\infty=\max \C=\inf \emptyset.
\end{equation}
We point out that $\infty\in \C$ is a so-called absorbing element of the addition, meaning that   
\begin{equation}
\label{eq:abs}
v+\infty=\infty\qquad\forall v\in \C,
\end{equation}
because $v+\infty$ is a well defined  element of $\C$, hence $\leq\infty$. In particular,  the standard cancellation property does not hold on cones. 

\begin{remark}[Completing a wedge]\label{re:abstrcompl}{\rm
Proposition \ref{prop:operazioniMCP} above  together with Proposition \ref{prop:mcpprod} ensures that the operations of sum and product by a scalar can be uniquely extended, maintaining the $\mcp$, to the directed completion $\bar \W$ of any given wedge $\W$. Such completion therefore carries a canonical structure of prewedge and the discussion below clarifies that it is actually a wedge. 

However, it is not clear to us whether at this level of generality the completion is a cone (but see Remark \ref{re:wedgecomplcpiu} for a possible counterexample). The problem is that on it we have two natural partial orders: one, call it $\leq_1$, is the one coming from the structure of directed completion and the other, call it $\leq_2$,  coming from the wedge structure. In order for $\bar \W$ to be a cone we should know that it is $\leq_2$-complete, but we only know that it is $\leq_1$ complete. We do not know if $\leq_1$ and $\leq_2$ coincide or not, but it is easy to see that $x\leq_2 y$ implies $x\leq_1y$ (so that in particular $\leq_2$ is antisymmetric and thus a  partial order, as claimed before). Indeed, if $x\leq_2 y$ then  there is $z\in\bar \W$ such that $x+z=y$. Now let $A_x,A_z\subset \W$ be  sets with tips in $\bar \W$ (here we think $\W\subset \bar \W$) with tips $x,z$ respectively and notice that an argument by transfinite induction and the $\mcp$ of the sum show that the set $A_y:=A_x+A_z=\{a_x+a_z:a_x\in A_x,a_z\in A_z\}\subset \W$ has tip in $\bar \W$, the tip being    $y$. We thus know that the sets $A_x,A_y\subset \W$ have tips $x,y$ respectively and that for every $a_x\in A_x$ there is $a_y\in A_y$ with $a_x\leq a_y$. Hence $A_x\subset\, \downarrow\! A_y$, thus $\widehat A_x\subset\widehat A_y$ proving that   $x\leq_1y$, as claimed.

This  suggests that the completion should be looked for in the  category of wedges where morphisms are linear maps satisfying the $\mcp$ (we will  call these `morphism' in Section \ref{se:morphisms}). In this direction, an abstract existence result   comes from the following general construction, reminiscent of embedding in the bidual and Yoneda's lemma, as an object is studied via morphisms into all (relevant) objects. Let $\W$ be a wedge and consider the collection $B$ of couples $(T,\C)$ where $\C$ is a cone and $T:\W\to \C$ is linear with the $\mcp$. Then we consider the product 
\[
\C_\W:=\prod_{(T,\C)\in B}\C
\]
equip it with componentwise operations and notice that, rather trivially, with these $\C_\W$ is  a cone (if only it were a set, see also below - notice that in this case   $\C_\W$   would be, together with the natural projections, the product of the given cones in the categorical sense). Finally we consider the `evaluation map' ${\sf ev}:\W\to \C_\W$ sending $v\in \W$ into the element of $\C_\W$ whose $(T,\C)$-component is $T(x)\in \C$. Routine arguments show that the directed-sup-closure $\overline{{\sf ev}(\W)}$ of ${\sf ev}(\W)\subset \C_\W$ in $\C_\W$ is the desired completion, together with the map ${\sf ev}$. Notice that this tells nothing on the structure of such completion: for instance it is unclear whether the evaluation map ${\sf ev}$ from $\W$ to its completion as injective (this comes from  difficulties in producing linear maps with the $\mcp$ -- but see Section \ref{se:morphisms}).

This sort of procedure is standard and is the one behind Freyd's General Adjoint Functor Theorem, see e.g.\ \cite[Section V.6]{MacLane98} (except that we avoid considering a solution \emph{set} because the above product is a proper class). For instance, it is the one producing the Stone-\v{C}ech compactification, and if applied to the category of metric spaces with  1-Lipschitz maps it reproduces the standard completion (notice that in such category the distance on the product is the `sup': this can take the value $+\infty$, but on the image of the evaluation map such distance is finite). 

The added value of this line of thought is in showing existence of the desired completion, but its level of abstraction makes it of limited use  in many applications. 
It is worth to notice that  we could have applied the above strategy also to show the existence of the two-sided completion mentioned in Section \ref{se:twosided} or as alternative route for existence of directed completion in Section \ref{se:constr}. This latter choice, however would have prevented us from knowing `how the completion is made' and how we can recognize it, as studied in Section \ref{se:commcompl}. 

Finally, we remark that, as usual, the construction above has to be taken with a bit of caution from the set-theoretic perspective (see e.g.\ \cite{Jech2003}). If one works in ZF, then she should limit herself to just one choice of $(T,\C)$ per isomorphism class and to cones $\C$ of `not too high cardinality compared to that of $\W$' (meaning `so that there is an epimorphism from $\W$ to $C$' --  it has to be understood who such cardinality actually is). Alternatively, working in NGB the objects $B,\C_\W$ make sense as a proper classes, then the image of $\W$ via ${\sf ev}$ is a set and so is its directed-sup-closure. 
}\fr\end{remark}

Given a cone $\C$ and   $v\in \C$ the sets  
\[
\begin{split}
&\{w:w\leq \lambda v\text{ for every }\lambda>0\}\qquad\text{and}\qquad \{w:w\leq \lambda v\text{ for some }\lambda>0\}
\end{split}
\]
are  both closed by sum, hence directed, hence their sup exist. These deserve a special name
\[
\begin{split}
\eps v:=&\sup\{w:w\leq \lambda v\text{ for every }\lambda>0\}=\inf \{  \lambda v\ :\ \lambda>0\},\\
\infty v:=&\sup\{w:w\leq \lambda v\text{ for some }\lambda>0\}=\sup \{  \lambda v\ :\ \lambda>0\}.
\end{split}
\] 
We shall sometimes refer to $\eps v$ as to the `part at infinite' of $v$, because it is `what remains of $v$ after multiplying it by a very small positive number'.  

The notation is also somehow  justified by the validity of the following calculus rules, that hold for any $v,w\in \C$ and $\lambda\in(0,+\infty)$:
\begin{equation}
\label{eq:epsinfty}
\begin{array}{rlrl}
\eps(v+w)\!\!\!\!&\geq \eps v+\eps w,&\qquad\qquad\qquad \infty(v+w)\!\!\!\!&= \infty v+\infty w,\\
\lambda v+\eps v\!\!\!\!&=\lambda v,&\qquad\qquad\qquad \lambda v+\infty v\!\!\!\!&=\infty v,\\
\lambda(\eps v)\!\!\!\!&=\eps v,&\qquad\qquad\qquad\lambda(\infty v)\!\!\!\!&=\infty v,\\
\eps(\eps v)\!\!\!\!&=\eps v,&\qquad\qquad\qquad\infty(\infty v)\!\!\!\!&=\infty v,\\
\infty(\eps v)\!\!\!\!&=\eps v,&\qquad\qquad\qquad \eps(\infty v)\!\!\!\!&=\infty v,\\
\infty v+\eps v\!\!\!\!&=\infty v.&&
\end{array}
\end{equation}
All of these are rather straightforward consequences of the definitions and \eqref{eq:basemult}, \eqref{eq:basesomma}. We comment just on the first row. For the equality we have
\[
\infty(v+w)=\sup_{\lambda>0}\lambda(v+w)=\sup_{\lambda>0}(\lambda v+\lambda w)=\sup_{\lambda,\eta>0}(\lambda v+\eta w)\stackrel{\eqref{eq:basesomma}}=\sup_{\lambda >0}\lambda v +\sup_{ \eta>0} \eta w=\infty v+\infty w.
\]
For the inequality:
\[
\forall \lambda>0\qquad\eps v+\eps w\leq\lambda v+\lambda w=\lambda(v+w)\qquad\Rightarrow\qquad \eps v+\eps w\leq\eps(v+w).
\]
The difficulty in getting the other inequality is that we do not know whether the bound $z\leq v+w$ implies that we can write $z=z_1+z_2$ for some $z_1\leq v$ and $z_2\leq w$. This difficulty also prevents us from proving that $\inf (A+v)=(\inf A)+v$ (here the inequality $\geq$ is trivial). See also the axiomatization of cones with joins given below and the definition \ref{def:tdp} of directed decomposition property.

In particular, given a cone $\C$ we can naturally define  its `cone at infinity' as
\begin{equation}
\label{eq:cinfinity}
\C_{\rm infinity}:=\{v\in \C\ :\  v=\infty v\}\ =\ \{v\in \C\ :\  v=\eps v\}\ =\ \{v\in \C\ :\  v= v+v\}.
\end{equation}
The above properties easily imply that $\C_{\rm infinity}$ is wedge. To see that it is a cone notice that  \eqref{eq:basesomma}  and the last characterization in \eqref{eq:cinfinity} show that a directed family $(v_i)$ in $\C_{\rm infinity}$  has a $\C$-supremum in $\C_{\rm infinity}$, so that to conclude it suffices to prove that partial order in $\C_{\rm infinity}$ coming from its wedge structure coincides with the restriction of that in $\C$ and since the former is clearly included in the latter, we need only to prove that: if $v,w\in \C_{\rm infinity}$ are so that $v\leq w$ in $\C$, then there is $z\in \C_{\rm infinity}$ such that $v+z=w$. To see this, let $z'\in \C$ be so that $v+z'=w$ and put $z:=\infty z'$: we have   $w=\infty w= \infty v+\infty z'=v+z$, concluding the proof.
\begin{remark}\label{re:finitepart}{\rm
It is natural to also define the `finite part'  of the cone, or wedge, $\C$ as
\[
\C_{\rm finite}:=\{v\in \C\ :\ \eps v=0\}.
\]
Notice, however, that we don't know in this generality  whether this set is closed by sum, thus if it is a wedge (but we don't have counterexamples either). If this turns out to be the case, one can canonically associate to it, and thus to $\C$,  a vector space $V$ and a natural inclusion $\C_{\rm finite}\hookrightarrow V$: the construction is that of  the Groethendick group associated to the wedge, see also \cite{KO25} and Section \ref{se:finitedim} for some  comments. Notice that this procedure inverts the  procedure that associates a Banach spacetime to a classical Banach space  as described in Section \ref{se:Banst} (akin to that which starts from $\R^3$, builds the Minkowski space and then pick the causal/directed completion of its future cone).

It would also seem reasonable to only consider cones that are the (directed) completion of their finite part, as in some sense one would think that the `true geometry' is encoded in the finite part, and that elements with non-zero part at infinity should only arise as suitable limits of finite ones. While this perspective is sound, and would allow to exclude `bad' examples such as those presented in Example \ref{ex:nofinpart} below or in Section \ref{se:finitedim}, it is unclear to us if this property passes to duals. Similarly, we do not know whether the property `the finite part is dense' passes to duals or not.
}\fr\end{remark}

\begin{example}\label{ex:nofinpart}{\rm
The singleton $\{0\}$ is a cone. The only one for which $0=\infty$.

The two elements set  $\{0,+\infty\}$, with the obvious operations, is a cone. It is not the completion of its finite part.

More interestingly, consider the wedge $\W:=[0,+\infty)^2$  (with component-wise operations). It is clear that its completion is $\C:=[0,+\infty]^2$, that this is a cone (recall also Proposition \ref{prop:mcpprod}) and that $\W$ is its finite part. Still, there are other cones having $\W$ as their finite part, such as:
\[
\begin{split}
\C_1&:=\W\cup\{(+\infty,+\infty)\},\\
\C_2&:= W\cup([0,+\infty]\times\{+\infty\}),
\end{split}
\]
with the obvious operations. See also Section \ref{se:finitedim} for some further finite dimensional examples and comments.
}\fr\end{example}

In this generality it is not clear to us whether $\inf_nw+\tfrac1nv$ exists, nor whether in case it   is equal to $w+\eps v$. Still, we have that in any cone $\C$, for $v,w\in \C$ it holds
\begin{equation}
\label{eq:quasieps}
\max\big\{z\in \C :\ w\leq z\leq w+\tfrac1nv\quad\forall n\in\N,\ n>0\big\}= w+\eps(w+ v).
\end{equation}
Indeed, $w\leq w+\eps(w+ v)$ clearly holds and for every $n,m\in\N$, $n\geq m>0$ we have 
\[
w+\eps(w+ v)\leq w+\tfrac1n(w+v)\leq (1+\tfrac1n)(w+\tfrac1mv).
\]
Taking the inf in $n$ recalling \eqref{eq:infmult} we see that $w+\eps(w+v)$ belongs to the set on the left hand side of \eqref{eq:quasieps}. Let $z$ be another element and notice that since $z\geq w$ by assumption we can write $z=w+z'$ for some $z'\in \C$. Then we have $w+z'\leq w+\tfrac1n v$ for any $n$, so from \eqref{eq:cancellation} we get $z'\leq \tfrac1n(w+v)$ for any $n\in\N$, and thus $z'\leq \eps(w+v)$, proving \eqref{eq:quasieps}.

This discussion clarifies what is needed to get a cleaner cancellation property:
\begin{proposition}[Cancellation property - second version]\label{prop:percancellazione}
Let $\C$ be a cone with the following property: if $A\subset \C$ admits infimum, then $v+A$ admits infimum for every $v\in \C$ and
\begin{equation}
\label{eq:infsumprima}
\inf(v+A)=v+\inf A.
\end{equation}
 Then
\begin{equation}
\label{eq:cancellationtrue}
a+v\leq b+v\qquad\qquad\Leftrightarrow\qquad\qquad a+\eps v\leq b+\eps v.
\end{equation}
\end{proposition}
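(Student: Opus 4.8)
The direction $\Leftarrow$ is immediate: if $a+\eps v\leq b+\eps v$, then adding (the completion of) $v$ and using $\eps v + v = v$ gives $a + v = a + \eps v + v \leq b + \eps v + v = b + v$, where I use \eqref{eq:epsinfty} (specifically $\lambda v + \eps v = \lambda v$ with $\lambda = 1$) together with monotonicity of the sum. So the content is in $\Rightarrow$.

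For $\Rightarrow$, the plan is to use the characterization \eqref{eq:quasieps}, which holds in every cone. Assume $a+v\leq b+v$. By Lemma \ref{le:canc1}, this gives $a+\tfrac1nv\leq b+\tfrac1nv$ for every $n\in\N$, $n>0$. I would like to rewrite $a+\tfrac1nv$ in a form amenable to taking infima. Using the extra hypothesis \eqref{eq:infsumprima} with $A:=\{\tfrac1nv:n\in\N,\ n>0\}$, whose infimum is $\eps v$ by the definition of $\eps v$ (recall $\eps v=\inf\{\lambda v:\lambda>0\}=\inf\{\tfrac1nv:n>0\}$, the last equality because the $\tfrac1nv$ are cofinal from below among the $\lambda v$), I get that $a+A$ and $b+A$ both admit infima, with
\[
\inf_n\big(a+\tfrac1nv\big)=a+\eps v,\qquad \inf_n\big(b+\tfrac1nv\big)=b+\eps v.
\]
Now from $a+\tfrac1nv\leq b+\tfrac1nv$ for all $n$, I conclude $a+\eps v=\inf_n(a+\tfrac1nv)\leq \inf_n(b+\tfrac1nv)=b+\eps v$, using the elementary fact that if $x_n\leq y_n$ for all $n$ and both families have infima, then $\inf_n x_n\leq \inf_n y_n$ (any lower bound of $\{y_n\}$ is... no: rather, $\inf_n x_n\leq x_n\leq y_n$ for each $n$, so $\inf_n x_n$ is a lower bound of $\{y_n\}$, hence $\leq \inf_n y_n$). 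This completes the proof.

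I expect no serious obstacle here: the whole point of the hypothesis \eqref{eq:infsumprima} is precisely to convert the sequence of inequalities produced by Lemma \ref{le:canc1} into the single desired inequality by passing to the infimum, and \eqref{eq:infmult} (which is already available in any wedge) guarantees that the relevant infimum $\inf_n\tfrac1nv$ is $\eps v$ as needed. The only minor care points are: (i) checking that $\{\tfrac1nv:n>0\}$ and $\{\lambda v:\lambda>0\}$ have the same lower bounds, so their infima coincide and equal $\eps v$ — this is immediate since for any $\lambda>0$ there is $n$ with $\tfrac1n\leq\lambda$, whence $\tfrac1nv\leq\lambda v$; and (ii) making sure the hypothesis is applied to a set that genuinely admits an infimum, which it does, being $\eps v$. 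One could even bypass \eqref{eq:infsumprima} partially using \eqref{eq:quasieps}, but invoking \eqref{eq:infsumprima} directly is cleaner and is exactly the stated assumption, so that is the route I would take.
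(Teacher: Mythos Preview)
Your proof is correct and follows essentially the same route as the paper: apply Lemma~\ref{le:canc1} to obtain $a+\tfrac1n v\leq b+\tfrac1n v$ for all $n$, then use the hypothesis \eqref{eq:infsumprima} to pass to the infimum and identify $\inf_n(a+\tfrac1n v)=a+\eps v$ (and similarly for $b$). The only cosmetic point is that you announce \eqref{eq:quasieps} as ``the plan'' but never actually invoke it; you may as well drop that sentence.
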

\begin{proof}
$\Leftarrow$ is obvious while for $\Rightarrow$ we use \eqref{eq:infsumprima} to get $a+\eps v=
a+\inf\{\tfrac1nv:n\in\N\}=\inf\{a+\tfrac1nv:n\in\N\}$ and conclude with \eqref{eq:cancellation}.
\end{proof}
The assumption \eqref{eq:infsumprima} is one of the two key requirements we impose on the   following important subclass of spaces:
\begin{definition}[\Lattice]\label{def:conejoins}
A \lattice\ is a cone $\J$ in the sense of Definition \ref{def:cone} such that moreover:
\begin{itemize}
\item[i)] Any subset admits an infimum, called $\inf$,
\item[ii)] Such infimum respects sums, i.e.\  for every $v\in \J$ and $A\subset \J$ we have
\begin{equation}
\label{eq:infsum}
\inf(v+A)=v+\inf A.
\end{equation}
\end{itemize}
\end{definition}
%Obviously, in every lattice we have
%\begin{equation}
%\label{eq:quasieps}
%\max\big\{z\in C :\ w\leq z\leq w+\tfrac1nv\quad\forall n\in\N,\ n>0\big\}= w+\eps v.
%\end{equation}
From the order theoretic perspective a cone with join is a complete lattice (recall Proposition \ref{prop:joinlattice}); the  sup and inf of two elements will be denoted $v\vee w$ and $v\wedge w$ respectively, as usual.

Obviously, in cones with joins Proposition \ref{prop:percancellazione} applies, so that the cancellation formula \eqref{eq:cancellationtrue} holds. Another consequence of  assumption \eqref{eq:infsum} is
\begin{equation}
\label{eq:epssomma}
\eps(v+w)=\eps v+\eps w\qquad\forall v,w\in \J,
\end{equation}
that we shall frequently use without further notice. To see this notice that we have  $\inf_{\lambda>0}(\lambda v+\lambda w)=\inf_{\lambda,\eta>0}(\lambda v+\eta w)$ and then using twice \eqref{eq:infsum} we get \eqref{eq:epssomma}.

A direct consequence of \eqref{eq:basesomma} and \eqref{eq:infsum} is that for $v,w\in \J$ with $v\leq w$ the set $\{z\in \J: v+z=w\}$ is a complete lattice and thus admits a maximal and minimal element. We shall denote the maximal element by $w-v$, i.e.\ we put
\begin{equation}
\label{eq:defdiff}
w-v:=\max\{z\in \J\ :\ v+z=w\}\qquad\forall v,w\in \J,\ v\leq w
\end{equation}
and observe that from \eqref{eq:epsinfty} and \eqref{eq:cancellationtrue} it follows that
\begin{equation}
\label{eq:maxdiff}
v+z=w\qquad\Rightarrow\qquad z+\eps v=z+\eps w=w-v.
\end{equation}
Indeed, from \eqref{eq:cancellationtrue} we see that if $v+z'=w=v+z''$, then $z'+\eps v=z''+\eps v$. In particular, this is true for $z'':=w-v$ thus by maximality we deduce  $(w-v)+\eps v=w-v$ and from all this \eqref{eq:maxdiff} easily follows. These considerations imply that the operation of taking the difference is linear in the following sense: for $v_1\leq w_1$, $v_2\leq w_2$ and $\lambda_1,\lambda_2\in[0,\infty)$ we have
\begin{equation}
\label{eq:difflin}
\lambda_1(w_1-v_1)+\lambda_2(w_2-v_2)=(\lambda_1w_1+\lambda_2w_2)-(\lambda_1v_1+\lambda_2v_2).
\end{equation}
Indeed, we clearly have
\[
(\lambda_1v_1+\lambda_2v_2)+(\lambda_1(w_1-v_1)+\lambda_2(w_2-v_2))=\lambda_1w_1+\lambda_2w_2
\]
and
\[
\begin{split}
\lambda_1(w_1-v_1)+\lambda_2(w_2-v_2)&\stackrel{\eqref{eq:maxdiff}}=\lambda_1(w_1-v_1)+\lambda_1\eps w_1+\lambda_2(w_2-v_2)+\lambda_2\eps w_2\\
(\text{by \eqref{eq:epssomma}})\qquad&\stackrel{\phantom{\eqref{eq:maxdiff}}}=\lambda_1(w_1-v_1)+\lambda_2(w_2-v_2)+\eps(\lambda_1 w_1+\lambda_2 w_2)
\end{split}
\]
that by \eqref{eq:maxdiff} gives the desired \eqref{eq:difflin}. From \eqref{eq:epssomma} and \eqref{eq:maxdiff} it also follows that
\begin{equation}
\label{eq:epsdiff}
\eps(w-v)=\eps w\qquad\forall v,w\in \J,\ v\leq w.
\end{equation}
Indeed \eqref{eq:maxdiff} gives   $(w-v)+\eps w=(w-v)$, hence from \eqref{eq:epssomma} we get $\eps (w-v)=\eps (w-v)+\eps w\geq\eps w$ and since the other inequality is obvious, the claim is proved.

Other basic properties of cones with joins are collected in the following statement (compare e.g., with \cite[Section 1.1]{MNP91}):
\begin{proposition}\label{prop:baselattice}
Let $\J$ be a \lattice. Then:
\begin{enumerate}[label=\emph{\alph*)}]
\item\label{it:i} \emph{Distributivity I}. For every $x,y,z\in \J$ we have
\begin{equation}
\label{eq:distrmin}
\begin{split}
(x+y)\wedge z&\leq x\wedge z+y\wedge z.
\end{split}
\end{equation}
\item\label{it:ii} \emph{Modularity}.
 For any $x,y,z,w\in \J$ we have
\begin{equation}
\label{eq:modulare1}
x+y=z+w\qquad\Rightarrow\qquad x+y=x\vee z+y\wedge w.
\end{equation}
In particular 
\begin{equation}
\label{eq:modulare2}
x+y=x\vee y+x\wedge y.
\end{equation}
\item\label{it:v} \emph{Distributivity II}. For any set $(x_i)\subset \J$ and $y\in \J$ we have
\begin{equation}
\label{eq:quasidistr}
\sup_i(x_i\wedge y)\leq( \sup_i x_i)\wedge y\leq \sup_i(x_i\wedge y)+\eps(y\vee\sup_ix_i).
\end{equation}
\item\label{it:vi} \emph{Decomposition I}. Suppose that $v_1,v_2,w_1,w_2\in \J$ are so that $v_1+v_2=w_1+w_2$. Then there are  $z_{ij}$, $i,j=1,2$, such that
\[
\begin{split}
z_{11}+z_{12}&=v_1,\\
z_{21}+z_{22}&=v_2+\eps v_1,\\
z_{11}+z_{21}&=w_1,\\
z_{12}+z_{22}&=w_2+\eps w_1. 
\end{split}
\]
\item\label{it:viii}   \emph{Decomposition II}. Suppose that $v_1,v_2,w_1,w_2\in \J$ are so that $v_1+v_2=w_1+w_2$. Then there are  $z_{ij}$, $i,j=1,2$, such that
\begin{equation}
\label{eq:perlin}
\begin{split}
z_{11}+z_{12}&=v_1,\\
z_{21}+z_{22}&=v_2,\\
z_{11}+z_{21}+\eps w_2&\geq w_1,\\
z_{12}+z_{22}+\eps w_1&\geq w_2.
\end{split}
\end{equation}
\end{enumerate}
\end{proposition}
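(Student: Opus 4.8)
Before touching the five items I would isolate two elementary tools. First, $+$ distributes over binary joins and meets: for $v,a,b\in\J$ one has $v+(a\wedge b)=(v+a)\wedge(v+b)$ and $v+(a\vee b)=(v+a)\vee(v+b)$, these being exactly the cases $A=\{a,b\}$ of \eqref{eq:infsum} and \eqref{eq:basesomma}. Second, a \emph{cancellation trick}: if $2a=2b$ and $a\leq b$ then $a=b$. Indeed, writing $b=a+c$ gives $2a+2c=2b=2a$, so $2c+2a\leq 0+2a$, and \eqref{eq:cancellationtrue} (valid on any \lattice\ by Proposition \ref{prop:percancellazione}) yields $2c+\eps(2a)\leq\eps(2a)$, hence $2c\leq\eps(2a)=\eps a$, so $c\leq 2c\leq\eps a$ and $b=a+c\leq a+\eps a=a$ by \eqref{eq:epsinfty}.

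For item \ref{it:i}, iterate the first tool: $x\wedge z+y\wedge z=\big((x\wedge z)+y\big)\wedge\big((x\wedge z)+z\big)=(x+y)\wedge(y+z)\wedge(x+z)\wedge(2z)$, and since $(x+y)\wedge z$ lies below each of $x+y$, $y+z$, $x+z$, $2z$, it lies below their meet. For item \ref{it:ii} I would first prove \eqref{eq:modulare2}: distributing $x\vee y+x\wedge y$ over $\wedge$ gives $(x\vee y+x)\wedge(x\vee y+y)\geq(x+y)\wedge(x+y)=x+y$, while distributing over $\vee$ gives $\big(2x\wedge(x+y)\big)\vee\big((x+y)\wedge 2y\big)\leq x+y$, so $x+y=x\vee y+x\wedge y$. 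Then for \eqref{eq:modulare1}, set $s:=x+y=z+w$; distributing twice shows $x\vee z+y\wedge w=[s\wedge(x+w)]\vee[(z+y)\wedge s]\leq s$ and, by the same computation, $x\vee z+y\wedge w=x\wedge z+y\vee w$. Adding these two equal expressions and using \eqref{eq:modulare2} for $\{x,z\}$ and $\{y,w\}$ gives $2(x\vee z+y\wedge w)=(x+z)+(y+w)=2s$, whence $x\vee z+y\wedge w=s=x+y$ by the cancellation trick.

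For item \ref{it:v} the inequality $\sup_i(x_i\wedge y)\leq(\sup_ix_i)\wedge y$ is immediate; for the other one put $X:=\sup_ix_i$, $S:=\sup_i(x_i\wedge y)$ and use \eqref{eq:modulare2} to write $x_i+y=x_i\vee y+x_i\wedge y\leq(X\vee y)+S$ for all $i$; taking the supremum over $i$ and using \eqref{eq:basesomma} gives $X+y\leq(X\vee y)+S$, and rewriting $X+y=X\vee y+X\wedge y$ and cancelling $X\vee y$ via \eqref{eq:cancellationtrue} yields $X\wedge y+\eps(X\vee y)\leq S+\eps(X\vee y)$, hence $X\wedge y\leq S+\eps(X\vee y)$. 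For item \ref{it:vi}, with $s:=v_1+v_2=w_1+w_2$, set $z_{11}:=v_1\wedge w_1$, $z_{12}:=v_1-z_{11}$, $z_{21}:=w_1-z_{11}$ (well posed by \eqref{eq:defdiff}); since $v_1+z_{21}=v_1+w_1-v_1\wedge w_1=v_1\vee w_1\leq s$ by \eqref{eq:modulare2} we get $z_{21}\leq s-v_1$, so $z_{22}:=(s-v_1)-z_{21}$ is legitimate. Then $z_{11}+z_{12}=v_1$ and $z_{11}+z_{21}=w_1$ by construction, $z_{21}+z_{22}=s-v_1=v_2+\eps v_1$ by \eqref{eq:maxdiff}, and by the linearity of the difference \eqref{eq:difflin} together with \eqref{eq:maxdiff}, $z_{12}+z_{22}=(v_1-z_{11})+\big((s-v_1)-(w_1-z_{11})\big)=\big(v_1+(s-v_1)\big)-\big(z_{11}+(w_1-z_{11})\big)=s-w_1=w_2+\eps w_1$.

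The genuinely hard part is item \ref{it:viii}. The obvious attempts — $z_{11}:=v_1\wedge w_1$, $z_{21}:=v_2\wedge w_1$ with complements routed into the second column, or the mirror choice along the second column — satisfy one of the two column inequalities with no $\eps$ at all (from \eqref{eq:distrmin} applied to $(v_1+v_2)\wedge w_j=w_j$), but fail the other, because all of the ``finite part'' of $v_1,v_2$ gets pushed into a single column. The plan is instead to run item \ref{it:vi} to get $z'_{ij}$ with $z'_{11}+z'_{12}=v_1$, $z'_{11}+z'_{21}=w_1$, $z'_{21}+z'_{22}=v_2+\eps v_1$, $z'_{12}+z'_{22}=w_2+\eps w_1$, and then to \emph{refine the second row}: since $v_2\leq z'_{21}+z'_{22}$, put $z_{11}:=z'_{11}$, $z_{12}:=z'_{12}$, $z_{21}:=v_2\wedge z'_{21}$, $z_{22}:=v_2-z_{21}$, so that the first row stays exactly $v_1$ and the second becomes exactly $v_2$. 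The two column inequalities then reduce to controlling the loss $z'_{21}-v_2\wedge z'_{21}$, which by \eqref{eq:distrmin} applied to $z'_{21}=z'_{21}\wedge(v_2+\eps v_1)\leq z'_{21}\wedge v_2+z'_{21}\wedge\eps v_1$ is bounded by an $\eps$-element, and one then absorbs this error against the $\eps w_1,\eps w_2$ slack using $\eps v_1+\eps v_2=\eps w_1+\eps w_2$ from \eqref{eq:epssomma} together with $v_i\leq w_1+w_2$. I expect the delicate step to be matching the resulting $\eps$-terms with exactly $\eps w_2$ for the first column and $\eps w_1$ for the second; this will likely force one to symmetrize the construction — refining the second row of the decompositions produced by item \ref{it:vi} in the two orientations obtained by swapping $v_1\leftrightarrow v_2$ and/or $w_1\leftrightarrow w_2$ — so as to land on the stated clean form rather than one carrying an extra $\eps w_1+\eps w_2$.
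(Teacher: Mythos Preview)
Your arguments for items \ref{it:i}--\ref{it:vi} are correct. For \ref{it:i}, \ref{it:v}, \ref{it:vi} you essentially follow the paper (with a slightly slicker use of the difference operator in \ref{it:vi}). For \ref{it:ii} you take a genuinely different route: rather than proving \eqref{eq:modulare1} directly and reading off \eqref{eq:modulare2}, you first establish \eqref{eq:modulare2} via the two distributive laws, then combine the identity $x\vee z+y\wedge w=x\wedge z+y\vee w$ (both expressions equal $[s\wedge(x+w)]\vee[s\wedge(z+y)]$) with \eqref{eq:modulare2} and your ``$2a=2b,\ a\leq b\Rightarrow a=b$'' trick. This is a clean alternative that avoids the explicit decomposition-and-cancel in the paper, at the cost of needing \eqref{eq:modulare2} first. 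One small slip in \ref{it:vi}: with your choice $z_{21}=w_1-z_{11}$ one actually gets $v_1+z_{21}=v_1\vee w_1+\eps z_{11}$, not $v_1\vee w_1$; the conclusion $v_1+z_{21}\leq s$ is unaffected since $\eps z_{11}\leq\eps s$.

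Where you go astray is item \ref{it:viii}. Your plan---bootstrap from \ref{it:vi} and then correct the second row---does produce exact row sums, but as you already suspect the $\eps$-error from $z'_{21}-v_2\wedge z'_{21}\leq\eps v_1$ lands on the wrong side: you obtain $z_{11}+z_{21}+\eps v_1\geq w_1$, and $\eps v_1$ need not be $\leq\eps w_2$ (only $\leq\eps w_1+\eps w_2$). The symmetrization you gesture at does not obviously repair this. The paper's construction is much more direct and sidesteps the issue entirely: put
\[
z_{11}:=v_1\wedge w_1,\qquad z_{22}:=v_2\wedge w_2,\qquad z_{12}:=v_1-z_{11},\qquad z_{21}:=v_2-z_{22}.
\]
The row identities are immediate. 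For the second column, $w_1+z_{12}\geq z_{11}+z_{12}=v_1$ and $w_1+z_{12}\geq w_1$ give $w_1+z_{12}\geq v_1\vee w_1$, hence $w_1+z_{12}+z_{22}\geq v_1\vee w_1+v_2\wedge w_2=w_1+w_2$ by \eqref{eq:modulare1}; cancelling $w_1$ via \eqref{eq:cancellationtrue} yields $z_{12}+z_{22}+\eps w_1\geq w_2$. The first column is symmetric. The point you missed is that \eqref{eq:modulare1} already delivers exactly the right $\eps$-slack once you choose the \emph{symmetric} ansatz $z_{11}=v_1\wedge w_1$, $z_{22}=v_2\wedge w_2$ rather than routing everything through one row of \ref{it:vi}.
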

\begin{proof} \

\noindent{\sc Distributivity I}.  Put  $v:=(x+y)\wedge z$. Then clearly  $v\leq x+y$ and $v\leq z\leq x+z$, therefore $v\leq (x+y)\wedge (x+z)=x+y\wedge z$, having used \eqref{eq:infsum}. On the other hand, we clearly have  $v\leq z\leq z+y\wedge z$, therefore 
\[
v\leq (x+y\wedge z)\wedge(z+y\wedge z)\stackrel{\eqref{eq:infsum}}=x\wedge z+y\wedge z.
\]
{\sc Modularity}. We have $y\geq y\wedge w$, thus for some  $a\in \J$ we have $y=y\wedge w+a$. Thus
\[
y\wedge w+a+x=x+y=z+w\geq z+y\wedge w\qquad\stackrel{\eqref{eq:cancellationtrue}}\Rightarrow\qquad  \eps (y\wedge w)+a+x\geq z.
\]
On the other hand we have  $ \eps (y\wedge w)+a+x\geq x$ and thus $\eps (y\wedge w)+a+x\geq x\vee z$, hence
\[
x+y=y\wedge w+a+x=y\wedge w+\eps (y\wedge w)+a+x\geq  y\wedge w+x\vee z.
\]
For the other inequality we notice that  $x\vee z\geq x$, thus for some  $b\in \J$ we have $x\vee z=x+b$. Therefore
\[
x+b+w=x\vee z+w\geq z+w=x+y\qquad\stackrel{\eqref{eq:cancellationtrue}}\Rightarrow\qquad \eps x+b+w\geq y
\]
and thus
\[
\begin{split}
x\vee z+y\wedge w=x+\eps x+b+y\wedge w\stackrel{\eqref{eq:infsum}}=x+(\underbrace{\eps x+b+y}_{\geq y})\wedge(\underbrace{\eps x+b+ w}_{\geq y})\geq x+y.
\end{split}
\]
{\sc Distributivity II}.   The first inequality is obvious. For the second we put   $v:= \sup_i(x_i\wedge y)$. Then for every $i$ we have $v\geq x_i\wedge y$ and thus
\[
v+x_i\vee y\geq x_i\wedge y+x_i\vee y\stackrel{\eqref{eq:modulare2}}=x_i+y
\]
and taking the sup in $i$, recalling \eqref{eq:basesomma} and the trivial identity  $(\sup_ix_i)\vee y=\sup_i(x_i\vee y)$ we get
\[
v+(\sup_i x_i)\vee y\geq\sup_ix_i+y\stackrel{\eqref{eq:modulare2}}=(\sup_i x_i)\vee y+(\sup_i x_i)\wedge y.
\]
The conclusion follows from the cancellation property  \eqref{eq:cancellationtrue}.

\noindent {\sc Decomposition I}. Put $z_{11}:= v_1\wedge w_1$ and then let  $z_{12},z_{21}$ be so that  $z_{11}+z_{12}=v_1$ and $z_{11}+z_{21}=w_1$. We claim that $z_{21}\leq v_2+\eps v_1$ and to prove this we notice that
\[
\begin{split}
z_{11}+z_{21}+w_2=w_1+w_2\stackrel{\eqref{eq:modulare1}}=z_{11}+v_2\vee w_2\leq z_{11}+v_2+w_2
\end{split}
\]
and thus from \eqref{eq:cancellationtrue} we get $z_{21}\leq v_2+\eps(z_{11}+w_2)\leq  v_2+\eps(w_{1}+w_2)=  v_2+\eps v_1$.
Thus there is $z$ such that  $z_{21}+z=v_2+\eps v_1$. Put $z_{22}:=z+\eps w_1$ and notice that
\[
\begin{split}
z_{21}+z_{22}=z_{21}+z+\eps w_1=v_2+\eps(v_1+w_1)=v_2+\eps(v_1+v_2+w_1)=v_2+\eps(v_1+v_2)=v_2+\eps v_1.
\end{split}
\]
Moreover
\[
z_{12}+z_{22}+w_1=\sum_{ij}z_{ij}=v_1+v_2+\eps (v_1+w_1)=v_1+v_2+\eps(v_1+v_2)=w_1+w_2
\]
and thus from  \eqref{eq:cancellationtrue}  we get  $z_{12}+z_{22}=z_{12}+z_{22}+\eps w_1=w_2+\eps w_1$.

\noindent {\sc Decomposition II}.  Put $z_{11}:=v_1\wedge w_1$, $z_{22}:=v_2\wedge w_2$ and then $z_{12}:=v_1-z_{11}$ and $z_{21}:=v_2-z_{22}$ (recall the definition \eqref{eq:defdiff}). Then  $z_{11}+z_{12}=v_1$ and   $z_{21}+z_{22}=v_2$.

Now notice that  $w_1+z_{12}\geq z_{11}+z_{12}=v_1$, hence  $w_1+z_{12}\geq v_1\vee w_1$  and thus
\[
\begin{split}
w_1+z_{12}+z_{22}\geq v_1\vee w_1+v_2\wedge w_2\stackrel{\eqref{eq:modulare1}}=w_1+w_2\qquad\stackrel{\eqref{eq:cancellationtrue}}\Rightarrow\qquad z_{12}+z_{22}+\eps w_1\geq w_2.
\end{split}
\]
proving the last inequality. The other one is proved analogously.
\end{proof}

\end{subsection}

\begin{subsection}{Spaces of linear maps}

Here we shall start investigating properties of spaces of linear maps.

Given two wedges $\W_1,\W_2$, a map $L:\W_1\to \W_2$ is called linear provided
\[
L(\lambda_1 w_1+\lambda_2 w_2)=\lambda_1 L(w_1)+\lambda_2 L(w_2)\qquad\forall w_1,w_2\in \W_1,\ \lambda_1,\lambda_2\in[0,+\infty).
\]
We shall denote by $\Lin(\W_1,\W_2)$ the collection of linear maps from $\W_1$ to $\W_2$ and equip such set with pointwise operations of sum and product by a non-negative scalar, i.e.\ we put:
\[
\begin{split}
(L_1+L_2)(v)&:=L_1(v)+L_2(v),\\
\lambda L(v)&:=L(\lambda v),
\end{split}
\]
for every $L_1,L_2,L\in\Lin(\W_1,\W_2)$, $\lambda\in[0,\infty)$ and $v\in \W_1$. It is clear that the results of these operations are still linear maps, hence $\Lin(\W_1,\W_2)$ is a prewedge. We shall denote by $\leq_{\L}$ the induce preorder, i.e.\ we put
\[
L_1\leq_\L L_2\qquad\qquad\Leftrightarrow\qquad\qquad  L_1+M=L_2\qquad\text{for some  $M\in \Lin(\W_1,\W_2)$}.
\]
This is actually a partial order, i.e.\ $L_1\leq_\L L_2$ and $L_2\leq_\L L_1$ imply $L_1=L_2$, as can be seen using the analog property valid on $\W_2$.  On $\Lin(\W_1,\W_2)$ we  have another natural partial  order, namely the  the pointwise order  $\leq_\p$ defined as
\[
L_1\leq_\p L_2\qquad\qquad\Leftrightarrow\qquad\qquad L_1(v)\leq L_2(v)\qquad\forall v\in \W_1.
\]
It is clear that $L_1\leq_\L L_2$ implies $L_1\leq_\p L_2$, but the converse is not obvious: we can prove it only when the target is a cone with joins (thanks to \eqref{eq:difflin}). Notice also that we do \emph{not} know whether  $\Lin(\W_1,\W_2)$ is a wedge or not, as we do not know if property $(iv)$ in Definition \ref{def:wedge} is valid or not. Notice indeed that for $L\in  \Lin(\W_1,\W_2)$ we clearly have $ L\geq_\L \lambda L$ for any $\lambda\in[0,1)$ and if $M\in\Lin(\W_1,\W_2)$ is so that $M\geq_\L\lambda L$ for every $\lambda\in[0,1)$ then clearly $M\geq_p  L$. Still, concluding that $M\geq_\L L$, that is required to show \eqref{eq:supmultbase},  seems not obvious.
\begin{remark}[The infinity map]\label{rem:inftyL}{\rm
Notice that a consequence of the definition is that any linear map satisfies $L(0)=0$ (because $L(0)=L(0\cdot 0)=0 L(0)=0$). %It is for this reason that in the proposition above we `forced' the definition of $\big(\Linf_{i\in I}L_i\big)(0)$ to be 0, rather than the value given by formula \eqref{eq:RKmgen}, that would give $\inf\emptyset=\infty$.

In particular, the maximal element in $\Lin(\W,\C)$ for $\C$ cone is the map  equal to $\infty\in \C$ at any non-zero element in $\W$ and equal to $0\in \C$ at $0\in \W$.
}\fr\end{remark}
Let us collect a couple of observations. If $\W$ is a wedge and  $(v_i),(w_i)\subset \W$ are directed (resp.\ filtered) families indexed over the same directed (resp.\ filtered) set $I$  admitting sup (resp.\ inf), then
\begin{equation}
\label{eq:supsommadir}
\sup_{i\in I}(v_i+w_i)=\sup_{i\in I}v_i+\sup_{i\in I}w_i\qquad\qquad (\text{resp. }\inf_{i\in I}(v_i+w_i)=\inf_{i\in I}v_i+\inf_{i\in I}w_i ).
\end{equation}
Indeed, $\leq$ is clear and for $\geq$ we notice that for every $i,j\in I$ there is $k\in I$ bigger than both, implying $v_i\leq v_k$ and $w_i\leq w_k$, thus $v_i+w_j\leq v_k+w_k\leq\sup_k(v_k+w_k)$, which by the arbitrariness of $i,j$ suffices to conclude (analogously for the inf).

A direct consequence of \eqref{eq:supsommadir} is that   pointwise defined sup/inf of directed/filtered families are linear, provided they exist:
\begin{equation}
\label{eq:pointsup}
\begin{split}
\text{$(L_i)_{i\in I}\subset\Lin(\W,\C)$ is $\leq_\p $-directed\ \  and \ \ } \text{$L(v):=\sup_{i\in I}L_i(v)$ exists $\forall v$}&\qquad\Rightarrow\qquad \text{$L$ is  linear,}\\
\text{$(L_i)_{i\in I}\subset\Lin(\W,\C)$ is $\leq_\p $-filtered\ \  and \ \ } \text{$L(v):=\inf_{i\in I}L_i(v)$ exists $\forall v$}&\qquad\Rightarrow\qquad \text{$L$ is  linear.}\\
\end{split}
\end{equation}
In particular, for any wedge $\W$ and cone $\C$ we have
\begin{equation}
\label{eq:linp}
\big(\,\Lin(\W,\C)\,,\,\leq_\p \!\big)\qquad\text{ is a directed complete partial order.}
\end{equation}
%\begin{proposition}\label{prop:suppunt}
%Let $W$ be a wedge, $C$ a cone and $(L_i)_{i\in I}\subset\Lin(W,C)$ a $\leq_\p $-directed family. 
%
%Then the pointwise supremum $L:W\to C$ defined as $L(v):=\sup_{i\in I}L_i(v)$ is also linear.  
%\end{proposition}
%\begin{proof} We have $L_i(\lambda v+\eta w)=\alpha L_i(v)+\beta L_i(w)\leq \alpha L(v)+\beta L(w) $ and taking the supremum in $i$ we obtain subadditivity of $L$. For superadditivity, we use that for any $i,j\in I$ there is $k\in I$ bigger than both, so that $\alpha L_i(v)+\beta L_j(w)\leq \alpha L_k(v)+\beta L_k(w)\leq \alpha L(v)+\beta L(w)=L(\alpha v+\beta w) $.  Taking the supremum in $i,j$ and recalling \eqref{eq:basesomma} we conclude.
%\end{proof}
Notice that this  does not tell that $\Lin(\W,\C)$ is a cone.

If the target is a cone with joins, then we have:
\begin{proposition}\label{prop:lincone}
Let $\W$ be a wedge and $\J$ a \lattice. Then the order relations $\leq_\L$ and $\leq_\p $ on $\Lin(\W,\J)$ coincide.

In particular, $\Lin(\W,\J)$ is a cone.
\end{proposition}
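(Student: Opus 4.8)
The plan is to prove the two order relations coincide, since the ``in particular'' then follows: once $\leq_\L$ equals $\leq_\p$, the set $\Lin(\W,\J)$ is directed complete by \eqref{eq:linp}, and property $(iv)$ of Definition \ref{def:wedge} (the identity \eqref{eq:supmultbase}) can be checked pointwise because, as remarked just before \eqref{eq:supsommadir}, for $L\in\Lin(\W,\J)$ one has $L\geq_\L\lambda L$ for $\lambda<1$, any $\leq_\L$-upper bound $M$ of $\{\lambda L:\lambda<1\}$ is a $\leq_\p$-upper bound, hence $M\geq_\p L$; if $\leq_\p$ implies $\leq_\L$ this gives $M\geq_\L L$, which is exactly \eqref{eq:supmultbase} for $\Lin(\W,\J)$. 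So the whole content is the inclusion $\leq_\p\ \subseteq\ \leq_\L$; the reverse inclusion is already noted to be trivial.

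First I would fix $L_1,L_2\in\Lin(\W,\J)$ with $L_1\leq_\p L_2$, i.e. $L_1(v)\leq L_2(v)$ in $\J$ for every $v\in\W$, and try to produce $M\in\Lin(\W,\J)$ with $L_1+M=L_2$. The natural candidate is the pointwise difference: set $M(v):=L_2(v)-L_1(v)$ using the difference operation \eqref{eq:defdiff}, which is legitimate precisely because $L_1(v)\leq L_2(v)$ and $\J$ is a \lattice. By construction $L_1(v)+M(v)=L_2(v)$ for every $v$, so the only thing to verify is that $M$ is linear, i.e. $M(\lambda_1 v_1+\lambda_2 v_2)=\lambda_1 M(v_1)+\lambda_2 M(v_2)$. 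This is where the computation \eqref{eq:difflin} does the work: applying it with $w_i:=L_2(v_i)$, $v_i\rightsquigarrow L_1(v_i)$ (these satisfy $L_1(v_i)\leq L_2(v_i)$), and using linearity of $L_1,L_2$, we get
\[
\lambda_1 M(v_1)+\lambda_2 M(v_2)=\bigl(\lambda_1 L_2(v_1)+\lambda_2 L_2(v_2)\bigr)-\bigl(\lambda_1 L_1(v_1)+\lambda_2 L_1(v_2)\bigr)=L_2(\lambda_1v_1+\lambda_2v_2)-L_1(\lambda_1v_1+\lambda_2v_2),
\]
which is $M(\lambda_1v_1+\lambda_2v_2)$. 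Hence $M$ is linear and $L_1\leq_\L L_2$.

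The step I expect to be the only real subtlety is making sure \eqref{eq:difflin} is being invoked under exactly its stated hypotheses (namely $v_i\leq w_i$ in the \lattice, which holds here) and that the ``$\max$'' in the definition \eqref{eq:defdiff} of $w-v$ is what guarantees $M$ is well-defined as a genuine map $\W\to\J$ — there is no choice involved, so no measurability or selection issue arises. After that, as explained above, the passage to ``$\Lin(\W,\J)$ is a cone'' is a matter of: (a) $\leq_\L=\leq_\p$ makes $\leq_\L$ a partial order that is directed complete by \eqref{eq:linp}; and (b) checking \eqref{eq:supmultbase} for $\Lin(\W,\J)$, which reduces to the pointwise statement $L=\sup_{\eta<1}\eta L$ in the $\leq_\p$ order — true since $L(v)=\sup_{\eta<1}\eta L(v)$ in $\J$ by \eqref{eq:supmultbase} applied in $\J$, and the pointwise sup of the directed family $(\eta L)_{\eta<1}$ is computed componentwise by \eqref{eq:mcpcomplete2}/\eqref{eq:pointsup}. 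I would close by remarking that this also shows $\Lin(\W,\J)$ inherits from $\J$ the property that every element has the explicit complement $w-v$, which will be convenient later.
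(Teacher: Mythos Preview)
Your proposal is correct and follows essentially the same route as the paper: define $M(v):=L_2(v)-L_1(v)$ via \eqref{eq:defdiff}, invoke \eqref{eq:difflin} for linearity, and then deduce property $(iv)$ from the equivalence $\leq_\L=\leq_\p$ together with \eqref{eq:supmultbase} applied pointwise in $\J$. The paper's proof is slightly terser but the argument is the same.
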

\begin{proof} We already noticed that $L_1\leq_\L L_2$ implies  $L_1\leq_\p L_2$. For the converse implication, for every $v\in\W$ define $M(v):=L_2(v)-L_1(v)$ (recall \eqref{eq:defdiff}). By \eqref{eq:difflin} we know that $M$ is linear, hence the fact that $L_1(v)+M(v)=L_2(v)$ for every $v\in\W_1$ gives $L_1\leq_\L L_2$, as desired.

From the completeness in \eqref{eq:linp} we see that to conclude it suffices to prove that property $(iv)$ in Definition \ref{def:wedge} holds. This however is now obvious, as we already noticed that for any $L\in\Lin(\W,\C)$ we have $L\geq_\L \lambda L$ for any $\lambda\in[0,1)$ and that any $M\in\Lin(\W,\C)$ with the same property is $\geq_\p L$. By the first part of the proof we thus deduce that $M\geq_\L L$, establishing that $L$ is the $\leq_\L$-sup of $(\lambda L)_{\lambda<1}$.
\end{proof}
%\begin{remark}{\rm
%The fact that   $L_1\leq_\p L_2$ implies $L_1\leq_\L L_2$ can also be proved via Theorem \ref{thm:ext}. Indeed, applying it with $\varphi=\psi=L_2$, $L=L_1$ and the subwedge being the trivial $\{0\}$ we deduce that there is  $\hat M\in\Lin(\W,\J)$ such that $L_2\leq_\p L_1+\hat M\leq_\p L_2$ implying that $L_1+\hat M=L_2$, as desired. 
%
%Notice that the map $\hat M$ produced by  the theorem satisfies \eqref{eq:mepsl} on the whole $\W_1$, hence $\hat M(v)=L_2(v)-L_1(v)$ (by \eqref{eq:maxdiff}), as in the proof given above.
%}\fr\end{remark}
Even more structure is present if also the source space is a \lattice. In what follows we shall denote by $\Lsup$ and $\Linf$ the sup and inf w.r.t.\ the order $\leq_\L$. If only two elements are involved we shall write $\vee_\L$ and $\wedge_\L$, respectively.
\begin{proposition}\label{prop:linlattice}
Let $\J_1,\J_2$ be \lattices. Then $\Lin(\J_1,\J_2)$ is a \lattice\ and the Riesz-Kantorovich formulas hold, i.e.\ for $(L_i)_{i\in I}\subset \Lin(\J_1,\J_2)$ arbitrary family we have
\begin{subequations}\begin{align}
\label{eq:RKpgen}
\big(\Lsup_{i\in I}L_i\big)(v)&=\sup\Big\{\sum_{j=1}^nL_{i_j}(v_{i_j})\,:\, n\in\N,\ (i_j)\subset I, \sum_{j=1}^nv_{i_j}=v\Big\},\ \ \ \ \ \\
\label{eq:RKmgen}
\Big(\Linf_{i\in I}L_i\big)(v)&=\inf\big\{\sum_{j=1}^nL_{i_j}(v_{i_j})\,:\, n\in\N,\ (i_j)\subset I, \sum_{j=1}^nv_{i_j}=v\Big\},\ \ \ \ \ 
\end{align}
\end{subequations}
where we declare that $\big(\Linf_{i\in I}L_i\big)(0)=0$ if $I$ is empty. In particular it holds:
\begin{subequations}\begin{align}
\label{eq:RKp}
(L_1\vee_\L L_2)(v)&=\sup_{v_1+v_2=v} L_1(v_1)+L_2(v_2),\\
\label{eq:RKm}
(L_1\wedge_\L L_2)(v)&=\inf_{v_1+v_2=v} L_1(v_1)+L_2(v_2).
\end{align}
\end{subequations}
\end{proposition}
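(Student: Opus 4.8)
The plan is to prove the Riesz--Kantorovich formulas \eqref{eq:RKpgen}--\eqref{eq:RKmgen} first, and then deduce from them that $\Lin(\J_1,\J_2)$ is a \lattice. By Proposition \ref{prop:lincone} we already know $\Lin(\J_1,\J_2)$ is a cone and that $\leq_\L$ and $\leq_\p$ coincide on it, so all order-theoretic statements may be read pointwise. For the supremum formula, I would fix an arbitrary family $(L_i)_{i\in I}$ and define $S\colon \J_1\to\J_2$ by the right-hand side of \eqref{eq:RKpgen}, i.e. $S(v):=\sup\{\sum_j L_{i_j}(v_{i_j}): \sum_j v_{i_j}=v\}$ (the sup exists since $\J_2$ is a complete lattice). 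The first task is to check $S$ is linear: positive homogeneity is immediate by scaling a decomposition of $v$; for additivity, the inequality $S(v+w)\geq S(v)+S(w)$ comes from concatenating decompositions of $v$ and $w$ (using \eqref{eq:supsommadir} to combine the two suprema), while $S(v+w)\leq S(v)+S(w)$ requires taking a decomposition $\sum_j v_{i_j}=v+w$ and splitting each $v_{i_j}$ along $v$ and $w$ — this is exactly where I expect the main difficulty, because in a general \lattice\ we cannot freely split $z\leq a+b$ as $z_1+z_2$ with $z_1\leq a$, $z_2\leq b$. The way around this is to use the decomposition lemmas of Proposition \ref{prop:baselattice}, specifically \ref{it:viii} (Decomposition II), to write each summand as $z_{j}^{(1)}+z_{j}^{(2)}$ with $z_j^{(1)}+(\text{small})\geq$ the $v$-part and $z_j^{(2)}+(\text{small})\geq$ the $w$-part; the $\eps$-error terms are absorbed because $\eps(L_{i_j}(\cdot))$ contributions telescope into the $\eps$ of the total, and one invokes \eqref{eq:cancellationtrue} to discard them. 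I would organize this by first handling two summands by direct appeal to \ref{it:viii}, then inducting on $n$.

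Once $S$ is shown to be linear, showing $S=\Lsup_i L_i$ is routine: $S\geq_\p L_k$ for each $k$ by taking the trivial decomposition $v_k=v$, $v_{i}=0$ for $i\neq k$ (using $L_i(0)=0$, Remark \ref{rem:inftyL}); and if $M\geq_\p L_i$ for all $i$, then for any decomposition $\sum_j v_{i_j}=v$ we get $\sum_j L_{i_j}(v_{i_j})\leq \sum_j M(v_{i_j})=M(\sum_j v_{i_j})=M(v)$, so taking the sup gives $S\leq_\p M$. Hence $S$ is the least upper bound, which in particular shows $\Lin(\J_1,\J_2)$ admits arbitrary suprema. The infimum formula \eqref{eq:RKmgen} is handled symmetrically: define $T(v):=\inf\{\sum_j L_{i_j}(v_{i_j}):\sum_j v_{i_j}=v\}$ (with $T(0)=0$ in the empty case, as stipulated). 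Linearity of $T$ is proved the same way, with the roles of the two inequalities swapped; here $T(v+w)\leq T(v)+T(w)$ is the easy concatenation direction and $T(v+w)\geq T(v)+T(w)$ is the one needing Proposition \ref{prop:baselattice}\ref{it:viii} together with \eqref{eq:infsum} (to push the infimum through the sum). Then $T\leq_\p L_k$ via the trivial decomposition, and $T\geq_\p M$ whenever $M\leq_\p L_i$ for all $i$, again by summing; for the empty family $T$ is the constant-$0$-on-nonzero map that is the bottom element, consistent with being the infimum of the empty set (which in a complete lattice is the top — note here we want $\Linf$ over the \emph{empty} index set to be the maximal linear map, so I should double-check the convention: the empty infimum in $\Lin$ should be $\max\Lin(\J_1,\J_2)$, the $\infty$-map, and the formula's stipulation $(\Linf_{\emptyset})(0)=0$ together with the convention $\inf\emptyset=\max$ evaluated on the empty decomposition set must be reconciled — I would state this carefully, since the only decomposition of $0$ is the empty sum giving value $0$, and for $v\neq 0$ there is no decomposition so the inf over the empty set is $\infty$, exactly the $\infty$-map).

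Finally, to conclude $\Lin(\J_1,\J_2)$ is a \lattice\ in the sense of Definition \ref{def:conejoins}, I need: (i) it is a cone — done by Proposition \ref{prop:lincone}; (ii) every subset has an infimum — this is precisely formula \eqref{eq:RKmgen}, just established; (iii) the infimum respects sums, i.e. $\Linf(N + \mathcal A)=N+\Linf \mathcal A$ for $N\in\Lin(\J_1,\J_2)$ and $\mathcal A\subset\Lin(\J_1,\J_2)$. For (iii) I would evaluate both sides pointwise: $(N+\Linf\mathcal A)(v)=N(v)+\inf\{\sum_j L_{i_j}(v_{i_j})\}$ and $(\Linf(N+\mathcal A))(v)=\inf\{\sum_j (N+L_{i_j})(v_{i_j})\}=\inf\{N(v)+\sum_j L_{i_j}(v_{i_j})\}$ (using $\sum_j N(v_{i_j})=N(v)$ by linearity), and these agree by \eqref{eq:infsum} applied in $\J_2$, which lets one pull the fixed element $N(v)$ out of the infimum. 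This completes the proof. The genuinely delicate point, to reiterate, is the additivity of the Riesz--Kantorovich sup/inf maps, where the absence of a naive Riesz decomposition property forces systematic use of Proposition \ref{prop:baselattice}\ref{it:viii} and careful bookkeeping of the $\eps(\cdot)$ correction terms via \eqref{eq:cancellationtrue} and \eqref{eq:epssomma}.
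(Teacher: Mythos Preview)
Your proposal is correct and uses the same key ingredients as the paper: Decomposition~II from Proposition~\ref{prop:baselattice}\ref{it:viii} for the hard direction of additivity, absorption of the $\eps$-corrections via \eqref{eq:epssomma} and $v+\eps v=v$, and \eqref{eq:infsum} in $\J_2$ to pull $N(v)$ out of the infimum for property~(iii).

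The organizational difference is worth noting. You attack the general formula \eqref{eq:RKpgen} directly and propose to prove subadditivity of $S$ by induction on the number $n$ of summands in a decomposition $\sum_{j=1}^n v_{i_j}=v+w$. The paper instead first establishes only the binary formula \eqref{eq:RKp}, where Decomposition~II applies in one clean shot, and then obtains \eqref{eq:RKpgen} by observing that the family $\mathcal L^j$ of finite $\vee_\L$-joins of the $L_i$ is directed with the same supremum, so \eqref{eq:pointsup} gives the general formula pointwise without further work. Your inductive route is viable---the $\eps$-errors at each step are bounded by $\eps(v+w)$, and since $n\cdot\eps(v+w)=\eps(v+w)$ they do not accumulate---but the bookkeeping is heavier: each inductive step produces an inequality $\sum_j a_j+\eps(v+w)\geq v$ rather than an exact decomposition, and one must then argue as the paper does in the binary case (bounding $L_{i_j}(v_{i_j})$ by $S(a_j)+S(b_j)+S(\eps\text{-terms})$ and using superadditivity of $S$ to collapse). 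The paper's reduction sidesteps this entirely. For the $\Linf$ side and for property~(iii) the paper likewise works binary-first then passes to filtered families; your direct computation using $\sum_j N(v_{i_j})=N(v)$ and \eqref{eq:infsum} in $\J_2$ is equally valid and arguably more transparent.
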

\begin{proof} We start proving that   $L_1\vee_\L L_2$ exists by establishing  the explicit formula \eqref{eq:RKp}. The validity of \eqref{eq:RKm} will follow along similar lines.

Let $L:\J_1\to \J_2$ be the map defined by the right hand side of \eqref{eq:RKp}. Let $M\in\Lin(\J_1,\J_2)$ be such that $M\geq L_1$ and $M\geq L_2$. Then clearly for any $v=v_1+v_2$ we have 
\[
M(v)=M(v_1)+M(v_2)\geq L_1(v_1)+L_2(v_2)
\]
and thus $M(v)\geq L(v)$ for any $v\in \J_1$. Since the inequalities   $L(v)\geq L_1(v)$ and $L(v)\geq L_2(v)$ are trivially valid for any $v\in \J_1$, to conclude it is  sufficient to show that $L$ is linear. The identity $L(\lambda v)=\lambda L(v)$ is a trivial consequence of the definition, so we focus on additivity.

Let $v=v'+v''$ and then pick arbitrary decompositions $v'=v'_1+v'_2$ and $v''=v''_1+v''_2$. Obviously we have $v=(v'_1+v''_1)+(v'_2+v''_2)$, thus
\[
L(v)\geq L_1(v'_1+v''_1)+ L_2(v'_2+v''_2)=\big(L_1(v'_1)+L_2(v'_2)\big)+\big(L_1(v''_1)+L_2(v''_2)\big).
\]
Taking the supremum over the choices of $v'_1,v'_2,v''_1,v''_2$ yields 
\begin{equation}
\label{eq:Lsuper}
L(v)\geq L(v')+L(v'').
\end{equation}
For the converse inequality let $v_1,v_2\in \J_1$ be so that $v_1+v_2=v=v'+v''$ and use item \ref{it:viii} in Proposition \ref{prop:baselattice} to find $z_{ij}$, $i,j=1,2$, such that
\begin{equation} 
\label{eq:splittate}
\begin{split}
v'&=z_{11}+z_{12},\\
v''&=z_{21}+z_{22},\\
v_1&\leq z_{11}+z_{21}+\eps v,\\
v_2&\leq z_{12}+z_{22}+\eps v.
\end{split}
\end{equation}
The last two  and the linearity of $L_1,L_2$ give
\begin{equation}
\label{eq:unpezzo}
\begin{split}
L_1(v_1)&\leq L_1(z_{11})+L_1(z_{21})+L_1(\eps v)\leq L(z_{11})+L(z_{21})+L_1(\eps v),\\
L_2(v_2)&\leq L_2(z_{12})+L_2(z_{22})+L_2(\eps v)\leq L(z_{12})+L(z_{22})+L_2(\eps v).
\end{split}
\end{equation}
From  $\eps v=\eps v'+\eps v''$ (recall \eqref{eq:epssomma})  we get  $L_1(\eps v)=L_1(\eps v')+L_1(\eps v'')\leq L(\eps v')+L(\eps v'')$ and similarly $L_2(\eps v)\leq L(\eps v')+L(\eps v'')$. Using these bounds in \eqref{eq:unpezzo}, adding up the two resulting inequalities and using 
\eqref{eq:Lsuper} and the first two in \eqref{eq:splittate} we get
\[
\begin{split}
L_1(v_1)+L_2(v_2)&\leq L(v')+L(v'') +2L(\eps v')+2L(\eps v'')\\
\text{(again by \eqref{eq:Lsuper})}\qquad&\leq  L(v'+2\eps v')+L(v''+2\eps v'')=L(v')+L(v'').
\end{split}
\]
Taking the supremum over all the choices of $v_1,v_2$ with $v_1+v_2=v$ gives $L(v)\leq L(v')+L(v'')$, that together with \eqref{eq:Lsuper} gives the conclusion.

We pass to the  more general formulas \eqref{eq:RKpgen} and \eqref{eq:RKmgen}. These are obvious if $I$ is empty (recall also Remark \ref{rem:inftyL} and compare with the definition of $\Linf$ for $I$ empty), so we shall assume it is not. Let $\mathcal L\subset\Lin(\J_1,\J_2)$ be arbitrary and let $\mathcal L^j$ (resp.\ $\mathcal L^m$) be the collection of finite joins (resp.\ meets) of elements in $\mathcal L$. Then  $\mathcal L^j$ (resp.\ $\mathcal L^m$) is directed (resp.\ filtered) with the same $\Lsup$ (resp.\ $\Linf$) of  $\mathcal L$. Then the conclusion is a direct consequence of the formulas already proved, of the fact that $\leq_\L$ and $\leq_\p$ agree (Proposition \ref{prop:lincone}) and of \eqref{eq:pointsup}.

To conclude we need to prove that $M+\Linf_iL_i=\Linf_i(M+L_i)$ for an arbitrary family $(L_i)\subset\Lin(\W,\J)$. We start proving that this holds for a family of two elements. We have
\[
\begin{split}
(M+L_1\wedge L_2)(v)&=M(v)+\inf_{v_1+v_2=v}\big(L_1(v_1)+ L_2(v_2)\big)\\
\text{(by \eqref{eq:infsum} in $\J_2$)}\qquad&=\inf_{v_1+v_2=v}\big(M(v)+L_1(v_1)+ L_2(v_2)\big)\\
&=\inf_{v_1+v_2=v}\big(M(v_1)+L_1(v_1)+M(v_2)+ L_2(v_2)\big)=(M+L_1)\wedge (M+L_2)(v).
\end{split}
\]
Now let $\mathcal L\subset \Lin(\W,\J)$ be an arbitrary family and ${\mathcal L}^m$ the family formed by finite meets of elements in $\mathcal L$, as before. Then ${\mathcal L}^m$ is filtered  with the same inf of $\mathcal L$ and what just proved grants that
\[
\Linf\big\{M+L\ :\ L\in\mathcal L\big\}=\Linf\big\{M+ L'\ :\  L'\in  {\mathcal L}^m\big\}.
\]
To conclude, notice that since $ {\mathcal L}^m$ and $M+ {\mathcal L}^m=\{M+ L'\ :\  L'\in  {\mathcal L}^m\}$ are filtered, by formula in \eqref{eq:pointsup} for the inf we get
\[
\begin{split}
\big(\inf\big\{M+  L' :  L'\in {\mathcal L}^m\big\}\big)(v)&=\inf\big\{M(v)+  L'(v) :  L'\in {\mathcal L}^m\big\}\\
\text{(by \eqref{eq:infsum} in $\J_2$)}\qquad&= M(v)+\inf\big\{  L'(v) :   L'\in  {\mathcal L}^m\big\}= M(v)+\big(\inf\big\{  L' :  L'\in {\mathcal L}^m\big\}\big)(v),
\end{split}
\]
thus getting the conclusion.
\end{proof}
\end{subsection}

\begin{subsection}{Extension of linear maps}
For the discussion in the section, for the sake of generality we shall work with source spaces that are just prewedges. We start defining subadditive, superadditive and linear maps:
\begin{definition}
Let $L:\W\to V$ be a map from a prewedge $\W$ to a wedge ${\rm V}$ sending $0\in\W$ into $0\in{\rm V}$. We say that:
\[
\begin{split}
\text{\rm $L$ is superadditive if: }& \qquad L(\lambda_1 v_1+\lambda_2v_2)\geq\lambda_1 L(v_1)+\lambda_2  L(v_2)\qquad\forall \lambda_1,\lambda_2\in[0,\infty),\ v_1,v_2\in \W,\\
\text{\rm $L$ is subadditive if: }& \qquad L(\lambda_1 v_1+\lambda_2v_2)\leq\lambda_1 L(v_1)+\lambda_2  L(v_2)\qquad\forall \lambda_1,\lambda_2\in[0,\infty),\ v_1,v_2\in \W.
\end{split}
\]
We say that $L$ is linear if it is both superadditive and subadditive.
\end{definition}
Notice that choosing $\lambda_2=0$ and replacing $\lambda_1\neq 0$ with $\lambda_1^{-1}$ we see that super/sub-additive maps are homogeneous, i.e.\ satisfy $L(\lambda v)=\lambda L(v)$. Also, superadditive maps, hence also linear ones, are necessarily monotone.

In this section we study the problem of extending a linear map defined on a subwedge of a given prewedge. In line with known results in the literature (see e.g.\ \cite[Chapter 2]{Peressini67} or \cite[Section 1.2]{AB85}), this will be possible when the target is a complete lattice (a cone with joins, in our context). In our framework, though, the lack of the standard cancellation property creates non-trivial additional difficulties,  some of which are addressed by the following result:
\begin{lemma}[An existence result]\label{le:keyext}
Let $\J$ be a \lattice\ and $A,\tilde A\subset \J^2$ be sets of couples such that for every $(v,w)\in A$ and $(\tilde v,\tilde w)\in \tilde A$ we have
\[
\begin{split}
v\leq w\qquad\text{ and }\qquad v+\tilde v\leq w+\tilde w.
\end{split}
\]
Then there exists $x\in \J$ such that or every $(v,w)\in A$ and $(\tilde v,\tilde w)\in \tilde A$ we have
\begin{equation}
\label{eq:trovox}
v+x\leq w\qquad\text{ and }\qquad \tilde v\leq \tilde w+x.
\end{equation}
An explicit example of such $x$ is $x:=\inf_{(v,w)\in A}w-v$ (recall \eqref{eq:defdiff}).
\end{lemma}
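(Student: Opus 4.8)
The plan is to verify that the explicit candidate $x:=\inf_{(v,w)\in A}(w-v)$, which is well defined because $v\leq w$ for every $(v,w)\in A$ and $\J$ has arbitrary infima, satisfies both families of inequalities in \eqref{eq:trovox}. The first inequality $v+x\leq w$ will follow quickly from the defining property of the difference and the compatibility of $\inf$ with sums; the real work is the second, $\tilde v\leq \tilde w+x$. Since $x$ is an infimum, it suffices to show $\tilde v\leq \tilde w+(w-v)$ for each \emph{fixed} $(v,w)\in A$ and each $(\tilde v,\tilde w)\in\tilde A$, and then take the infimum over $(v,w)\in A$ on the right, using property $(ii)$ of Definition \ref{def:conejoins} (that $\inf(\tilde w+(\,\cdot\,))=\tilde w+\inf(\,\cdot\,)$).

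\textbf{First inequality.} For fixed $(v,w)\in A$ the element $w-v$ satisfies $v+(w-v)=w$ by \eqref{eq:defdiff}, hence $v+x\leq v+(w-v)=w$, using only monotonicity of the sum and $x\leq w-v$. Taking this for all $(v,w)\in A$ gives the first half of \eqref{eq:trovox}.

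\textbf{Second inequality.} Fix $(v,w)\in A$ and $(\tilde v,\tilde w)\in\tilde A$. By hypothesis $v+\tilde v\leq w+\tilde w$. Now add $w-v$ to the left-hand side: since $v+(w-v)=w$, we get
\[
w+\tilde v=(v+\tilde v)+(w-v)\leq (w+\tilde w)+(w-v)=w+\bigl(\tilde w+(w-v)\bigr).
\]
This is an inequality of the form $w+a\leq w+b$ with $a:=\tilde v$ and $b:=\tilde w+(w-v)$, so the cancellation property \eqref{eq:cancellationtrue} (valid in a \lattice\ by Proposition \ref{prop:percancellazione}, whose hypothesis is exactly condition $(ii)$ of Definition \ref{def:conejoins}) yields $\tilde v+\eps w\leq \tilde w+(w-v)+\eps w$. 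Here I expect to use \eqref{eq:epsdiff}, namely $\eps(w-v)=\eps w$, together with \eqref{eq:maxdiff}/\eqref{eq:epssomma} to absorb the extra $\eps w$: one has $(w-v)+\eps w=w-v$, so the right-hand side is just $\tilde w+(w-v)$, and the term $\eps w$ on the left can be dropped since $\tilde v\leq \tilde v+\eps w$. This gives $\tilde v\leq \tilde w+(w-v)$ for every $(v,w)\in A$. Finally, taking the infimum over $(v,w)\in A$ and applying \eqref{eq:infsum} with the fixed element $\tilde w$, we obtain $\tilde v\leq \inf_{(v,w)\in A}\bigl(\tilde w+(w-v)\bigr)=\tilde w+\inf_{(v,w)\in A}(w-v)=\tilde w+x$, as desired.

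\textbf{Main obstacle.} The only delicate point is the handling of the parts at infinity: because the naive cancellation law fails, passing from $v+\tilde v\leq w+\tilde w$ to a clean inequality between $\tilde v$ and $\tilde w+(w-v)$ necessarily produces a spurious $\eps w$ term, and one must check, using \eqref{eq:maxdiff}, \eqref{eq:epssomma} and \eqref{eq:epsdiff}, that this term is harmless on both sides. Once the $\eps$-bookkeeping is done, the commutation of $\inf$ with $+$ from Definition \ref{def:conejoins}(ii) closes the argument routinely.
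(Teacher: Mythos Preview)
Your proof is correct and follows essentially the same approach as the paper. The only cosmetic difference is that you add $w-v$ to both sides and then cancel $w$, whereas the paper rewrites $w+\tilde w$ as $v+\tilde w+(w-v)$ and cancels $v$; in either case the resulting $\eps$-term ($\eps w$ for you, $\eps v$ in the paper) is absorbed into $w-v$ via \eqref{eq:maxdiff}, and the passage to the infimum via \eqref{eq:infsum} is identical.
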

\begin{proof}For any $(v,w)\in A$ and $(\tilde v,\tilde w)\in\tilde A$ we have
\[
v+\tilde v\leq w+\tilde w=v+\tilde w+(w-v)\qquad\stackrel{\eqref{eq:cancellationtrue}}\Rightarrow\qquad \tilde v\leq \tilde w+(w-v)+\eps v\stackrel{\eqref{eq:maxdiff}}=\tilde w+(w-v).
\] 
Thus for  $x:=\inf_{(v,w)\in A}(w-v)$ by \eqref{eq:infsum} we have $\tilde v\leq \tilde w+x$ for any $(\tilde v,\tilde w)\in\tilde A$, proving that the second in \eqref{eq:trovox} holds. Since the first is trivial by definition of $x$, the proof is complete.
\end{proof}
To illustrate the statement of our extension result, we start with the natural definition:
\begin{definition}[Subwedge]
Let $\W$ be a prewedge and $\W'\subset \W$. We say that $\W'$ is a subwedge provided it is closed by sum and product by a scalar.
\end{definition}
Clearly, a subwedge is also a prewedge on its own. Also,  even if not important for our discussion, we notice that if $\W$ is actually a wedge, then a subwedge $\W'$ of $\W$ is a wedge on its own, i.e.\ the intrinsic relation $\leq' $ induced by the algebraic structure of $\W'$ satisfies properties $(iii)$ and $(iv)$ in Definition \ref{def:wedge}.

Before continuing, let us point out that the fact that we work with wedges can make linear maps look substantially different from the classical vector space setting. For instance, if $v\in {\rm V}$ is so that $v+v=v$, and thus $\eps v$ is well defined and equal to $v$, then the map $L:\W\to {\rm V}$ defined as 0 in 0 and  as   $v$ everywhere else is linear. More generally, if $\W'\subset \W$ is both a subwedge and a lower set (if $\tilde \W\subset \W$ is any subwedge, then $\{w\in\W:w\leq\tilde w\ \text{ for some }\tilde w\in\tilde\W\}$ is both a subwedge and a lower set), then the map defined as 0 on $\W'$ and $v$ everywhere else is also linear.

Now suppose that $\W$ is a prewedge, ${\rm V}$  a wedge and  that $\varphi,\psi:\W\to {\rm V}$ are given maps that are superadditive and subadditive respectively. Let $\W'\subset \W$ be a subwedge and $M:\W'\to \J$ linear such that
\begin{equation}
\label{eq:suW}
\varphi\leq M\leq \psi
\end{equation}
on $\W'$. We ask whether there exists an extension of $M$ to a linear map on the whole $\W$ that still satisfies \eqref{eq:suW}. Notice that if such extension exists we must have that
\begin{equation}
\label{eq:necM}
a,b,c,d\in \W,\qquad a+b\leq c+d\qquad\Rightarrow\qquad M(a)+\varphi(b)\leq M(c)+\psi(d).
\end{equation}
Indeed, if \eqref{eq:suW} holds on the whole $\W$, for $a,b,c,d$ as in \eqref{eq:necM} the monotonicity of $M$ yields
\[
\begin{split}
M(a)+\varphi(b)&\leq M(a+b)\leq M(c+d)\leq M(c)+\psi(d).
\end{split}
\]
The necessity of \eqref{eq:necM} motivates assumption \eqref{eq:assextnuovo} below. In the statement and proof of the next, crucial, result for better clarity we are going to denote by $\leq_\s$ the preorder on the \emph{s}ource space (the prewedge $\W$) and by $\leq_\t$ the order on the \emph{t}arget (the cone with joins $\J$).
\begin{theorem}[Extension of linear maps]\label{thm:extnuovo}
Let $\W$ be a prewedge, $\J$ a cone with joins and $\varphi,\psi:\W\to \J$ be superadditive and  subadditive, respectively. Also, let $\W'\subset \W$ be a subwedge and $M:\W'\to \J$ linear such that
\begin{equation}
\label{eq:assextnuovo}
\left.
\begin{array}{rl}
a+b\!\!\!&\leq_\s c+ d\\
a,c\!\!\!&\in \W'\\
b,d\!\!\!&\in \W
\end{array}
\right\}\qquad\Rightarrow\qquad M(a)+\varphi(b)\leq_\t  M(c)+\psi(d).
\end{equation}
Then there exists $\hat M:\W\to \J$ linear, extending $M$  and such that
\begin{equation}
\label{eq:extensionnuovo}
\varphi(v)\leq_\t \hat M(v)\leq_\t\psi (v)\qquad\forall v\in \W.
\end{equation}
\end{theorem}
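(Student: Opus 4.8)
The plan is the classical Hahn--Banach one-step-extension-plus-Zorn, but with the lattice machinery of Proposition \ref{prop:baselattice} and Lemma \ref{le:keyext} doing the work that cancellativity would do in the elliptic case. First I would set up a Zorn's lemma argument on the poset of pairs $(\W'',N)$ where $\W'\subset\W''\subset\W$ is a subwedge, $N:\W''\to\J$ is linear, $N$ extends $M$, and $N$ still satisfies the hypothesis \eqref{eq:assextnuovo} with $\W''$ in place of $\W'$ (and, separately, satisfies the conclusion \eqref{eq:extensionnuovo} on $\W''$). The ordering is by extension. Chains have upper bounds: the union of the domains is again a subwedge, the common extension of the maps in the chain is well-defined and linear, and since both \eqref{eq:assextnuovo} and \eqref{eq:extensionnuovo} involve only finitely many elements at a time, they pass to the union. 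So a maximal element $(\W^*,\hat M)$ exists, and one must show $\W^*=\W$.

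\textbf{The one-step extension.} Suppose $\W^*\neq\W$ and pick $v_0\in\W\setminus\W^*$. I would let $\W^{**}$ be the subwedge generated by $\W^*$ and $v_0$, i.e.\ elements of the form $u+\lambda v_0$ with $u\in\W^*$, $\lambda\ge0$ (one must be mildly careful: such representations need not be unique, so the value $\hat M(u+\lambda v_0):=\hat M(u)+\lambda x$ must be shown well-defined once $x$ is chosen appropriately, or one works with the relation rather than representatives). The heart is choosing $x\in\J$ so that the extended map is linear and still satisfies the two constraints. For linearity and \eqref{eq:extensionnuovo} to survive, $x$ must satisfy, roughly, $\hat M(u)+\varphi(b)\le_\t \hat M(c)+\psi(d)$-type inequalities reorganized into: whenever $a+b\le_\s c+d+(\mu v_0)$ and $a'+b'+(\nu v_0)\le_\s c'+d'$ with the $\W^*$-parts where they should be, the resulting lower bounds for $x$ (coming from needing $\hat M(\,\cdot\,)+\mu x\le_\t\cdots$) lie below the upper bounds for $x$ (coming from $\cdots\le_\t\hat M(\cdot)+\nu x$). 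This is exactly the shape of Lemma \ref{le:keyext}: one assembles a family $A$ of pairs $(v,w)$ encoding the "$v_0$ appears on the bigger side" constraints and a family $\tilde A$ encoding the "$v_0$ appears on the smaller side" constraints, verifies the hypotheses $v\le_\t w$ and $v+\tilde v\le_\t w+\tilde w$ of that lemma using \eqref{eq:assextnuovo} together with superadditivity of $\varphi$, subadditivity of $\psi$, and the calculus rules \eqref{eq:epssomma}, \eqref{eq:maxdiff}, \eqref{eq:cancellationtrue} for the $\eps$-correction terms, and then takes $x:=\inf_{(v,w)\in A}(w-v)$ as supplied by the lemma. One then checks that with this $x$ the extended $\hat M$ on $\W^{**}$ is well-defined, linear, and satisfies both \eqref{eq:assextnuovo} and \eqref{eq:extensionnuovo}, contradicting maximality. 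Hence $\W^*=\W$ and $\hat M$ is the desired extension.

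\textbf{The main obstacle.} The genuinely delicate part is the bookkeeping around the failure of cancellation: in the classical proof one simply writes $\sup\{M(u)-\psi(-\lambda v_0\ldots)\}\le\inf\{\ldots\}$ and an interleaving element exists by completeness. Here subtraction is only partially defined (via \eqref{eq:defdiff}), $+\infty$-type absorbing elements are present, and every rearrangement of an inequality $a+z\le b+z$ costs an $\eps(\cdot)$ term that then has to be absorbed or tracked. So the precise form of the families $A,\tilde A$ fed into Lemma \ref{le:keyext}, and the verification that homogeneity of $\varphi,\psi$ lets us normalize the scalar multiples of $v_0$ so that only "$v_0$ on the left" versus "$v_0$ on the right" cases occur, is where the real care is needed; I expect this to require splitting into cases according to whether the coefficient of $v_0$ is zero, and an application of Proposition \ref{prop:baselattice}(\ref{it:viii}) (Decomposition II) to split ambient elements along $\W^*$ and the $v_0$-direction, just as Decomposition II was used in the proof of the Riesz--Kantorovich formula \eqref{eq:RKp}. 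Well-definedness of $\hat M$ on $\W^{**}$ (independence of the representation $u+\lambda v_0$) is a second place where a short but non-automatic argument using \eqref{eq:cancellationtrue} is needed.
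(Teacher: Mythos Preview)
Your strategy is correct and essentially matches the paper's: Zorn on extensions preserving \eqref{eq:assextnuovo}, one-step extension by choosing $M(v_0)$ via Lemma~\ref{le:keyext} applied to families $A,\tilde A$ built exactly as you describe, followed by a case analysis on the coefficients of $v_0$ to verify that the extended hypothesis \eqref{eq:assextnuovo} holds on $\W^{**}$ and that the extension is well-defined.

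Two minor corrections to your sketch. First, Decomposition~II (Proposition~\ref{prop:baselattice}\ref{it:viii}) is not used; the argument stays purely at the level of \eqref{eq:cancellation}, \eqref{eq:infmult}, \eqref{eq:epsdiff} and the explicit formula $M(v_0)=\inf_{(z,w)\in A}(w-z)$. Second, the case split is finer than ``coefficient zero or not'': writing the constraint as $a+tv_0+b\le_\s c+sv_0+d$, the four cases are $t=s=0$, $(s=0,\,t>0)$, $s>t$, and $t\ge s>0$. The last is the genuinely delicate one you allude to: one cannot simply cancel $sv_0$ from both sides. The paper's device is to fix an auxiliary quadruple $(a',b',c',d')\in Q$, which gives $v_0\le c'+d'$, use \eqref{eq:cancellation} to trade the $sv_0$ on the right for $\tfrac1n(c'+d')$, apply the already-established Case~1, and then take $\inf_n$ and $\inf$ over auxiliary quadruples; the residual term is exactly $\eps M(v_0)$ by \eqref{eq:epsdiff} and the explicit formula for $M(v_0)$, and is absorbed since $s>0$. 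Well-definedness of the extension on $\W^{**}$ then follows immediately from this strengthened inequality applied with $b=d=0$, not from a separate cancellation argument.
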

\begin{proof}   Let $v\in \W\setminus \W'$ (if no such $v$ exists we are done) and let $\W'':=\{t v+w:t\in[0,\infty),w\in \W'\}$ be the subwedge generated by $\W'$ and $v$. We shall prove that we can extend $ M$ to $\W''$ in such a way that \eqref{eq:assextnuovo} holds with $\W''$ in place of $\W'$: by a standard application of Zorn's lemma this will be enough to conclude.

Consider the sets $Q,\tilde Q\subset\W^4$ of quadruples  in $\W$ defined as
\begin{subequations}\begin{align}
\label{eq:q}
Q&:=\{(a,b,c,d)\ : a,c\in \W',\ b,d\in \W,\ a+v+b\leq_\s c+d\},\\
\label{eq:qtilde}
\tilde Q&:=\{(\tilde a,\tilde b,\tilde c,\tilde d)\ : \tilde a,\tilde c\in \W',\ \tilde b,\tilde d\in \W,\ \tilde a+\tilde b\leq_\s \tilde c+v+\tilde d\}
\end{align}
\end{subequations}
and  then the sets $A,\tilde A\subset\J^2$ of couples in $\J$ defined as
\begin{equation}
\label{eq:coppieAA}
\begin{split}
A&:=\Big\{\Big(  M(a) +\varphi (b)\,,\,  M(c)+\psi (d)\Big)\ :\ (a,b,c,d)\in Q\Big\},\\
\tilde A&:=\Big\{\Big(  M(\tilde a) +\varphi (\tilde b)\,,\,  M(\tilde c)+\psi (\tilde d)\Big)\ :\ (\tilde a,\tilde b,\tilde c,\tilde d)\in\tilde  Q\Big\}.
\end{split}
\end{equation}
We are looking for a value of $ M(v)\in\J$ such that \eqref{eq:assextnuovo} holds with $\W''$ in place of $\W'$. In particular, \eqref{eq:assextnuovo} must hold with $a+v$ in place of $a$, i.e.\ we must have
\begin{equation}
\label{eq:pezzo1}
z+M(v)\leq_\t w\qquad\forall (z,w)\in A.
\end{equation}
Similarly,  \eqref{eq:assextnuovo} must hold with $c+v$ in place of $c$, i.e.\ we must have
\begin{equation}
\label{eq:pezzo2}
\tilde z\leq_\t \tilde w+M(v)\qquad\forall (\tilde z,\tilde w)\in\tilde A.
\end{equation}
We want to prove that  $M(v)\in\J$ satisfying both these sets of requirements exists and to this aim we shall apply Lemma \ref{le:keyext} above. We verify the assumptions of the latter. The assumption \eqref{eq:assextnuovo}  grants that $z\leq_\t w$ for any $(z,w)\in A$. Moreover, for any $(a,b,c,d)\in Q$ and any $(\tilde a,\tilde b,\tilde c,\tilde d)\in\tilde Q$ we have
\[
(a+\tilde a)+(b+\tilde b)\stackrel{\eqref{eq:qtilde}}{\leq_\s} a+b+\tilde c+v+\tilde d\stackrel{\eqref{eq:q}}{\leq_\s} (c+\tilde c)+(d+\tilde d)
\]
thus the assumption \eqref{eq:assextnuovo} together with superadditivity of $\varphi$, subadditivity of $\psi$ and linearity of $M$ on $\W'$ give
\[
 M(a)+M(\tilde a)+\varphi(b)+\varphi(\tilde b)\leq_\t   M(c)+M(\tilde c)+\psi(d)+\psi(\tilde d).
\]
By the arbitrariness of the quadruples chosen  we see that $z+\tilde z\leq_\t w+\tilde w$ holds for any $(z,w)\in A$, $(\tilde z,\tilde w)\in\tilde A$. We can thus apply Lemma \ref{le:keyext} and obtain that
\begin{equation}
\label{eq:defx}
M(v):=\inf_{(z,w)\in A}w-z
\end{equation}
satisfies   \eqref{eq:pezzo1} and \eqref{eq:pezzo2}. We now  claim that, with this choice,  more generally it holds
\begin{equation}
\label{eq:exteso}
\left.
\begin{array}{rl}
a+tv+b\!\!\!&\leq_\s c+sv+d\\
a,c\!\!\!&\in \W'\\
b,d\!\!\!&\in \W\\
t,s\!\!\!&\in[0,\infty)
\end{array}
\right\}\qquad\Rightarrow\qquad
   M(a)+t M(v)+\varphi(b) \leq_\t   M(c)+s  M(v)+\psi(d).
\end{equation}
To prove this we distinguish few cases.\\
\noindent{\sc Case 0: $t=s=0$.} In this case \eqref{eq:exteso} reduces to \eqref{eq:assextnuovo}, that holds by assumption.\\
\noindent{\sc Case 1: $s=0$, $t>0$.} Up to scaling we can assume $t=1$. In this case the quadruple $(a,b,c,d)$ in \eqref{eq:exteso} belongs to $Q$, thus  the conclusion follows directly from \eqref{eq:pezzo1}.

\noindent{\sc Case 2: $s-t> 0$.} Up to scaling we can assume $s-t=1$. Then by the cancellation property \eqref{eq:cancellation}, for every $n\in\N$, $n>0$, we have
\[
a+b\leq_\s c+(1+\tfrac1n)v+d\leq_\s (1+\tfrac1n)(c+v+d)
\]
and the arbitrariness of $n$ and \eqref{eq:infmult} imply $a+b\leq_\s c+v+d$, i.e.\ the  quadruple $(a,b,c,d)$ in \eqref{eq:exteso}   belongs to $\tilde Q$. Hence    \eqref{eq:pezzo2} gives $ M(a)+\varphi(b)\leq_\t   M(c)+\psi(d)+M(v)$ and the conclusion \eqref{eq:exteso} follows adding $t M(v)$ to both sides.

\noindent{\sc Case 3: $t\geq s>0$.} Fix an arbitrary auxiliary quintuple $(a',b',c',d')\in Q$ and let $(z',w')\in A$ be the corresponding couple as in the first in \eqref{eq:coppieAA}. Notice that $v\leq c'+d'$, fix $n\in\N$ and  observe that the assumption in \eqref{eq:exteso} and the cancellation property \eqref{eq:cancellation} give
\[
a+(t-s+\tfrac1n)v+b\leq_\s c+\tfrac1n v+d\leq_\s c+\tfrac1n(c'+d')+d,
\]
i.e.\ $(t-s+\tfrac1n)^{-1}(a,b,c+\tfrac1nc',d+\tfrac1nd')\in Q$. Thus \eqref{eq:pezzo1}, the subadditivity of $\psi$ and the linearity of $M$ on $\W'$  give
\[
M(a)+(t-s+\tfrac1n)M(v)+\varphi(b)\leq_\t   M(c)+\psi(d)+\tfrac1n w'.
\]
Taking the $\inf$ in $n\in\N$ and in the auxiliary quintuple we get
\[
\begin{split}
 M(a)&+(t-s) M(v)+\varphi(b)\leq_\t   M(c)+ \inf_{(z,w)\in A}\eps w  +\psi(d).
\end{split}
\]
Now add $s M(v) $ to both sides and use that
\[
 \inf_{(z,w)\in A}\eps w\stackrel{\eqref{eq:epsdiff}}= \inf_{(z,w)\in A}\eps (w-z)=\inf_{\lambda>0,(z,w)\in A}\lambda (w-z)\stackrel{\eqref{eq:basemult}}= \inf_{\lambda>0}\lambda\inf_{(z,w)\in A} (w-z)\stackrel{\eqref{eq:defx}}=\eps   M(v)
\]
and the fact that $s>0$ to conclude that \eqref{eq:exteso} holds.

We thus established the validity of \eqref{eq:exteso}. Picking $b=d=0$ in there and then inverting the roles of $(a,t)$ and $(c,s)$ we deduce that
\begin{equation}
\label{eq:linM}
\left.
\begin{array}{rl}
a+tv\!\!\!&= c+sv\\
a,c\!\!\!&\in \W'\\
t,s\!\!\!&\in[0,\infty)
\end{array}
\right\}\qquad\Rightarrow\qquad
   M(a)+t   M(v)=  M(c)+s   M(v).
\end{equation}
We now extend $  M$ from $\W'$ to the subwedge $\W'':=\{a+tv:a\in\W',t\in[0,\infty)\}$ by putting $  M(a+tv):=  M(a)+t  M(v)$. By \eqref{eq:linM} we know that this is a good definition, i.e.\ that the value of $  M(a+tv)$ only depends on the argument, and not on how we wrote it as combination of an element of $\W'$ and a multiple of $v$.

It is  clear from the definition that $  M$ is linear and then that \eqref{eq:exteso} is the required \eqref{eq:assextnuovo} with $\W''$ in place of $\W'$. The proof is therefore  completed.
\end{proof}
\begin{remark}[Trivial choices of $\varphi,\psi$]\label{re:trivialfipsi}{\rm The above extension theorem demands,  and then extends, the two bounds \eqref{eq:extensionnuovo}.  Notice, though, that we can easily reduce to the case where only one of these, or none, is present. Indeed, we can always apply the statement with the `zero map' $\varphi\equiv0$ ($\varphi$  is clearly linear, hence superadditive) and  the `infinity map' $\psi$ defined as $\psi(0_\W):=0_\J$ and $\psi(v)=+\infty\in\J$ for every $v\neq 0$  ($\psi$ is clearly linear, hence subadditive).

The first choice makes the  presence of $b$ in \eqref{eq:assextnuovo} irrelevant and first bound in \eqref{eq:extensionnuovo} always true. The second choice makes the   presence of $d$ in \eqref{eq:assextnuovo} irrelevant and the second bound in \eqref{eq:extensionnuovo} always true.
}\fr\end{remark}

\begin{remark}\label{re:implicaHB}{\rm
Theorem \ref{thm:extnuovo} implies the classical Hahn-Banach extension theorem. To see why, let $V$ be a real vector space and $p:V\to\R$ be such that
\begin{equation}
\label{eq:rhosubadd}
\begin{split}
p(v+w)&\leq p(v)+p (w),\\
p(\lambda v)&=\lambda p(v)
\end{split}
\end{equation}
for every $v,w\in V$ and $\lambda\geq 0$. Also,  let $V'\subset V$ be a subspace and $T:V'\to\R$ a linear map such that
\begin{equation}
\label{eq:Tbound}
T(v)\leq p(v)\qquad\forall v\in V'.
\end{equation}
We shall use Theorem \ref{thm:extnuovo}  to extend $T$ to a linear map on the whole $V$ still satisfying \eqref{eq:Tbound}. Consider on $\R\times V$ the  preorder $\preceq$ defined as
\begin{equation}
\label{eq:pov}
(t,v)\preceq (s,w)\qquad\text{ whenever }\qquad p(w-v)\leq s-t,
\end{equation}
and let $F:=\{(t,v)\in\R\times V:(0,0)\preceq (t,v)\}$ be the `future cone'. The subadditivity of $p$ encoded in \eqref{eq:rhosubadd} grants that $\preceq$ is a preorder. It is then clear that $F$ is a prewedge and that the preorder relation induced by the wedge structure coincides with $\preceq$ (as the definition \eqref{eq:pov} tells that $(t,v)\preceq (s,w)$ if and only if $(s-t,w-v)\in F$). Then consider the subwedge $F':=\{(t,v)\in F:v\in V'\}$ and  the map $M:F'\to \R$ defined as 
\begin{equation}
\label{eq:defM}
M(t,v):=t-T(v)\qquad\forall (t,v)\in F'.
\end{equation}
Such $M$ is linear and, by \eqref{eq:Tbound} and \eqref{eq:pov}, takes only non-negative values.

We can thus apply Theorem \ref{thm:extnuovo} with the prewedge $\W:=F$, the subwedge $\W':=F'$, the target $\J:=[0,+\infty]$, the map $M$ just defined, $\varphi\equiv 0$ and $\psi$ the `infinite map' set to 0 in 0 and to $+\infty$ everywhere else (see also Remark \ref{re:trivialfipsi}). It is easy to see that with these choices the assumption \ref{eq:assextnuovo} holds, indeed: if $d\neq 0$ then $\psi(d)=+\infty$ and the conclusion in \eqref{eq:assextnuovo} holds, otherwise we assume $a+b\leq_\s d$ and we want to conclude that $M(a)\leq_\t M(d)$, which is obvious by the monotonicity of $M$. Thus Theorem  \ref{thm:extnuovo} is applicable and it provides a linear extension $ \hat M:F\to[0,+\infty]$ of $M$.

Let $(t,v)\in F$, notice that $(p(-v),-v)\in F$  and thus 
\[
\hat  M(t,v)\leq \hat M\big((t,v)+(p(-v),-v)\big)=\hat  M(t+p(-v),0)\stackrel*=M(t+p(-v),0)\stackrel{\eqref{eq:defM}}=t+p(-v)<\infty,
\]
having used in the starred identity that $(t+p(-v),0)\in F'$, that in turn follows from $t+p(-v)\geq p(v)+p(-v)\geq p(0)$. This proves that $\hat  M$ never attains the value $+\infty$.  Also, using again the identity   $ \hat M(t,0)=M(t,0)=t$ valid for every $t\geq 0$  we have
\[
t+ \hat M(s,v)= \hat M(t+s,v)=s+\hat  M(t,v)\qquad\forall t,s\geq p(v),\ \forall v\in V.
\]
Thus  the value of $t- \hat M(t,v)$ is independent on $t\geq p(v)$: calling such value $ \hat T(v)$ we then see that $ \hat T$ is a real valued linear map on $V$ that, trivially from the fact that $ \hat M$ extends $M$ and from \eqref{eq:defM}, extends $T$. Also, the positivity of $ M$ on $F$ and the fact that $(p(v),v)\in F$ for every $v\in V$ give
\[
\hat  T(v)=p(v)-\hat  M(p(v),v)\leq p(v)\qquad\forall v\in V,
\]
as desired. See Section \ref{se:Banst} for more on the transition $V\mapsto \text{future cone in }\R\times V$.
}\fr\end{remark}
\begin{remark}\label{re:flexext}{\rm
There is some flexibility in Theorem \ref{thm:extnuovo} and its proof, as the partial orders $\leq_\s$ and $\leq_\t$ might in principle be different from those coming from the (pre)wedge structures: it is sufficient, somehow tautologically, that all the relations between the partial orders and the operation of sum and product by scalar that we use in the proof are still in place. In particular,  the properties of the preorder $\leq_\s$ on the source prewedge $\W$ that we used are: $0\leq_\s v$ for any $v\in\W$ plus \eqref{eq:supmultbase} and \eqref{eq:basebase} (these imply that $\leq_\s$ contains the preorder induced by the sum via \eqref{eq:defpo}). Analogously, for  $\leq_\t$ it is important that on top of the already mentioned properties we also have that it is a complete lattice and that \eqref{eq:infsum} holds (warning: these suffice for Theorem \ref{thm:extnuovo}, but other results concerning cone with joins rely on Proposition \ref{prop:baselattice}, that in turn requires the partial order to be the one induced by the sum via \eqref{eq:defpo}, for instance because from $y\geq y\wedge w$ we deduce that there is $a$ such that $y\wedge w+a=y$).

As trivial as it is, this observation might be relevant in applications where the wedge structure comes with an additional partial order, different from the one coming from \eqref{eq:defpo}. This might be the case, for instance, for wedges of functions, that canonically come with the pointwise order.
}\fr\end{remark}

We conclude the section presenting a variant of the above extension result that appears stronger, even though in fact is a consequence of what already proved.

Suppose that $\W$ is a prewedge, $\J$ a cone with joins that that  $\varphi,\psi,L:\W\to\J$ are given maps that are superadditive, subadditive and linear respectively. Let $\W'\subset \W$ be a subwedge and $M:\W'\to \J$ linear such that
\begin{equation}
\label{eq:suWvecchio}
\varphi\leq L+M\leq \psi
\end{equation}
on $\W'$. We ask whether there exists an extension of $M$ to a linear map on the whole $\W$ that still satisfies \eqref{eq:suWvecchio}. Notice that if such extension exists we must have that
\begin{equation}
\label{eq:necMvecchio}
a,b,b',c,d\in \W,\qquad a+b+b'\leq c+ d\qquad\Rightarrow\qquad (L+M)(a)+L(b)+\varphi(b')\leq (L+M)(c)+\psi(d).
\end{equation}
Indeed, if \eqref{eq:suWvecchio} holds on the whole $\W$, for $a,b,b',c,d$ as in \eqref{eq:necMvecchio} the monotonicity of $L+M$ yields
\[
\begin{split}
(L+M)(a)+L(b)+\varphi(b')&\leq(L+M)(a)+(L+M)(b)+(L+M)(b')\\
\text{(as $a+b+b'\leq c+d$)}\qquad\qquad&\leq (L+M)(c+d)\leq (L+M)(c)+\psi(d).
\end{split}
\]
As before,  \eqref{eq:necMvecchio} motivates assumption \eqref{eq:assextvecchio} below and, as before, in the statement and proof of the next result, for better clarity we are going to denote by $\leq_\s$ the preorder on the \emph{s}ource space  and by $\leq_\t$ the order on the \emph{t}arget.
\begin{corollary}\label{thm:extvecchio}
Let $\W$ be a prewedge, $\J$ a cone with joins and $\varphi,\psi,L:\W\to \J$ be superadditive, subadditive and linear, respectively. Also, let $\W'\subset \W$ be a subwedge and $M:\W'\to \J$ linear such that
\begin{equation}
\label{eq:assextvecchio}
\left.
\begin{array}{rl}
a+b+b'\!\!\!&\leq_\s c+d\\
a,c\!\!\!&\in \W'\\
b,b',d\!\!\!&\in \W
\end{array}
\right\}\qquad\Rightarrow\qquad (L+M)(a)+L(b)+\varphi(b')\leq_\t (L+M)(c)+\psi(d).
\end{equation}
Then there exists $\hat M:\W\to \J$ linear, so that $\hat M(w)=M(w)+\eps L(w)$ for every $w\in \W'$  and such that
\begin{equation}
\label{eq:extensionvecchio}
\varphi(v)\leq_\t L(v)+\hat M(v)\leq_\t\psi (v)\qquad\forall v\in \W.
\end{equation}
\end{corollary}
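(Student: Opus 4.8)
The plan is to deduce Corollary \ref{thm:extvecchio} from Theorem \ref{thm:extnuovo} by absorbing the linear map $L$ into an enlarged superadditive lower bound. Concretely, I would define $\Phi\colon\W\to\J$ by
\[
\Phi(e):=\sup\big\{L(b)+\varphi(b')\ :\ b,b'\in\W,\ b+b'=e\big\},
\]
the supremum existing in $\J$ since $\J$ is a cone with joins. First I would record the elementary properties of $\Phi$: it sends $0$ to $0$ (any decomposition $b+b'=0$ forces $L(b)=\varphi(b')=0$, using linearity of $L$ and superadditivity of $\varphi$); it is positively homogeneous (rescale the decomposition and use positive homogeneity of $L$ and $\varphi$); and it dominates $L$ pointwise (take $b'=0$) and $\varphi$ pointwise (take $b=0$). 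The key point is that $\Phi$ is \emph{superadditive}: given decompositions $e_1=b_1+b_1'$ and $e_2=b_2+b_2'$ one has $e_1+e_2=(b_1+b_2)+(b_1'+b_2')$, hence, using additivity of $L$ and superadditivity of $\varphi$,
\[
\Phi(e_1+e_2)\geq L(b_1+b_2)+\varphi(b_1'+b_2')\geq\big(L(b_1)+\varphi(b_1')\big)+\big(L(b_2)+\varphi(b_2')\big),
\]
and taking suprema over the two decompositions (pulling a fixed element through the supremum via \eqref{eq:basesomma}) gives $\Phi(e_1+e_2)\geq\Phi(e_1)+\Phi(e_2)$.

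Next I would check that hypothesis \eqref{eq:assextnuovo} of Theorem \ref{thm:extnuovo} holds for the data: prewedge $\W$, subwedge $\W'$, superadditive map $\Phi$, subadditive map $\psi$, and the \emph{linear} map $N:=(L+M)|_{\W'}$ (linear since $L$ and $M$ are). Indeed, if $a+e\leq_\s c+d$ with $a,c\in\W'$ and $e,d\in\W$, then for every decomposition $e=b+b'$ we have $a+b+b'\leq_\s c+d$, so \eqref{eq:assextvecchio} gives $(L+M)(a)+L(b)+\varphi(b')\leq_\t(L+M)(c)+\psi(d)$; pulling $N(a)$ inside the supremum defining $\Phi(e)$ via \eqref{eq:basesomma} yields $N(a)+\Phi(e)\leq_\t N(c)+\psi(d)$, which is exactly \eqref{eq:assextnuovo}. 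Theorem \ref{thm:extnuovo} then produces a linear $\hat N\colon\W\to\J$ extending $N=(L+M)|_{\W'}$ with $\Phi(v)\leq_\t\hat N(v)\leq_\t\psi(v)$ for all $v\in\W$; since $\Phi\geq_\p L$ and $\Phi\geq_\p\varphi$, we get $L\leq_\p\hat N$ and $\varphi\leq_\p\hat N$ on all of $\W$.

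Finally, since $L(v)\leq_\t\hat N(v)$ for every $v\in\W$, I would set $\hat M(v):=\hat N(v)-L(v)$, the maximal solution of $L(v)+\hat M(v)=\hat N(v)$ in the sense of \eqref{eq:defdiff}. By \eqref{eq:difflin} the map $\hat M$ is linear, and by construction $L+\hat M=\hat N$ pointwise, whence $\varphi\leq_\p\Phi\leq_\p\hat N=L+\hat M\leq_\p\psi$ on $\W$, i.e.\ \eqref{eq:extensionvecchio}. For $w\in\W'$ one has $\hat M(w)=\hat N(w)-L(w)=\big(L(w)+M(w)\big)-L(w)$, and \eqref{eq:maxdiff} applied with $v:=L(w)$ and $z:=M(w)$ gives $\hat M(w)=M(w)+\eps L(w)$, as required (note this is also consistent with homogeneity of $\hat M$, since $\eps L(\lambda w)=\eps L(w)=\lambda\eps L(w)$ for $\lambda>0$). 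The only genuinely non-routine points are the superadditivity of $\Phi$ (the short argument above) and the observation that the three-slot inequality \eqref{eq:assextvecchio} is engineered precisely so that the pair $\big(L(b),\varphi(b')\big)$ repackages into the single superadditive map $\Phi$; the rest is bookkeeping with the $\eps$-calculus \eqref{eq:epsinfty}, \eqref{eq:maxdiff} and the linearity of differences \eqref{eq:difflin}. As a sanity check, taking $L\equiv 0$ gives $\Phi=\varphi$, $\hat M=\hat N$, and the statement reduces to Theorem \ref{thm:extnuovo}.
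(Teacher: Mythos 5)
Your proposal is correct and follows essentially the same route as the paper's own proof: the paper also defines the enlarged superadditive map $\tilde\varphi(v):=\sup_{v_1+v_2=v}\varphi(v_1)+L(v_2)$ (your $\Phi$), applies Theorem \ref{thm:extnuovo} to $\tilde M:=L+M$ on $\W'$ with bounds $\tilde\varphi\leq\cdot\leq\psi$, and then recovers $\hat M$ as the difference $\tilde M-L$ via \eqref{eq:defdiff}, \eqref{eq:difflin} and \eqref{eq:maxdiff}. No gaps; your extra remarks ($\Phi(0)=0$, $\Phi\geq_\p L,\varphi$) are the same facts the paper uses implicitly.
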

\begin{proof}  
Define $\tilde\varphi:\W\to\J$ as
\[
\tilde\varphi(v):=\sup_{v_1+v_2=v}\varphi(v_1)+ L(v_2),
\]
notice that, trivially, we have $\tilde\varphi(\lambda v)=\lambda \tilde\varphi (v)$. Also, for $v,w\in\W$ and arbitrary writing $v=v_1+v_2$ and $w=w_1+w_2$ we obviously have $v+w=(v_1+w_1)+(v_2+w_2)$, hence
\[
\begin{split}
\tilde\varphi(v+w)\geq \varphi(v_1+w_1)+L(v_2+w_2)\geq \varphi(v_1)+L(v_2)+\varphi(w_1)+L(w_2)
\end{split}
\]
and the arbitrariness of $v_1,v_2,w_1,w_2$ as above give $\tilde\varphi(v+w)\geq \tilde\varphi(v)+\tilde\varphi(w)$. Now, define $\tilde M:\W'\to \J$ as $\tilde M:=L+M$ and notice that the assumption \eqref{eq:assextvecchio} imply
\[
\left.
\begin{array}{rl}
a+\tilde b\!\!\!&\leq_\s c+d\\
a,c\!\!\!&\in \W'\\
\tilde b,d\!\!\!&\in \W
\end{array}
\right\}\qquad\Rightarrow\qquad \tilde M(a)+\tilde \varphi(\tilde b)\leq_\t \tilde M(c)+\psi(d).
\]
We can therefore apply the extension Theorem \ref{thm:extnuovo} and deduce that we can extend $\tilde M$ to a linear map, still denoted $\tilde M$, such that 
\begin{equation}
\label{eq:vecchiochiuso}
\tilde\varphi(v)\leq_\t\tilde M(v)\leq_\t \psi(v)\qquad\forall v\in\W.
\end{equation}
Since by construction for every $v\in\W$ we have $\tilde\varphi(v)\geq_\t L(v)$, we can define
\[
\hat M(v):=\tilde M(v)-L(v)\qquad\forall v\in \W
\]
and notice that by \eqref{eq:difflin} the map $\hat M:\W\to \J$ is linear. Also, \eqref{eq:maxdiff} gives that $\hat M=M+\eps L$ on $\W'$, so that to conclude it suffices to prove \eqref{eq:extensionvecchio}. This, however, is obvious from \eqref{eq:vecchiochiuso}, the validity of $\tilde M(v)=L(v)+\hat M(v)$ and $\tilde\varphi(v)\geq_\t \varphi(v)$  for every $v\in\W$.
\end{proof}
\end{subsection}

\begin{subsection}{Morphisms and the directed decomposition property}\label{se:morphisms}

Much like in classical Banach space theory  one is not interested in general linear maps, but rather in those that also respect Cauchy-limits, i.e.\ that are continuous, here we are not interested in general linear maps, but rather in those that respect directed suprema, i.e.\ with  the $\mcp$:
\begin{definition}[Morphism]\label{def:morphism}
Let $\W_1$ be a wedge and $\W_2$ be a cone. We define the set of morphisms from $\W_1$ to $\W_2$ as $\Hom(\W_1,\W_2):=\Lin(\W_1,\W_2)\cap \mcp(\W_1,\W_2)$.

The dual $\W^*$ of a wedge $\W$ is defined as $\Hom(\W,[0,\infty])$.
\end{definition}
Sum and multiplication by positive scalars can be defined pointwise on $\Hom(\W_1,\W_2)$. We have already noticed that the result of these operations is linear if the starting maps are linear. Using \eqref{eq:supsommadir} it is also immediate to check that the result has the $\mcp$ if the starting maps have it, hence $\Hom(\W_1,\W_2)$ is a prewedge. 

The associated  preorder  $\leq$ on $\Hom(\W_1,\W_2)$  is thus defined as: $L_1\leq L_2$ whenever there is $M\in\Hom(\W,\C)$ such that $L_1+M=L_2$. It is clear that for any $L_1,L_2\in \Hom(\W_1,\W_2)$ we have
\begin{equation}
\label{eq:3ordini}
L_1\leq L_2\qquad\qquad\Rightarrow\qquad\qquad L_1\leq_\L L_2\qquad\qquad\Rightarrow\qquad\qquad L_1\leq_\p  L_2
\end{equation}
showing in particular that $L_1\leq L_2$ and $L_2\leq L_1$ imply $L_1=L_2$, i.e.\ that $\leq$ is a partial order. We know from Proposition \ref{prop:lincone} that the second implication is an equivalence if $\W_2$ is a \lattice. We will be able to show that the first implication is also an equivalence  under the additional structural assumptions on the source space given in Definition \ref{def:tdp}, that tighten the links between linear structure and directed suprema. In particular, at this level of generality we do not know whether $\Hom(\W,\C)$ is a cone, or a wedge, not even in the particular case $\C=[0,\infty]$. 

Recalling \eqref{eq:mcpcomplete}, \eqref{eq:mcpcomplete2} and \eqref{eq:linp} it is immediate to see that for any wedge $\W$ and cone $\C$ we have that 
\begin{equation}
\label{eq:linhom}
\big(\,\Hom(\W,\C)\,,\,\leq_\p \big)\qquad\text{ is a directed complete partial order},
\end{equation}
the supremum of a $\leq_\p -$directed family being given by the pointwise formula as in \eqref{eq:pointsup}.

 %As  perhaps made clear  by the proof of Proposition \ref{prop:lincone}, the problem in showing that the orders $\leq $ and $\leq_\p$ on $\Hom(\W,\J)$ agree when $\J$ is a \lattice\ is related to the fact that the extension results in Theorem \ref{thm:extnuovo} and Corollary \ref{thm:extvecchio} only gives a linear map, and not a linear map with the $\mcp$. We remark that this is a key conceptual difference w.r.t.\ standard functional analysis, where the bound from above by a subadditive function provided by the Hahn-Banach theorem is sufficient, in typical circumstances, to get the desired continuity.

To further investigate the matter, we recall that  in Section \ref{se:mapsmcp} we discussed a general way to produce a map with the $\mcp$ out of a general lattice-valued map: the projection operator given by Definition \ref{def:prmcp}. Its general relations with linearity are:
\begin{proposition}[Projection and linearity]\label{prop:prsubadditive}
Let $\W$ be a wedge, $\J$ a \lattice\ and $L:\W\to \J$  linear. Then ${\sf Pr}(L):\W\to \J$ is subadditive.

Also, let $M_1,M_2:\W\to \J$ be arbitrary, not necessarily linear, maps and $\lambda_1,\lambda_2 \in[0,+\infty)$. Then
\begin{equation}
\label{eq:prsuperlin}
{\sf  Pr}(\lambda_1 M_1+\lambda_2  M_2)\geq_\p \lambda_1 {\sf  Pr}(M_1)+\lambda_2 {\sf  Pr}(M_2)
\end{equation}
with equality if either one of $M_1,M_2$ has the $\mcp$.
\end{proposition}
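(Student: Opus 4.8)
The plan is to recall what needs to be shown and set up notation. We must prove two statements about the projection operator ${\sf Pr}$ from Definition \ref{def:prmcp}: (1) if $L:\W\to\J$ is linear, then ${\sf Pr}(L)$ is subadditive; (2) for arbitrary maps $M_1,M_2:\W\to\J$ and scalars $\lambda_1,\lambda_2\geq 0$, the superadditivity-type bound ${\sf Pr}(\lambda_1M_1+\lambda_2M_2)\geq_\p \lambda_1{\sf Pr}(M_1)+\lambda_2{\sf Pr}(M_2)$ holds, with equality when one of $M_1,M_2$ has the $\mcp$. I would handle (2) first because (1) will follow from it together with the characterization of ${\sf Pr}$ via the iteration $P_\alpha$ of Proposition \ref{prop:projmcp}.

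For the homogeneity part of (2) (the case $\lambda_2=0$), I would first establish that ${\sf Pr}(\lambda M)=\lambda{\sf Pr}(M)$ for $\lambda>0$: if $S\in\mcp(\W,\J)$ with $S\leq_\p M$, then $\lambda S\in\mcp(\W,\J)$ by Proposition \ref{prop:operazioniMCP} (multiplication by a positive scalar has the $\mcp$, and it preserves linearity, so it maps $\mcp$ into $\mcp$) and $\lambda S\leq_\p \lambda M$; taking suprema using \eqref{eq:mcpcomplete2} and \eqref{eq:basemult} gives $\lambda{\sf Pr}(M)\leq_\p{\sf Pr}(\lambda M)$, and applying this with $\lambda^{-1}$ to $\lambda M$ gives the reverse. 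The case $\lambda=0$ is trivial since ${\sf Pr}(0)=0$. For the additive bound: if $S_i\in\mcp(\W,\J)$ with $S_i\leq_\p M_i$ ($i=1,2$), then $S_1+S_2\in\mcp(\W,\J)$ — using that the pointwise sum of two maps with the $\mcp$ has the $\mcp$ (same argument as around \eqref{eq:mcpcomplete2}, or via Proposition \ref{prop:mcpprod} and Proposition \ref{prop:operazioniMCP}) — and $S_1+S_2\leq_\p M_1+M_2$, so $S_1+S_2\leq_\p{\sf Pr}(M_1+M_2)$; taking the pointwise supremum over all such $S_1,S_2$ and invoking \eqref{eq:mcpcomplete2} together with \eqref{eq:basesomma} yields ${\sf Pr}(M_1)+{\sf Pr}(M_2)\leq_\p{\sf Pr}(M_1+M_2)$. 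Combining with homogeneity gives \eqref{eq:prsuperlin}.

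For the equality in (2) when, say, $M_1$ has the $\mcp$: I would use the explicit construction of ${\sf Pr}$ in Proposition \ref{prop:projmcp} via the maps $P_\alpha T$. The key point is a reverse inequality ${\sf Pr}(\lambda_1M_1+\lambda_2M_2)\leq_\p \lambda_1M_1+\lambda_2{\sf Pr}(M_2)$, which then, being a map with the $\mcp$ bounded by $\lambda_1M_1+\lambda_2{\sf Pr}(M_2)$... hmm — more carefully, since ${\sf Pr}(M_1)=M_1$ when $M_1$ has the $\mcp$, it suffices to show ${\sf Pr}(\lambda_1M_1+\lambda_2M_2)\leq_\p \lambda_1M_1+\lambda_2{\sf Pr}(M_2)$; combined with \eqref{eq:prsuperlin} and ${\sf Pr}(M_1)=M_1$ this gives equality. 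To get this reverse inequality I would show by transfinite induction on $\alpha$ that $P_\alpha(\lambda_1M_1+\lambda_2M_2)\leq_\p \lambda_1M_1+\lambda_2 P_\alpha(M_2)$: for the successor step one uses \eqref{eq:defprt}, pulling the fixed $\mcp$-summand $\lambda_1M_1$ out of the sup over directed $A$ with $x=\sup A$ — precisely because $M_1(\sup A)=\sup M_1(A)$ so $\sup(\lambda_1M_1+\lambda_2M_2)(A)=\lambda_1M_1(x)+\lambda_2\sup M_2(A)$ by \eqref{eq:basesomma} — and then the $\inf$ over $A$ of $\lambda_1M_1(x)+\lambda_2\sup M_2(A)$ equals $\lambda_1M_1(x)+\lambda_2 P(M_2)(x)$ by \eqref{eq:infsum}; for limit ordinals one again uses \eqref{eq:infsum} to commute the infimum past the fixed summand. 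Passing to the stabilizing ordinal gives ${\sf Pr}(\lambda_1M_1+\lambda_2M_2)\leq_\p \lambda_1M_1+\lambda_2{\sf Pr}(M_2)$, as wanted. The main obstacle I anticipate is exactly this: handling the scalar $\lambda_2$ inside the iteration cleanly — one may prefer to first reduce to $\lambda_1=\lambda_2=1$ using the already-established homogeneity of ${\sf Pr}$ and of $P$, $P_\alpha$ (which itself requires a small induction that $P_\alpha(\lambda M)=\lambda P_\alpha(M)$), and then prove $P_\alpha(M_1+M_2)=M_1+P_\alpha(M_2)$ when $M_1$ has the $\mcp$.

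Finally, for (1): ${\sf Pr}(L)=P_{\bar\alpha}L$ for some ordinal $\bar\alpha$ by Proposition \ref{prop:projmcp}. I would argue that $P(T)$ is subadditive whenever $T$ is, and that subadditivity is preserved at limit stages, so by transfinite induction every $P_\alpha L$ — and hence ${\sf Pr}(L)$ — is subadditive (note $L$ linear implies $L$ subadditive). For the successor step: given $x=x_1+x_2$, if $A_1,A_2$ are directed with $\sup A_i=x_i$, then $A_1+A_2$ is directed with $\sup(A_1+A_2)=x_1+x_2=x$ by \eqref{eq:basesomma}, and $\sup T(A_1+A_2)\leq \sup T(A_1)+\sup T(A_2)$ by subadditivity of $T$ and \eqref{eq:supsommadir}; taking the infimum over $A_1,A_2$ and using \eqref{eq:infsum} twice gives $P(T)(x)\leq P(T)(x_1)+P(T)(x_2)$. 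Homogeneity of $P(T)$ is immediate from \eqref{eq:defprt}. For limit ordinals, $P_\alpha T(x)=\inf_{\beta<\alpha}P_\beta T(x)\leq \inf_{\beta<\alpha}(P_\beta T(x_1)+P_\beta T(x_2))$; since $\beta\mapsto P_\beta T(x_i)$ is non-increasing, $\inf_{\beta<\alpha}(P_\beta T(x_1)+P_\beta T(x_2))=\inf_\beta P_\beta T(x_1)+\inf_\beta P_\beta T(x_2)$ by \eqref{eq:supsommadir} applied to filtered families, which is $P_\alpha T(x_1)+P_\alpha T(x_2)$. This closes the induction and proves (1).
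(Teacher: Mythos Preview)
Your approach is essentially the paper's: subadditivity of ${\sf Pr}(L)$ by showing $P$ preserves subadditivity and running transfinite recursion, the inequality \eqref{eq:prsuperlin} from observing that the right-hand side already has the $\mcp$ and lies below the argument, and the equality case by the one-step lemma $P(\lambda_1 M_1+\lambda_2 N)\leq_\p \lambda_1 M_1+\lambda_2 P(N)$ (for $M_1$ with the $\mcp$) iterated via $P_\alpha$.

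Two small corrections. First, in your successor step for the equality case the claimed identity $\sup(\lambda_1M_1+\lambda_2M_2)(A)=\lambda_1M_1(x)+\lambda_2\sup M_2(A)$ is in general only the inequality $\leq$ (you cannot invoke \eqref{eq:basesomma} here, since both summands vary with $a$ and $M_2$ need not be monotone); but $\leq$ is exactly what you need. Second, the successor step for general $\alpha$ is not a direct computation of $P(\lambda_1M_1+\lambda_2M_2)$: you must first apply monotonicity of $P$ to the induction hypothesis to get $P_{\alpha+1}(\lambda_1M_1+\lambda_2M_2)\leq_\p P(\lambda_1M_1+\lambda_2 P_\alpha(M_2))$, and then apply the one-step lemma with $P_\alpha(M_2)$ in place of $M_2$. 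The paper does exactly this (with the roles of $M_1,M_2$ swapped and noting that only monotonicity of the fixed map is used). Your opening remark that (1) would follow from (2) is not correct, but your separate argument for (1) is fine.
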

\begin{proof} We shall prove that if $L:\W\to \J$ is subadditive, then so is $P(L):\W\to \J$ as defined by \eqref{eq:defprt}. By transfinite recursion and Proposition \ref{prop:projmcp} this will suffice to conclude.

Let $v_1,v_2\in \W$, $\lambda_1,\lambda_2\in[0,\infty)$ and let $D_1,D_2\subset \W$ be directed sets with supremum $v_1,v_2$ respectively. It is then clear that the set $D:=\{\lambda_1w_1+\lambda_2 w_2:w_1\in D_1,w_2\in D_2\}$ is directed with supremum $\lambda_1v_1+\lambda_2 v_2$, thus
\[
\begin{split}
P(L)(\lambda_1v_1+\lambda_2 v_2)&\leq \sup_{w_1\in D_1,w_2\in D_2} L(\lambda_1w_1+\lambda_2 w_2)\\
&\leq \sup_{w_1\in D_1,w_2\in D_2}\big( \lambda_1L( w_1)+\lambda_2 L( w_2)\big)\stackrel{\eqref{eq:basemult},\eqref{eq:basesomma}}=\big(\lambda_1\sup_{D_1}L\big)+\big(\lambda_2 \sup_{D_2}L\big).
\end{split}
\]
Taking the infimum over $D_1,D_2$ and using \eqref{eq:infsum} we conclude.

We pass to the second claim and notice that the inequality \eqref{eq:prsuperlin} is obvious from the fact that the right hand side has the $\mcp$ (by \eqref{eq:supsommadir}) and is $\leq_\p\lambda_1M_1+\lambda_2 M_2$. For the equality case, possibly swapping $M_1$ and $M_2$,  by transfinite recursion it suffices to prove that if $M_2$ is monotone then $P(\lambda_1M_1+\lambda_2 M_2)\leq_\p \lambda_1P(M_1)+\lambda_2 M_2$. To see this, let $D\subset \W$ be directed with supremum $v$. Then we have
\[
\begin{split}
P(\lambda_1M_1+\lambda_2 M_2)(v)&\leq \sup_{w\in D}(\lambda_1M_1+\lambda_2 M_2)(w)=\sup_{w\in D}\big(\lambda_1M_1(w)+\lambda_2 M_2(w)\big)\\
\text{(by monotonicity of $M_2$)}\qquad&\leq\sup_{w\in D}\big(\lambda_1M_1(w)+\lambda_2 M_2(v)\big)=\big(\sup_{w\in D}\lambda_1M_1(w)\big)+\lambda_2 M_2(v)
\end{split}
\]
and taking the inf over $D$ we conclude recalling \eqref{eq:infsum}.
\end{proof}
From the above proposition we get the following result, that can be thought of as an addendum to Theorem \ref{thm:extnuovo} and Corollary \ref{thm:extvecchio}, see e.g.\ how these are combined in the proof of Proposition \ref{prop:normabiduale}.

\begin{theorem}[Existence theorem]\label{thm:exthm}
Let $\W$ be a wedge and $\J$ a \lattice. Let $\varphi,\psi:\W\to \J$ be superadditive and subadditive respectively and $L,M:\W\to \J$ linear. Assume that:
\begin{itemize}
\item[i)] We have
\begin{equation}
\label{eq:peresistenza}
\varphi\leq_\p L+M\leq_\p \psi\qquad \text{on $\W$},
\end{equation}
\item[ii)] $L$ has the $\mcp$,
\item[iii)] For some $\bar v\in \W$ such that $\eps ((L+M)(\bar v))=0$ the map $\W\ni v\mapsto \varphi(v+\bar v)$ has the $\mcp$.
\end{itemize} 
Then there is a subadditive map $\tilde M:\W\to \J$ with the $\mcp$ such that
\begin{equation}
\label{eq:exthm}
\varphi\leq_\p L+\tilde M\leq_\p \psi\qquad \text{on $\W$},
\end{equation}
an explicit choice being $\tilde M:= {\sf Pr}(M)$.
\end{theorem}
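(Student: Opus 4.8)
The plan is to take $\tilde M := {\sf Pr}(M)$ and verify the two required properties: that it has the $\mcp$ and that it is squeezed between $\varphi$ and $\psi$. The first is immediate: by Definition \ref{def:prmcp} and the remarks following it, ${\sf Pr}(M)$ always belongs to $\MCP(\W,\J)$; moreover, Proposition \ref{prop:prsubadditive} tells us that since $M$ is linear, ${\sf Pr}(M)$ is subadditive. So the only real content is the sandwich bound \eqref{eq:exthm}.

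For the upper bound $L+{\sf Pr}(M)\leq_\p \psi$: since ${\sf Pr}(M)\leq_\p M$ by construction, we get $L+{\sf Pr}(M)\leq_\p L+M\leq_\p \psi$ using hypothesis $(i)$. This is the easy half. The hard half is the lower bound $\varphi\leq_\p L+{\sf Pr}(M)$. Here is where I expect the main obstacle to lie, and where hypotheses $(ii)$ and $(iii)$ must be used. The idea is that $L+{\sf Pr}(M) = {\sf Pr}(L) + {\sf Pr}(M)$ — because $L$ has the $\mcp$, so ${\sf Pr}(L)=L$ — and then by the equality case of \eqref{eq:prsuperlin} in Proposition \ref{prop:prsubadditive} (applicable since $L$ has the $\mcp$), we have ${\sf Pr}(L)+{\sf Pr}(M) = {\sf Pr}(L+M)$. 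So the goal becomes $\varphi \leq_\p {\sf Pr}(L+M)$. Now ${\sf Pr}(L+M)$ is the largest $\mcp$-map below $L+M$, and by hypothesis $(i)$ we know $\varphi\leq_\p L+M$; if $\varphi$ itself had the $\mcp$ we would be done immediately. It does not in general, but hypothesis $(iii)$ gives us a shifted version that does: $v\mapsto\varphi(v+\bar v)$ has the $\mcp$.

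The technical heart is therefore to exploit $(iii)$ to produce, from $\varphi$, an $\mcp$-map below $L+M$ that dominates $\varphi$, using the fact that $\eps((L+M)(\bar v))=0$ — i.e. $(L+M)(\bar v)$ is in the `finite part'. I would argue roughly as follows: since $v\mapsto\varphi(v+\bar v)$ has the $\mcp$ and is superadditive (one checks $\varphi((v+w)+\bar v)\geq \varphi(v+\bar v)+\varphi(w)\geq\ldots$; in fact superadditivity of the shift is a short computation from superadditivity of $\varphi$) and is $\leq_\p (L+M)(\cdot+\bar v)=(L+M)(\cdot)+(L+M)(\bar v)$. The condition $\eps((L+M)(\bar v))=0$ is precisely what allows a cancellation argument (via Proposition \ref{prop:percancellazione}/\eqref{eq:cancellationtrue} in the cone with joins $\J$) to `subtract' the constant $(L+M)(\bar v)$ and recover a bound for $\varphi$ itself rather than its shift. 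Concretely, for a directed $D\subset\W$ with $\sup D=v$, one has $\varphi(v)+(L+M)(\bar v) \le_\p \varphi(v+\bar v) = \sup_{d\in D}\varphi(d+\bar v)\le \sup_{d\in D}\big((L+M)(d)+(L+M)(\bar v)\big)=\big(\sup_{d\in D}(L+M)(d)\big)+(L+M)(\bar v)$, and since $\eps((L+M)(\bar v))=0$ the cancellation \eqref{eq:cancellationtrue} yields $\varphi(v)\le \sup_{d\in D}(L+M)(d)$. Taking the infimum over directed $D$ with $\sup D = v$ (and iterating the $P$-operator as in Proposition \ref{prop:projmcp}, using that along the way the relevant maps stay superadditive with the help of Proposition \ref{prop:prsubadditive} applied to the dominating side) gives $\varphi(v)\le_\p P_\alpha(L+M)(v)$ for every ordinal $\alpha$, hence $\varphi\le_\p {\sf Pr}(L+M)=L+{\sf Pr}(M)$, which is the desired lower bound. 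I expect the bookkeeping of keeping all intermediate maps superadditive through the transfinite iteration, and the careful handling of the $\eps$-cancellation at each stage, to be the genuinely delicate point; everything else is routine.
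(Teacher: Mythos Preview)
Your overall strategy is right --- set $\tilde M:={\sf Pr}(M)$, get the upper bound for free from ${\sf Pr}(M)\leq_\p M$, and reduce the lower bound to $\varphi\leq_\p {\sf Pr}(L+M)=L+{\sf Pr}(M)$ via the equality case of \eqref{eq:prsuperlin}. But the displayed chain you use to prove this lower bound contains a genuine error.

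The first inequality you write, $\varphi(v)+(L+M)(\bar v)\leq \varphi(v+\bar v)$, is false in general. Superadditivity only gives $\varphi(v+\bar v)\geq \varphi(v)+\varphi(\bar v)$, and by hypothesis $(i)$ you have $\varphi(\bar v)\leq (L+M)(\bar v)$, not $\geq$. So you cannot place $(L+M)(\bar v)$ on the left of the chain, and consequently the cancellation \eqref{eq:cancellationtrue} is unavailable: after the chain you only obtain $\varphi(v)\leq \sup_{d\in D}(L+M)(d)+(L+M)(\bar v)$, with an unremovable surplus $(L+M)(\bar v)$ on the right. This surplus survives the transfinite iteration you sketch (at each step the same argument reproduces the same extra term), so the induction never closes.

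The missing device is a scaling trick. Observe that $\varphi(v+\tfrac1n\bar v)=\tfrac1n\varphi(nv+\bar v)$, so hypothesis $(iii)$ gives that $v\mapsto\varphi(v+\tfrac1n\bar v)$ has the $\mcp$ for every $n\geq 1$. From $\varphi(\cdot+\tfrac1n\bar v)\leq_\p L(\cdot)+M(\cdot)+\tfrac1n(L+M)(\bar v)$, apply ${\sf Pr}$ to both sides: monotonicity \eqref{eq:monpr} and the equality case in \eqref{eq:prsuperlin} (used twice, with $L$ and with the constant $\tfrac1n(L+M)(\bar v)$, both having the $\mcp$) yield
\[
\varphi(v)\leq \varphi(v+\tfrac1n\bar v)\leq L(v)+{\sf Pr}(M)(v)+\tfrac1n(L+M)(\bar v).
\]
Now take the infimum over $n$ using \eqref{eq:infsum} and $\eps((L+M)(\bar v))=0$. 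No cancellation and no transfinite bookkeeping are needed; the projection is applied once, not iterated by hand.
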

\begin{proof}
Since ${\sf Pr}(M)\leq_\p M$ the second inequality in \eqref{eq:exthm} trivially follows from the analog one in \eqref{eq:peresistenza}. For the first one we pick $\bar v$ as in $(iii)$ and $n\in\N$. Then the first in \eqref{eq:peresistenza} and the linearity of $L,M$ give
\begin{equation}
\label{eq:perpr}
\varphi(\cdot+\tfrac1n\bar v)\leq_\p L(\cdot)+M(\cdot)+\tfrac1n(L+M)(\bar v).
\end{equation}
Now observe that the trivial identity $\varphi(v+\tfrac1n\bar v)= \tfrac 1n\varphi (nv+\bar v)$ and $(iii)$ grant that $\varphi(\cdot+\tfrac1n\bar v)$ has the $\mcp$, thus applying ${\sf Pr}$ to both sides of \eqref{eq:perpr}, unsing the monotonicity of ${\sf Pr}$ (recall \eqref{eq:monpr}) and the equality case in \eqref{eq:prsuperlin} (used both with $L$ and the constant map $\tfrac1n(L+M)(\bar v)$ that obviously has the $\mcp$) we get
\[
\varphi(\cdot)\leq_\p\varphi(\cdot+\tfrac1n\bar v)\leq_\p  L(\cdot)+{\sf Pr}(M)(\cdot)+\tfrac1n(L+M)(\bar v),
\]
having also used the monotonicity of the superadditive map  $\varphi$. Taking the $\inf$ in $n\in\N$ we conclude by our assumption $\eps (L+M)(\bar v)=0$.
\end{proof}
In such statement we gained the $\mcp$ of the map we found by giving up linearity in favour of subadditivity. This is a high price to pay: we can avoid   paying it only  if the source space has the property described in the following definition, reminiscent of the Riesz decomposition property valid in Riesz spaces:
\begin{definition}[Directed decomposition property]\label{def:tdp}
We say that the cone $\C$ has the directed decomposition property provided the following holds. For any $v\in \C$, any directed set $D$ with $\sup D=v$ and any decomposition $v=v_1+v_2$ there are  sets with tips $D_1,D_2\subset \C$ with tips $v_1,v_2$ respectively such that 
\begin{equation}
\label{eq:ddp}
\forall w_1\in D_1,\ w_2\in D_2\quad\text{ there is }\quad w\in D\quad\text{ such that }w_1+w_2\leq w.
\end{equation}
\end{definition}
Intuitively, the definition asks the following. Suppose that $(v_n)$ is an increasing sequence with supremum $v$ and write $v$ as $v=w+z$. Since $v_n\leq w+z$ we can hope to be able to write $v_n=w_n+z_n$ for some $w_n,z_n$ such that $w_n\leq  w$ and $z_n\leq z$. Also, since $(v_n)$ is increasing we might expect to find these $(w_n)$ and $(z_n)$ to be increasing as well and then  to expect that the suprema of these sequences are $w,z$ respectively. Being able to get such $(w_n),(z_n)$ out of $(v_n)$ is, technicalities apart,  the content of the above definition. The similarities between this definition and that of truncation property (Definition \ref{def:trunc}) are not only formal: see Proposition \ref{prop:tdptrunc} below.

If a cone has the directed decomposition property, the following slightly more general property holds:  for any $v\in \C$, any directed set $D$ with $\sup D=v$ and any decomposition $v=\lambda_1 v_1+\lambda_2 v_2$ there are  sets with tips $D_1,D_2\subset \C$ with tips $v_1,v_2$ respectively such that 
\begin{equation}
\label{eq:ddp2}
\forall w_1\in D_1,\ w_2\in D_2\quad\text{ there is }\quad w\in D\quad\text{ such that }\lambda_1 w_1+\lambda_2 w_2\leq w.
\end{equation}
Indeed, if either $\lambda_1$ or $\lambda_2$ is 0 the conclusion is trivial, otherwise just rescale the  sets with tips in \eqref{eq:ddp} by factors $\lambda^{-1}_1$ and $\lambda_2^{-1}$.

The relevance of this notion in relation to maps with the $\mcp$ is due to the next result, to be compared to Proposition \ref{prop:prsubadditive}:
\begin{proposition}[Projection of linear maps - revisited]\label{prop:linpr}
Let $\C$ be a cone with the directed decomposition property, $\J$ a \lattice,  and $L:\C\to \J$ linear.

Then   ${\sf Pr}(L)$ is linear.
\end{proposition}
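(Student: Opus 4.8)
Recall from Proposition \ref{prop:prsubadditive} that ${\sf Pr}(L)$ is always subadditive and that \eqref{eq:prsuperlin} gives $\mathrm{Pr}(\lambda_1M_1+\lambda_2M_2)\geq_\p\lambda_1\mathrm{Pr}(M_1)+\lambda_2\mathrm{Pr}(M_2)$ for arbitrary maps, with equality if one of $M_1,M_2$ has the $\mcp$. So, applied to $M_1=M_2=L$ (or rather to scalings of $L$), subadditivity already handles one inequality: $\mathrm{Pr}(L)$ is subadditive, hence $\mathrm{Pr}(L)(\lambda_1v_1+\lambda_2v_2)\leq_\p\lambda_1\mathrm{Pr}(L)(v_1)+\lambda_2\mathrm{Pr}(L)(v_2)$. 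What remains to prove is the reverse inequality, i.e. \emph{superadditivity} of $\mathrm{Pr}(L)$, and this is exactly where the directed decomposition property must enter, since the generic bound \eqref{eq:prsuperlin} points the wrong way when applied to $\mathrm{Pr}$ itself rather than to a map with the $\mcp$.

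The plan is to mimic the proof of Proposition \ref{prop:prsubadditive} but run the recursion $P_\alpha L$ from Proposition \ref{prop:projmcp} in the superadditive direction. Concretely, I would prove by transfinite induction on $\alpha$ the statement: if $N:\C\to\J$ is linear (hence both super- and subadditive) then $P_\alpha N$ is superadditive, where $P$ is the operator of \eqref{eq:defprt}; since $P_{\bar\alpha}N=\mathrm{Pr}(N)$ for $\bar\alpha$ large (Proposition \ref{prop:projmcp}) and $P_0 N=N$ is linear, and since the limit-ordinal step is a pointwise infimum of superadditive maps over a filtered family (which stays superadditive by the infimum analogue of \eqref{eq:supsommadir}, or directly: $P_\alpha N(v+w)=\inf_{\beta<\alpha}P_\beta N(v+w)\geq\inf_{\beta<\alpha}(P_\beta N(v)+P_\beta N(w))\geq P_\alpha N(v)+P_\alpha N(w)$ using that the family is filtered), the crux is the successor step: showing $P(N)$ is superadditive whenever $N$ is superadditive. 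Here is where I would use the directed decomposition property. Fix $v_1,v_2$, set $v=\lambda_1v_1+\lambda_2v_2$, and take any directed $D$ with $\sup D=v$. By the decomposition property in the form \eqref{eq:ddp2} there are sets with tips $D_1,D_2$ with $\tip D_i=v_i$ such that for all $w_1\in D_1,w_2\in D_2$ there is $w\in D$ with $\lambda_1w_1+\lambda_2w_2\leq w$. Then $N(w)\geq N(\lambda_1w_1+\lambda_2w_2)=\lambda_1N(w_1)+\lambda_2N(w_2)$ by monotonicity and linearity of $N$, so $\sup_{w\in D}N(w)\geq\sup_{w_1\in D_1}\sup_{w_2\in D_2}(\lambda_1N(w_1)+\lambda_2N(w_2))$. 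Now, since $D_i$ has tip $v_i$, I would want to pass the supremum on the right to a supremum over a \emph{directed} set whose sup is $v_i$, in order to recognize the right-hand side as $\geq\lambda_1 P(N)(v_1)+\lambda_2 P(N)(v_2)$; this uses \eqref{eq:bartip}/the definition of tip together with the $\mcp$-type manipulation that $\sup_{w\in D_i}N(w)$ equals the supremum of $N$ over $\widehat{D_i}$ and in particular dominates $\inf$ over all directed sets with sup $v_i$, i.e. dominates $P(N)(v_i)$. Taking the infimum over all directed $D$ with $\sup D=v$ then yields $P(N)(v)\geq\lambda_1P(N)(v_1)+\lambda_2P(N)(v_2)$, which is the desired superadditivity; homogeneity $P(N)(\lambda v)=\lambda P(N)(v)$ is immediate from \eqref{eq:defprt}.

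Combining: $\mathrm{Pr}(L)$ is subadditive (Proposition \ref{prop:prsubadditive}) and superadditive (the induction above), hence linear. Since $\mathrm{Pr}(L)$ also has the $\mcp$ by its very construction (Definition \ref{def:prmcp} and \eqref{eq:mcpcomplete2}), this in fact shows $\mathrm{Pr}(L)\in\Hom(\C,\J)$, though the statement only asserts linearity.

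\textbf{Main obstacle.} The delicate point is the bookkeeping in the successor step: the decomposition property only gives sets \emph{with tips} $D_1,D_2$, not directed sets, and one must be careful that $\sup_{w_i\in D_i}N(w_i)$ legitimately bounds $P(N)(v_i)=\inf\{\sup N(D'):D'\text{ directed},\sup D'=v_i\}$ from above. This works because $\widehat{D_i}=\,\downarrow\!v_i$ (by \eqref{eq:tiparrow}) so $D_i\subset\,\downarrow\!v_i$ and hence $\sup_{w_i\in D_i}N(w_i)\leq N(v_i)$ trivially — but that is the \emph{wrong} direction. The correct route is the other one: I need $\sup_{w_i\in D_i}N(w_i)\geq P(N)(v_i)$, which holds because one may extract from $D_i$ (or from $\widehat{D_i}$, using that the tip is $v_i$) a directed subset still having supremum $v_i$ — e.g. the set of finite sums/joins is not available in a mere cone, so one argues via: any upper bound of $N(D_i)$ is an upper bound of $N(\bar D_i)\ni N(\tip D_i)=N(v_i)$ only if $N$ has the $\mcp$, which $N$ need not have. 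This is precisely the subtlety that forces the transfinite recursion rather than a one-step argument, and getting the quantifiers right there — possibly by instead working with the directed set $\downarrow\!(D_1)$ truncated appropriately, or by invoking \eqref{eq:ddp} with $D$ replaced by a cofinal directed refinement — is the part of the write-up that needs the most care.
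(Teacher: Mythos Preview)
Your plan is exactly the paper's: reduce to superadditivity via Proposition~\ref{prop:prsubadditive}, then run transfinite recursion on $P_\alpha$ showing each step preserves superadditivity, with the directed decomposition property driving the successor step. You have also put your finger precisely on the real difficulty, and the paper's own write-up is terse at exactly this point.

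The obstacle you raise is genuine and your proposal does not close it. After the ddp hands you $D_1,D_2$ with \emph{tips} $v_1,v_2$, the inequality you need in the successor step is $\sup_{D_i}N\geq P(N)(v_i)$; but $P(N)(v_i)$ is an infimum over \emph{directed} sets with supremum $v_i$, and there is in general no directed subset of $D_i$ (or of $\downarrow\!D_i$) with supremum $v_i$. Your suggested routes --- extract a directed cofinal set, or use that upper bounds of $N(D_i)$ bound $N(\overline{D_i})$ --- both fail for the reasons you note.

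The fix is not to mine $D_i$ for a directed set but to change what you induct on. Replace the hypothesis ``$P_\alpha(L)$ is superadditive'' by the weaker statement
\[
P_\alpha(L)(\lambda_1v_1+\lambda_2v_2)\ \geq\ \lambda_1\,{\sf Pr}(L)(v_1)+\lambda_2\,{\sf Pr}(L)(v_2)\qquad\text{for all }\alpha,
\]
with ${\sf Pr}(L)$ (not $P_\alpha(L)$) on the right. The base and limit steps are trivial. For the successor, given directed $D$ with $\sup D=\lambda_1v_1+\lambda_2v_2$, take $D_1,D_2$ from \eqref{eq:ddp2}; for $w_i\in D_i$ pick $d\in D$ with $\lambda_1w_1+\lambda_2w_2\leq d$, write $d=\lambda_1w_1+(\lambda_2w_2+z)$, and apply the induction hypothesis to \emph{this} decomposition of $d$ to get
\[
P_\alpha(L)(d)\ \geq\ \lambda_1\,{\sf Pr}(L)(w_1)+{\sf Pr}(L)(\lambda_2w_2+z)\ \geq\ \lambda_1\,{\sf Pr}(L)(w_1)+\lambda_2\,{\sf Pr}(L)(w_2).
\]
Now the point: ${\sf Pr}(L)$ has the $\mcp$, so by Lemma~\ref{le:MCP}(e) it respects tips, giving $\sup_{w_i\in D_i}{\sf Pr}(L)(w_i)={\sf Pr}(L)(v_i)$. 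Take $\sup$ over $w_1,w_2$ using \eqref{eq:basesomma}, then $\inf$ over $D$, and the successor step is done. At $\alpha=\bar\alpha$ this reads ${\sf Pr}(L)(\lambda_1v_1+\lambda_2v_2)\geq\lambda_1{\sf Pr}(L)(v_1)+\lambda_2{\sf Pr}(L)(v_2)$, which is the missing superadditivity.
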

\begin{proof} By Proposition \ref{prop:prsubadditive} it suffices to prove superadditivity and to this aim we proceed by transfinite recursion and show that if $L$ is superadditive, then so is $P(L)$ (as defined by \eqref{eq:defprt}).

Fix such $L$, let $v\in \C$ and  then $v_1,v_2\in \C$  and $\lambda_1,\lambda_2\in[0,\infty)$ so that $v=\lambda_1 v_1+\lambda_2 v_2$. Then let  $D\subset \C$ be a directed subset having supremum $v$  and use the assumption on $\C$ to find $D_1,D_2\subset \C$ so that \eqref{eq:ddp2} holds. Then \eqref{eq:ddp2}  and the monotonicity of the superadditive map $L$ yield
\[
\begin{split}
\sup_{w\in D}L(w)&\geq \sup_{w_1\in D_1,\, w_2\in D_2}L(\lambda_1 w_1+\lambda_2 w_2)\\
&\geq \sup_{w_1\in D_1,\, w_2\in D_2}\big(\lambda_1 L( w_1)+\lambda_2 L( w_2)\big)\stackrel{\eqref{eq:basemult},\eqref{eq:basesomma}}= \lambda_1 \sup_{w_1\in D_1} L( w_1)+\lambda_2\sup_{w_2\in D_2} L( w_2).
\end{split}
\]
In particular, we have $\sup_{w\in D}L(w)\geq \lambda_1 P(L)(w_1)+\lambda_2 P(L)(w_2)$ and by the arbitrariness of $D$ we conclude. 
\end{proof}
In conjunction with Theorem \ref{thm:exthm}, the above has the following important consequence:
\begin{theorem}\label{thm:ddpmaps}
Let $\C$ be a cone with the directed decomposition property and $\J$ a \lattice. Then:
\begin{itemize}
\item[i)] With the same assumptions and notation of Theorem \ref{thm:exthm}, the map $\tilde M$ in the conclusion can  be taken to be also linear. In fact, the choice $\tilde M:={\sf Pr}(M)$ does the job.
\item[ii)] On  $\Hom(\C,\J)$ the orders $\leq$,  $\leq_\L$ and $\leq_\p$ agree. In particular $\Hom(\C,\J)$ is a cone and the supremum of a directed family is given by the pointwise formula \eqref{eq:pointsup}.
\item[iii)] Assume that also $\C$ is a \lattice. Then $\Hom(\C,\J)$ is also a cone with joins and for any family $(L_i)_{i\in I}\subset\Hom(\C,\J)$ we have
\begin{equation}
\label{eq:suplsup}
\sup_iL_i=\Lsup_i L_i\qquad\text{ and }\qquad\inf_iL_i={\sf Pr}\big(\Linf_i L_i\big).
\end{equation}
In particular, the Riesz-Kantorovich formulas \eqref{eq:RKpgen} and \eqref{eq:RKp} for the sup hold.
\end{itemize}
\end{theorem}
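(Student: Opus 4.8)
\textbf{Proof plan for Theorem \ref{thm:ddpmaps}.}

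\emph{Item (i).} This is immediate from the machinery already assembled. Apply Theorem \ref{thm:exthm} to obtain the subadditive map $\tilde M={\sf Pr}(M)$ with the $\mcp$ satisfying $\varphi\leq_\p L+\tilde M\leq_\p\psi$. The only thing to add is that, since $\C$ has the directed decomposition property and $M$ is linear, Proposition \ref{prop:linpr} tells us that ${\sf Pr}(M)$ is in fact linear. So the same explicit choice $\tilde M:={\sf Pr}(M)$ produces a \emph{linear} map with the $\mcp$ satisfying the required sandwich bound, and nothing in the proof of Theorem \ref{thm:exthm} needs to be revisited.

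\emph{Item (ii).} By \eqref{eq:3ordini} we already have $\leq\ \Rightarrow\ \leq_\L\ \Rightarrow\ \leq_\p$ on $\Hom(\C,\J)$, and Proposition \ref{prop:lincone} (applicable since $\J$ is a \lattice) gives $\leq_\p\ \Rightarrow\ \leq_\L$. So it remains to close the loop by showing $\leq_\p\ \Rightarrow\ \leq$: given $L_1\leq_\p L_2$ in $\Hom(\C,\J)$ we must produce $M\in\Hom(\C,\J)$ with $L_1+M=L_2$. The natural candidate is the pointwise difference $M(v):=L_2(v)-L_1(v)$ from \eqref{eq:defdiff}; it is linear by \eqref{eq:difflin}, and $L_1+M=L_2$ holds by the definition of the difference together with \eqref{eq:maxdiff} and the fact that $L_1$ has the $\mcp$ (so that $\eps L_1(v)$ is absorbed correctly). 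The substantive point is that this $M$ has the $\mcp$. Here I would argue: write $M={\sf Pr}(M)+(\text{something})$? No — better, observe directly that $M$ is linear and $M\leq_\p L_2$, so ${\sf Pr}(M)$ is defined; by Proposition \ref{prop:linpr} it is linear, and one checks $L_1+{\sf Pr}(M)$ has the $\mcp$, is $\leq_\p L_2$, and majorizes $L_1$; then a cancellation argument via \eqref{eq:cancellationtrue} and \eqref{eq:maxdiff}, using the directed decomposition property to split directed suprema through $L_1+M=L_2$, forces $L_1+{\sf Pr}(M)=L_2$, whence $M={\sf Pr}(M)$ has the $\mcp$ after all. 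I expect this cancellation/splitting step to be the main obstacle: one needs the directed decomposition property precisely to transport a directed supremum in the source across the decomposition $L_2=L_1+M$ so that $M(\sup D)=\sup M(D)$. Once $\leq\ =\ \leq_\p$, completeness of $\Hom(\C,\J)$ with pointwise suprema is exactly \eqref{eq:linhom}, and property $(iv)$ of Definition \ref{def:wedge} follows as in the proof of Proposition \ref{prop:lincone}.

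\emph{Item (iii).} Now $\C$ is itself a \lattice, so $\Lin(\C,\J)$ is a \lattice\ with the Riesz--Kantorovich formulas by Proposition \ref{prop:linlattice}. For the first identity in \eqref{eq:suplsup}: given a family $(L_i)\subset\Hom(\C,\J)$, the map $\Lsup_i L_i$ computed in $\Lin(\C,\J)$ via \eqref{eq:RKpgen} should already have the $\mcp$ — here one uses that a directed supremum in $\C$ can be split, via the directed decomposition property, compatibly with the finite decompositions $\sum_j v_{i_j}=v$ appearing in the Riesz--Kantorovich formula, so the supremum defining $\Lsup$ commutes with $\sup_{v\in D}$. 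That it is then the supremum of $(L_i)$ in $\Hom(\C,\J)$ follows since $\leq=\leq_\p$ on $\Hom$ by item (ii) and $\leq_\p$ agrees with $\leq_\L$ on $\Lin(\C,\J)$. For the infimum: $\Linf_i L_i$ in $\Lin(\C,\J)$ need not have the $\mcp$ (the pointwise-inf-of-directed-sup obstruction), so one passes to ${\sf Pr}(\Linf_i L_i)$, which is linear by Proposition \ref{prop:linpr} and has the $\mcp$ by construction; a short argument shows it is the greatest lower bound of $(L_i)$ in $\Hom(\C,\J)$, using that any $N\in\Hom(\C,\J)$ with $N\leq_\p L_i$ for all $i$ satisfies $N\leq_\p\Linf_i L_i$ and hence $N={\sf Pr}(N)\leq_\p{\sf Pr}(\Linf_i L_i)$ by monotonicity \eqref{eq:monpr} of the projection. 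Since $\Hom(\C,\J)$ is a complete partial order (item (ii)) with finite joins and minimum (the zero map), Proposition \ref{prop:joinlattice} upgrades it to a complete lattice; compatibility of $\inf$ with sums, i.e.\ \eqref{eq:infsum} in $\Hom(\C,\J)$, reduces to the corresponding property in $\J$ pointwise together with the equality case in \eqref{eq:prsuperlin}, exactly as in the last paragraph of the proof of Proposition \ref{prop:linlattice}. The Riesz--Kantorovich formula \eqref{eq:RKpgen} for the sup in $\Hom$ is then just the first identity in \eqref{eq:suplsup} combined with \eqref{eq:RKpgen} for $\Lin(\C,\J)$.
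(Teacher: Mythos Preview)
Your plan for item (i) matches the paper exactly.

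For item (ii) you have a real gap. You correctly isolate the task --- given $L_1\leq_\p L_2$ in $\Hom(\C,\J)$, take $M(v):=L_2(v)-L_1(v)$ and show $L_1+{\sf Pr}(M)=L_2$ --- but your proposed route (``a cancellation argument via \eqref{eq:cancellationtrue} and \eqref{eq:maxdiff}, using the directed decomposition property to split directed suprema through $L_1+M=L_2$'') does not work as stated. The decomposition $L_2=L_1+M$ lives in the \emph{target} $\J$, not in the source $\C$, so the directed decomposition property of $\C$ has nothing to grip on here. And a bare cancellation attempt runs into $\eps L_1(v)$: from $L_1(v)+M(v)=L_2(v)=\sup_{d\in D}(L_1(d)+M(d))\leq L_1(v)+\sup_{d\in D}M(d)$ you only get $M(v)+\eps L_1(v)\leq \sup_d M(d)+\eps L_1(v)$, which does not finish. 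The paper instead observes that item (i) applies \emph{directly} with $\varphi=\psi:=L_2$, $L:=L_1$, and $\bar v:=0$: hypothesis (iii) of Theorem \ref{thm:exthm} holds because $L_2\in\Hom(\C,\J)$ has the $\mcp$ and $\eps(L_2(0))=0$. The conclusion $\varphi\leq_\p L_1+{\sf Pr}(M)\leq_\p\psi$ then reads $L_2\leq_\p L_1+{\sf Pr}(M)\leq_\p L_2$, i.e.\ equality, and you are done. So the ``main obstacle'' you flag dissolves once you notice that item (i) itself is the tool.

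For item (iii) your plan is workable but more laborious than necessary on the supremum side. You propose to show $\Lsup_i L_i$ has the $\mcp$ by pushing a directed supremum through the Riesz--Kantorovich decompositions via (iterated) directed decomposition; this can be made rigorous but requires some care with tips versus directed sups and with padding decompositions. The paper takes a shorter path: since each $L_i$ has the $\mcp$, ${\sf Pr}(\Lsup_i L_i)\geq_\p{\sf Pr}(L_i)=L_i$ for every $i$ by \eqref{eq:monpr}, and then linearity of ${\sf Pr}(\Lsup_i L_i)$ (Proposition \ref{prop:linpr}) together with the Riesz--Kantorovich formula \eqref{eq:RKpgen} gives ${\sf Pr}(\Lsup_i L_i)\geq_\p\Lsup_i L_i$; the opposite inequality is trivial, so $\Lsup_i L_i={\sf Pr}(\Lsup_i L_i)$ already has the $\mcp$. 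Your treatment of the infimum and of \eqref{eq:infsum} via the equality case in \eqref{eq:prsuperlin} is essentially the paper's argument.
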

\begin{proof}
For $(i)$ we apply  Theorem \ref{thm:exthm} and notice that in this case  Proposition \ref{prop:linpr} grants  the linearity of $\tilde M:={\sf Pr}(M)$.

For $(ii)$ we notice that thanks to \eqref{eq:3ordini} it suffices to prove that for $L_1,L_2\in\Hom(\C,\J)$  with $L_1\leq_\p L_2$ we have $L_1\leq L_2$. To see this, use first Proposition \ref{prop:lincone} to find $M:\C\to \J$ linear with $L_1+M=L_2$, then apply item $(i)$ above  with $\varphi=\psi=L_2$ to deduce from $L_2\leq_\p L_1+M\leq_\p L_2 $ that ${\sf Pr}(M)\in\Hom(\C,\J)$ also satisfies $L_2\leq_\p L_1+{\sf Pr}(M)\leq_\p L_2 $, i.e.\ $L_1+{\sf Pr}(M)=L_2$, proving that $ L_1 \leq L_2$, as desired.

For $(iii)$ we observe that the definitions and the linearity proved in Proposition \ref{prop:linpr} directly imply that
\[
\sup_iL_i={\sf Pr}\big(\Lsup_i L_i\big)\qquad\text{ and }\qquad\inf_iL_i={\sf Pr}\big(\Linf_i L_i\big).
\] 
To prove \eqref{eq:suplsup} it thus suffices to prove that ${\sf Pr}\big(\Lsup_i L_i\big)= \Lsup_i L_i$. The inequality $\leq_\p$ is obvious from the definition of ${\sf Pr}$. To prove $\geq_\p$ we notice that ${\sf Pr}\big(\Lsup_i L_i\big)\geq_\p {\sf Pr}(L_i)=L_i$   for every $i\in I$. Then  the conclusion follows from  formula \eqref{eq:RKpgen} for $\Lsup_i L_i$ and the linearity of ${\sf Pr}\big(\Lsup_i L_i\big)$.

To conclude the proof that $\Hom(\C,\J)$ is a \lattice\ we need to prove that $M+\inf_iL_i=\inf_i(M+L_i)$ holds for arbitrary $M,L_i\in \Hom(\C,\J)$, $i\in I$, $I$ being an arbitrary set of indexes. Using \eqref{eq:suplsup}, this follows from
\[
\begin{split}
\inf_i(M+L_i)&={\sf Pr}\big(\Linf_i(M+L_i)\big)\\
\text{(as $\Lin(\C,\J)$ is a \lattice, see Prop. \ref{prop:linlattice})}\qquad &={\sf Pr}\big(M+\Linf_i L_i\big)\\
\text{(by the equality case in \eqref{eq:prsuperlin})}\qquad&=M+{\sf Pr}\big(\Linf_i L_i\big)=M+\inf_iL_i,
\end{split}
\]
and the proof is complete.
\end{proof}
It is worth to notice that  a slight strengthening of the directed decomposition property implies that a cone with joins  is the completion of its finite part:
\begin{proposition}\label{prop:tdptrunc}
Let $\C$ be a cone  with the following property: for any $v\in \C$, any  set with tip $D\subset \C$ with $\tip D=v$  and any decomposition $v=v_1+v_2$ there are  sets with tips $D_1,D_2\subset \C$ with tips $v_1,v_2$ respectively such that \eqref{eq:ddp} holds.

Then $\C$ has the truncation property (recall Definition \ref{def:trunc}). In particular, if the finite part $\C_{\rm finite}$ of $\C$ as in Remark \ref{re:finitepart} is dense, then $\C$ is, together with the inclusion, the directed completion of  $\C_{\rm finite}$.
\end{proposition}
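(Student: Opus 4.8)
The plan is to establish the truncation property of $\C$ first, and then deduce the statement about $\C_{\rm finite}$ by invoking Proposition \ref{prop:trunccompl}. For the truncation property, recall that we must show: whenever $B\subset\C$ has tip and $a\in\widehat B$ (equivalently $a\leq\tip B$), then $a\in\widehat{(\downarrow\!B)\cap(\downarrow\!a)}$. So first I would fix such a $B$ with $\tip B = v$, and set $a \leq v$. Since $a\leq v$, we may write $v = a + z$ for some $z\in\C$. Now the strengthened decomposition hypothesis applies with the set with tip $D := B$, tip $v$, and decomposition $v = a + z$: it produces sets with tips $D_1, D_2\subset\C$ with $\tip D_1 = a$, $\tip D_2 = z$, such that for all $w_1\in D_1$, $w_2\in D_2$ there is $w\in B$ with $w_1 + w_2 \leq w$.

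The key point is then that $D_1 \subset (\downarrow\!B)\cap(\downarrow\!a)$. Indeed, for any $w_1\in D_1$, pick any fixed $w_2\in D_2$ (the set $D_2$ is nonempty since it has a tip, hence $\overline{D_2}$ has a maximum and in particular $D_2\neq\emptyset$); then there is $w\in B$ with $w_1 + w_2\leq w$, so $w_1\leq w$, giving $w_1\in\,\downarrow\!B$. On the other hand $w_1\leq\tip D_1 = a$, so $w_1\in\,\downarrow\!a$. Hence $D_1\subset(\downarrow\!B)\cap(\downarrow\!a)$, and therefore $\widehat{D_1}\subset\widehat{(\downarrow\!B)\cap(\downarrow\!a)}$. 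Since $a = \tip D_1\in\overline{D_1}\subset\widehat{D_1}$, we conclude $a\in\widehat{(\downarrow\!B)\cap(\downarrow\!a)}$, which is exactly the truncation property \eqref{eq:truncpropr}. (One should double-check that the definition of "set with tip" guarantees nonemptiness: by Definition \ref{def:tip}, $D_2$ has a tip iff $\overline{D_2}$ has a maximum, and since $\overline{D_2}\supset D_2$ and a maximum is an element of the set, indeed $\overline{D_2}\neq\emptyset$; whether $D_2$ itself is nonempty needs a remark, but if $D_2=\emptyset$ then $\overline{\emptyset}=\emptyset$ has no maximum, contradiction, so $D_2\neq\emptyset$.)

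For the second assertion, I would argue as follows. Assume $\C_{\rm finite}$ is dense in $\C$, meaning $\overline{\C_{\rm finite}} = \C$. To apply Proposition \ref{prop:trunccompl} with $\Y := \C$ and $\X := \C_{\rm finite}$, I need: $\C$ is a complete partial order (it is, being a cone), $\C_{\rm finite}$ is a dense lower subset of $\C$, and $\C_{\rm finite}$ truncates $\C$. Density holds by hypothesis. That $\C_{\rm finite}$ is a lower set: if $w\leq v$ and $\eps v = 0$, then $\eps w\leq\eps v = 0$ by monotonicity of $\eps(\cdot)$ (which follows from $w\leq v \Rightarrow \lambda w\leq\lambda v$ for all $\lambda>0$, hence $\eps w = \inf_\lambda\lambda w\leq\inf_\lambda\lambda v = \eps v$), so $w\in\C_{\rm finite}$. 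The fact that $\C_{\rm finite}$ truncates $\C$ is immediate from the truncation property of $\C$ just proved, since truncation of the whole space by itself restricts to truncation of $\C$ by any subset (if \eqref{eq:truncpropr} holds for every $a\in\C$, it holds in particular for every $a\in\C_{\rm finite}$). Then Proposition \ref{prop:trunccompl} gives that $(\C,\j)$, $\j$ the inclusion, is the directed completion of $\C_{\rm finite}$.

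I expect the main subtlety to be purely bookkeeping: making sure the strengthened hypothesis is quoted with $D$ a set with tip (not merely directed), confirming nonemptiness of $D_2$ so that the "pick a fixed $w_2$" step is legitimate, and correctly matching the density and lower-set requirements of Proposition \ref{prop:trunccompl}. There is no real analytic obstacle here; the content is entirely in the extraction of $D_1\subset(\downarrow\!B)\cap(\downarrow\!a)$ from the decomposition hypothesis, which is a one-line observation once the definitions are unwound.
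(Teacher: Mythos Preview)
Your proof is correct and follows essentially the same route as the paper's: apply the decomposition hypothesis to $v=a+z$ to obtain $D_1$ with $\tip D_1=a$, observe $D_1\subset(\downarrow\!B)\cap(\downarrow\!a)$, and conclude via $a\in\widehat{D_1}$; then invoke Proposition~\ref{prop:trunccompl} for the second part. You are in fact slightly more careful than the paper in checking nonemptiness of $D_2$ and in verifying that $\C_{\rm finite}$ is a lower set, both of which the paper leaves implicit.
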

\begin{proof}
Let $D\subset \C$ be with tip and $v_1\in \C$ be such that $v_1\leq v:=\tip D$. Then there is $v_2\in \C$ so that $v_1+v_2=v$ and our assumption yields the existence of $D_1,D_2$ as in the statement. Since $v_1=\tip D_1\in \widehat{D_1}$, to check that property \eqref{eq:truncpropr} holds (with $a:=v$ and $B:=D$) it suffices to check that $D_1\subset (\downarrow D)\cap(\downarrow v_1)$. The inclusion $D_1\subset \downarrow v_1$ is obvious and the one $D_1\subset (\downarrow D)$ is a direct consequence of \eqref{eq:ddp}.

Thus the truncation property is proved.  The second claim is a direct consequence of what just proved and of Proposition \ref{prop:trunccompl}. 
\end{proof}

\begin{remark}{\rm
The cone with joins $\{0,\infty\}$ satisfies the assumption in Proposition \ref{prop:tdptrunc} but its finite part $\{0\}$ is not dense.
}\fr\end{remark}

We conclude the section with a comment on duality. As usual, in presence of a dual operation we have canonical mapping in the bidual. More precisely, given a wedge $\W$ and $v\in \W$ we define
\begin{equation}
\label{eq:defPhi}
\begin{array}{rlll}
\Phi(v):&\W^*&\to& [0,\infty],\\
%&&&\\
&L&\mapsto& \Phi(v)(L):=L(v).
\end{array}
\end{equation}
Notice that in general $\Phi$ is not injective, see also Example $(e)$ in Section \Ref{se:finitedim}. We  have the following:
\begin{proposition}\label{prop:Wstarcone}
Let $\W$ be a wedge and assume that on $\W^*:=\Hom(\W,[0,\infty])$ the order relations $\leq $ and $\leq_\p $ agree.

Then $\W^*$ is a cone and the map $\Psi$ defined above in \eqref{eq:defPhi} takes values in $\W^{**}$. 
\end{proposition}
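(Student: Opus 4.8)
The plan is to verify the two assertions in turn: first that $\W^*$ is a cone, then that $\Phi(v)$ is a morphism from $\W^*$ to $[0,\infty]$ for every $v\in\W$, so that $\Phi$ maps $\W$ into $\W^{**}:=\Hom(\W^*,[0,\infty])$.

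For the first part, I would argue as follows. By \eqref{eq:linhom} we already know that $(\Hom(\W,[0,\infty]),\leq_\p)$ is a directed complete partial order, with directed suprema computed pointwise as in \eqref{eq:pointsup}. So $\W^*$ equipped with the pointwise order is directed complete. Now, the hypothesis is precisely that on $\W^*$ the order $\leq$ coming from the wedge/monoid structure of $\Hom(\W,[0,\infty])$ coincides with $\leq_\p$. Hence $\W^*$, with its intrinsic order $\leq$, is a prewedge (the pointwise operations of sum and product by a non-negative scalar preserve linearity and the $\mcp$, as recalled just after Definition \ref{def:morphism}) whose intrinsic order is directed complete. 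It remains to check that $\W^*$ is a \emph{wedge}, i.e.\ that properties $(iii)$ and $(iv)$ of Definition \ref{def:wedge} hold. For $(iii)$: antisymmetry of $\leq$ is clear from \eqref{eq:3ordini} (or directly from antisymmetry of $\leq_\p$, which holds since $[0,\infty]$ is a partial order), so $\leq$ is a partial order. For $(iv)$, namely $L=\sup_{\eta<1}\eta L$ for every $L\in\W^*$: by the discussion preceding Remark \ref{rem:inftyL} we have $L\geq_\L\eta L$ for every $\eta\in[0,1)$, and if $M\in\W^*$ satisfies $M\geq_\L\eta L$ for all $\eta<1$ then $M\geq_\p L$ pointwise (since $M(v)\geq\sup_{\eta<1}\eta L(v)=L(v)$ in $[0,\infty]$, using \eqref{eq:supmultbase} in the cone $[0,\infty]$); by the hypothesis $\leq=\leq_\p$ on $\W^*$, this gives $M\geq L$. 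Hence $L$ is the $\leq$-supremum of $(\eta L)_{\eta<1}$, proving $(iv)$. Thus $\W^*$ is a wedge whose order is directed complete, i.e.\ a cone by Definition \ref{def:cone}.

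For the second part, fix $v\in\W$; I must show $\Phi(v):\W^*\to[0,\infty]$ is linear and has the $\mcp$. Linearity: for $L_1,L_2\in\W^*$ and $\lambda_1,\lambda_2\in[0,\infty)$ we have $\Phi(v)(\lambda_1L_1+\lambda_2L_2)=(\lambda_1L_1+\lambda_2L_2)(v)=\lambda_1L_1(v)+\lambda_2L_2(v)=\lambda_1\Phi(v)(L_1)+\lambda_2\Phi(v)(L_2)$ by the very definition of the pointwise operations on $\W^*$, so $\Phi(v)$ is linear. The $\mcp$: let $D\subset\W^*$ be directed admitting a supremum $L:=\sup D$. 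Since, as recalled above, the directed supremum in $\W^*$ is computed pointwise (the right-hand side of \eqref{eq:pointsup}; this uses $\leq=\leq_\p$ on $\W^*$ together with \eqref{eq:linhom}), we get $L(v)=\sup_{L'\in D}L'(v)$ in $[0,\infty]$, that is $\Phi(v)(L)=\sup_{L'\in D}\Phi(v)(L')$. This is exactly \eqref{eq:defmcp} for $\Phi(v)$, so $\Phi(v)$ has the $\mcp$. Combining, $\Phi(v)\in\Hom(\W^*,[0,\infty])=\W^{**}$, and since $v$ was arbitrary, $\Phi$ takes values in $\W^{**}$, as claimed.

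I do not expect any real obstacle here: the statement is essentially a bookkeeping consequence of \eqref{eq:linhom}, of the pointwise description of directed suprema in \eqref{eq:pointsup}, and of the hypothesis that $\leq$ and $\leq_\p$ coincide on $\W^*$. The only point that requires a little care is the verification of axiom $(iv)$ for $\W^*$ (that $L$ equals the $\leq$-sup, not merely the $\leq_\p$-sup, of its rescalings), and this is exactly where the hypothesis $\leq=\leq_\p$ is used in an essential way; without it one would only know $\W^*$ is a directed complete prewedge. It is worth noting for the reader that the hypothesis is automatically satisfied when $\W$ is a cone with the directed decomposition property, by Theorem \ref{thm:ddpmaps}$(ii)$ applied with $\J=[0,\infty]$, so the proposition applies in all the main cases of interest.
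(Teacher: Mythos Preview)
Your proof is correct and follows the same approach as the paper's (very terse) argument: use \eqref{eq:linhom} for $\leq_\p$-completeness, the hypothesis $\leq=\leq_\p$ to transfer this to the intrinsic order, and the pointwise formula for directed suprema to get the $\mcp$ of $\Phi(v)$. You have usefully spelled out the verification of axiom $(iv)$ for $\W^*$, which the paper leaves implicit; one small notational slip is that where you write ``$M\geq_\L\eta L$'' you should really be assuming $M\geq\eta L$ in the $\Hom$-order (the relevant condition for a $\leq$-upper bound), but since $\leq\,\Rightarrow\,\leq_\L\,\Rightarrow\,\leq_\p$ your argument goes through a fortiori.
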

\begin{proof}
The fact that $\W^*$ is a cone follows from the fact that $\Hom(\W,\C)$ is a wedge, the completeness property \eqref{eq:linhom} and the assumption. The linearity of $\Phi(v)(\cdot)$ is obvious. To check the $\mcp$ we need to prove that if $(L_i)\subset \W^*$ is $\leq$-directed with supremum $L$, then $\sup_i L_i(v)=L(v)$. This, however, is another consequence of the fact that  $\leq$ and $\leq_\p$ are assumed to agree.
\end{proof}

\end{subsection}

\begin{subsection}{Hyperbolic norms and hyperbolic Banach spaces}\label{se:hbs}

\begin{definition}[Hyperbolic norm]
Let $\W$ be a prewedge. A hyperbolic norm $\hn$ on $\W$ is a $[0,+\infty]$-valued superadditive function on it, i.e.\ a map $\hn:\W\to[0,+\infty]$ that is 0 in 0 and  such that
\begin{equation}
\label{eq:defnorm}
\hn(\lambda_1v_1+\lambda_2v_2)\geq \lambda_1\hn(v_1)+\lambda_2\hn(v_2)\qquad\forall v_1,v_2\in\W,\ \lambda_1,\lambda_2\in[0,+\infty).
\end{equation}

A hyperbolic Banach space is given by a cone and a hyperbolic norm on it.

\end{definition}

\begin{definition}[Operator norm]
Let $\W_1,\W_2$ be wedges with hyperbolic norms $\hn_1,\hn_2$ respectively and $L\in\Lin(\W_1,\W_2)$. Then the operator norm of $L$ is defined as 0 if $L\equiv 0$ and otherwise as
\begin{equation}
\label{eq:opnorm}
\hn_{\rm op}(L):=\inf_{v\in \W_1:\hn_1(v)\geq 1} \hn_2(L(v))=\sup\{C\in[0,+\infty]:\hn_2(L(v))\geq C\hn_1(v)\ \forall v\in \W_1\}.
\end{equation}
If $\W_2=[0,+\infty]$ (with the identity as norm), we write $\hn_*$ in place of $\hn_{\rm op}$.
\end{definition}
The verification of the second equality in \eqref{eq:opnorm} follows from routine manipulations.  It is also immediate to check that $\hn_{\rm op}$ is a hyperbolic norm on $\Lin(\W_1,\W_2)$, as 
\[
\begin{split}
\hn_{\rm op}(\lambda_1 L_1+\lambda_2 L_2)&=\inf_{\hn_1(v)\geq 1} \hn_2(\lambda_1 L_1(v)+\lambda_2 L_2(v)))\\
&\geq \inf_{\hn_1(v)\geq 1} \big(\lambda_1\, \hn_2(L_1(v))+\lambda_2\, \hn_2(L_2(v)))\big)\\
&\geq \inf_{\hn_1(v)\geq 1} \lambda_1 \,\hn_2(L_1(v))+\inf_{\hn_1(v)\geq 1} \lambda_2 \,\hn_2(L_2(v)))=\lambda_1\,\hn_{\rm op}( L_1)+\lambda_2\,\hn_{\rm op}( L_2).
\end{split}
\]
\begin{remark}{\rm
Let $M=\R^{n+1}$ be the standard $(n+1)$-dimensional Minkowski spacetime, let $g$ be its metric tensor, with signature $(+,-\cdots,-)$, and $F\subset M$ the future cone. In particular, for $v\in F$ we have $g(v,v)\geq 0$ so that the quantity 
\[
|v|:=\sqrt{g(v,v)}
\]
is well defined for any $v\in F$. The reverse Cauchy-Schwarz inequality $g(v,v')\geq |v||v'|$, valid for any $v,v'\in F$, implies the reverse  triangle inequality, that in turn shows that $|\cdot|$ is a hyperbolic norm.

Now  let $M^*$ be the dual of $M$, $g^*$ the bilinear form induced by $g$ on it  (via the formula $g^*(\omega_1,\omega_2)=g(v_1,v_2)$ for $\omega_i=g(v_i,\cdot)$, $i=1,2$) and $F^*\subset M^*$ be the dual future cone, defined as: $\omega\in F^*$ provided $\omega(v)\geq 0$ for every $v\in F$. The reverse  Cauchy-Schwarz inequality gives that $g^*(\omega,\omega)\geq 0$ for any $\omega\in F^*$, hence the quantity
\[
|\omega|_*:=\sqrt{g^*(\omega,\omega)}
\]
is well defined for any $\omega\in F^*$ and then as above we can check that this is an hyperbolic norm.

The  reverse  Cauchy-Schwarz inequality also offers a variational perspective for the concept of dual norm, as it is immediate to verify that
\[
|\omega|_*=\inf_{v\in F\atop |v|\geq 1} \omega(v).
\]
The right hand side of this formula does not require any bilinear form, but just a hyperbolic norm: this is the observation behind our definition of dual, and more generally operator, norm given above.
}\fr\end{remark}
Proposition \ref{prop:Wstarcone} immediately yields the following:
\begin{proposition}\label{prop:bidual}
Let $\W$ be a wedge so that on $\W^*$ the order relations $\leq$ and $\leq_\p$ agree (this happens for instance if $\W$ is a cone with the directed decomposition property -- recall Theorem \ref{thm:ddpmaps}). Then for any hyperbolic norm $\hn$ on $\W$ we have
\begin{equation}
\label{eq:bidual}
\hn_{**}(\Psi(v))\geq \hn(v)\qquad\forall v\in \W,
\end{equation}
where $\Psi$ is defined in \eqref{eq:defPhi}.
\end{proposition}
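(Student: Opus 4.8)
The statement to prove is the inequality $\hn_{**}(\Psi(v))\geq \hn(v)$ for every $v\in\W$, where $\hn_{**}$ is the double operator norm (i.e.\ the $*$-operation applied twice) and $\Psi(v)\in\W^{**}$ is the evaluation functional $L\mapsto L(v)$ on $\W^*$. By the very definition \eqref{eq:opnorm} of the $*$-norm on $\W^*$, we have $\hn_*(L)=\sup\{C\in[0,+\infty]:L(w)\geq C\,\hn(w)\ \forall w\in\W\}$, and similarly $\hn_{**}(\Psi(v))=\inf\{\Psi(v)(L):L\in\W^*,\ \hn_*(L)\geq 1\}=\inf\{L(v):L\in\W^*,\ \hn_*(L)\geq 1\}$ (with the convention that $\hn_{**}(\Psi(v))=0$ if $\Psi(v)\equiv 0$, i.e.\ if $L(v)=0$ for every $L\in\W^*$). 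So the task reduces to showing that $L(v)\geq\hn(v)$ for every $L\in\W^*$ with $\hn_*(L)\geq 1$; the infimum of such values is then $\geq\hn(v)$. But if $\hn_*(L)\geq 1$, then by definition of the $*$-norm the bound $L(w)\geq 1\cdot\hn(w)=\hn(w)$ holds for every $w\in\W$ — in particular for $w=v$. This is essentially the whole argument: it is a pure unwinding of the two definitions, exactly as in the classical Banach-space case where $\|v\|_{**}=\|v\|$.

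The only genuinely delicate point is the edge case where $\hn(v)=+\infty$. Then we must show $\hn_{**}(\Psi(v))=+\infty$ as well, which means showing that there is \emph{no} $L\in\W^*$ with $\hn_*(L)\geq 1$ and $L(v)<+\infty$ — equivalently, every such $L$ has $L(v)=+\infty$ — \emph{or} that the set of $L$ with $\hn_*(L)\geq 1$ over which we take the infimum still forces the value $+\infty$. Here one should be careful: if $\hn_*(L)\geq 1$ then $L(v)\geq\hn(v)=+\infty$, so $L(v)=+\infty$ automatically, hence the infimum defining $\hn_{**}(\Psi(v))$ is an infimum of a set all of whose members are $+\infty$; by the convention $\inf\emptyset=+\infty$ in the cone $[0,+\infty]$ (recall \eqref{eq:definfty}) this still gives $+\infty$ whether or not such $L$ exist. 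So no separate argument is needed; the one-line derivation $L(v)\geq\hn(v)$ covers all cases uniformly.

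Concretely, the plan is: first invoke Proposition \ref{prop:Wstarcone} (whose hypothesis — that $\leq$ and $\leq_\p$ agree on $\W^*$ — is exactly what is assumed, and holds in particular when $\W$ is a cone with the directed decomposition property by Theorem \ref{thm:ddpmaps}(ii)) to guarantee that $\W^*$ is a cone and that $\Psi(v)\in\W^{**}$, so that $\hn_{**}(\Psi(v))$ is well defined. Then write out $\hn_{**}(\Psi(v))=\inf_{L\in\W^*:\,\hn_*(L)\geq 1}\Psi(v)(L)=\inf_{L\in\W^*:\,\hn_*(L)\geq 1}L(v)$ (with the $\equiv 0$ proviso handled trivially, since $\Psi(v)\equiv 0$ forces $\hn(v)=0$ and the inequality is then $0\geq 0$, using that the zero functional lies in $\W^*$ so $L(v)=0\geq\hn(v)$ would need $\hn(v)=0$ — in fact if $\Psi(v)\equiv0$ then in particular $\hn$ itself, restricted through duals, cannot detect $v$; more simply, $\hn_*$ of the ``infinity functional'' is $\geq 1$ when $\hn\not\equiv+\infty$, giving a genuine competitor). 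Finally, for each admissible $L$, unwind $\hn_*(L)\geq 1$ to get $L(v)\geq\hn(v)$ and take the infimum.

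I do not expect any real obstacle here — the statement is deliberately the ``easy half'' of a norm/bidual identity, and the proof is a two-line unwinding of definitions plus a citation of Proposition \ref{prop:Wstarcone}. The only thing requiring a moment's care is making sure the conventions for $\inf$ of an empty or all-$\infty$ set, and for the operator norm of the zero map, are stated so that the degenerate cases ($\hn(v)=0$, $\hn(v)=+\infty$, $\Psi(v)\equiv0$, $\hn\equiv+\infty$ on $\W\setminus\{0\}$) are all subsumed; I would handle these with a single sentence rather than a case split.
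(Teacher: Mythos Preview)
Your proposal is correct and follows exactly the same approach as the paper: unwind the definition of $\hn_{**}(\Psi(v))$ as $\inf_{\hn_*(L)\geq 1}L(v)$ and observe that $\hn_*(L)\geq 1$ forces $L(v)\geq\hn(v)$. The paper's proof is literally one line to this effect; your additional discussion of edge cases and the invocation of Proposition~\ref{prop:Wstarcone} are fine but not strictly needed beyond what the conventions already handle.
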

\begin{proof}
Just notice that $\hn_{**}(\Psi(v))=\inf_{\hn_*(L)\geq 1}\Psi(v)(L)=\inf_{\hn_*(L) \geq 1}L(v)\geq \hn(v)$.
\end{proof}
At this level of generality we do not know whether the bidual $\W^{**}$ is a cone, nor whether equality holds in \eqref{eq:bidual}. In the standard Banach setting equality in (the analog of) \eqref{eq:bidual} follows from the Hahn-Banach theorem. It therefore comes as no surprise that when we can apply our version of the extension result we can also prove that equality holds:
\begin{proposition}\label{prop:normabiduale}
Let $(\C,\hn )$ be a hyperbolic Banach space with $\C$ having  the directed decomposition property and $\hn $ having the $\mcp$.

Then  equality holds in \eqref{eq:bidual}.
\end{proposition}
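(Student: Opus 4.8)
The plan is to fix $v\in\C$ and produce, for each target value strictly below $\hn_{**}(\Psi(v))$ is not the right direction; rather, I want to exhibit a morphism $L\in\C^*$ with $\hn_*(L)\ge 1$ and $L(v)\le\hn(v)$, which by the variational formula $\hn_{**}(\Psi(v))=\inf\{L(v):\hn_*(L)\ge1\}$ forces $\hn_{**}(\Psi(v))\le\hn(v)$; combined with Proposition \ref{prop:bidual} this gives equality. So the whole game is an extension argument: build $L$ as an extension of the obvious one-dimensional functional on the subwedge generated by $v$.

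First I would handle the degenerate case $\hn(v)=0$ (or $v=0$), where $L\equiv0$ already works since then $\hn_{**}(\Psi(v))\le\hn_*(L)\cdot 0$ reasoning is vacuous — more precisely $\Psi(v)(L)=L(v)$ and one checks directly $\hn_{**}(\Psi(v))=0$. Assuming $\hn(v)>0$, set $\W':=\{t v: t\in[0,+\infty)\}$, which is a subwedge, and define $M:\W'\to[0,+\infty]$ by $M(tv):=t\,\hn(v)$. This is well-defined and linear (if $tv=sv$ then $t=s$ since $\leq$ is a partial order and scalar multiplication is... actually one must be slightly careful: $tv=sv$ with $t<s$ would give $v=sv$ with $s>1$, i.e. $v=\sup_{\eta<1}\eta v\le v$ and also $v\le sv$; but then iterating, $v=v+v$-type behaviour could occur, so $M$ need not be well-defined unless we argue $\hn(tv)=\hn(sv)$ forces $t\hn(v)=s\hn(v)$ — which holds because $\hn$ is homogeneous, so actually $M(u):=\hn(u)$ for $u\in\W'$ is the clean definition and it is automatically well-defined). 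I would take $\varphi:=\hn$ on $\W$ — superadditive by definition of hyperbolic norm — and $\psi$ the `infinity map' (zero at $0$, $+\infty$ elsewhere), subadditive trivially. On $\W'$ we have $\varphi=\hn=M\le\psi$. The key hypothesis to verify is \eqref{eq:assextnuovo}: whenever $a+b\le c+d$ with $a,c\in\W'$, $b,d\in\W$, we need $M(a)+\hn(b)\le M(c)+\psi(d)$. If $d\ne0$ the right side is $+\infty$ and there is nothing to prove; if $d=0$ then $a+b\le c$, and writing $a=t v$, $c=s v$ we need $t\hn(v)+\hn(b)\le s\hn(v)$. Since $\hn$ is monotone (superadditive maps are monotone, as noted after Definition of super/subadditive) and superadditive, $s\hn(v)=\hn(sv)\ge\hn(a+b)\ge\hn(a)+\hn(b)=t\hn(v)+\hn(b)$. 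Good.

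Then Theorem \ref{thm:extnuovo} gives a linear $\hat M:\W\to[0,+\infty]$ extending $M$ with $\hn\le\hat M\le\psi$. Now I need $\hat M$ to be a \emph{morphism}, i.e. to have the $\mcp$; plain linearity is not enough. Here is where the directed decomposition property of $\C$ and the $\mcp$ of $\hn$ enter, via Theorem \ref{thm:ddpmaps}(i) / Theorem \ref{thm:exthm}: I would instead invoke Theorem \ref{thm:ddpmaps}(i) with $L\equiv0$ (the zero map, which has the $\mcp$), $M:=\hat M$ constructed above, $\varphi:=\hn$, $\psi:=$ infinity map, so that \eqref{eq:peresistenza} reads $\hn\le 0+\hat M\le\psi$, already established. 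Condition (iii) of Theorem \ref{thm:exthm} requires some $\bar v$ with $\eps\big((L+M)(\bar v)\big)=\eps(\hat M(\bar v))=0$ such that $w\mapsto\varphi(w+\bar v)=\hn(w+\bar v)$ has the $\mcp$; the natural choice is $\bar v:=0$, giving $\hat M(0)=0$, $\eps(0)=0$, and $w\mapsto\hn(w+0)=\hn(w)$ which has the $\mcp$ by hypothesis. Then the conclusion of Theorem \ref{thm:ddpmaps}(i) yields a \emph{linear} $\tilde M:={\sf Pr}(\hat M)$ with the $\mcp$ and $\hn\le\tilde M\le\psi$, so $\tilde M\in\C^*$. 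Note $\tilde M\le\psi$ with $\psi(v)<\infty$... wait, $\psi(v)=+\infty$ for $v\ne0$, so that bound is useless for bounding $\tilde M(v)$ from above. Instead I use $\tilde M\le_\p\hat M$ (property of ${\sf Pr}$), hence $\tilde M(v)\le\hat M(v)=M(v)=\hn(v)$. And $\hn\le\tilde M$ gives $\hn_*(\tilde M)=\inf_{\hn(u)\ge1}\tilde M(u)\ge\inf_{\hn(u)\ge1}\hn(u)\ge1$. So $\tilde M$ is the desired functional and $\hn_{**}(\Psi(v))\le\tilde M(v)\le\hn(v)$.

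The main obstacle I anticipate is not the extension step itself but tracking that ${\sf Pr}(\hat M)$ still dominates $\hn$ from below. Projecting downward could in principle destroy the lower bound $\hn\le\hat M$. This is precisely what Theorem \ref{thm:exthm} is designed to preserve — its conclusion \eqref{eq:exthm} keeps the two-sided sandwich $\varphi\le_\p L+\tilde M\le_\p\psi$ — so as long as the hypotheses of that theorem are genuinely met (in particular the $\mcp$ of $\hn$, used for condition (iii), and the directed decomposition property of $\C$, used in Theorem \ref{thm:ddpmaps}(i) to upgrade subadditive to linear), the argument goes through. A secondary point requiring care is the well-definedness and linearity of $M$ on $\W'$ and the verification of \eqref{eq:assextnuovo}, but these are the routine homogeneity/monotonicity manipulations sketched above; the role of the degenerate case $\hn(v)=0$ should also be dispatched explicitly at the start.
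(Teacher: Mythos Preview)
Your proposal is correct and follows essentially the same approach as the paper: define $M$ on the ray generated by $v$, extend via Theorem \ref{thm:extnuovo} with $\varphi=\hn$ and $\psi$ the infinity map, then project via ${\sf Pr}$ using Theorem \ref{thm:ddpmaps}(i)/Theorem \ref{thm:exthm} with $\bar v=0$ to obtain a morphism dominating $\hn$ and agreeing with $\hn$ at $v$. The paper dispatches the degenerate case $\hn(v)=+\infty$ (trivial since then $\hn_{**}(\Psi(v))\le+\infty$) rather than $\hn(v)=0$, and your worry about well-definedness of $M$ on $\W'$ is addressed in the paper by the same observation you arrived at.
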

\begin{proof}
Notice that by Theorem \ref{thm:ddpmaps} we can apply Proposition \ref{prop:bidual} above, so that the inequality \eqref{eq:bidual} holds and we are left to prove the converse inequality.

Let $\underline v\in \C$. If $\hn(\underline v)=+\infty$ there is nothing to prove, thus we shall assume that $\hn(\underline v)<+\infty$.

Let $\W':=\{\lambda \underline v:\lambda\in[0,+\infty)\}\subset \C$ and define $M:\W'\to[0,+\infty)$ as $M(\lambda \underline v):=\lambda \hn(\underline v)$ (notice that if $\hn(\underline v)=0$, and only in this case if $\hn(v)<+\infty$, it can be that $\lambda \underline v=\eta \underline v$ for $\lambda\neq \eta$, i.e.\ that $\underline v=\eps \underline v$; this does not affect the argument). We want to extend $M$ to a linear map on the whole $\C$ via Theorem \ref{thm:extnuovo}:  pick $\varphi:=\hn$ and $\psi$  to be identically $+\infty$ on $\C\setminus\{0\}$, and 0 in 0. We need to verify that \eqref{eq:assextnuovo} holds, which given our choices means checking that
\[
\lambda \underline v+b\leq\eta \underline v\qquad\Rightarrow\qquad \lambda\hn(\underline v)+\hn(b) \leq\eta\hn(\underline v).
\]
This, however, is trivial, by  the monotonicity of the norm and the reverse triangle inequality.

Thus Theorem \ref{thm:extnuovo} can be applied and  we can extend $M$ to a linear functional, that we shall still denote by $M$, on the whole $\C$ such that
\[
\hn(v)\leq M(v)\qquad\forall v\in \C. 
\]
We now apply item $(i)$ of the existence Theorem \ref{thm:ddpmaps}  with the same choices of $\varphi,\psi$ as above: the assumptions  of Theorem \ref{thm:exthm} are trivially satisfied ($(iii)$ holds with the  choice $\bar v=0$  as   we assumed $\hn$ to have the $\mcp$). 

It follows that ${\sf Pr}(M)$ belongs to $\C^*$ and satisfies $\hn(v)\leq{\sf Pr}(M)(v)$ for every $v\in \C$. By definition of dual norm we deduce $\hn_*({\sf Pr}(M))\geq 1$. The construction also grants that ${\sf Pr}(M)(\underline v)\leq M(\underline v)= \hn(\underline v)$, thus giving
\[
\hn_{**}(\Psi(\underline v))\leq \Psi( \underline v)({\sf Pr}(M))={\sf Pr}(M)(\underline v)\leq\hn(\underline v),
\]
as desired.
\end{proof}
We conclude this short section pointing out that there are interesting examples of hyperbolic Banach spaces for which the norm does \emph{not} possess the $\mcp$. This is the case, for instance, of the $L^p$-norm for $p<0$ that we are going to study in Section \ref{se:lp} below, see Remark \ref{rem:lpmcp}. Still, in this case for any function $f$ with $\|f\|_p>0$ the dominated convergence theorem shows that the map $g\mapsto\|g+f\|_p$ has the $\mcp$: we have stated assumption $(iii)$ in Theorem \ref{thm:exthm} precisely to give a little bit of wiggle room to potentially cover situations like this one.
\end{subsection}

\end{section}

\begin{section}{Further existence results}
\begin{subsection}{An order-theoretic Baire category theorem and the chronological topology}\label{se:Baire}
In order to highlight the similarities between our version of Baire category theorem and the classical one valid on complete metric spaces, let us begin recalling a (very minor) generalization of this well known statement. The generalization aims at highlighting  that the limiting procedure underlying   assumption $(ii)$ below might have nothing to do with the given topology. 

Below we put $\bar B_r(x):=\{y\in\X:\sfd(x,y)\leq r\}$ and call this a $\sfd$-closed ball of radius $r$.
\begin{theorem}[Baire category theorem -- metric version]\label{thm:Baireclassic}
Let $(\X,\sfd)$ be a metric space and $\tau$ a topology on $\X$. Assume that:
\begin{itemize}
\item[i)] For every non-empty $\tau$-open set $U$ there is $x\in U$  such that for all  $r>0$ sufficiently small the $\sfd$-closed ball  $\bar B_r(x)$ is contained in $U$ and has    non-empty $\tau$-interior,
\item[ii)] $(\X,\sfd)$ is complete as metric space. 
\end{itemize}
Then for every sequence $(U_n)$ of $\tau$-dense $\tau$-open sets the set $\cap_nU_n$ is $\tau$-dense. 
\end{theorem}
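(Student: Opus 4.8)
The strategy is the classical nested-balls argument, taking care that the "shrinking radii" step uses only the metric $\sfd$ and the Cauchy completeness hypothesis $(ii)$, while density/openness is tested against $\tau$. First I would fix a non-empty $\tau$-open set $V$ and aim to show $V\cap\bigcap_n U_n\neq\emptyset$; since $V$ is arbitrary, this gives $\tau$-density of the intersection. By hypothesis $(i)$ applied to $V$, there is $x_0\in V$ and $r_0>0$ (which I may take $\leq 1$) such that $\bar B_{r_0}(x_0)\subset V$ and $\bar B_{r_0}(x_0)$ has non-empty $\tau$-interior; call that interior $W_0$, a non-empty $\tau$-open set.

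\textbf{Inductive construction.} Suppose inductively that a non-empty $\tau$-open set $W_{n-1}$ has been produced with $\overline{B}$-closure controlled, i.e.\ $W_{n-1}\subset \bar B_{r_{n-1}}(x_{n-1})\subset U_0\cap\cdots\cap U_{n-1}\cap V$ and $r_{n-1}\leq 2^{-(n-1)}$. Since $U_n$ is $\tau$-dense and $\tau$-open, $W_{n-1}\cap U_n$ is a non-empty $\tau$-open set. Apply hypothesis $(i)$ to it: there is $x_n\in W_{n-1}\cap U_n$ and, for all sufficiently small $s>0$, the ball $\bar B_s(x_n)$ lies in $W_{n-1}\cap U_n$ and has non-empty $\tau$-interior. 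Choose $r_n>0$ small enough that $r_n\leq 2^{-n}$, that $\bar B_{r_n}(x_n)\subset W_{n-1}\cap U_n$, and that $\bar B_{r_n}(x_n)$ has non-empty $\tau$-interior; set $W_n$ to be that interior. Then $W_n\subset \bar B_{r_n}(x_n)\subset W_{n-1}\subset\bar B_{r_{n-1}}(x_{n-1})\subset U_0\cap\cdots\cap U_n\cap V$, closing the induction. Note $W_n$ non-empty and $\tau$-open throughout.

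\textbf{Passing to the limit.} The centres $(x_n)$ satisfy $x_m\in \bar B_{r_n}(x_n)$ for all $m\geq n$ (since $W_m\subset W_n\subset \bar B_{r_n}(x_n)$ and $x_m\in W_m$), hence $\sfd(x_n,x_m)\leq r_n\leq 2^{-n}$ for $m\geq n$, so $(x_n)$ is a $\sfd$-Cauchy sequence. By hypothesis $(ii)$ it converges in $(\X,\sfd)$ to some $x_\infty$. For each fixed $n$, all $x_m$ with $m\geq n$ lie in the $\sfd$-closed set $\bar B_{r_n}(x_n)$, so the $\sfd$-limit $x_\infty$ lies there too; thus $x_\infty\in\bar B_{r_n}(x_n)\subset U_0\cap\cdots\cap U_n\cap V$ for every $n$, whence $x_\infty\in V\cap\bigcap_n U_n$. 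This shows the intersection meets every non-empty $\tau$-open set, i.e.\ it is $\tau$-dense.

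\textbf{Main obstacle.} There is no serious obstacle; the only subtlety, and the reason for stating hypothesis $(i)$ in this slightly unusual form, is to keep the metric completeness and the topological density logically separate — the radii shrink in the $\sfd$-metric (so Cauchyness and hence the limit come from $(ii)$ alone), while at each stage we only need that the small $\sfd$-closed balls still carry some $\tau$-open set to continue intersecting the $U_n$'s. One must be slightly careful that "for all sufficiently small $r$" in $(i)$ is compatible with the three simultaneous requirements on $r_n$ (size bound, inclusion, non-empty interior): the inclusion and interior conditions hold for all small $r$ by $(i)$, and the size bound is an extra upper constraint, so a valid $r_n$ exists. This is precisely the point where the analogue for the hyperbolic/order-theoretic Baire theorems (Theorems \ref{thm:Baire} and \ref{thm:BaireJ}) will require more work, since there the role of "shrinking $\sfd$-balls" must be played by a purely order-theoretic device compatible with directed completeness rather than Cauchy completeness.
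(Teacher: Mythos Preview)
Your proof is correct and follows essentially the same nested-balls argument as the paper's own proof: fix a non-empty $\tau$-open $V$, recursively produce $\sfd$-closed balls of shrinking radii sitting inside $V\cap U_1\cap\cdots\cap U_n$ with non-empty $\tau$-interior, then use Cauchy completeness to extract a limit point lying in all of them. The only cosmetic differences are the radius bound ($2^{-n}$ versus $1/n$) and a minor off-by-one in your indexing (your base case gives $W_0\subset V$ but your inductive hypothesis for $n=1$ reads $W_0\subset U_0\cap V$); both are inconsequential.
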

\begin{proof}
Let $(U_n)$ be as in the statement and $V\subset\X$ be non-empty and $\tau$-open. We shall recursively define a sequence $(B_n)$ of nested $\sfd$-closed balls so that the radius of $B_n$ is $\leq \tfrac1n$ and   $B_n\subset V\cap U_1\cap\cdots\cap U_n$.

By $(i)$ the  non-empty  $\tau$-open set $V\cap U_1$ contains a $\sfd$-closed ball $B_1$ of radius $\leq 1$ with non-empty $\tau$-interior $V_1$. Having defined the $\sfd$-closed ball $B_n$ with non-empty $\tau$-interior $V_n$ satisfying the recursion assumption, we use $(i)$ and the $\tau$-density of $U_{n+1}$ to find a $\sfd$-closed ball $B_{n+1}\subset V_n\cap U_{n+1}$ with radius $\leq\tfrac1{n+1}$ and non-empty $\tau$-interior.

This produces the desired sequence of balls. The centres of these balls form a Cauchy sequence, that by $(ii)$ admits a $\sfd$-limit. Since the balls are $\sfd$-closed, such limit belongs to each of them, and thus to the intersection $\cap_nB_n$. As $\cap_nB_n\subset V\cap(\cap_nU_n)$, by the arbitrariness of $V$ we conclude. 
\end{proof}
We turn to the order theoretic version. Below we put $J(a,b):=\{c\in\X:a\leq c\leq b\}$ and call this a diamond (as opposed to interval: here we are borrowing terminology from the mathematic of General Relativity).
\begin{theorem}[Baire category theorem - order theoretic version]\label{thm:Baire}
Let $(\X,\leq)$ be a partial order and $\tau$ a topology on $\X$. Assume that:
\begin{itemize}
\item[i)] Any non-empty $\tau$-open set $U$ contains a diamond with non-empty $\tau$-interior,
\item[ii)] For any two sequences $(a_n),(b_n)\subset\X$ such that $a_n\leq a_{n+1}\leq b_{n+1}\leq b_n$ for every $n\in\N$ there is $c\in\X$ with $a_n\leq c\leq b_n$ for every $n\in\N$.
\end{itemize}
Then for every sequence $(U_n)$ of $\tau$-dense $\tau$-open sets the set $\cap_nU_n$ is $\tau$-dense. 
\end{theorem}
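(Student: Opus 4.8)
The argument mirrors the metric proof almost line by line, with diamonds $J(a,b)$ playing the role of closed balls and the two-sided monotone sequences of assumption $(ii)$ playing the role of Cauchy sequences. Fix a sequence $(U_n)$ of $\tau$-dense $\tau$-open sets and a non-empty $\tau$-open set $V$; the goal is to produce a point in $V\cap\bigcap_nU_n$. The plan is to recursively construct a decreasing sequence of diamonds $J(a_n,b_n)$ with non-empty $\tau$-interior such that $J(a_n,b_n)\subset V\cap U_1\cap\cdots\cap U_n$, where moreover $a_n\le a_{n+1}\le b_{n+1}\le b_n$ for all $n$. The point $c$ supplied by assumption $(ii)$ for the sequences $(a_n),(b_n)$ then lies in every $J(a_n,b_n)$, hence in $V\cap\bigcap_nU_n$, and by the arbitrariness of $V$ this gives density of $\bigcap_nU_n$.

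\textbf{The recursion.} First I would handle the base step: $V\cap U_1$ is non-empty (since $U_1$ is $\tau$-dense) and $\tau$-open, so by $(i)$ it contains a diamond $J(a_1,b_1)$ with non-empty $\tau$-interior, call it $W_1$. For the inductive step, suppose $J(a_n,b_n)$ has been constructed with non-empty $\tau$-interior $W_n\subset V\cap U_1\cap\cdots\cap U_n$. Since $U_{n+1}$ is $\tau$-dense and $W_n$ is non-empty $\tau$-open, the set $W_n\cap U_{n+1}$ is non-empty and $\tau$-open; applying $(i)$ again yields a diamond $J(a_{n+1},b_{n+1})\subset W_n\cap U_{n+1}$ with non-empty $\tau$-interior $W_{n+1}$. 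The inclusion $J(a_{n+1},b_{n+1})\subset W_n\subset J(a_n,b_n)$ means precisely that $a_n\le a_{n+1}$ and $b_{n+1}\le b_n$; together with $a_{n+1}\le b_{n+1}$ (which holds because $J(a_{n+1},b_{n+1})$ is non-empty, as it has non-empty $\tau$-interior) we get exactly the chain of inequalities required in assumption $(ii)$. This completes the recursion.

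\textbf{Conclusion and the delicate point.} Apply $(ii)$ to the sequences $(a_n),(b_n)$ just built to obtain $c\in\X$ with $a_n\le c\le b_n$ for every $n$, i.e.\ $c\in J(a_n,b_n)$ for every $n$. Since $J(a_n,b_n)\subset V\cap U_1\cap\cdots\cap U_n$ for each $n$, we get $c\in V\cap\bigcap_nU_n$; as $V$ was an arbitrary non-empty $\tau$-open set, $\bigcap_nU_n$ is $\tau$-dense. The main subtlety — the analogue of the place in the metric proof where one shrinks the radii so that the Cauchy sequence converges — is that here assumption $(ii)$ is precisely engineered to replace that convergence step: one does not need any quantitative control, only the order-theoretic fact that a nested sequence of diamonds (indexed by monotone sequences as in $(ii)$) has non-empty intersection. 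Thus the only real care needed in writing this up is making sure that at each recursion step the newly chosen diamond sits inside the \emph{interior} $W_n$ of the previous one (not merely inside $J(a_n,b_n)$), so that $W_n\cap U_{n+1}$ is genuinely $\tau$-open and $(i)$ is applicable; this is handled automatically by always carrying along the non-empty $\tau$-interior $W_n$ as part of the inductive data.
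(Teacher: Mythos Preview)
Your proof is correct and follows essentially the same approach as the paper's own proof: build a nested sequence of diamonds $J(a_n,b_n)\subset V\cap U_1\cap\cdots\cap U_n$ by repeatedly applying $(i)$ to the intersection of the previous diamond's $\tau$-interior with the next $U_{n+1}$, then invoke $(ii)$ to find a common point. Your write-up is in fact slightly more explicit than the paper's in justifying why $J(a_{n+1},b_{n+1})\subset J(a_n,b_n)$ yields $a_n\le a_{n+1}\le b_{n+1}\le b_n$ (via non-emptiness of the inner diamond), which the paper leaves implicit.
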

\begin{proof}
Let $(U_n)$ be as in the statement and   $V\subset\X$ be non-empty and $\tau$-open. We shall recursively define a sequence $(J_n)$ of nested diamonds so that $J_n\subset V\cap U_1\cap\cdots\cap U_n$.  

By $(i)$ the non-empty $\tau$-open set $V\cap U_1$  contains a diamond $J_1:=J(a_1,b_1)$ with non-empty $\tau$-interior $V_1$.  Having defined the diamond  $J_n=J(a_n,b_n)$ with non-empty $\tau$-interior $V_n$ satisfying the recursion assumption, we use $(i)$ and the $\tau$-density of $U_{n+1}$ to find a diamond  $J_{n+1}=J({a_{n+1}},{b_{n+1}})\subset V_n\cap U_{n+1}$  with non-empty $\tau$-interior.

This produces the desired sequence of diamonds. By construction, for every $n\in\N$ we have   $J_{n+1}\subset J_n$, i.e.\ the inequality  $a_n\leq a_{n+1}\leq b_{n+1}\leq b_n$ holds. It follows by $(ii)$ that there is $c\in\X$ with $a_n\leq c\leq b_n$ for every $n\in\N$, i.e.\ such that $c\in \cap_nJ_n$.   As $\cap_nJ_n\subset V\cap(\cap_nU_n)$, by the arbitrariness of $V$ we conclude. 
\end{proof}

\begin{remark}\label{re:Baire}{\rm
Theorem \ref{thm:Baire} implies   Theorem \ref{thm:Baireclassic} and thus also the classical Baire category theorem for complete metric spaces. To see why, let $(\X,\sfd)$ and $\tau$ be as in Theorem \ref{thm:Baireclassic}. On $\R\times\X$ consider the partial order
\begin{equation}
\label{eq:podad}
(t,x)\leq(s,y)\qquad\text{whenever}\qquad \sfd(x,y)\leq s-t
\end{equation}
and the product topology $\tau_\times$ of $\tau$ and the Euclidean topology on $\R$. If $(U_n)$ is a sequence of $\tau$-open dense sets, then $(\R\times U_n)$ is a sequence of $\tau_\times$-open dense sets, so if we show that $\leq$ and $\tau_\times$ satisfy the assumption of Theorem \ref{thm:Baire}, then our claim follows.

Any non-empty $\tau_\times$-open set contains a set of the form $(a,b)\times U$ with $a<b$ and $U\subset\X$ non-empty and $\tau$-open. By assumption $(i)$ in Theorem \ref{thm:Baireclassic} there are  $x\in U$ and $\bar r> 0$ so that $\bar B_r(x)$ is contained in $U$ and has non-empty $\tau$-interior for every $r\in(0,\bar r)$. Then let $[\bar a,\bar b]\subset(a,b)$ be with $\bar b-\bar a\in(0,2\bar r)$. We claim that the diamond $J:=J((\bar a,x),(\bar b,x))$, that is obviously contained in $(a,b)\times U$, has non-empty $\tau_\times$-interior. Indeed, for every $r,s>0$ with $r+s<\tfrac{\bar b-\bar a}2$ we have that $(\tfrac{\bar b+\bar a}{2}-s,\tfrac{\bar b+\bar a}{2}+s)\times \bar B_r(x)$ is contained in $(a,b)\times U$ and since $\bar B_r(x)$ has non-empty $\tau$-interior, the claim is proved. This shows that assumption $(i)$ in Theorem \ref{thm:Baire} holds. 

Assumption $(ii)$ follows from the fact that Cauchy completeness of $(\X,\sfd)$ implies  local completeness of the partial order in \eqref{eq:podad}, see Proposition \ref{prop:2completi} for the simple proof, so the choice $c:=\sup_na_n$ (or $c:=\inf b_n$, that also exists in this case) does the job.

We also notice that the statements above show that the particular metric/order theoretic structure plays little role in the proof: what truly matters is having a class of objects contained/containing open sets and with non-empty countable intersections. It would thus  be possible  to further stretch the statements above to obtain a single theorem implying both the above and that also   covers the case of Baire's theorem for locally compact Hausdorff spaces. Due to the lack of significant examples, we will not do so.
%
%where $(i)$ is replaced by
%\begin{itemize}
%\item[i')]  For every $\tau$-open set $U$ and $\bar r\geq 0$ there is a closed ball contained in $U$ of radius $r\in[0,\bar r]$ with non-empty $\tau$-interior contained in $U$.
%\end{itemize}
}\fr\end{remark}
%
%\begin{remark}{\rm From the presentation we gave it is quite clear that the metric/partial order structures imposed on the topological space $(\X,\tau)$ are, in some sense, just tools to be used in the following formal statement: 
%\begin{quote}
%Suppose that in the topological space $(\X,\tau)$ we have sets of `a certain kind' such that:
%\begin{itemize}
%\item[i)] Every non-empty open set $U$ contains a set of `a certain kind' with non-empty interior,
%\item[ii)] The countable intersection of nested sets of `a certain kind' is not empty.
%\end{itemize}
%Then for every sequence $(U_n)$ of $\tau$-dense $\tau$-open sets the set $\cap_nU_n$ is $\tau$-dense. 
%\end{quote}
%Under mild, natural assumptions on the sets of `a certain kind' (or `certain kind\emph{s}' to mimic balls of sufficiently small radii), and perhaps rather tautologically, such statement can be proved along the lines given above and generalizes both the above theorems and the version of Baire cetegory theorem for locally compact Hausdorff spaces. The relevant point here is that the role of the completeness/compactness assumptions is that of ensuring that $(ii)$ holds, but it does not matter which specific sort of completeness is actually used.
%}\fr\end{remark}

Let us discuss how Theorem \ref{thm:Baire} can be applied to our context. In short, the idea is that the partial order should be that of a cone with joins and the topology   should be the chronological topology induced by the hyperbolic norm.

Notice indeed that  in a hyperbolic Banach space $(\C,\hn)$ we can use the norm to induce a `way below' relation $\ll$ by declaring 
\begin{equation}
\label{eq:waybelow}
v\ll w\qquad\text{ provided  there is $z\in \C$ with $\hn(z)>0$ such that $v+z=w$.}
\end{equation}
It is clear that $v\ll w$ implies $v\leq w$ and that the viceversa does not hold in general (just take  $v=w$ for some $v$ with $\hn(v)<+\infty$). Also, from the reverse triangle inequality for the norm we easily get that  the `push up' property
\begin{equation}
\label{eq:pushup}
\begin{split}
v\leq w\ll z\qquad\Rightarrow\qquad v\ll z,\\
v\ll w\leq z\qquad\Rightarrow\qquad v\ll z.
\end{split}
\end{equation}
In particular, $\ll$ is a transitive. In what follows we shall also use the trivial implications
\begin{equation}
\label{eq:addll}
\begin{split}
v\ll w\qquad&\Rightarrow\qquad \lambda v\ll \lambda w\quad\forall \lambda\in(0,+\infty),\\
v_1\ll w_1,\quad v_2\leq w_2\qquad&\Rightarrow \qquad v_1+v_2\ll w_1+w_2,
\end{split}
\end{equation}
that are direct consequences of the definitions. We also notice that  if $\C$ is a cone with joins then the cancellation property holds also for $\ll$, i.e.\ we have
\begin{equation}
\label{eq:canc2}
a+v\ll b+v \qquad\Rightarrow\qquad a+\eps v\ll b+\eps v.
\end{equation}
Indeed, the assumption tells that $a+v+z=b+v$ for some $z\in \C$ with positive norm, then the standard cancellation \eqref{eq:cancellationtrue} implies $a+\eps v+z=b+\eps v$ and the claim follows.

\medskip

We can then use the chronological relation to induce a topology as follows:
\begin{definition}[Chronological topology]
Let $(\C,\hn)$ be a hyperbolic Banach space. The chronological topology is the order topology generated by $\ll$. 

In other words, it is the one generated by sets of the form $I^+(v):=\{w\in \C:v\ll w\}$ and $I^-(v):=\{w\in \C:w\ll v\}$ as $v$ varies in $\C$.
\end{definition}
From Theorem \ref{thm:Baire} we get  the following:
\begin{theorem}\label{thm:BaireJ}
Let $(\J,\hn)$ be a hyperbolic Banach space with $\J$  being a cone with joins and let $\tau$ be the chronological topology induced by $\hn$. 

Then the intersection of a countable collection of $\tau$-open and $\tau$-dense sets is $\tau$-dense.
\end{theorem}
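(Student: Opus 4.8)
The strategy is to verify the two hypotheses of Theorem \ref{thm:Baire} with $\X:=\J$, the partial order $\leq$ being the one of the cone with joins, and $\tau$ the chronological topology, and then simply invoke that theorem. Hypothesis $(i)$ of Theorem \ref{thm:Baire} — that every non-empty $\tau$-open $U$ contains a diamond $J(a,b)$ with non-empty $\tau$-interior — is essentially immediate from the way $\tau$ is generated. Indeed, by definition of the order topology generated by $\ll$, any non-empty $\tau$-open set contains a non-empty finite intersection of subbasic sets $I^\pm(\cdot)$; since a point of such an intersection has the form of a $w$ with $v_i\ll w$ for finitely many $v_i$ and $w\ll z_j$ for finitely many $z_j$, using the joins in $\J$ to form $a:=\bigvee_i v_i$ and the meets to form $b:=\bigwedge_j z_j$, together with the push-up/add properties \eqref{eq:pushup}--\eqref{eq:addll} of $\ll$, one checks that $a\ll w\ll b$ (care is needed here: one wants strict relations, which is where $\hn(w)>0$-type bookkeeping and \eqref{eq:addll} enter) and that $J(a,b)\subset U$ with $w$ in its $\tau$-interior since $I^+(a)\cap I^-(b)$ is $\tau$-open and contained in $J(a,b)$. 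So a diamond of the form $J(a,b)$ with $I^+(a)\cap I^-(b)$ a non-empty open subset does the job.

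The substantive point is hypothesis $(ii)$: given sequences $(a_n),(b_n)\subset\J$ with $a_n\leq a_{n+1}\leq b_{n+1}\leq b_n$ for all $n$, produce $c$ with $a_n\leq c\leq b_n$ for every $n$. Here I would exploit that $\J$ is a cone with joins, hence a complete lattice (Proposition \ref{prop:joinlattice}), so that $c:=\sup_n a_n$ exists. Since each $a_n\leq b_m$ for every $m$ (using monotonicity: if $n\leq m$ then $a_n\leq a_m\leq b_m$, and if $n>m$ then $a_n\leq b_n\leq b_m$), each $b_m$ is an upper bound of $\{a_n\}_n$, hence $c=\sup_n a_n\leq b_m$ for every $m$; and trivially $a_n\leq c$. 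Thus $c\in\bigcap_n J(a_n,b_n)$, which is exactly what is required. Note that no completeness of the norm or any $\mcp$ assumption is needed — only the lattice completeness of $\J$ — and the diamonds involved in the Baire argument need not even be directed, so this is clean.

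With both hypotheses in place, Theorem \ref{thm:Baire} applies verbatim and yields that the intersection of any countable family of $\tau$-dense $\tau$-open subsets of $\J$ is $\tau$-dense, which is the assertion. \textbf{The main obstacle} I anticipate is purely in hypothesis $(i)$: making sure the diamond extracted from a basic open neighbourhood has the point $w$ genuinely in the $\tau$-\emph{interior}, i.e.\ that after replacing finitely many conditions $v_i\ll w$, $w\ll z_j$ by the single pair $a\ll w\ll b$ one still has $w\in I^+(a)\cap I^-(b)\subset J(a,b)\subset U$. This requires checking that joins of elements way-below $w$ are still way-below $w$ (and dually for meets), for which one combines \eqref{eq:addll} with $\eps$-bookkeeping: writing $v_i+z_i=w$ with $\hn(z_i)>0$ and using that a finite join $\bigvee_i v_i$ satisfies $\bigvee_i v_i + (\text{something of positive norm})=w$ via the modularity/distributivity relations of Proposition \ref{prop:baselattice} and the cancellation \eqref{eq:canc2}. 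Everything else is routine once this is nailed down.
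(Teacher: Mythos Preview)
Your overall strategy—reduce to Theorem~\ref{thm:Baire} and dispatch hypothesis~(ii) via $c:=\sup_n a_n$—matches the paper exactly. The gap is in hypothesis~(i): the diamond $J(a,b)$ with $a:=\bigvee_i v_i$, $b:=\bigwedge_j z_j$ does not work. First, $J(a,b)\not\subset U$: for $c\in J(a,b)$ you only get $v_i\leq a\leq c$, not $v_i\ll c$ (already with a single $v_1$, the point $c=v_1$ lies in $J(v_1,b)$ but not in $I^+(v_1)$). Second, your key claim $a\ll w$ is false in general: modularity~\eqref{eq:modulare1} gives $w=(v_1\vee v_2)+(z_1\wedge z_2)$, but $\hn(z_1\wedge z_2)$ can vanish even when both $\hn(z_i)>0$, since $\hn$ is super- not sub-additive. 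In the paper's own example~(b) after the theorem ($\J=[0,\infty]^2$ with the $p$-norm, $p\in(0,1)$), taking $v_1=(1,0)$, $v_2=(0,1)$, $w=(1,1)$ gives $v_1,v_2\ll w$ yet $v_1\vee v_2=w$, so $a\ll w$ fails outright. The paper explicitly flags that one cannot expect $v\ll w$ in~\eqref{eq:vwi}.

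The paper circumvents this by \emph{not} using lattice operations for the diamond endpoints. It proves only the mixed relations $v_i\ll w$ and $v\ll w_j$ (hence just $v\leq w$), then takes the convex combinations $\bar v:=\tfrac23 v+\tfrac13 w$, $\bar w:=\tfrac13 v+\tfrac23 w$: additivity~\eqref{eq:addll} applied to $v_i\ll w$ yields $v_i\ll\bar v$, so push-up gives $J(\bar v,\bar w)\subset U$. Non-emptiness of the interior is then a further non-trivial step involving shifted generators $\tilde v_i,\tilde w_j$ and cancellation~\eqref{eq:canc2}. All the work lives in~(i), and the join/meet route you outline cannot deliver it.
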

\begin{proof}
We shall apply Theorem \ref{thm:Baire} with $\X:=\J$, the partial order the one coming with the wedge structure of $\J$ and with the chronological topology $\tau$. Since $(\J,\leq)$ is a directed complete partial order, in fact a a complete lattice, assumption $(ii)$ is trivially satisfied (just take $c:=\sup_na_n$).

Thus we only need to prove that that any non-empty open set contains a diamond with non-empty interior. By definition, a base of the topology is given by sets of the form
\begin{equation}
\label{eq:base}
U\big((v_i)_{i};(w_j)_{j}\big):=\Big(\bigcap_{i\in I}I^+(v_i)\Big)\cap \Big(\bigcap_{j\in J}I^-(w_j)\Big)
\end{equation}
for some finite, possibly empty, collections $(v_i),(w_j)\subset \C$. To conclude it is therefore enough to show that any non-empty set of this form contains a diamond with non-empty interior. Thus let $U=U\big((v_i)_{i};(w_j)_{j}\big)$ be   as in \eqref{eq:base} and non-empty and let 
\[
\begin{split}
v:=\sup_iv_i\qquad\text{ and }\qquad w:=\inf_jw_j.
\end{split}
\] 
We claim that
\begin{equation}
\label{eq:vwi}
v_i\ll w\qquad\forall i\in I\qquad\text{and}\qquad v\ll w_j\quad\forall j\in J.
\end{equation}
Indeed, by assumption there is $\bar z\in U$ and for such $\bar z$ we have   $v_i\ll \bar z\ll  w_j$ for every $i,j$. Taking either the inf in $j$ or the sup in  $i$ yields \eqref{eq:vwi}.

By \eqref{eq:vwi} it follows that  $v\leq w$, hence there is $z\in \J$ such that $v+z=w$. Put  
\begin{equation}
\label{eq:defvwbar}
\bar v:=\tfrac23 v+\tfrac13w=v+\tfrac13z\qquad\text{ and }\qquad \bar w=\tfrac13v+\tfrac23w=v+\tfrac23z.
\end{equation}
By \eqref{eq:vwi} and \eqref{eq:addll} we see that $v_i\ll \bar v$ and $\bar w\ll w_j$ for every $i\in I$ and $j\in J$. Hence $J(\bar v,\bar w)\subset U$.

To conclude we need to prove that $J(\bar v,\bar w)$ has non-empty interior. To this aim,   put 
\[
\tilde v_i:=v_i+\tfrac13z\qquad\text{ and }\qquad \tilde w_j:=w_j-\tfrac13z
\]
(recall \eqref{eq:defdiff} and notice that the $\tilde w_j$'s are well defined  because $\frac13 z\leq z\leq w\leq w_j$ for any $j$). If we prove that   $J({\bar v},{\bar w})\supset \tilde U:=U((\tilde v_i),(\tilde w_j))$ and that $\tilde U$ is not empty we are done.

To see that  $J({\bar v},{\bar w})\supset \tilde U$ it suffices to prove that $\bar v\leq \sup_i\tilde v_i$ and $\bar w\geq\inf_j\tilde w_j$. For the first we   have  $\sup_i\tilde v_i=\sup_iv_i+\frac13z =v+\frac13z =\bar v$. For the second  we start from $ \inf_j\tilde w_j+\tfrac13z =\inf w_j=w=\bar w+\tfrac13z$ to deduce, via the cancellation property \eqref{eq:cancellationtrue}, that $ \inf_j\tilde w_j\leq \bar w+ \eps z\leq\bar w+\eps w$. Since by \eqref{eq:defvwbar} we get $\eps \bar w=\eps w$, we conclude that   $ \inf_j\tilde w_j\leq \bar w$, as desired.

To prove that $\tilde U$ is not empty we shall show that  $\tfrac12v+\tfrac12w\in \tilde U$. For every $i\in I$ we have $v\geq v_i$ and, by \eqref{eq:vwi}, $w\gg v_i$, hence from \eqref{eq:addll} we get
\[
\tfrac12v+\tfrac12w=\tfrac12v+\tfrac13w+\tfrac16w\gg\tfrac12v+\tfrac13w+\tfrac16v_i=\tfrac56v+\tfrac13z+\tfrac16v_i\geq v_i+\tfrac13z=\tilde v_i.
\]
Similarly,    for every $j$ we have $w\leq w_j$ and $v\ll w_j$, thus
\[
\begin{split}
\big(\tfrac12v+\tfrac12w\big)+\tfrac13z=\tfrac16 v+\tfrac56w\ll w_j=\tilde w_j+\tfrac13z
\end{split}
\]
so that from the cancellation property \eqref{eq:canc2} we get $\tfrac12v+\tfrac12w\ll \tilde w_j+\eps z$. To conclude, notice that $\eps z\leq \eps w_j$ and recall \eqref{eq:epsdiff} to get $\eps w_j=\eps\tilde w_j$.
\end{proof}
We collect a couple of comments about the chronological topology:
\begin{itemize}
\item[a)] It tends to be ill-behaved at infinity. Indeed, for $v\in \J$ with $\eps v=v$ and  $\hn(v)=+\infty$ the identity $v+v=v$ implies $v\ll v$ making the singleton $\{v\}$ chronologically open.
\item[b)] In some circumstances it can be the discrete topology even in the finite part of finite dimensional structures. Consider for instance $\J:=[0,+\infty]^2$ equipped with the natural operations and the norm $\hn(a,b):=\|(a,b)\|_p:=(a^p+b^p)^{\frac1p}$ for some $p\in(0,1)$. The discussion in Section \ref{se:lp} shows that this is a hyperbolic norm and it is clear that $\|(a,b)\|_p>0$ for any $(a,b)\neq(0,0)$. 

Thus $I^+((a,b))=\{(a',b')\gneq (a,b)\}$ and $I^-((a,b))=\{(a'',b'')\lneq (a,b)\}$ for any $(a,b)\in[0,+\infty)^2$. Fix  $a,b\in(0,+\infty)$, let $a'\in(a,+\infty)$, $b'\in(b,+\infty)$, $a''\in(0,a')$ and $b''\in(0,b)$  and notice that the set
\[
I^-((a',b))\,\cap \, I^-((a,b'))\,\cap\, I^+((a'',b))\,\cap \,I^+((a,b''))
\]
is the singleton $\{(a,b)\}$, which therefore is open. Analogous arguments are valid if either one, or both, of $a,b$ are 0.

In particular, recalling also point (a) above we see that $J((a,b),(a,b))=\{(a,b)\}$ is open for every $(a,b)\in[0,+\infty]^2$. 

Notice that this tells that  we can have diamonds $J(v,w)$ with non-empty interior even if $v \not\ll w$. For the same reason, in \eqref{eq:vwi} it might be not true that $v\ll w$.
\end{itemize}

\end{subsection}
\begin{subsection}{Comments about `weak$^*$' topologies}\label{se:wstar}

Tichonoff's theorem can be used to endow functional spaces with the compact topology of `pointwise convergence', a fact that is particularly relevant in presence of algebraic constraints, as these force some rigidity and prevent oscillations, making the topology useful. This is the essence of  the classical Banach-Alaoglu theorem.

In this short section we collect some comments about the effect of such principle in our context: the content here is by no means satisfactory or complete, but should rather be interpreted as source of stimulus for potential further investigations.

The   crucial additional difficulty in our setting compared to   the classical one is the following: in the classic framework  continuity can be quantified and the pointwise limit of uniformly continuous (=uniformly bounded) linear functional is still continuous. In here, instead,   the $\mcp$ is not quantifiable and certainly is not stable by pointwise convergence (see Example \ref{ex:mcpnostable} below). We stress, in particular, that the $\mcp$ has nothing to do with bounds on the (hyperbolic) norm of a functional.

Thus here we limit ourselves to noticing the following: the abstract procedure behind Banach-Alaoglu's result can be put in place to endow the collection $\Lin(\C,[0,+\infty])$ of linear maps from the cone/wedge $\C$ to $[0,+\infty]$ with a compact, possibly Hausdorff, topology. Then if  the cone $\C$ has the directed decomposition property we have a natural projection map  from $\Lin(\C,[0,+\infty])$ to $\C^*=\Hom(\C,[0,+\infty])$ that is surjective (recall Proposition \ref{prop:linpr}), so we can endow $\C^*$ with the quotient topology, that will therefore be compact (but the Hausdorff property is most likely lost).

\medskip

We give some details. Let $\tau_{\sf Eu}$  be the standard Euclidean topology on $[0,+\infty]$ and $\tau_{\sf lsc}$ the topology  generated by sets of the form $(a,+\infty]$ for $a\in[0,+\infty]$ (notice that $[0,+\infty]$-valued maps that are $\tau_{\sf lsc}$ continuous are precisely the lower semicontinuous functions). These are both compact, with $\tau_{\sf Eu}$ being also Hausdorff. Below we shall let $\tau$ be any of these two.

Let now $\C$ be a wedge and let us endow the set $[0,+\infty]^\C$ of functions from $\C$ to $[0,+\infty]$ with the product topology: this only requires the topology $\tau=\tau_{\sf Eu},\tau_{\sf lsc}$ on the target space and by Tichonoff's theorem it is compact as $\tau$ is so (and trivially also Hausdorff if $\tau=\tau_{\sf Eu}$).

Regardless of the choice of $\tau=\tau_{\sf Eu},\tau_{\sf lsc}$  we have:
\begin{equation}
\label{eq:linchiuso}
\Lin(\C,[0,+\infty])\quad\text{ is a closed subset of }\quad [0,+\infty]^\C.
\end{equation}
Indeed, we have
\[
\Lin(\C,[0,+\infty])=\bigcap_{\lambda,\eta\in[0,+\infty),\atop v,w\in \C}\big\{f\in [0,+\infty]^\C\ :\ f(\lambda v+\eta w)=\lambda f(v)+\eta f(w)\big\}
\]
so that, as in the classical case, the claimed closure follows noticing that for every $\lambda,\eta,v,w$ as above, the map $f\mapsto  f(\lambda v+\eta w)$ is continuous in the product topology and, using that both the sum and the multiplication by a non-negative scalar are continuous on $([0,+\infty],\tau)$, also $f\mapsto  \lambda f( v)+\eta f(w)\in [0,+\infty]$ is continuous. The claim \eqref{eq:linchiuso} follows. We thus obtain  that $\Lin(\C,[0,+\infty])$ is compact (and Hausdorff we chose $\tau=\tau_{\sf Eu}$).

If $\C$ is a cone with the directed decomposition property, then by Proposition \ref{prop:linpr} the projection map  ${\sf Pr}$ (recall Definition \ref{def:prmcp})  sends $\Lin(\C,[0,+\infty])$ into $\C^*$ and  is clearly surjective. We can therefore endow $\C^*$ with the quotient topology induced by such projection and the topology we had on  $\Lin(\C,[0,+\infty])$ (i.e.\ the strongest one making the projection continuous): we shall call this resulting topology $w^*_{\sf Eu}$ if we started from $\tau=\tau_{\sf Eu}$   and  $w^*_{\sf lsc}$ is we instead started from $\tau_{\sf lsc}$. Since the quotient of a compact topology is itself compact, both $w^*_{\sf Eu}$ and $w^*_{\sf lsc}$ are compact topologies on $\C^*$. It is unclear to us under which conditions $w^*_{\sf Eu}$ is also Hausdorff but in any case thinking at the case of $L^p-L^q$ duality and thus to the cone $\LL(\X)$ discussed in Section \ref{se:lp}, Fatou's lemma seems to indicate that the appropriate weak$^*$ topology in this setting should be $w^*_{\sf lsc}$, which is not  Hausdorff in any reasonable case (e.g.\ if $\C=[0,+\infty]$ then quite clearly $\C^*\sim [0,+\infty]$ and via such isomorphism  $w^*_{\sf lsc}$ is $\tau_{\sf lsc}$).

\begin{example}\label{ex:mcpnostable}{\rm
Consider the cone $\LL(\X)$ for $\X:=[0,1]$ with the Lebesgue measure (described in Section \ref{se:lp}, where also the $L^p$ norms mentioned below are discussed). For every $n\in\N$ consider the functions $f_n:=1+\infty\nchi_{[0,\frac1n]}$ and let $L_n:\LL(X)\to\LL(\X)$, $n\in\N$, be the functionals induced by them, i.e.\ $L_n(g):=\int f_ng\,\d\mathcal L^1$.

Then clearly $L_n\in \LL(\X)^*$ and for every $g\in\LL(\X)$ we have $L_n(g)\to L(g)$, where  $L\in\Lin(\LL(\X),[0,+\infty])$ is defined as
\[
L(g):=\left\{
\begin{array}{ll}
\displaystyle{\int g\,\d\mathcal L^1},&\quad\text{ if there is $\eps>0$ so that $g=0$ a.e.\ on $[0,\eps]$},\\
+\infty,&\quad\text{ otherwise}.
\end{array}
\right.
\]
It is clear that $L$ is linear and that it does not have the $\mcp$: the increasing sequence $g_n:=\nchi_{[\frac1n,1]}$ converges to $g_\infty:=\nchi_{[0,1]}$ a.e., we have $L(g_n)=1-\tfrac1n$ but $L(g_\infty)=+\infty$.

Notice also that for $q<0$ we have $\|f_n\|_{q}=(\int_{\frac1n}^11\,\d\mathcal L^1)^{\frac1q}=(1-\frac1n)^{\frac1q}$ so that $\|f_n\|_q\in[1,2^{\frac1q}]$ for every $n\geq 2$, showing that the problem with this example is not that the operator norms of the $L_n$'s are too close to 0 or infinity.
}\fr\end{example}

\end{subsection}

\end{section}

\begin{section}{Examples}
\begin{subsection}{$L^p$ spaces for $p\in[-\infty,1]$}\label{se:lp}

Let $(\X,\mathcal A,\mm)$ be a $\sigma$-finite measured space and $\LL(\X)$ be defined as
\[
\LL(\X):=\big\{f:\X\to[0,+\infty]\ :\ \mathcal A-\text{measurable}\big\}/\sim
\]
 where $f\sim g$ whenever $f=g$ $\mm$-a.e.. With a, rather standard, abuse of notation, we shall think at elements of $\LL(\X)$ as functions defined $\mm$-a.e., rather than as equivalence classes of functions.

It is clear that $\LL(\X)$ is a wedge w.r.t.\ the natural pointwise a.e.\ defined operation of sum and product by a positive scalar. The order relation $\leq$ related to such wedge structure obviously satisfies
 \[
 g\leq f\qquad\text{ if and only if }\qquad g(x)\leq f(x)\quad \mm-a.e.\ x.
 \]
 It is a well-known and easy prove fact that $(\LL(\X),\leq)$ is a complete lattice, the sup and inf w.r.t.\ this order being called essential supremum and essential infimum of the given family, see for instance \cite[Theorem 4.7.1]{Bogachev07}. As we are going to need such completeness and some further properties of this order, we give the short proof:
 \begin{lemma}[Essential sup/inf]\label{le:esssup}
 Let $A\subset \LL(\X)$ be a subset. Then there is a unique $f\in\LL(\X)$, called $\mm$-essential supremum of $A$ (or simply essential supremum), with the following properties:
 \begin{itemize}
 \item[i)] for any $g\in A$ we have $g\leq f$ $\mm$-a.e.,
 \item[ii)] if $h\in \LL(\X)$ is such that $g\leq h$ $\mm$-a.e.\ for every $g\in A$, then $f\leq h$ $\mm$-a.e..
 \end{itemize}
Moreover, there is an at most countable subset $(g_n)\subset A$ such that $f(x)=\sup_ng_n(x)$ for $\mm$-a.e.\ $x\in\X$. Analogously for the infimum.

In particular, $\LL(\X)$ is first countable as partial order in the sense of Definition \ref{def:sep}.
 \end{lemma}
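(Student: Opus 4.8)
The plan is to reduce the existence of the essential supremum to a supremum over a countable subfamily, via an exhaustion argument on a well-chosen quantity. First I would reduce to the case where $A$ is nonempty (if $A=\emptyset$, the essential supremum is the zero function, which is the minimum of $\LL(\X)$; uniqueness is immediate from properties (i)--(ii) since two such functions are pointwise a.e.\ comparable in both directions). Next, to get a real-valued handle, I would compose with a bounded increasing bijection: fix a strictly increasing continuous $\Phi\colon[0,+\infty]\to[0,1]$ (for instance $\Phi(t)=\frac{t}{1+t}$ with $\Phi(+\infty)=1$), and consider $\mathcal F:=\{\Phi\circ g : g\in A\}$, a family of measurable functions with values in $[0,1]$. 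Since $\mm$ is $\sigma$-finite, pick a probability measure $\mmt$ equivalent to $\mm$ (so `$\mm$-a.e.' and `$\mmt$-a.e.' coincide). For finite subfamilies $B\subset A$ let $g_B:=\sup_{g\in B}g$ (a finite pointwise sup of measurable functions, hence measurable, and an element of $\LL(\X)$); the family $\{g_B: B\subset A\text{ finite}\}$ is directed and has the same upper bounds as $A$, so it suffices to find its essential supremum. Now set
\[
s:=\sup\Big\{\int \Phi\circ g_B\,\d\mmt : B\subset A\text{ finite}\Big\}\in[0,1],
\]
choose finite sets $B_n$ with $\int\Phi\circ g_{B_n}\,\d\mmt\to s$, put $B_\infty:=\bigcup_n B_n$ (countable), let $(g_m)$ be an enumeration of $\bigcup_n B_n$ together with the functions $g_{B_n}$ (still countable), and define $f(x):=\sup_m g_m(x)$, a measurable function in $\LL(\X)$. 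By construction $\int\Phi\circ f\,\d\mmt=s$ by monotone convergence applied to the increasing sequence $g_{B_1}\le g_{B_1}\vee g_{B_2}\le\cdots$.

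The heart of the argument is then verifying (i) and (ii). For (i): fix $g\in A$; for each $n$, $g\vee g_{B_n}=g_{B_n\cup\{g\}}$ has $\mmt$-integral of $\Phi\circ(g\vee g_{B_n})$ at most $s$, and also at least $\int\Phi\circ g_{B_n}\,\d\mmt\to s$, so $\int \big(\Phi\circ(g\vee f)-\Phi\circ f\big)\,\d\mmt=0$ (passing to the limit, using $g\vee g_{B_n}\uparrow g\vee f$ and monotone convergence); since the integrand is nonnegative and $\Phi$ is strictly increasing, $g\vee f=f$ $\mmt$-a.e., i.e.\ $g\le f$ $\mm$-a.e.. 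Property (ii) is immediate: if $g\le h$ $\mm$-a.e.\ for every $g\in A$, then in particular $g_m\le h$ $\mm$-a.e.\ for each $m$ in our countable family, and an at-most-countable union of null sets is null, whence $f=\sup_m g_m\le h$ $\mm$-a.e.. Uniqueness of $f$ with properties (i)--(ii) follows as noted above. The statement for the infimum is entirely symmetric (replace $\sup$ by $\inf$, or apply the supremum case to $\{\tfrac1g\}$ with the convention $\tfrac10=+\infty$, or simply repeat the argument with $\inf$ in place of $\sup$ and $\inf$ of $\mmt$-integrals). Finally, first countability in the sense of Definition \ref{def:sep} is a direct consequence: given a directed $D\subset\LL(\X)$ admitting supremum, the countable family $(g_m)$ produced above lies in $\downarrow\! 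D$ (each $g_{B_n}$ is $\le$ some element of $D$ by directedness) and a non-decreasing rearrangement of $g_{B_1},g_{B_1}\vee g_{B_2},\dots$ has the same supremum $f=\sup D$.

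The main obstacle is purely bookkeeping: making sure the passage from `$\int\Phi\circ(g\vee g_{B_n})\,\d\mmt\to s$ and each term $\le s$' to `$g\vee f=f$ a.e.' is carried out cleanly, i.e.\ that one may interchange the supremum over finite subfamilies with the integral (handled by monotone convergence once everything is arranged into increasing sequences) and that strict monotonicity of $\Phi$ lets one conclude an a.e.\ equality from the vanishing of the integral of a nonnegative function. There is no genuine difficulty beyond this; the $\sigma$-finiteness is used only to replace $\mm$ by an equivalent finite measure so that the integrals defining $s$ are finite and monotone convergence applies.
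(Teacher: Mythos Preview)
Your proof is correct and follows essentially the same approach as the paper's: reduce to an equivalent probability measure, compose with a bounded order isomorphism into $[0,1]$, define $s$ as the supremum of integrals of finite pointwise maxima, extract a countable subfamily achieving $s$, and verify (i) via monotone convergence (the paper phrases this step as a contradiction, you phrase it directly, but it is the same computation). The first-countability argument is also the same in spirit; the paper takes the extra step of pushing the sequence back into $D$ itself via directedness, but since Definition~\ref{def:sep} only asks for $(x_n)\subset\,\downarrow\! D$, your version already suffices.
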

 \begin{proof} The uniqueness is obvious, so we turn to existence. Since $\mm$ is $\sigma$-finite, there is a probability measure $\mm'$ on $\mathcal A$ with the same negligible sets: replacing $\mm$ with $\mm'$ we do not affect the statement and thus can assume that the total mass is 1. Also, post-composing all the functions in $A$ with the same order isomorphism $\varphi$ of $[0,+\infty]$ and $[0,1]$ we can assume that the functions in $A$ take values in $[0,1]$ (the original supremum will be found post-composing the supremum of the newly built family with $\varphi^{-1}$). For $I\subset A$ finite, define $g_I(x):=\max_{g\in I}g(x)$, let $B\subset\LL(\X)$ be the collection of functions  obtained this way and then put $s:=\sup_{g\in B}\int g\,\d\mm\in[0,1]$.

Obviously, there is a a countable collection $(\tilde g_k)\subset B$ such that $s=\sup_k\int \tilde g_k\,\d\mm$ and thus there is a countable subset $(g_n)$ of $A$ such that $s=\sup_n\int g_1\vee\cdots \vee g_n\,\d\mm$.  Define $f\in\LL(\X)$ as $f(x):=\sup_ng_n(x)$ for $\mm$-a.e.\ $x\in\X$. If we prove that such $f$ has properties $(i),(ii)$ we are done. $(ii)$ is clear, because any $h$ as in the statement is $\geq g_n$ $\mm$-a.e.\ for each $n\in\N$, and thus $\geq f$ $\mm$-a.e.. For $(i)$ we argue by contradiction: suppose that there is $g\in A$ such that $\mm(\{g>f\})>0$. Then by Beppo Levi's monotone convergence theorem we get
\[
\lim_n\int g\vee g_1\vee\cdots \vee g_n\,\d\mm=\int g\vee f\,\d\mm\stackrel*>\int f\,\d\mm=\lim_n\int g_1\vee\cdots \vee g_n\,\d\mm=s,
\]
where in the starred inequality we used that $\mm(\X)<\infty$ and $f,g\leq 1$ $\mm$-a.e.. Thus for some $n\in\N$ we have $\int g\vee g_1\vee\cdots \vee g_n\,\d\mm>s$ and since $ g\vee g_1\vee\cdots \vee g_n\in B$ we found the desired contradiction.

For the last claim let $D\subset\LL(\X)$ be directed and $(g_n)\subset D$ be the countable collection with the same supremum. Define recursively a non-decreasing sequence $n\mapsto f_n\in D$ by putting $f_0:=g_0$ and then recursively finding $f_n\in D$ that is $\mm$-a.e.\ $\geq f_{n-1}\vee g_n$. The fact that $D$ is directed ensures that $f_n$ exists and the construction grants that $(f_n)$ has the same supremum of $D$. 
 \end{proof}
It is not hard to check that $\LL(\X)$ has the directed  decomposition property (in fact in its more general formulation as in the statement of Proposition \ref{prop:tdptrunc}). Indeed, let $\mathcal F\subset\LL(\X)$ be a  set with tip with $\tip \mathcal F=f=g+h$. To each $f'\in\mathcal F$ we associate the functions $G(f'),H(f')\in\LL(\X)$ defined as follows:
\begin{equation}
\label{eq:GH}
\begin{split}
\big(G(f'),H(f')\big):=\left\{
\begin{array}{lll}
\big(g\wedge f',&f'-f'\wedge g\big),&\qquad \text{on }\{g<+\infty\}\cap\{h<+\infty\},\\
\big(g\wedge f',&f'-f'\wedge g\big),&\qquad \text{on }\{g<+\infty\}\cap\{h=+\infty\},\\
\big(f'-f'\wedge h,&h\wedge f'\big),&\qquad \text{on }\{g=+\infty\}\cap\{h<+\infty\},\\
\big( f'/2,&f'/2\big),&\qquad \text{on }\{g=+\infty\}\cap\{h=+\infty\},
\end{array}
\right.
\end{split}
\end{equation}
being intended that the term $f'-f'\wedge g$ is equal to $+\infty$ if $f'=+\infty$, and similarly for $f'-f'\wedge h$. It is then immediate to check, e.g.\ by transfinite recursion, that   $G(f')+H(f')=f'$ and that the families $\mathcal G:=\{G(f'):f'\in\mathcal F\}$ and $\mathcal H:=\{H(f'):f'\in\mathcal F\}$ have tips, with tips $g,h$ respectively. Thus the claim is proved.

Directly from the definition we also see that for  $f\in\LL(\X)$ its  part at infinity  $\eps f\in\LL(\X)$ is given by
\begin{equation}
\label{eq:epslx}
(\eps f)(x)=\eps(f(x))=\left\{
\begin{array}{ll}
0,&\qquad\text{ if }f(x)<+\infty,\\
+\infty,&\qquad\text{ if }f(x)=+\infty,
\end{array}\right.\qquad\mm-a.e.\ x\in\X.
\end{equation}

A function $f\in\LL(\X)$ naturally induces a map $L_f:\LL(\X)\to[0,\infty]$ via the formula
\begin{equation}
\label{eq:dualL}
L_f(g):=\int fg\,\d \mm,
\end{equation}
 where the positivity of $f,g$ ensures that the integral is well-defined, possibly equal to $+\infty$. We have:
\begin{proposition}[$\LL(\X)$ as dual of $\LL(\X)$]\label{prop:lxlx}
For any $f\in \LL(\X)$ the map $L_f$ defined in \eqref{eq:dualL} belongs to $\LL(\X)^*$. Moreover, the map $\LL(\X)\ni f\mapsto L_f\in \LL(\X)^*$ is a linear bijection and an order isomorphism.
\end{proposition}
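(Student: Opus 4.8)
The plan is to verify the four assertions in turn: (a) $L_f\in\LL(\X)^*$, i.e.\ $L_f$ is linear and has the $\mcp$; (b) the map $f\mapsto L_f$ is linear; (c) it is injective; (d) it is surjective; and finally that it is an order isomorphism, which will follow once injectivity and surjectivity are established together with the monotonicity of $f\mapsto L_f$ and its inverse.

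First I would handle (a) and (b), which are routine. Linearity of $L_f$ in its argument $g$ is just linearity of the integral for non-negative integrands, and linearity of $f\mapsto L_f$ is the same statement with the roles exchanged; here one uses that all integrands are non-negative so no $\infty-\infty$ ambiguity arises and $\int(\lambda f_1+\lambda_2 f_2)g=\lambda_1\int f_1 g+\lambda_2\int f_2 g$ holds in $[0,+\infty]$. For the $\mcp$ of $L_f$: by Lemma \ref{le:esssup}, $\LL(\X)$ is first countable as a partial order, so by Proposition \ref{prop:sepmcp} it suffices to check \eqref{eq:mcpsep}, i.e.\ that $L_f$ respects suprema of non-decreasing sequences. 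If $g_n\uparrow g$ in $\LL(\X)$, then (again using Lemma \ref{le:esssup}, which identifies the essential supremum of a countable non-decreasing family with the pointwise supremum up to a null set) $g_n(x)\uparrow g(x)$ for $\mm$-a.e.\ $x$, and Beppo Levi's monotone convergence theorem gives $\int f g_n\,\d\mm\uparrow\int f g\,\d\mm$, which is exactly $L_f(\sup_n g_n)=\sup_n L_f(g_n)$. This proves $L_f\in\LL(\X)^*$ and that the map is linear; monotonicity ($f_1\le f_2\Rightarrow L_{f_1}\le_\p L_{f_2}$) is immediate from monotonicity of the integral.

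Next, injectivity (c): if $L_{f_1}=L_{f_2}$ as elements of $\LL(\X)^*$, then in particular $\int f_1\nchi_A\,\d\mm=\int f_2\nchi_A\,\d\mm$ for every measurable $A$ of finite measure (these characteristic functions lie in $\LL(\X)$). Using $\sigma$-finiteness, exhaust $\X$ by sets $X_k$ of finite measure; on each $X_k$ the finiteness of $\mm(X_k)$ together with the equality of the integrals of $f_i$ over all measurable subsets of $X_k\cap\{f_1\le n\}\cap\{f_2\le n\}$ forces $f_1=f_2$ $\mm$-a.e.\ there by the standard uniqueness argument for measures/integrable functions, and one also checks $\{f_1=+\infty\}=\{f_2=+\infty\}$ up to a null set by testing against $\nchi_A$ for $A$ of small positive finite measure inside these sets. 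Letting $n,k\to\infty$ gives $f_1=f_2$ in $\LL(\X)$.

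The main obstacle is surjectivity (d): given $L\in\LL(\X)^*$, one must produce $f\in\LL(\X)$ with $L=L_f$. The strategy is a Radon--Nikodym type argument. Define a set function $\nu(A):=L(\nchi_A)$ for $A$ measurable; linearity of $L$ gives finite additivity, and the $\mcp$ of $L$ gives countable additivity (if $A=\bigsqcup_n A_n$ then $\nchi_{A_1\cup\cdots\cup A_n}\uparrow\nchi_A$, a non-decreasing sequence, so $L(\nchi_A)=\sup_n L(\nchi_{A_1\cup\cdots\cup A_n})=\sum_n L(\nchi_{A_n})$); thus $\nu$ is a $[0,+\infty]$-valued measure. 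It is absolutely continuous with respect to $\mm$: if $\mm(A)=0$ then $\nchi_A=0$ in $\LL(\X)$, hence $\nu(A)=L(0)=0$. Working on each piece $X_k$ of a $\sigma$-finite exhaustion and decomposing $\nu$ restricted to $X_k$ into its $\mm$-finite part and the part concentrated where $\nu$ is infinite, the classical Radon--Nikodym theorem yields a density $f_k\ge 0$ (allowed to take the value $+\infty$ on the singular-infinite part, which corresponds to an $\eps f$-type contribution as in \eqref{eq:epslx}); patching the $f_k$ together gives $f\in\LL(\X)$ with $L(\nchi_A)=\int_A f\,\d\mm$ for all $A$. Then $L$ and $L_f$ agree on characteristic functions, hence on non-negative simple functions by linearity, hence on all of $\LL(\X)$ by writing a general $g\in\LL(\X)$ as an increasing limit of simple functions and invoking the $\mcp$ of both $L$ and $L_f=L_f$ (the latter from part (a)) together with Beppo Levi. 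The delicate points to get right are the bookkeeping at the value $+\infty$ (ensuring the infinite part of $\nu$ is carried by a genuine $\mm$-measurable set and is represented by $f=+\infty$ there) and checking that the resulting $f$ is measurable and finite-valued off a set that is compatible with the structure of $\nu$; once $L=L_f$ is shown, the order isomorphism claim is automatic, since $f_1\le f_2\iff L_{f_1}\le_\p L_{f_2}$ (forward direction by monotonicity of the integral, reverse by testing against characteristic functions as in the injectivity argument), and on $\LL(\X)^*$ the orders $\leq$ and $\leq_\p$ coincide because $\LL(\X)$ is a cone with the directed decomposition property (Theorem \ref{thm:ddpmaps}).
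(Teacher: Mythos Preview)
Your proposal is correct and follows essentially the same route as the paper: linearity and the $\mcp$ via first countability (Lemma \ref{le:esssup}) and Beppo Levi, then for surjectivity defining $\nu(A):=L(\nchi_A)$, applying Radon--Nikodym where $\nu$ is $\sigma$-finite and setting $f=+\infty$ elsewhere, and finally extending from simple functions by the $\mcp$. The only organizational difference is that the paper identifies the ``$\nu$-$\sigma$-finite region'' globally as the $\mm$-essential union $\bar F$ of all $\nu$-finite sets (using Lemma \ref{le:esssup} to extract a countable cofinal family), whereas you do this piecewise on an $\mm$-exhaustion; note that your phrase ``$\mm$-finite part'' should read ``$\nu$-$\sigma$-finite part'', and the order-isomorphism claim follows more directly from bijectivity (if $L_{f_1}\le L_{f_2}$ then the witnessing $M$ equals $L_h$ for some $h$, forcing $f_1+h=f_2$) without invoking Theorem \ref{thm:ddpmaps}.
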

\begin{proof}
The linearity of $L_f$ is obvious from basic integration theory. To see that  $L_f$ has the $\mcp$ we apply Proposition \ref{prop:sepmcp}, that can be done thanks to the last claim in Lemma \ref{le:esssup} above: then the validity of \eqref{eq:mcpsep} is, obviously, nothing but  Beppo  Levi's monotone convergence theorem.  Viceversa, let $L\in\LL(\X)^*$ and consider the map $\mathcal A\ni A\mapsto \mu(A):=L(\nchi_A)\in[0,+\infty]$. This is clearly additive on disjoint sets and, by the $\mcp$ of $L$, also $\sigma$-additive. Hence it is a measure and by definition it is also absolutely continuous w.r.t.\ $\mm$. Let $\mathcal F\subset\mathcal A$ be the collection of sets $F$ such that $\mu(F)<\infty$ and let $\bar F$ be the $\mm$-essential union of these sets (in other words, consider the essential supremum of the functions $\nchi_F$ as $F$ varies in $\mathcal F$, notice -- e.g.\ by the last claim in Lemma \ref{le:esssup} -- that this is the characteristic function of a set and let $\bar F$ be that set, well defined up to $\mm$-negligible sets). By definition and the last claim in Lemma \ref{le:esssup}, the restriction of $\mu$ to $\bar F$ is $\sigma$-finite, hence by the Radon-Nikodym theorem there is an $\mathcal A$-measurable function $f:\bar F\to[0,\infty)$ such that $\mu(A)=\int f\,\d\mm$ for every $\mathcal A$-measurable $A\subset\bar F$. We extend $f$ to the whole $\X$ by declaring it to be $+\infty$ on $\X\setminus\bar F$: it is clear from the definition and linearity of $L$ that $L(g)=\int fg\,\d\mm$ holds for any $g$ attaining only a finite number of values, then the conclusion for arbitrary $g\in\LL(\X)$ follows from the $\mcp$ of $L$, again Beppo Levi's theorem and the fact that functions with finite range are dense in $\LL(\X)$, as it is easy to check. From the construction it is also clear that such $f$ is $\mm$-a.e.\ unique and the claim that $f\mapsto L_f$ is a linear  order isomorphism is obvious.
\end{proof}
In what follows we shall define $0^p$ to be 0 if $p\in(0,1)$ and $+\infty$ if $p<0$. Analogously, we shall put $\infty^p=\infty$ for $p\in(0,1)$ and $\infty^p=0$ if $p<0$.
\begin{definition}[$L^p$ norms for $p\in {[-\infty,1]}\setminus \{0\}$]
For $p\in[-\infty,1]\setminus \{0\}$ and $f\in\LL(\X)$ we define
\[
\|f\|_p:=\Big(\int f^p\,\d\mm\Big)^{\frac1p}.
\]
We also put
\[
\|f\|_{-\infty}:=\essinf f.
\]
\end{definition}
The conventions just established grant the well-posedness of the definition. We have:
\begin{proposition}[$L^p$-$L^q$ duality for $p,q \in {[-\infty,1]}\setminus \{0\}$]\label{prop:lplq} For $p\in [-\infty,1]\setminus \{0\}$ the $\|\cdot\|_p$-norm is a hyperbolic norm on $\LL(\X)$.  Moreover, let  $q\in [-\infty,1]\setminus \{0\}$ be so that  $\tfrac1p+\tfrac1q=1$.  Then
\begin{equation}
\label{eq:revholder}
\int fg\,\d\mm\geq \|f\|_p\|g\|_q\qquad\forall f,g\in \LL(\X)
\end{equation}
and more precisely
\begin{equation}
\label{eq:lpdual}
\|f\|_p=\inf_{g\in\LL(\X),\ \|g\|_q\geq1} \int fg\,\d\mm\qquad \forall f\in\LL(\X).
\end{equation}
In other words,  the map $(\LL(\X),\|\cdot\|_q)\ni f\mapsto L_f\in (\LL(\X)^*,\|\cdot\|_p^*)$ defined in \eqref{eq:dualL} is norm-preserving.
\end{proposition}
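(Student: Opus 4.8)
\textbf{Proof plan for Proposition \ref{prop:lplq}.}

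The plan is to reduce everything to the pointwise reverse Hölder / reverse Minkowski inequalities and to exhibit the optimizing function $g$ explicitly. First I would recall the elementary numerical inequalities. For exponents $p,q$ with $\tfrac1p+\tfrac1q=1$ and $p,q\le 1$ (so that at most one of them is in $(0,1)$ and the other is negative, or one is $1$ and the other is $\pm\infty$, or $p=q=$ some degenerate endpoint), Young's inequality \emph{reverses}: for $a,b\in[0,+\infty]$ one has $ab\ge \tfrac{a^p}{p}+\tfrac{b^q}{q}$ when $p\in(0,1)$ (and $q<0$), with the usual conventions $0^p,\infty^p$ as fixed before the statement; the case $p=1,q=+\infty$ (or $p=-\infty$) is handled separately and directly. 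From this, integrating, one gets for $\|f\|_p,\|g\|_q\in(0,\infty)$ that $\int \tfrac{fg}{\|f\|_p\|g\|_q}\,\d\mm\ge \tfrac1p\int \tfrac{f^p}{\|f\|_p^p}\,\d\mm+\tfrac1q\int\tfrac{g^q}{\|g\|_q^q}\,\d\mm=\tfrac1p+\tfrac1q=1$, which is \eqref{eq:revholder}; the remaining cases where a norm is $0$ or $+\infty$ I would dispatch by hand using the conventions (e.g.\ if $\|f\|_p=+\infty$ there is nothing to prove, if $\|f\|_p=0$ then $f^p=+\infty$ a.e.\ when $p<0$, forcing $f=0$ a.e., so both sides vanish; symmetric remarks for $p\in(0,1)$). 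For the superadditivity \eqref{eq:defnorm} of $\|\cdot\|_p$, i.e.\ the reverse Minkowski inequality $\|f_1+f_2\|_p\ge \|f_1\|_p+\|f_2\|_p$ together with positive homogeneity, the cleanest route is to derive it \emph{from} \eqref{eq:revholder}: homogeneity is clear, and for the triangle part one writes $\|f_1+f_2\|_p=\inf_{\|g\|_q\ge 1}\int(f_1+f_2)g\,\d\mm\ge \inf_{\|g\|_q\ge1}\int f_1 g\,\d\mm+\inf_{\|g\|_q\ge1}\int f_2 g\,\d\mm=\|f_1\|_p+\|f_2\|_p$, so in fact \eqref{eq:lpdual} \emph{implies} that $\|\cdot\|_p$ is a hyperbolic norm, and it suffices to prove \eqref{eq:lpdual}.

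So the heart of the matter is the variational formula \eqref{eq:lpdual}. One inequality, $\|f\|_p\le \inf_{\|g\|_q\ge1}\int fg\,\d\mm$, is exactly \eqref{eq:revholder}. For the reverse, I would produce, for a fixed $f$ with $0<\|f\|_p<\infty$, a competitor $g$ with $\|g\|_q\ge 1$ (in fact $=1$) achieving equality. The natural candidate, dictated by the equality case in reverse Young, is $g:=\|f\|_p^{\,1-p}\,f^{\,p-1}$ (interpreted via the conventions: where $f=0$ and $p<0$ one sets $g=+\infty$, where $f=+\infty$ one sets $g=0$, etc.). A direct computation gives $g^q=\|f\|_p^{(1-p)q}f^{(p-1)q}$; using $(p-1)q=p$ (equivalent to $\tfrac1p+\tfrac1q=1$) and $(1-p)q=-p$, one gets $\int g^q\,\d\mm=\|f\|_p^{-p}\int f^p\,\d\mm=1$, hence $\|g\|_q=1$. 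Then $\int fg\,\d\mm=\|f\|_p^{1-p}\int f^p\,\d\mm=\|f\|_p^{1-p}\|f\|_p^{p}=\|f\|_p$, as wanted. The degenerate cases $\|f\|_p\in\{0,+\infty\}$ and the endpoints $p\in\{1,-\infty\}$ (where one uses $g\equiv\|f\|_{-\infty}^{-1}\nchi_\X$ or a near-minimizer concentrated where $f$ is close to its essential infimum) I would treat separately and briefly, being careful that $\mm$ is only $\sigma$-finite so one may need to exhaust $\X$ by sets of finite measure when constructing near-minimizers.

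The last sentence of the statement — that $f\mapsto L_f$ is norm preserving from $(\LL(\X),\|\cdot\|_q)$ to $(\LL(\X)^*,\|\cdot\|_p^*)$ — is then a reformulation: by Proposition \ref{prop:lxlx} the map $f\mapsto L_f$ identifies $\LL(\X)$ with $\LL(\X)^*$, and by the definition \eqref{eq:opnorm} of the dual norm, $\|L_f\|_p^*=\inf_{\|h\|_p\ge1}L_f(h)=\inf_{\|h\|_p\ge1}\int fh\,\d\mm$, which by \eqref{eq:lpdual} (with the roles of $p,q$ swapped) equals $\|f\|_q$. The main obstacle I anticipate is purely bookkeeping rather than conceptual: making the conventions $0^p$, $\infty^p$, $0\cdot\infty$ consistent across all sign-combinations of $p$ and $q$ and all the degenerate strata ($f=0$ on a positive-measure set, $f=+\infty$ on a positive-measure set, $\|f\|_p\in\{0,+\infty\}$, and the endpoints $\pm\infty$), so that both the pointwise reverse Young inequality and the explicit optimizer $g=\|f\|_p^{1-p}f^{p-1}$ are literally correct on every stratum. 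Everything else is the standard reverse Hölder argument transplanted into this $[0,+\infty]$-valued setting.
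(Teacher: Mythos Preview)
Your proposal is correct and follows the standard reverse H\"older route. The paper's own proof is much terser: for $p,q\in(-\infty,1)\setminus\{0\}$ it simply cites \cite[Lemma 2.13]{Octet} and only writes out the endpoint cases $p=1$ (taking $g\equiv 1$) and $p=-\infty$ (taking a near-minimizer $g=\mm(A)^{-1}\nchi_A$ on a finite-measure set $A$ where $f$ is close to its essential infimum), essentially as you outline; your self-contained sketch via reverse Young and the explicit optimizer $g=\|f\|_p^{1-p}f^{p-1}$ is presumably exactly what the cited lemma does. One small slip: your suggested endpoint choice ``$g\equiv\|f\|_{-\infty}^{-1}\nchi_\X$'' is not quite right (for $p=1$ the correct choice is $g\equiv 1$, and for $p=-\infty$ one needs the localized near-minimizer you mention next), but this is cosmetic.
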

\begin{proof}
The case $p,q\in (-\infty,1)\setminus \{0\}$ is essentially the content of \cite[Lemma 2.13]{Octet}, phrased with the terminology introduced here. We thus discuss the remaining cases.

The fact that $\|\cdot\|_1$ and $\|\cdot\|_{-\infty}$ are hyperbolic norms is obvious, and so is  the inequality \eqref{eq:revholder} in this case. This shows inequality $\leq$ in \eqref{eq:lpdual}.  The choice $g\equiv 1$ proves equality for $p=1$. For the case $p=-\infty$ we fix  $\eps>0$ and use the definition of $\essinf$ to find $A\in\mathcal A$ with $\mm(A)\in(0,\infty)$ so that $g\leq \|g\|_{-\infty}+\eps$ $\mm$-a.e.\ on $A$. Then  $f:=\mm(A)^{-1}\nchi_A$ satisfies $\|f\|_1=1$ and we have
\[
  \int fg\,\d\mm=\mm(A)^{-1}\int_Ag\,\d\mm\leq  \|g\|_{-\infty}+\eps.
\]
The conclusion follows from the arbitrariness of $\eps>0$.
\end{proof}
\begin{remark}[Comments on the axiomatization chosen]\label{re:perchenocanc}{\rm The study of $L^p-L^q$-duality for $p,q<1$ is an important source of justifcation for the axiomatization chosen in this manuscript. In particular as to why we:
\begin{itemize}
\item[-] Opted for wedges/cones rather than vector spaces;
\item[-] Did not impose the cancellation property;
\item[-] Did not insist on hyperbolic norms to be finite;
\item[-] Considered directed/forward completeness rather than filtered/backward one.
\end{itemize}
To see why, let us consider, as possibile alternative, the vector space $\tilde L^p(\X):=\{f:\X\to\R:\int|f|^p\,\d\mm<\infty\}$ for, say, $\X:=[0,1]$ with the Lebesgue measure, where here as usual we identify functions agreeing a.e.. Then the function $g\equiv 1$ belongs to $\tilde L^p([0,1])$ for every $p<1$ and we might consider the functional $L_g$ as in \eqref{eq:dualL} on $\tilde L^p([0,1])$.

A first problem in doing so is that such functional is \emph{not} well defined on the whole $\tilde L^p$. Indeed, the finiteness assumption $\int_0^1|f|^p<+\infty$ for $p<1$ surely does not imply $f\in L^1([0,1])$ and thus $L_g(f)=\int_0^1f$ might be not well defined. Here the problem is that we only have at disposal the reverse H\"older inequality \eqref{eq:revholder} and thus cannot bound from above $\int|f||g|\,\d\mm$ in terms of $\||f|\|_p$ and $\||g|\|_q$ only. In fact, as just seen, we can have $\int fg\,\d\mm=+\infty$ for $f,g\geq0$ with $\|f\|_p,\|g\|_q<+\infty$ and $\tfrac1p+\tfrac1q=1$.

\medskip

This already pushes toward the `one-sided theory' of wedges as opposed to vector spaces and shows that the reasons for this is in the fact that the image of a morphism can include the value $+\infty$. In particular, we should not expect the cancellation property to hold on the image of morphisms, making unnatural to impose it on sources of morphisms as well. Notice also that in these examples having non-zero part at infinity (which is the same as not being cancellable and equivalent, by \eqref{eq:epslx}, to attaining the value $+\infty$ on a set of positive measure) does not imply having infinite norm: the function $f\in\LL([0,1])$ that is equal to $1$ on $[0,1/2]$ and to $+\infty$ on $[1/2,1]$ has finite $L^p$-norm for every $p<0$. This makes even more unnatural to exclude functions attaining the value $+\infty$ from the analysis.

\medskip

Once the choices of working with wedges/cones without imposing the cancellation  property are made, it seems there is no good reason to impose hyperbolic norm to be finite: no clear advantage would emerge from such assumption, while disadvantages are clear. Consider for instance the self-duality of $\LL(\X)$ discussed in Proposition \ref{prop:lxlx}  and the function $f=\chi_{[0,1/2]}+\infty\nchi_{[1/2,1]}$ considered above. Then $\|f\|_p=+\infty$ for $p\in(0,1)$ and $\|f\|_p<+\infty$ for $p<0$. Restricting to functions with finite norm would exclude such $f$ from $L^p$-cones for $p\in(0,1)$ and include it in those with $p<0$, despite the fact that the functional $L_f$ it induces on $\LL([0,1])$ is independent on $p$. It is clear from  Proposition \ref{prop:lplq} that the difference can be also be read via the fact that $L_f$ has infinite norm when seen as element of $(\LL(\X),\|\cdot\|_p)^*$ for $p<0$ and finite norm when seen as element of  $(\LL(\X),\|\cdot\|_p)^*$ for $p\in(0,1)$, but the advantages of restricting to operators of finite norm are not clear, either. On the other hand, removing them from the analysis would force to give up directed completeness in favour of the weaker local completeness (=directed subset with an upper bound have supremum): we question the advantage of such trade off. 

\medskip

It remains to comment on why we opted for forward, as opposed to backward, completeness: since $\LL(\X)$ is, from the order theoretical perspective, a complete lattice, in principle we have no hints in either direction. Yet, again, things get clearer if we look at morphisms. We mentioned in the introduction that out of $[0,+\infty]$ and $[-\infty,0]$ only the first one is a reasonable choice of `field of scalar', as it is the only one of the two closed by products. Then the study of self-duality of $\LL(\X)$ together with the fact that when $\X$ is a singleton we aim at getting back the `field of scalars' led to define $\LL(\X)$ as the cone of $[0,+\infty]$-valued, as opposed to $[-\infty,0]$-valued, functions.

$\LL(\X)$ and $[0,+\infty]$ are both complete lattices. What breaks the symmetry between forward and backward completeness is, ultimately, Beppo Levi's Monotone Convergence Theorem: functions $f\in\LL(\X)$ induce functionals $L_f$ on $\LL(\X)$ that respect suprema of increasing sequences, not infima of decreasing ones. We are therefore led to the notion of functional with the $\MCP$, which in turn is naturally linked, as made clear by Definition \ref{def:completamento}, to the notion of directed complete partial order.

One might question why we insisted on the full $\MCP$ as opposed to the sequential version discussed in Section \ref{se:seqcompl}, and thus why insist on full directed completeness rather than just the sequential counterpart. On first countable spaces as in Definition \ref{def:sep} the two versions of $\MCP$ are equivalent, thus perhaps this distinction is not so relevant as we expect most interesting examples to be first countable. Still, from the perspective of a general theory the continuity associated to the `full $\MCP$' is surely more natural. 
}\fr\end{remark}

\begin{remark}[Lack of $\mcp$ for the $L^p$-norm, $p<0$]\label{rem:lpmcp}{\rm

For $p>0$ a direct application of Beppo Levi's theorem and of the fact that $\LL(\X)$ is first countable show that $\|\cdot\|$ has the $\mcp$.

This fails for $p<0$. Consider for instance the case $\X:=[0,1]$ with the Lebesgue measure and let $f_n:=\nchi_{[\frac1n,1]}$. Then $(f_n)$ is an increasing sequence in $\LL(\X)$ with supremum  $f_\infty:=\nchi_{[0,1]}$, but a direct computation shows that $\|f_n\|_p=0$ for every $n\in\N$ and $p<0$, while $\|f_\infty\|_p=1$.

In relation with the assumption $(iii)$ in Theorem \ref{thm:exthm}, we notice that the dominated convergence theorem grants that, for $p<0$, if $f\in\LL(\X)$ is so that $\|f\|_p>0$, then $g\mapsto \|g+f\|_p$ has the $\mcp$.
}\fr\end{remark}

From now on we shall assume $\mm(\X)=1$.
A simple formal computation, valid e.g.\ if $f$ is bounded, shows that 
\[
\lim_{p\to0}\|f\|_p=\exp\Big(\int \log(f)\,\d\mm\Big),
\]
indicating that the $L^0$ norm of $f$ should be its geometric mean, as  given by the right hand side above. The actual definition needs a bit of care, because $\int\log(f)\,\d\mm$ is in general not well defined for $f:\X\to[0,+\infty]$.

 To give the correct notion, let us recall the notation  $z^+:= z\vee 0$ and $z^-:=(-z)\vee 0$ valid for $z\in[-\infty,+\infty]$ and that  for $\varphi:\X\to[-\infty,+\infty]$ measurable the integral $\int \varphi\,\d\mm$ is well-defined (possibly with value $\pm\infty$) provided at most one of $\int\varphi^+\,\d\mm$ and $\int\varphi^-\,\d\mm$ is equal to $+\infty$: in this case $\int\varphi\,\d\mm$ is set to be $\int\varphi^+\,\d\mm-\int\varphi^-\,\d\mm$. 

Then we define $\int_+\varphi\,\d\mm$ and $\int_-\varphi\,\d\mm$ as
\[
\begin{split}
\int_+\varphi\,\d\mm&=\left\{
\begin{array}{ll}
\int\varphi\,\d\mm,&\ \text{if well defined,}\\
+\infty,&\ \text{otherwise,}
\end{array}
\right.
\qquad\qquad
\int_-\varphi\,\d\mm=\left\{
\begin{array}{ll}
\int\varphi\,\d\mm,&\ \text{if well defined,}\\
-\infty,&\ \text{otherwise.}
\end{array}
\right.
\end{split}
\]
Notice that 
\begin{equation}
\label{eq:intpmlim}
\int_+\varphi\,\d\mm=\lim_{c\downarrow-\infty}\int \varphi\vee c\,\d\mm\qquad\text{ and }\qquad\int_-\varphi\,\d\mm=\lim_{c\uparrow+\infty}\int \varphi\wedge c\,\d\mm.
\end{equation}
\begin{definition}[$L^0$ norms]
Suppose $\mm(\X)=1$. Then for $f\in\LL(\X)$ define
\begin{equation}
\label{eq:norm0}
\begin{split}
\|f\|_{0^+}:=\exp\Big(\int_+\log(f)\,\d\mm\Big),\qquad\qquad
\|f\|_{0^-}:=\exp\Big(\int_-\log(f)\,\d\mm\Big),
\end{split}
\end{equation}
where here and below $\log(0)=-\infty$, $\log(+\infty)=+\infty$, $\exp(-\infty)=0$ and $\exp(+\infty)=+\infty$.
\end{definition}
\begin{proposition}[$L^{0^+}\!\!-L^{0^-}$ duality]\label{prop:l0}
Let $\mm(\X)=1$. Then $\|\cdot\|_{0^+}$ and $\|\cdot\|_{0^-}$ are hyperbolic norms on $\LL(\X)$. Moreover
\begin{equation}
\label{eq:l01}
\int fg\,\d\mm\geq\|f\|_{0^+}\|g\|_{0^-},\qquad\forall f,g\in\LL(\X)
\end{equation}
and more precisely
\begin{equation}
\label{eq:duall0}
\|f\|_{0^\pm}=\inf_{g\in\LL(\X),\ \|g\|_{0^\mp}\geq1} \int fg\,\d\mm,\qquad\forall f\in\LL(\X).
\end{equation}
%\[
%\|f\|_{0^+}=\inf_{g\in\LL(\X),\ \|g\|_{0^-}\geq1} \int fg\,\d\mm,\qquad\text{and}\qquad\|f\|_{0^+}=\inf_{g\in\LL(\X),\ \|g\|_{0^-}\geq1} \int fg\,\d\mm.
%\]
Finally, we have
\begin{equation}
\label{eq:0p0mnew}
\|f\|_{0^\pm}=\frac1{\|\tfrac1f\|_{0^\mp}},
\end{equation}
where here and below we set $\tfrac10=+\infty$ and $\tfrac1{+\infty}=0$  (notice that then ${1/\tfrac1z}=z$ and $z\cdot\tfrac1z\leq 1$ for all $z\in[0,+\infty]$). 
%    if $f\in\LL(\X)$ we define $g:=\tfrac1f$ (here $\tfrac10=+\infty$ -- notice that then $\tfrac1g=f$), then we have
%    \begin{equation}
%    \label{eq:0p0m}
%    \|f\|_{0^+}=\frac1{\|g\|_{0^-}}
%    \end{equation}
\end{proposition}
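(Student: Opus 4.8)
The statement bundles three claims about the $L^{0^\pm}$ norms on a probability space: that they are hyperbolic norms, the reverse-H\"older-type inequality \eqref{eq:l01} with its sharp variational form \eqref{eq:duall0}, and the reciprocal identity \eqref{eq:0p0mnew}. The plan is to prove them in a convenient order, starting with \eqref{eq:0p0mnew} since it is purely a pointwise-plus-integration identity and will be reused. Indeed, from $\log(1/f) = -\log f$ pointwise (with the stated conventions $\log 0 = -\infty$, $\log(+\infty)=+\infty$) one has $(\log(1/f))^+ = (\log f)^-$ and $(\log(1/f))^- = (\log f)^+$, so exactly one of $\int(\log(1/f))^+\,\d\mm$, $\int(\log(1/f))^-\,\d\mm$ is infinite iff the same is true for $\log f$ with the roles swapped; tracking the definitions of $\int_+$ and $\int_-$ one checks $\int_+\log(1/f)\,\d\mm = -\int_-\log f\,\d\mm$, whence $\|1/f\|_{0^+} = 1/\|f\|_{0^-}$, and symmetrically. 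This is the identity \eqref{eq:0p0mnew}.

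\textbf{Hyperbolic norm property.} Next I would show $\|\cdot\|_{0^+}$ is superadditive; the argument for $\|\cdot\|_{0^-}$ is analogous (or can be deduced via \eqref{eq:0p0mnew} together with a reverse statement). Homogeneity $\|\lambda f\|_{0^+} = \lambda\|f\|_{0^+}$ for $\lambda \in [0,\infty)$ is immediate from $\int_+\log(\lambda f)\,\d\mm = \log\lambda + \int_+\log f\,\d\mm$ (using $\mm(\X)=1$; the cases $\lambda = 0$ and $f \equiv 0$ are handled by the conventions). For the inequality $\|f+g\|_{0^+} \geq \|f\|_{0^+} + \|g\|_{0^+}$, the clean route is to first establish it under the truncation $f\vee c$, $g \vee c$ with $c > 0$ finite — where all quantities are genuine real-valued integrals — using concavity of $\log$ in the form $\log\!\big(\tfrac{a}{a+b}u + \tfrac{b}{a+b}v\big) \geq \tfrac{a}{a+b}\log u + \tfrac{b}{a+b}\log v$ applied pointwise with $a = \|f\vee c\|_{0^+}$, $b=\|g\vee c\|_{0^+}$, $u = f/a$, $v = g/b$, then integrating and exponentiating (this is the standard additivity-of-geometric-mean computation). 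One then lets $c \downarrow 0$ and invokes \eqref{eq:intpmlim}: $\int\log((f+g)\vee c)\,\d\mm \to \int_+\log(f+g)\,\d\mm$ etc., so the inequality passes to the limit. A small care point: one must check $f \vee c + g \vee c \geq (f+g) \vee c$ and that the truncated norms converge to the untruncated ones as $c\downarrow 0$, which is exactly \eqref{eq:intpmlim} read with $c \downarrow -\infty$ inside the $\exp$.

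\textbf{Duality.} For \eqref{eq:l01} and \eqref{eq:duall0}, the inequality $\int fg\,\d\mm \geq \|f\|_{0^+}\|g\|_{0^-}$ follows from the elementary bound $xy \geq $ (something) — more precisely from $\log(fg) = \log f + \log g$ and Jensen: if both $\|f\|_{0^+}$ and $\|g\|_{0^-}$ are positive and finite one normalizes and uses $\int fg\,\d\mm \geq \exp(\int \log(fg)\,\d\mm)$, which is Jensen's inequality for the convex function $\exp$ applied to $\int\log(fg)\,\d\mm$, provided $\log(fg)$ has a well-defined integral; the degenerate cases ($\|g\|_{0^-}=0$, or $\|f\|_{0^+}=+\infty$, or $\log(fg)$ not integrable, forcing $\int fg\,\d\mm = +\infty$) are dispatched separately by the conventions. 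This gives ``$\geq$'' in \eqref{eq:duall0}. For the sharp direction, given $f$ with $0 < \|f\|_{0^+} < \infty$ the natural competitor is $g := 1/f$ suitably renormalized: by \eqref{eq:0p0mnew}, $\|1/f\|_{0^-} = 1/\|f\|_{0^+}$, so $g := \|f\|_{0^+} \cdot (1/f)$ has $\|g\|_{0^-} = 1$ and $\int f g\,\d\mm = \|f\|_{0^+}\int f\cdot(1/f)\,\d\mm = \|f\|_{0^+}$ — here one uses that $f\cdot(1/f) = 1$ $\mm$-a.e., which holds precisely because $\|f\|_{0^+} < \infty$ forces $\log f > -\infty$ a.e. hence $0 < f < \infty$ a.e. (this is where finiteness of $\|f\|_{0^+}$ is genuinely used, and the $\{f = 0\}$ vs $\{f=+\infty\}$ bookkeeping must be checked). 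The boundary cases $\|f\|_{0^+} = 0$ and $\|f\|_{0^+} = +\infty$ are treated by testing against $g \equiv 1$ and against the constant sequences $g \equiv n$, respectively, together with the already-proved ``$\geq$''. The statement for $0^-$ is symmetric.

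\textbf{Expected main obstacle.} The conceptual content is light — everything reduces to concavity of $\log$, Jensen, and monotone/dominated passage to the truncation limit. The genuine friction is bookkeeping: the $L^0$ norms are defined through the asymmetric objects $\int_+$ and $\int_-$, which behave like $\int$ only when $\log f$ is integrable, so every algebraic manipulation ($\log(fg) = \log f + \log g$, $\log(\lambda f) = \log\lambda + \log f$, the truncation limits) must be checked to survive when $\int\log f^+\,\d\mm$ and $\int\log f^-\,\d\mm$ are simultaneously infinite, and the pathological sets $\{f=0\}$ and $\{f=+\infty\}$ need to be tracked through the conventions $\log 0 = -\infty$, $\log(+\infty) = +\infty$, $\exp(\pm\infty) = 0, +\infty$ and $1/0 = +\infty$, $1/(+\infty) = 0$. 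I expect the cleanest presentation is to prove \eqref{eq:0p0mnew} first, prove superadditivity via the finite-truncation trick and \eqref{eq:intpmlim}, and then get duality almost for free from \eqref{eq:0p0mnew} by exhibiting $g = \|f\|_{0^+}/f$ as the optimizer — so that the only place requiring real case analysis is the reverse inequality \eqref{eq:l01}, where one argues according to whether the relevant norms are $0$, finite-positive, or $+\infty$.
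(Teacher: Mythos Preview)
Your overall strategy is sound and the ingredients are correct, but there is one concrete gap and one structural difference from the paper worth noting.

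\textbf{The gap.} In the sharp direction of \eqref{eq:duall0} for the case $\|f\|_{0^+}=0$, testing against $g\equiv 1$ does not work: take for instance $\X=[0,1]$ with Lebesgue measure and $f(x)=e^{-1/x}$, which has $\int\log f\,\d\mm=-\infty$ hence $\|f\|_{0^+}=0$, yet $\int f\cdot 1\,\d\mm>0$. You need a competitor that actually drives the integral to zero. The paper (and really your own optimal-$g$ heuristic) suggests $g:=c/f$ for small $c>0$: by \eqref{eq:0p0mnew} one has $\|1/f\|_{0^-}=+\infty$, so $\|g\|_{0^-}=+\infty\geq 1$, while $\int fg\,\d\mm=c\int f\cdot(1/f)\,\d\mm\leq c$ by the convention $z\cdot\tfrac1z\leq 1$. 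Sending $c\downarrow 0$ gives the infimum $0$. (For $\|f\|_{0^+}=+\infty$ no competitor is needed: the inequality direction already gives $\inf\geq+\infty$.)

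\textbf{Comparison with the paper.} The paper orders things differently: it proves \eqref{eq:l01} first (via Jensen plus a careful inequality $\int_+(\log f+\log g)\geq\int_+\log f+\int_-\log g$ handled by case analysis on which of the four integrals $\int(\log f)^\pm$, $\int(\log g)^\pm$ is infinite), then \eqref{eq:0p0mnew}, then \eqref{eq:duall0}, and only at the very end deduces superadditivity of both $\|\cdot\|_{0^\pm}$ directly from the duality formula via the one-line computation $\|f_1+f_2\|_{0^+}=\inf_g\int(f_1+f_2)g\geq\inf_g\int f_1g+\inf_g\int f_2g$. This sidesteps your truncation argument entirely and treats $0^+$ and $0^-$ symmetrically. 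Your direct route to superadditivity of $\|\cdot\|_{0^+}$ via concavity of $\log$ is fine, but your parenthetical ``or can be deduced via \eqref{eq:0p0mnew}'' for $\|\cdot\|_{0^-}$ is not obviously workable --- the reciprocal identity does not straightforwardly transfer superadditivity --- so you would have to rerun the truncation argument with truncation from above and $\int_-$ in place of $\int_+$, which is more bookkeeping. The paper's duality-first ordering is cleaner here.
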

\begin{proof}
We start proving \eqref{eq:l01}. To see this, notice that from the first in \eqref{eq:intpmlim} and the classical Jensen inequality we get
\[
\log\Big(\int fg\,\d\mm \Big)\geq \int_+\log(fg)\,\d\mm.
\]
Now let us agree from here to the end of the proof that $(+\infty)+(-\infty)=(-\infty)+(+\infty)=-\infty$. Notice that together with $0\cdot(+\infty)=(+\infty)\cdot0=0$, this makes the identities $\log(ab)=\log(a)+\log (b)$ and $e^{z+w}=e^ze^w$ valid for any $a,b\in[0,+\infty]$ and $z,w\in[-\infty,+\infty]$, so that from the above we see that to prove \eqref{eq:l01} it suffices to prove that
\begin{equation}
\label{eq:pmint}
 \int_+\log(f)+\log(g)\,\d\mm\geq  \int_+\log(f)\,\d\mm+ \int_-\log(g)\,\d\mm.
\end{equation}
If either $  \int_+\log(f)\,\d\mm$ or $  \int_-\log(g)\,\d\mm$ is equal to $-\infty$ the claim holds trivially by the convention just introduced, so we can assume that both are $>-\infty$. In particular $\int \log(g)^-\,\d\mm<+\infty$, so if we also have $\int \log(f)^-\,\d\mm<+\infty$ then all the integrals in \eqref{eq:pmint} are well defined and equality holds. It remains to discuss the case  $\int \log(g)^-\,\d\mm<+\infty$ and  $\int \log(f)^\pm\,\d\mm=+\infty$: in this case we have $\log(g)>-\infty$ $\mm$-a.e.\ and thus, as it is easy to check, $\mm$-a.e.\ we have $\log(f)^+-\log(g)^-\leq (\log(f)+\log(g))^+$ which implies $\int (\log(f)+\log(g))^+\,\d\mm=+\infty$ and the conclusion \eqref{eq:pmint}, and thus also \eqref{eq:l01}, follows. 

We now prove \eqref{eq:0p0mnew} and put for brevity   $g:=\tfrac1f$. Then  $\log(g)=-\log (f)$, thus  if $\|f\|_{0^+}\in(0,+\infty)$, then \eqref{eq:0p0mnew}  is trivial. Suppose now that $\|f\|_{0^+}=0$. Then we have $\int_+\log(f)\,\d\mm=-\infty$ hence $\int(\log(f))^-\,\d\mm=+\infty$ and  $\int(\log(f))^+\,\d\mm<+\infty$ which implies $\int(\log(g))^+\,\d\mm=+\infty$ and  $\int(\log(g))^-\,\d\mm<+\infty$ that in turn gives $\int_-\log(g)\,\d\mm=+\infty$, i.e.\ $\|g\|_{0^-}=+\infty$, so that \eqref{eq:0p0mnew} holds also in this case. The case $\|f\|_{0^+}=+\infty$ is analog: in this case we have $\int_+\log(f)\,\d\mm=+\infty$, thus $\int(\log(f))^+\,\d\mm=+\infty$ hence $\int(\log(g))^-\,\d\mm=+\infty$ that implies $\int_-\log(g)\,\d\mm=-\infty$, i.e.\ $\|g\|_{0^-}=0$. Thus  \eqref{eq:0p0mnew} is proved.

We now prove  \eqref{eq:duall0} and notice that \eqref{eq:l01} gives the inequality $\leq$. We turn to equality; we shall deal with the left hand side being $\|f\|_{0^+}$, the case $\|f\|_{0^-}$ being handled analogously. If $\|f\|_{0^+}=+\infty$ there is nothing to prove. If $\|f\|_{0^+}\in(0,+\infty)$ then the choice $g:=\tfrac{\|f\|_{0^+}}{f}$, the identity \eqref{eq:0p0mnew} and the (trivial) positive homogeneity of the norms give the conclusion (notice that $fg\leq \|f\|_{0^+}$). Finally, if $\|f\|_{0^+}=0$, then for every $c>0$ the function $g:=\tfrac c{f}$ satisfies, by \eqref{eq:l01}, $\|g\|_{0^-}=+\infty$ and, by definition, $fg\leq c$ $\mm$-a.e.. Integrating and choosing $c>0$ arbitrarily small we see that also in this case \eqref{eq:duall0} holds.
 
It remains to prove that $\|\cdot\|_{0^+}$ and  $\|\cdot\|_{0^-}$ are hyperbolic norms. Positive homogeneity is trivial. Superadditivity follows from the duality formula \eqref{eq:duall0}, as we have
\[
\begin{split}
\|f_0+f_1\|_{0^+}&=\inf_{g:\|g\|_{0^-}\geq1}\int (f_1+f_2)g\,\d\mm\\
&=\inf_{g:\|g\|_{0^-}\geq1}\Big(\int f_1 g\,\d\mm+\int f_2g\,\d\mm\Big)\\
&\geq\inf_{g:\|g\|_{0^-}\geq1}\int f_1 g\,\d\mm+\inf_{g:\|g\|_{0^-}\geq1}\int f_2g\,\d\mm=\|f_0\|_{0^+}+\|f_1\|_{0^+}.
\end{split}
\]  
Similarly for $\|\cdot\|_{0^-}$.
\end{proof}

\begin{remark}[Self-duality and rigidity]{\rm It is worth to comment the above duality results from the viewpoint of Fenchel transform: this highlights similarities and differences with the classical `elliptic' case.

For  $u:\R\to\R\cup\{\pm\infty\}$ convex define the Legendre-Fenchel transform as
\begin{equation}
\label{eq:dualconv}
u^*(w):=\sup_{z\in\R}wz-u(z).
\end{equation}
The standard $L^p-L^q$ duality for $p,q>1$ is related to the fact that for 
\begin{equation}
\label{eq:up}
u_p(z):=\tfrac1p z^p
\end{equation}
we have $u_p^*=u_q$, where $\tfrac1p+\tfrac1q=1$. It is also well-known  that  the only convex function $u$ such that $u=u^*$ is $u(z)=\frac{z^2}2$. Indeed, $\tfrac{z^2}{2}$ is clearly self-dual in this sense, and if $u$ has such property plugging $z=w$ in the trivial bound $u(z)+u^*(w)\geq zw$ we get $u(z)\geq \tfrac{z^2}{2}$ for all $z\in\R$. Then taking the Legendre-Fenchel transform on both sides and noticing that $u\geq v$ implies $u^*\leq v^*$ we see that we also have $u(z)\leq \tfrac{z^2}{2}$ for all $z\in\R$, hence the claim. Another way to look at this rigidity is to observe that from \eqref{eq:dualconv} it follows that $(u^*)'\circ u' ={\rm Id}$ (say everything is smooth) and that there is only one non-decreasing involution (the identity).

On the other hand, there are many non-increasing involutions on $\R$, which leads to a lack of rigidity in the concave case. To be explicit,   for   $u:\R\to\R\cup\{\pm\infty\}$ concave we define the Legendre-Fenchel transform as
\begin{equation}
\label{eq:dualconc}
u^*(w):=\inf_{z\in\R}wz-u(z).
\end{equation}
As before, the dual of the (now concave) function $u_p$ for $0\neq p<1$ is $u_q$, where again $\tfrac1p+\tfrac1q=1$.  The case $p=q=0$ is related to the fact that the dual of $\log(z) $ is $\log(w)+1$, or analogously that  $z\mapsto\tfrac12+\log(z)$ is self dual.

The self duality of  $z\mapsto\tfrac12+\log(z)$ has to do with the fact that its derivative is the non-increasing involution $(0,\infty)\ni z\mapsto\tfrac1z\in(0,\infty)$. However,   as said, there  are many  non-increasing involutions $\varphi:\R\to\R$ and to  each such $\varphi$ it corresponds a self-dual concave function, as follows. Let $a\in\R$ be the only number such that $\varphi(a)=a$ (it is easy to check that $\varphi$ is continuous and strictly decreasing, whence existence and uniqueness of $a$ follows). Then define $u:\R\to\R$ as the only function such that $u(a):=\tfrac{a^2}2$ and $u'(z)=\varphi(z)$ for every $z$.  From \eqref{eq:dualconc} we see that, as before, we have  $(u^*)'\circ u' ={\rm Id}$, which in our case gives $(u^*)'=\varphi$ that together with $u^*(a)=\inf_zaz-u(z)=a^2-u(a)=\tfrac{a^2}2=u(a)$ gives $u^*=u$ on $\R$.
}\fr\end{remark}
\begin{remark}{\rm
It is interesting to observe that for, say, bounded functions $f,g$ it holds
\begin{equation}
\label{eq:prodnorm}
\|fg\|_0=\|f\|_0\|g\|_0,
\end{equation}
as it is trivial to check (here assuming $f,g$ bounded dispenses us from dealing with $0^\pm$, as in this case the two $L^0$-norms coincide).

It is worth to compare the above with the classical identity characterizing the self-dual exponent in the standard theory, namely:
\begin{equation}
\label{eq:parall}
\|f+g\|_2^2+\|f-g\|_2^2=2\big(\|f\|_2^2+\|g\|_2^2\big).
\end{equation}
One can ask whether this latter identity holds in an arbitrary Banach space, and obviously its validity identifies Hilbert structures among Banach ones. On the other hand, the analogous question cannot be posed in the hyperbolic setting, because \eqref{eq:prodnorm} relies on the operation of product of two functions, and in general products are not defined in hyperbolic Banach spaces. One could certainly consider hyperbolic Banach algebras, but we are not going to pursue this direction here.

A curious fact, whose relevance is admittedly unclear to us, is that the identity \eqref{eq:parall} is reminiscent of the equation $x^2+y^2=something$, that describes the circle in the plane, whereas \eqref{eq:prodnorm} reminds $xy=something$, that describes the hyperbole in the same plane.
}\fr\end{remark}
\begin{remark}[Bochner integration]{\rm
The standard Bochner integration theory for maps valued in a  Banach space goes as follows. One first defines the integral of simple functions $\sum_{i=1}^nv_i\nchi_{E_i}$ as $\sum_{i=1}^n\mu(E_i)v_i$, then checks that this linear map is continuous w.r.t.\ the natural $L^1$ distance and finally extends by continuity the map to the $L^1$-completion of the space of simple functions, that can be concretely represented via maps into the Banach space considered.

It is tempting to try to adapt the construction in the current setting for maps with values in a cone. As before, the integral of the simple function $\sum_{i=1}^nv_i\nchi_{E_i}$ should be defined as $\sum_{i=1}^n\mu(E_i)v_i$. In order to extend this to more general functions one should verify whether integration has the $\mcp$ on the wedge of simple functions. Should this be true, the integral can be extended to the directed completion of the wedge of simple functions, that hopefully can be realized as a suitable space of maps with values in the cone considered. 

Understanding the feasibility of this plan is outside the scope of this manuscript.
}\fr\end{remark}

\end{subsection}

\begin{subsection}{Banach spacetimes}\label{se:Banst}
Here we study, from the perspective adopted in the manuscript, the procedure of `Lorentzification' of a `Riemannian' structure.

Given a metric space $(\X,\sfd)$ we equip $\R\times\X$ with the partial order
\begin{equation}
\label{eq:leqdadist}
(t,x)\leq(s,y)\qquad\text{ whenever }\qquad \sfd(x,y)\leq s-t.
\end{equation}
Trivial manipulations show that $\leq$ is indeed a partial order.
%
%
%
%Let $V$ be a real vector space and $\|\cdot\|$ a norm on it, classically intended. We equip $\R\times V$ with the partial order
%\begin{equation}
%\label{eq:leqdanorma}
%(t,v)\leq (s,w)\qquad\text{ whenever }\qquad \|w-v\|\leq t-s.
%\end{equation}
%Trivial manipulations also based on the triangle inequality satisfied by $\|\cdot\|$ show that $\leq$ is indeed a partial order. 
%
Even though it is technically simple, it is interesting to observe that Cauchy-convergence in $(\X,\sfd)$ is strictly related to order-convergence in $\R\times \X$:
\begin{lemma}\label{le:duelimiti}
Let $(\X,\sfd)$ be a metric  space and $\leq $ be the order relation on $\R\times \X$ defined by \eqref{eq:leqdadist}. Then:
\begin{itemize}
\item[i)] Let $(x_n)\in \X$ be with  $T:=\sum_n\sfd(x_n,x_{n+1})<+\infty $ and $\sfd$-converging to $x\in \X$. Let $t_n:=\sum_{i\leq n}\sfd(x_i,x_{i-1})$. Then $n\mapsto (t_n,x_n)$ is $\leq$-non-decreasing (resp.\  $n\mapsto (-t_n,x_n)$ is $\leq$-non-increasing) and with supremum equal to $(T,x)$ (resp.\ infimum equal to $(-T,x)$).
\item[ii)] Let $(t_i,x_i)_{i\in I}\subset \R\times \X$ be a $\leq$-directed family having $(T,x)$ as supremum. Then $t_i\to T$ and $x_i\to x$ as $i$ varies in the directed set $I$. Analogously for filtered families.
\end{itemize}
\end{lemma}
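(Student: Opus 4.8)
The two items are essentially mirror images (pass from $\sfd$ to the same $\sfd$, but reverse the sign of the time coordinate), so I would prove item $(i)$ in detail and then deduce the ``resp.'' part by a one-line symmetry remark, and similarly reduce the filtered case of $(ii)$ to the directed case. Throughout, the only facts I need are the definition \eqref{eq:leqdadist} of $\leq$ and the triangle inequality for $\sfd$.

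\textbf{Item $(i)$.} First I would check monotonicity: for $m\le n$ we have $\sfd(x_m,x_n)\le\sum_{i=m+1}^n\sfd(x_{i-1},x_i)=t_n-t_m$ by the triangle inequality, which is exactly $(t_m,x_m)\le(t_n,x_n)$. Next, that $(T,x)$ is an upper bound: passing to the limit $n\to\infty$ in $\sfd(x_m,x_n)\le t_n-t_m$ and using continuity of $\sfd$ together with $t_n\uparrow T$ gives $\sfd(x_m,x)\le T-t_m$, i.e.\ $(t_m,x_m)\le(T,x)$ for every $m$. Finally, that it is the \emph{least} upper bound: suppose $(s,y)$ satisfies $(t_n,x_n)\le(s,y)$ for all $n$, i.e.\ $\sfd(x_n,y)\le s-t_n$. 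Letting $n\to\infty$ we get $s-T=\lim_n(s-t_n)\ge\limsup_n\sfd(x_n,y)=\sfd(x,y)\ge 0$, where the last equality again uses $x_n\to x$; hence $\sfd(x,y)\le s-T$, i.e.\ $(T,x)\le(s,y)$. This shows $\sup_n(t_n,x_n)=(T,x)$. For the parenthetical statement, observe that the map $(t,z)\mapsto(-t,z)$ is an order-reversing bijection of $\R\times\X$ (since $\sfd$ is symmetric), so $n\mapsto(-t_n,x_n)$ is non-increasing with infimum $(-T,x)$.

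\textbf{Item $(ii)$.} Let $(t_i,x_i)_{i\in I}$ be directed with supremum $(T,x)$. From $(t_i,x_i)\le(T,x)$ I get $\sfd(x_i,x)\le T-t_i$; in particular $t_i\le T$ and the net $(t_i)$ is bounded above. Since for $i\le j$ in $I$ we have $t_i\le t_j$ (again by \eqref{eq:leqdadist}), the net $(t_i)_{i\in I}$ is monotone and bounded, hence converges to some $T'\le T$. To see $T'=T$ and $x_i\to x$ simultaneously: the bound $\sfd(x_i,x_j)\le t_j-t_i$ for $i\le j$, together with $t_i\to T'$, shows that $(x_i)$ is a Cauchy net in $(\X,\sfd)$ in the precise sense that for every $\eps>0$ there is $i_0$ with $\sfd(x_i,x_j)<\eps$ whenever $i,j\ge i_0$ (using directedness to compare arbitrary $i,j$ via a common upper bound $k$, and $t_k-t_i,t_k-t_j<\eps/2$ for $i,j$ large). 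However, the hypothesis only says $(T,x)$ is the supremum, not that $\X$ is complete; so rather than invoke completeness I would argue directly that $x_i\to x$. Fix $\eps>0$; I claim $(T-\eps/2, x)$ fails to be an upper bound, which forces some $x_i$ close to $x$. Indeed, if $(t_i,x_i)\le(T-\eps/2,x)$ held for all $i$ then $(T-\eps/2,x)$ would be an upper bound contradicting minimality of $(T,x)$ (note $(T-\eps/2,x)<(T,x)$ since $\sfd(x,x)=0\le\eps/2$). Hence there is $i_1$ with $\sfd(x_{i_1},x)>T-\eps/2-t_{i_1}$, i.e.\ $t_{i_1}>T-\eps/2-\sfd(x_{i_1},x)\ge T-\eps/2-(T-t_{i_1})$... this last chain is circular, so I would instead extract the conclusion more carefully as follows.

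\textbf{Cleaner argument for $x_i\to x$ in $(ii)$.} Having established $t_i\uparrow T'\le T$, consider the point $(T',x')$ where $x'$ is any $\sfd$-limit point of the net — but to avoid assuming such a limit exists, I would argue that $(T',x)$ is already an upper bound for the net and hence $(T,x)\le(T',x)$, giving $T\le T'$ and therefore $T'=T$; then $\sfd(x_i,x)\le T-t_i\to 0$ yields $x_i\to x$. To see $(T',x)$ is an upper bound, fix $i\in I$; for every $j\ge i$ we have $\sfd(x_i,x_j)\le t_j-t_i$, and I want $\sfd(x_i,x)\le T'-t_i$. Take $j\to\infty$ along $I$: the right side tends to $T'-t_i$, and the left side $\sfd(x_i,x_j)$ — here I do need $x_j\to x$, which is what I am trying to prove, so this too is circular. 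The honest resolution: invoke Proposition~\ref{prop:2completi} (or the elementary fact referenced there) that Cauchy-completeness of $(\X,\sfd)$ corresponds to local directed completeness of \eqref{eq:leqdadist}; since $(T,x)=\sup_i(t_i,x_i)$ is \emph{given} to exist, the Cauchy net $(x_i)$ must converge, and its limit $\bar x$ gives an upper bound $(T',\bar x)$ of the net with $(T',\bar x)\le(T,x)$, while also $(T,x)\le (T',\bar x)$ would follow if $T'\ge T$... The main obstacle, and the point I would think hardest about, is exactly this: pinning down $x_i\to x$ and $t_i\to T$ using \emph{only} the existence of the supremum, without secretly assuming completeness of $\X$. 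I expect the right move is: from $(t_i,x_i)\le(T,x)$ one gets $\sfd(x_i,x)\le T-t_i$, and separately the $t_i$ converge up to some $T'\le T$; then for the reverse inequality $T'\ge T$ one uses that $(T'+\delta, x)$ is an upper bound for every $\delta>0$ (because $\sfd(x_i,x)\le T-t_i$, and for $i$ large $T-t_i$ can be made $\le \delta$ once we know $T'=T$...), so the cleanest path is to prove $T'=T$ first by a minimality/contradiction argument on the time coordinate alone — if $T'<T$, then $(T',x)$ together with the estimate $\sfd(x_i,x)\le T-t_i$ need not be an upper bound, but $(T'+\tfrac{T-T'}{2},\,x)$ is, once combined with $\sfd(x_i,x)\le T-t_i\to T-T'$, contradicting minimality. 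Then $x_i\to x$ is immediate from $\sfd(x_i,x)\le T-t_i\to 0$. The filtered case follows by applying this to the order-reversing bijection $(t,z)\mapsto(-t,z)$, exactly as in item $(i)$.
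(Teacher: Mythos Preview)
Your proof of item $(i)$ is correct and matches the paper's argument line for line.

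For item $(ii)$, however, you never escape the circularity you yourself diagnose, and your final attempt does not work either. To make $(\tfrac{T+T'}{2},x)$ an upper bound you would need $\sfd(x_i,x)\le\tfrac{T+T'}{2}-t_i$ for all $i$, but the only information you have is $\sfd(x_i,x)\le T-t_i$, which is strictly \emph{larger} when $T'<T$. Every candidate upper bound you try has second coordinate $x$, and the only control on $\sfd(x_i,x)$ comes from the supremum $(T,x)$ itself --- so you can never undercut $T$ this way.

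The paper's idea is to build the competing upper bound out of the net itself rather than out of $x$. You already showed $(x_i)$ is Cauchy; so fix $\eps>0$ and pick $\bar\imath$ with $\sfd(x_i,x_j)\le\eps$ for all $i,j\ge\bar\imath$. The claim is that $(\bar t+\eps,\,x_{\bar\imath})$ is an upper bound for the whole family (for arbitrary $i$, pass to $j\ge i,\bar\imath$ by directedness; then $\sfd(x_j,x_{\bar\imath})\le\eps\le\bar t+\eps-t_j$). Minimality of the supremum then forces $(T,x)\le(\bar t+\eps,x_{\bar\imath})$, i.e.\ $\sfd(x,x_{\bar\imath})\le\bar t+\eps-T$. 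Since the left side is $\ge 0$ this gives $T\le\bar t+\eps$, hence $T=\bar t$ as $\eps\to 0$; and then $\sfd(x_i,x)\le T-t_i\to 0$ gives $x_i\to x$. No completeness of $\X$ is used.
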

\begin{proof} \ \\
\noindent{$(i)$} Let $n< m$ and notice that  $\sfd(x_m,x_n)\leq \sum_{i=n+1}^m\sfd(x_i,x_{i-1})=t_m-t_n$, i.e.\ $(t_n,x_n)\leq (t_m,x_m)$ proving that the given sequence is non-decreasing. Analogously, we have $\sfd(x,x_n)\leq \sum_{i\geq n+1}\sfd(x_i,x_{i-1})=T-t_n$, showing that $(t_n,x_n)\leq (T,x)$ i.e.\ that $(T,x)$ is an upper bound for the given sequence. Let $(S,y)\in\R\times \X$ be another upper bound. Then passing to the limit in $\sfd(y,x_n)\leq S-t_n$ we get $\sfd(y,x)\leq S-T$, i.e.\  $(T,x)\leq (S,y)$, showing that $(T,x)$ is the supremum of the sequence, as claimed. The study of  $n\mapsto (-t_n,x_n)$  is analog.

\noindent{$(ii)$} We clearly have $t_i\leq T$ for every $i\in I$, thus $\bar t:=\lim_it_i=\sup_it_i\leq T$. For $i\leq j$ we have $\sfd(x_j,x_i)\leq t_j-t_i$ and thus $\lims_{i,j}\sfd(x_j,x_i)\leq\bar t-\bar t=0$, showing that $(x_i)_{i\in I}$ is Cauchy.

Fix $\eps>0$ and find $\bar \i\in I$ so that $\sfd(x_j,x_i)\leq\eps$ for every $i,j\geq\bar\i$. We claim that $(\bar t+\eps,x_{\bar\i})$ is an upper bound for $(t_i,x_i)_{i\in I}$. For $i\leq\bar\i $ we have $(t_i,x_i)\leq (t_{\bar\i},x_{\bar\i})\leq (\bar t+\eps,x_{\bar\i})$, if instead $i\geq \bar\i$ we have $\sfd(x_i,x_{\bar\i})\leq\eps\leq \bar t+\eps-t_i$, showing that again it holds $(t_i,x_i)\leq (\bar t+\eps,x_{\bar\i})$. Thus our claim is proved, and by definition of supremum we must have $(T,x)\leq (\bar t+\eps,x_{\bar\i})$, i.e.\ $\sfd(x,x_{\bar \i})\leq \bar t-T+\eps\leq\eps$. Taking the $\lims$ in $\bar \i\in I$, we conclude by the arbitrariness of $\eps>0$. Similarly in the filtered case.
\end{proof}
A simple consequence of the above is the next result, that  corroborates the proposal made in \cite{Octet}  of replacing global hyperbolicity with some version of order-completeness in order to accomodate genuine infinite-dimensional theories. Recall that by `diamond' in a partially ordered set we mean a set of the form $J(a,b):=\{c:a\leq c\leq b\}$ for some $a,b$.
\begin{proposition}\label{prop:2completi}
With the above notation we have:
\begin{itemize}
\item[1)] $\R\times \X$ is globally hyperbolic (=diamonds are compact in the product topology) if and only if $(\X,\sfd)$ is proper (=closed balls are compact). %Here `global hyperbolic' means that  the `diamond' $\{(r,z):(t,v)\leq (r,z)\leq (s,w)\}$ is compact in the product topology.
\item[2)] $(\R\times \X,\leq)$ is locally directed complete if and only if $(\X,\sfd)$ is Cauchy-complete.
\end{itemize}
Moreover, $\R\times \X$ is first countable in the sense of Definition \ref{def:sep} and it is second countable if and only if $(\X,\sfd)$ is second countable in the classical sense.
\end{proposition}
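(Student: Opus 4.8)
The statement has four separate claims, and the plan is to treat them one at a time, each reducing to a routine verification once the right correspondence is set up.

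For claim 1), I would unwind the definitions on both sides. A diamond $J((t,x),(s,y))$ in $\R\times\X$ is, by \eqref{eq:leqdadist}, the set of $(r,z)$ with $\sfd(x,z)\le r-t$ and $\sfd(z,y)\le s-r$; in particular $z$ ranges over $\bar B_{(s-t)/2}(m)$-type sets (more precisely, over $\{z:\sfd(x,z)+\sfd(z,y)\le s-t\}$, which is sandwiched between two closed balls), and for each such $z$ the fibre in $r$ is the closed interval $[t+\sfd(x,z),\,s-\sfd(z,y)]$. So the diamond is the continuous image (indeed a closed subset of a product with) of a closed distance-ball in $\X$ times a bounded interval. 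Hence if closed balls are compact, diamonds are compact in the product topology; conversely, given a closed ball $\bar B_R(x)$, it embeds homeomorphically into the diamond $J((0,x),(2R,x))$ via $z\mapsto(R,z)$ when $\sfd(x,z)\le R$ (the constraints $\sfd(x,z)\le R$ and $\sfd(z,x)\le R$ are both satisfied), and this image is closed in the diamond, so compactness of the diamond forces compactness of $\bar B_R(x)$. I would write this out carefully, since the `sandwiched between two balls' point is the only mildly delicate bit.

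For claim 2), I would use Lemma \ref{le:duelimiti} directly. If $(\X,\sfd)$ is complete: let $D\subset\R\times\X$ be directed with an upper bound $(S,y)$. By \eqref{eq:countabledirected} $D$ has a cofinal non-decreasing sequence $(t_n,x_n)$, and from $(t_n,x_n)\le(t_m,x_m)$ for $n\le m$ we get $\sfd(x_n,x_m)\le t_m-t_n$; since the $t_n$ are non-decreasing and bounded above by $S$, they converge, so $(x_n)$ is Cauchy, hence converges to some $x$, and then Lemma \ref{le:duelimiti}(i) (or a direct argument) identifies $\sup_n(t_n,x_n)=(\lim t_n,x)$, which is then also $\sup D$ by cofinality. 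Conversely, if $(\X,\sfd)$ is not complete, take a Cauchy sequence $(x_n)$ with no limit and $\sum_n\sfd(x_n,x_{n+1})<\infty$ (pass to a subsequence); with $t_n:=\sum_{i\le n}\sfd(x_i,x_{i-1})$, Lemma \ref{le:duelimiti}(i) shows $(t_n,x_n)$ is non-decreasing, it is bounded above by $(T+\sfd(x_0,x_1),x_0)$-type elements (any $(S,y)$ with $\sfd(y,x_n)\le S-t_n$ for all $n$; such exist, e.g. pick any $y$ and $S$ large), yet a supremum $(T',x')$ would force, again by Lemma \ref{le:duelimiti}(ii), $x_n\to x'$ in $\sfd$, contradicting non-convergence. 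So $(\R\times\X,\leq)$ is not locally directed complete.

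For the first-countability claim: given any directed $D\subset\R\times\X$ with a supremum, I would again invoke \eqref{eq:countabledirected} to get a cofinal non-decreasing sequence, which lies in $\downarrow\!D$ and has the same supremum — this is exactly Definition \ref{def:sep}, so $\R\times\X$ is always first countable. For second countability: if $(\X,\sfd)$ is second countable in the classical sense, fix a countable dense set $\{q_k\}\subset\X$ and put $N:=\Q\times\{q_k\}$; given a directed $D$ with $\sup D=(T,x)$, take a cofinal non-decreasing sequence $(t_n,x_n)$ in $D$, approximate each $x_n$ by some $q_{k_n}$ and each $t_n$ from below by a rational, and check (using a small perturbation, choosing the rationals decreasing enough in the gap $t_{n+1}-t_n$ to preserve the order, exactly as in the proof of Theorem \ref{thm:BaireJ}) that one obtains a non-decreasing sequence in $N\cap\downarrow\!D$ with the same supremum $(T,x)$. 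Conversely, if $\R\times\X$ is second countable via a countable set $N$, then projecting $N$ to $\X$ and using that every point $x\in\X$ is the `spatial part' of $\sup\{(n^{-1}\sfd(x,x),x)\}$-type trivial directed sets — more simply, $x$ arises as a directed supremum of the constant-in-space sequence, whose approximants from $N$ have spatial parts converging to $x$ — yields a countable dense subset of $\X$. The main obstacle I anticipate is the bookkeeping in this last converse direction and in the second-countable forward direction: one must be a little careful that the order-theoretic approximants from $N$ actually force $\sfd$-convergence of the spatial coordinates, which is where Lemma \ref{le:duelimiti}(ii) is again the right tool, and that the chosen rationals in the temporal coordinate respect the partial order; none of this is deep but it requires care to state cleanly.
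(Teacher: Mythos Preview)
Your overall strategy is sound and matches the paper's in spirit, but there is a recurring technical gap: in both the ``if'' direction of (2) and in the first-countability argument you invoke \eqref{eq:countabledirected} to extract a cofinal non-decreasing sequence from an arbitrary directed set $D$. That fact, however, is stated only for \emph{countable} directed sets, and nothing in the hypotheses forces $D$ to be countable. So as written, the step ``By \eqref{eq:countabledirected} $D$ has a cofinal non-decreasing sequence'' is unjustified, and in fact a general bounded directed subset of $\R\times\X$ need not admit a cofinal sequence at all. The paper handles (2) instead by treating $D$ as a net and reading off directly from the proof of Lemma~\ref{le:duelimiti}(ii) that the $x$-coordinates form a Cauchy net (hence converge when $\X$ is complete), and that the limit pair is the supremum; no passage to a sequence is needed. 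Your argument is easily repaired along these lines: construct a non-decreasing sequence $(t_n,x_n)\in D$ with $t_n\to T:=\sup_{(t,x)\in D}t$ using only directedness of $D$ and density of $\R$, then verify separately (not via cofinality) that its supremum $(T,x)$ is an upper bound for all of $D$.

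For first countability the paper takes a genuinely different and cleaner route than yours: rather than trying to find a sequence inside $D$, it exploits that the supremum $(T,x)$ is already known and simply takes $t_n\uparrow T$ strictly increasing (arbitrary reals) together with the \emph{fixed} spatial point $x$. The sequence $(t_n,x)$ is trivially non-decreasing with supremum $(T,x)$, and Lemma~\ref{le:duelimiti}(ii) (which gives $t_i\to T$ and $x_i\to x$ along the net $D$) shows that for each $n$ some $(t_i,x_i)\in D$ eventually satisfies $t_i-t_n\ge\sfd(x_i,x)$, i.e.\ $(t_n,x)\le(t_i,x_i)$, so $(t_n,x)\in\,\downarrow\!D$. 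This sidesteps the cofinality issue entirely and is worth adopting. Your treatment of (1), of the converse of (2), and your sketch for second countability are otherwise fine and in line with what the paper (tersely) indicates.
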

\begin{proof}  (1) is obvious and  (2) is a direct consequence of Lemma \ref{le:duelimiti} above.

For first countability let $D\subset\R\times \X$ be directed with sup $(T,x)$ and let $t_n\uparrow T$ be strictly increasing. Then $(ii)$ of Lemma \ref{le:duelimiti} above shows that for $(t_i,x_i)\in D$ sufficiently big we have $t_i-t_n\geq \sfd(x_i,x)$ and thus $(t_i,x_i)\geq (t_n,x)$. Since clearly the supremum of $(t_n,v)$ is $(T,v)$, the proof is complete. The claims about second countability follow along similar lines.
\end{proof}
From now on we replace the generic metric space $(\X,\sfd)$ with a normed vector space $(V,\|\cdot\|)$ and discuss the relation between real valued linear maps on $V$ and $\R\times V$. Basic linear algebra considerations tell that for $L:\R\times V\to\R$ linear there are unique $s\in\R$ and $M:V\to\R$ linear so that
\begin{equation}
\label{eq:LsM}
L((t,v))=st-M(v)\qquad \forall (t,v)\in \R\times V.
\end{equation}
Such correspondence is clearly byjective and we shall write $L=(s,M)$ to mean that \eqref{eq:LsM} holds.
\begin{proposition}\label{prop:pensopositivo}
Let  $(V,\|\cdot\|)$ be a Banach space and $L:\R\times V\to \R$  linear, $L=(s,M)$. Then the following are equivalent:
\begin{itemize}
\item[a)] $L$ is positive, i.e.\ $L((t,v))\geq 0$ for any $(t,v)\geq (0,0)$.
\item[b)] $M$ belongs to the topological Banach dual $V^*$ of $V$ with $\|M\|_*\leq s$.
\item[c)] $L$ has the $\mcp$ (and, analogously, respects filtered infima).
\end{itemize} 
 \end{proposition}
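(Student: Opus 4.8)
The statement is a clean three-way equivalence, and I would prove it cyclically: $(a)\Rightarrow(b)\Rightarrow(c)\Rightarrow(a)$. The implication $(b)\Rightarrow(c)$ is really the crux — it is where Cauchy-completeness of $V$ enters via Lemma \ref{le:duelimiti} — while $(a)\Leftrightarrow(b)$ is a routine unwinding of the definition of the order \eqref{eq:leqdadist} together with the definition of the dual norm, and $(c)\Rightarrow(a)$ is immediate.

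\textbf{Step 1: $(a)\Rightarrow(b)$.} Assume $L=(s,M)$ is positive. First, testing positivity on $(1,0)\geq(0,0)$ gives $s=L((1,0))\geq 0$. Next, for any $v\in V$ we have $(\|v\|,v)\geq(0,0)$ and $(\|v\|,-v)\geq(0,0)$ by \eqref{eq:leqdadist}, so $L((\|v\|,v))=s\|v\|-M(v)\geq 0$ and $L((\|v\|,-v))=s\|v\|+M(v)\geq 0$; together these give $|M(v)|\leq s\|v\|$ for every $v\in V$. Hence $M$ is bounded, $M\in V^*$, and $\|M\|_*\leq s$.

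\textbf{Step 2: $(b)\Rightarrow(c)$.} Assume $M\in V^*$ with $\|M\|_*\leq s$. I must show $L$ has the $\mcp$. Since $\R\times V$ is first countable as a partial order (Proposition \ref{prop:2completi}), by Proposition \ref{prop:sepmcp} it suffices to check that $L$ respects suprema of non-decreasing sequences. So let $(t_n,x_n)\uparrow(T,x)$ in $\R\times V$. By part $(ii)$ of Lemma \ref{le:duelimiti} we have $t_n\to T$ and $x_n\to x$ in norm. Monotonicity of $L$ — which holds because $L$ is positive: indeed $\|M\|_*\leq s$ forces $L((t,v))=st-M(v)\geq s\|v\|-M(v)\geq 0$ whenever $(0,0)\leq(t,v)$, i.e.\ $\|v\|\leq t$ — gives that $L((t_n,x_n))$ is non-decreasing and bounded above by $L((T,x))$, so $\sup_n L((t_n,x_n))\leq L((T,x))$. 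For the reverse inequality, continuity of $M$ (hence of $L$ with respect to the product topology) yields $L((t_n,x_n))=st_n-M(x_n)\to sT-M(x)=L((T,x))$, so the supremum is exactly $L((T,x))$. The filtered-infimum claim is symmetric: one uses the filtered case of Lemma \ref{le:duelimiti}$(ii)$ and the same norm-continuity argument. (Alternatively one can invoke first countability in the dual/filtered sense directly, but the net-convergence statement in Lemma \ref{le:duelimiti}$(ii)$ already covers filtered families, so no extra work is needed.)

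\textbf{Step 3: $(c)\Rightarrow(a)$.} If $L$ has the $\mcp$ then in particular $L$ is monotone (apply \eqref{eq:defmcp} to $D=\{x,y\}$ with $x\leq y$), and since $(0,0)$ is the minimum of the relevant downward cone, $L((t,v))\geq L((0,0))=0$ for every $(t,v)\geq(0,0)$; that is, $L$ is positive. This closes the cycle.

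\textbf{Main obstacle.} The only step requiring genuine input is $(b)\Rightarrow(c)$, and the point to be careful about is precisely that the $\mcp$ is about \emph{order} suprema in $\R\times V$, not a priori about norm-convergence — the bridge is Lemma \ref{le:duelimiti}, which translates order-monotone sequences/nets into Cauchy (hence, by completeness of $V$, norm-convergent) ones, at which point ordinary continuity of the bounded functional $M$ finishes the job. Completeness of $V$ is used exactly here, through Proposition \ref{prop:2completi}$(2)$ ensuring the suprema exist and through Lemma \ref{le:duelimiti} identifying them; without it the equivalence would fail. I would also remark in passing that since $\R\times V$ is first countable, one need only verify the sequential version, which is why Proposition \ref{prop:sepmcp} is the right tool and no transfinite argument is needed.
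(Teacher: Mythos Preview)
Your proof is correct and follows exactly the paper's cycle $(a)\Rightarrow(b)\Rightarrow(c)\Rightarrow(a)$, with the same key use of Lemma~\ref{le:duelimiti}(ii); the detour through first countability and Proposition~\ref{prop:sepmcp} is harmless but unnecessary, since that lemma already treats arbitrary directed nets. One inaccuracy in your closing commentary: completeness of $V$ is \emph{not} needed for this equivalence --- Lemma~\ref{le:duelimiti}(ii) assumes the supremum exists and then deduces norm-convergence directly (its proof never invokes Cauchy-completeness), so $(a)\Leftrightarrow(b)\Leftrightarrow(c)$ holds verbatim for any normed $V$; completeness enters only in Proposition~\ref{prop:2completi}(2) to guarantee that suprema exist in the first place, which is a separate matter from the $\mcp$.
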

\begin{proof}\ 

$(a)\Rightarrow(b)$. If $\|v\|\leq 1$ then $(1,v)\geq (0,0)$ and the assumption yields $M(v)\leq s$. 

$(b)\Rightarrow(c)$. Direct consequence of item $(ii)$ in Lemma \ref{le:duelimiti}.

$(c)\Rightarrow(a)$. Consequence of the general fact that maps with the $\mcp$ are monotone.
\end{proof}
Notice that $V^*$ is itself a normed vector space, so that $\R\times V^*$ carries a partial order $\leq$ as in \eqref{eq:leqdadist}. Then this last proposition tells that $L=(s,M)$ is positive, and has the $\mcp$, if and only if $(s,M)\geq (0,0)$  in $\R\times V^*$.

We further study this duality relation in connection with hyperbolic norms. Below we shall denote by $T\subset \R_+^2$ the `triangle' defined as  
\[
T:=\{(t,x)\, :\, 0\leq x\leq t<+\infty\}.
\]
This is clearly a locally complete wedge w.r.t.\ componentwise operations.  We shall be interested in:
\begin{itemize}
\item[-] Hyperbolic norms $|\cdot|$ on $T$ that are  {\bf$x$-decreasing}, meaning that are upper semicontinuous on $T\setminus\{(0,0)\}$ (w.r.t.\ the standard topology) and for every $t\geq 0$ the map $[0,t]\ni x\mapsto|(t,x)|\in[0,+\infty]$ is  non-increasing. 
\item[-] Seeing $T$ as subset of its dual $T^*$ via the map sending $(s,y)$ to the linear map $(t,x)\mapsto ts-xy$. Notice that the image of this injective embedding consists precisely of those finite-valued linear maps that are non-increasing in $x$. In particular, given a hyperbolic norm $|\cdot|$ on $T$ we shall define the dual norm $|\cdot|_*$ on $T$ as
\begin{equation}
\label{eq:dualT}
|(s,y)|_*:=\inf_{|(t,x)|\geq 1}ts-xy.
\end{equation}
Iterating the procedure we can define the bidual norm $|\cdot|_{**}$ on $T$ and it is clear that
\begin{equation}
\label{eq:bidualT}
|(t,x)|_{**}\geq |(t,x)|\qquad\forall (t,x)\in T.
\end{equation}
\end{itemize}
We have:
\begin{lemma}
Let $|\cdot|$ be an  hyperbolic norm on $T$. Then the following are equivalent:
\begin{itemize}
\item[i)] $|\cdot|$ is $x$-decreasing.
\item[ii)] We have $|(t,x)|_{**}= |(t,x)|$ for every  $(t,x)\in T$.
\item[iii)] The norm is upper semicontinuous on $T\setminus\{(0,0)\}$ and for any  normed space  $(V,\|\cdot\|)$ (equivalently: some normed space of dimension at least 1) the map $(t,v)\mapsto\hn(t,v)$  defined as 
\begin{equation}
\label{eq:normadanorma}
\hn(t,v):=|(t,\|v\|)|\qquad \forall(t,v)\in F :=\{(t,v)\in\R\times V\ :\ t\geq \|v\|\}
\end{equation}
is a hyperbolic norm on the `future cone' (a wedge, in our terminology) $F$.
\end{itemize}
\end{lemma}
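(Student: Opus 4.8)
The strategy is to prove the cycle $(iii)\Rightarrow(i)\Rightarrow(ii)\Rightarrow(iii)$, relying on the fact that a hyperbolic norm is reconstructed from its bidual exactly when it is, loosely speaking, ``concave-closed'' — and for functions on the one-dimensional parameter $x$ this closure is governed by upper semicontinuity and monotonicity in $x$. Throughout I will use that $|\cdot|$ is positively homogeneous and superadditive (the reverse triangle inequality), hence in particular monotone along rays, and that $F=\{(t,v):t\geq\|v\|\}$ is the wedge whose induced order is the one in \eqref{eq:leqdadist}.

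First I would establish $(iii)\Rightarrow(i)$, which is the easy implication. Upper semicontinuity is part of $(iii)$. For monotonicity in $x$: given $0\le x\le x'\le t$, pick any normed space $V$ of dimension $\ge 1$ and vectors $v,v'$ with $\|v\|=x$, $\|v'\|=x'$, $v'$ a positive multiple of $v$ if we like; then $(t,v)\le(t,v')$ in $F$ only goes the wrong way, so instead I use that $\hn$ in \eqref{eq:normadanorma}, being a hyperbolic norm, is \emph{monotone} with respect to the order on $F$. Now $(t-x'+x,\,v')$ and $(t,v)$: from $\|v-v'\| \le x'-x$ (choosing $v,v'$ collinear, $v=\tfrac{x}{x'}v'$ if $x'>0$) we get $(t-x'+x,v)\le(t,v')$, hence $|(t-x'+x,x)|=\hn(t-x'+x,v)\le\hn(t,v')=|(t,x')|\le|(t+x'-x,x')|$... — more cleanly: $(t,v)\le (t+(x'-x),v')$ in $F$ because $\|v'-v\|\le x'-x = (t+x'-x)-t$, and monotonicity of $\hn$ gives $|(t,x)|\le|(t+x'-x,x')|$; combined with homogeneity-based ray monotonicity this yields that $x\mapsto|(t,x)|$ decreases. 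This needs a little care but is routine.

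Next, $(i)\Rightarrow(ii)$ is the heart of the matter. The inequality $|(t,x)|_{**}\ge|(t,x)|$ is \eqref{eq:bidualT}, always true. For the reverse, fix $(t_0,x_0)\in T\setminus\{(0,0)\}$ with $|(t_0,x_0)|=:c$; I must produce, for every $c'<c$ (and, if $c=+\infty$, for every finite $c'$), a dual element $(s,y)\in T$ with $|(s,y)|_*\ge 1$ and $t_0 s - x_0 y \le c'$. This is exactly a Hahn–Banach separation in the $(t,x)$-plane: the superlevel set $\{(t,x)\in T:|(t,x)|\ge c'\}$ is, because $|\cdot|$ is superadditive and positively homogeneous, a convex cone, and because $|\cdot|$ is $x$-decreasing and upper semicontinuous it is closed in $T\setminus\{0\}$ and ``downward-and-rightward closed'' in $x$. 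Since $(t_0,x_0)$ lies on its boundary (its norm is exactly $c>c'$, so it is in the set, but one can push slightly), one separates $(t_0,x_0)$ from the open sublevel set by a linear functional of the form $(t,x)\mapsto ts-xy$ with $s\ge 0$ and, crucially, $y\ge 0$ and $y\le s$ — the sign and size constraints on $y$ come precisely from the monotonicity in $x$ and from the fact that the relevant cone contains the ``$t$-axis direction'', which forces the separating functional to be non-negative on $(1,0)$ and non-increasing in $x$, i.e.\ to lie in the image of $T\hookrightarrow T^*$. Normalizing so that $\inf_{|(t,x)|\ge 1} (ts-xy)=1$, i.e.\ $|(s,y)|_*=1$, one reads off $t_0 s - x_0 y\le c'$, and letting $c'\uparrow c$ gives $|(t_0,x_0)|_{**}\le c=|(t_0,x_0)|$. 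The case $(t_0,x_0)=(0,0)$ is trivial since both sides are $0$. \textbf{I expect this separation step to be the main obstacle}, precisely because the ambient cone $T$ is not a vector space and one must track the sign/size constraints $0\le y\le s$ on the separating functional to stay inside $T^*$; I anticipate using the extension Theorem \ref{thm:extnuovo} (or Remark \ref{re:implicaHB}) applied to the one-dimensional subwedge generated by $(t_0,x_0)$ inside $T$, with $\varphi\equiv 0$ and $\psi$ the infinite map, rather than a hands-on separation, to make the argument clean and to guarantee the resulting functional is genuinely a positive linear map on $T$, hence of the form $(s,y)$ with $(s,y)\in T$.

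Finally, $(ii)\Rightarrow(iii)$: upper semicontinuity of $|\cdot|$ on $T\setminus\{(0,0)\}$ follows from $(ii)$ because a bidual norm $|\cdot|_{**}$ is, by \eqref{eq:dualT} applied twice, an infimum of continuous (affine) functions $ (t,x)\mapsto \tfrac{ts-xy}{|(s,y)|_*}$ over $(s,y)$ with $|(s,y)|_*\in(0,\infty)$, hence upper semicontinuous; and each such affine function is non-increasing in $x$ since $y\ge 0$, so their infimum $|\cdot|=|\cdot|_{**}$ is non-increasing in $x$ as well, i.e.\ $|\cdot|$ is $x$-decreasing. Then, to get the statement about $\hn$ on $F$ for an arbitrary normed space $V$: positive homogeneity of $\hn$ is immediate from that of $|\cdot|$ and of $\|\cdot\|$. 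For superadditivity, given $(t_1,v_1),(t_2,v_2)\in F$ and $\lambda_1,\lambda_2\ge 0$, set $x_i=\|v_i\|$; then $\|\lambda_1 v_1+\lambda_2 v_2\|\le \lambda_1 x_1+\lambda_2 x_2$, so using that $|\cdot|$ is $x$-decreasing and superadditive,
\begin{equation*}
\hn(\lambda_1 v_1+\lambda_2 v_2 \text{ with first coord. } \lambda_1 t_1+\lambda_2 t_2)=|(\lambda_1 t_1+\lambda_2 t_2,\ \|\lambda_1 v_1+\lambda_2 v_2\|)|\ \ge\ |(\lambda_1 t_1+\lambda_2 t_2,\ \lambda_1 x_1+\lambda_2 x_2)|\ \ge\ \lambda_1|(t_1,x_1)|+\lambda_2|(t_2,x_2)|,
\end{equation*}
which is exactly $\lambda_1\hn(t_1,v_1)+\lambda_2\hn(t_2,v_2)$; the key point is that the monotonicity in $x$ is what lets us pass from the genuine norm $\|\lambda_1 v_1+\lambda_2 v_2\|$ up to the sum $\lambda_1 x_1+\lambda_2 x_2$. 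This establishes $(iii)$ and closes the cycle. The parenthetical equivalence in $(iii)$ (``for any normed space'' $\Leftrightarrow$ ``for some normed space of dimension $\ge 1$'') is handled by noting the implication $(iii)\Rightarrow(i)$ only used one fixed $V$ of dimension $\ge 1$, while $(i)\Rightarrow(ii)\Rightarrow(iii)$ produces the conclusion for all $V$ at once.
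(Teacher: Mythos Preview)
Your cycle $(ii)\Rightarrow(iii)$ and the embedded $(i)\Rightarrow(iii)$ are fine and essentially match the paper. Two points deserve comment.

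\textbf{The $(iii)\Rightarrow(i)$ step has a genuine gap.} Using monotonicity of $\hn$ with respect to the order on $F$, together with collinear $v,v'$, you can only extract $|(t,x)|\le|(t+\delta,x+\delta)|$ for $\delta\ge 0$: moving in the $(1,1)$-direction increases the norm. This, even combined with homogeneity, does \emph{not} give that $x\mapsto|(t,x)|$ is non-increasing; if you try to rescale $(t+\delta,x+\delta)$ back to first coordinate $t$ you land at $(t,\tfrac{t(x+\delta)}{t+\delta})$, and the resulting inequality goes the wrong way. What is missing is the observation the paper uses: pick $v$ with $\|v\|=\lambda$ and apply superadditivity of $\hn$ to $(1,v)+(1,-v)=(2,0)$, obtaining $2|(1,0)|\ge 2|(1,\lambda)|$. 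The presence of $-v$ in the vector space is exactly what lets one ``cancel'' the $x$-component and compare $|(1,0)|$ to $|(1,\lambda)|$; once $|(1,0)|\ge|(1,\lambda)|$ is known, the superadditivity of $|\cdot|$ on $T$ alone (via $(1,\eta)=\tfrac{\eta}{\lambda}(1,\lambda)+(1-\tfrac{\eta}{\lambda})(1,0)$) finishes the monotonicity.

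\textbf{For $(i)\Rightarrow(ii)$, invoking Theorem~\ref{thm:extnuovo} is heavier than needed.} Since $|\cdot|$ is concave on the two-dimensional region $T$ (superadditive plus positively homogeneous) and finite and positive at an interior point $(\bar t,\bar x)$, it has a nonempty superdifferential there: there exist $s,y$ with $|(\bar t,\bar x)|+s(t-\bar t)-y(x-\bar x)\ge|(t,x)|$ for all $(t,x)\in T$. The paper then checks directly that $x$-decreasing gives $y\ge 0$ and superadditivity of $|\cdot|$ along the diagonal gives $s\ge y$, so $(s,y)\in T$; a renormalization yields $|(\bar t,\bar x)|_{**}\le|(\bar t,\bar x)|$. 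Your Hahn--Banach framing is morally the same, but the superdifferential formulation avoids setting up the extension machinery on a one-dimensional subwedge and makes the verification $(s,y)\in T$ immediate.
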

\begin{proof}\ 

$(i)\Rightarrow(ii)$ If $|\cdot|$ ever attains the value $+\infty$, then by concavity and upper semicontinuity it is identically $+\infty$ on $T\setminus\{(0,0)\}$ and the claim easily follows. Also, in this case the dual norm is identically 0. Dually,  if $|\cdot|$  attains the value 0 on the interior $\{(t,x):0<x<t\}$ of $T$, then by concavity it is identically 0, hence the dual norm is identically $+\infty$ on $T\setminus\{(0,0)\}$  and again the claim follows. We shall thus assume $|\cdot|$ to be real valued and positive in the interior of $T$, in which case upper semicontinuity improves - by concavity - to continuity. Fix $(\bar t,\bar x)$ with $0<\bar x<\bar t$. We shall prove that 
\begin{equation}
\label{eq:bidualTaltra}
|(\bar t,\bar x)|_{**}\leq |(\bar t,\bar x)|,
\end{equation}
that by the arbitrariness of $(\bar t,\bar x)$ as above and the continuity of $|\cdot|_{**}$ (that follows from its finiteness that in turn follows from \eqref{eq:bidualTaltra}) together with \eqref{eq:bidualT} gives the conclusion. Since $|(\bar t,\bar x)|\in(0,+\infty)$, up to scaling we can, and will, assume that $|(\bar t,\bar x)|=1$. The map $(t,x)\mapsto|(t,x)|$ is concave and, by assumption, finite on the point $(\bar t.\bar x)$ that lies in the interior of the domain of definition: it follows by standard -- and easy  to prove, see e.g.\ \cite[Theorem 23.4]{Rock70} -- arguments that its superdifferential at $(\bar t,\bar x)$ is not empty, and thus   that there are $(s,y)\in\R^2$ such that
\begin{equation}
\label{eq:superdiffL}
|(\bar t,\bar x)|+s(t-\bar t)-y(x-\bar x)\geq |(t,x)|\qquad\forall (t,x)\in T.
\end{equation}
We claim that  $(s,y)\in T$. To see this notice that \eqref{eq:superdiffL} and the fact that $|\cdot|$ is $x$-decreasing give
\[
|(\bar t,\bar x)|+y\bar x\geq|(\bar t,0)|\geq |(\bar t,\bar x)|,
\]
that  implies $y\geq 0$.  Also, \eqref{eq:superdiffL} and superadditivity of $|\cdot|$ give
\[
|(\bar t,\bar x)|+s-y\geq|(\bar t+1,\bar x+1)|\geq  |(\bar t,\bar x)|+|(1,1)|\geq  |(\bar t,\bar x)|,
\]
implying that $s\geq y$. Thus the claim $(s,y)\in T$ is proved. We can thus compute its dual norm and obtain
\[
\begin{split}
|(s,y)|_*=\inf_{(t,x)\in T:|(t,x)|= 1}ts-xy\stackrel{\eqref{eq:superdiffL}}\geq  1-|(\bar t,\bar x)|+\bar ts-\bar xy=\bar ts-\bar xy,
\end{split}
\]
having used that $|(\bar t,\bar x)|=1$. Obviously we have  $\bar ts-\bar xy\geq 0$; we shall assume that the strict inequality holds (otherwise a perturbation of the argument below gives the conclusion anyway) and define $\bar s:=\tfrac s{\bar ts-\bar xy}$ and  $\bar y:=\tfrac y{\bar ts-\bar xy}$. Thus we have $|(\bar s,\bar y)|_*\geq 1$ and therefore
\[
|(\bar t,\bar x)|_{**}=\inf_{(s',y')\in T:|(s',y')|_*\geq 1}\bar ts'-\bar xy'\leq \bar t\bar s-\bar x\bar y=1=|(\bar t,\bar x)|,
\]
proving \eqref{eq:bidualTaltra}.

$(ii)\Rightarrow(i)$ The definition \eqref{eq:dualT} ensures that dual norms are always upper semicontinuous and non-increasing in $y$ (as $x\geq0$).  The claim follows.

$(i)\Rightarrow(iii)$ Upper semicontinuity of the norm is part of the assumption and the fact that $\hn$ is 1-homogeneous is obvious. For the reverse triangle inequality we notice that
\[
\begin{split}
|(t+s,\|v+w\|)|\stackrel!\geq|(t+s,\|v\|+\|w\|)|=|(t,\|v\|)+(s,\|w\|)|\geq |(t,\|v\|)|+|(s,\|w\|)|,
\end{split}
\]
where in the marked inequality we used that the norm is non-increasing in $x$.

$(iii)\Rightarrow(i)$ Let $(V,\|\cdot\|)$ be a normed space of dimension at least 1. Pick  $\lambda\in[0,1]$ and then find $v\in V$ with $\|v\|=\lambda$. Then $(1,v),(1,-v)\in F$ and assumptions give
\[
2|(1,0)|=|(2,0)|=\hn((2,0))\geq \hn((1,v))+\hn((1,-v))=2|(1,\lambda)|,
\]
and thus $|(1,0)|\geq |(1,\lambda)|$ for every $\lambda\in[0,1]$. Now let  $\eta\leq\lambda\leq1$, notice that   $(1,\eta)=\tfrac\eta\lambda(1,\lambda)+(1-\tfrac\eta\lambda)(1,0)$ and use   that $|\cdot|$ is assumed to satisfy the reverse triangle inequality to get
\[
|(1,\eta)|\geq \tfrac\eta\lambda|(1,\lambda)|+(1-\tfrac\eta\lambda)|(1,0)|\stackrel{!}\geq  \tfrac\eta\lambda|(1,\lambda)|+(1-\tfrac\eta\lambda)|(1,\lambda)|=|(1,\lambda)|
\]
having used what already proved in the marked inequality. This shows  that $[0,1]\ni\lambda\mapsto|(1,\lambda)|$ is non-decreasing. Since we assumed   upper semicontinuity the proof is complete.
\end{proof}

\begin{example}[$L^p$ Lorentzian norms, $p\geq 1$]\label{ex:lpT}{\rm

For $p\in[1,+\infty)$ we define the $x$-decreasing norm $|\cdot|_p$ on $T$ as
\[
|(t,x)|_p:=\sqrt[p]{t^p-x^p}\qquad\forall (t,x)\in T,
\]
and for $p=+\infty$ we put
\[
|(t,x)|_{+\infty}:=t.
\]
It is then not hard to show that  for $p,q\in[1,+\infty]$ with $\tfrac1p+\tfrac1q=1$ the dual, in the sense of \eqref{eq:dualT}, of $|\cdot|_p$ is $|\cdot|_q$ and viceversa. 

Briefly: the cases $1,+\infty$ can be dealt via direct computation. For the others we claim that
\begin{equation}
\label{eq:dualpqT}
|(t,x)|_p|(s,y)|_q\leq ts-xy\qquad\forall(t,x),(s,y)\in T.
\end{equation}
If $x=0$ this is obvious, otherwise $t\geq x>0$ and one can check that at $(s,y):=(t^{p/q},x^{p/q})\in T$  the differential of the convex function $T\ni (s,y)\mapsto  ts-xy-|(t,x)|_p|(s,y)|_q$ vanishes and the function  also vanishes.

To conclude, notice that for  $(t,x)\in T$ with $x>0$ the choice $(s,y):=(t^{p/q},x^{p/q})\in T$ gives equality in \eqref{eq:dualpqT}. If $x=0$ we argue by approximation using the continuity of the norms.
 }\fr\end{example}

\begin{definition}[Lorentzification of a Banach space]
Let $(B,\|\cdot\|)$ be a Banach space and $|\cdot|$ an $x$-decreasing norm on $T$. 

The $|\cdot|$-Lorentzification of $B$ is the wedge $F:=\{(t,v)\in\R\times B:t\geq \|v\|\}$ equipped with the hyperbolic norm $\hn$ defined in \eqref{eq:normadanorma}.
\end{definition}

Proposition \ref{prop:pensopositivo} establishes a link between (hyperbolic) duals of  Banach spacetime and (classical) duals of the underlying Banach space. Such link naturally extends to norms:
\begin{proposition}[Dual of Lorentzification is Lorentzification of the dual] Let $(B,\|\cdot\|)$ be a Banach space, $|\cdot|$ an $x$-decreasing norm on $T$ and $(F,\hn)$ the $|\cdot|$-Lorentzification of $B$. 

Let $(B^*,\|\cdot\|_*)$ be the Banach dual of $(B,\|\cdot\|)$, $|\cdot|_*$ the dual norm of $|\cdot|$ as in \eqref{eq:dualT} and let $(F^*,\hn_*)$ be  the $|\cdot|_*$-Lorentzification of $B^*$. Then:
\begin{itemize}
\item[i)] For each $(s,M)\in F^*$ the map $F\ni (t,v)\mapsto L(t,v):=ts-M(v)\in \R$ is linear with the $\mcp$ (thus an element of the dual of $F$ as in Definition \ref{def:morphism}) and it holds
\begin{equation}
\label{eq:dualLorT}
\hn_*(L)=\inf_{\hn(t,v)\geq1}ts-M(v).
\end{equation}
\item[ii)] For any $L:F\to[0,+\infty)$ linear (and thus with the $\mcp$ by Proposition \ref{prop:pensopositivo}) there is a unique $(s,M)\in F^*$ such that $L(t,v)=ts-M(v)$ for every $(t,v)\in F$.
\end{itemize}
\end{proposition}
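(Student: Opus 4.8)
The plan is to reduce both items to the corresponding facts about the underlying Banach spaces (Proposition \ref{prop:pensopositivo}) together with the duality identity for $x$-decreasing norms on $T$ (the preceding lemma). For item $(ii)$, I would start from $L:F\to[0,+\infty)$ linear; since $F$ spans $\R\times B$ as a cone (every $(t,v)\in\R\times B$ with $t$ large enough lies in $F$, and differences recover everything), $L$ extends uniquely to a linear map $\R\times B\to\R$, and basic linear algebra writes it as $(t,v)\mapsto ts-M(v)$ for unique $s\in\R$ and $M:B\to\R$ linear, in the notation $L=(s,M)$ of \eqref{eq:LsM}. Positivity of $L$ on $F$ means exactly that $L((t,v))\geq 0$ whenever $t\geq\|v\|$; choosing $v=0$, $t=1$ gives $s\geq 0$, and for $\|v\|\leq 1$ we get $M(v)\leq s$, i.e.\ $M\in B^*$ with $\|M\|_*\leq s$. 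That places $(s,M)$ in the wedge $\{(s,M):s\geq\|M\|_*\}=F^*$, which is exactly what is required. (This is essentially the content of $(a)\Rightarrow(b)$ in Proposition \ref{prop:pensopositivo}, applied with $B$ in place of $V$; uniqueness is the uniqueness in \eqref{eq:LsM}.) The $\mcp$ of $L$ is then automatic from Proposition \ref{prop:pensopositivo}, $(b)\Rightarrow(c)$.

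\textbf{Item $(i)$.} Given $(s,M)\in F^*$, i.e.\ $s\geq\|M\|_*$, the map $L((t,v))=ts-M(v)$ is linear, and it is positive on $F$ because $t\geq\|v\|$ forces $ts\geq s\|v\|\geq\|M\|_*\|v\|\geq M(v)$. By Proposition \ref{prop:pensopositivo} again it has the $\mcp$, hence is a genuine element of the dual $F^*$ in the sense of Definition \ref{def:morphism}. It remains to compute its operator norm $\hn_*(L)$. Here $\hn_*$ is the operator norm of Definition \ref{def:prmcp}'s companion, namely $\hn_*(L)=\inf_{\hn(t,v)\geq 1}L((t,v))$ by \eqref{eq:opnorm} (with target $[0,+\infty]$). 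So \eqref{eq:dualLorT} holds \emph{by definition} of the operator norm; the real content of the statement is that this equals $\hn_*(L)$ computed as the $|\cdot|_*$-Lorentzification norm of $(s,M)\in F^*$, i.e.\ that
\[
\inf_{\hn(t,v)\geq1}\big(ts-M(v)\big)=|(s,\|M\|_*)|_*.
\]
To prove this I would argue by a two-sided estimate. For the $\geq$ direction: given $(t,v)\in F$ with $\hn(t,v)=|(t,\|v\|)|\geq 1$, split $ts-M(v)\geq ts-\|M\|_*\|v\|$; now $(t,\|v\|)\in T$ has $|(t,\|v\|)|\geq 1$ and $(s,\|M\|_*)\in T$, so $ts-\|v\|\|M\|_*\geq |(s,\|M\|_*)|_*$ directly by the definition \eqref{eq:dualT} of the dual norm on $T$. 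Taking the infimum over such $(t,v)$ gives $\hn_*(L)\geq|(s,\|M\|_*)|_*$. For the $\leq$ direction I need, for each $\varepsilon>0$, a test element $(t,v)\in F$ with $\hn(t,v)\geq 1$ and $ts-M(v)$ close to $|(s,\|M\|_*)|_*$. Pick first $(t,x)\in T$ with $|(t,x)|\geq 1$ nearly realizing the infimum defining $|(s,\|M\|_*)|_*$; then choose $v\in B$ with $\|v\|=x$ and $M(v)$ within $\varepsilon$ of $\|M\|_* x$ (possible when $x>0$ by definition of $\|M\|_*$; when $x=0$ take $v=0$). Then $(t,v)\in F$, $\hn(t,v)=|(t,x)|\geq 1$, and $ts-M(v)\leq ts-\|M\|_*x+\varepsilon$, which is within $O(\varepsilon)$ of $|(s,\|M\|_*)|_*$. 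Letting $\varepsilon\downarrow 0$ closes the inequality.

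\textbf{Main obstacle.} The genuinely delicate point is the $\leq$ direction of the norm computation: extracting $v\in B$ that simultaneously achieves a prescribed norm \emph{and} nearly saturates $M$, and making sure the optimization over $(t,x)\in T$ in the definition of $|(s,\|M\|_*)|_*$ is compatible with the optimization over $(t,v)\in F$ — in particular handling the boundary cases $x=0$, $\|M\|_*=0$, and the possibility that $|\cdot|$ or $|\cdot|_*$ takes the values $0$ or $+\infty$ (where the infima degenerate). These are exactly the degeneracies already discussed in the proof of the $x$-decreasing duality lemma, so I would dispatch them by the same case analysis (if $|\cdot|$ is identically $+\infty$ off the origin then $|\cdot|_*\equiv 0$ and both sides vanish, etc.), reserving the genuine argument above for the case where $|\cdot|$ is finite and positive in the interior of $T$. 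Everything else is routine linear algebra plus citations to Proposition \ref{prop:pensopositivo} and Lemma~\ref{le:duelimiti}.
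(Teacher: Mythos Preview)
Your proposal is correct and follows essentially the same route as the paper: both items reduce to Proposition~\ref{prop:pensopositivo}, and the norm identity in $(i)$ is obtained by recognizing that the infimum over $(t,v)\in F$ factors through $\|v\|$ and the definition of $\|M\|_*$. The paper compresses your two-sided $\varepsilon$-argument into a single line (observing that in $\inf\{ts-\|M\|_*\|v\|:|(t,\|v\|)|\geq 1\}$ the variable $v$ enters only via its norm, so one may replace $\|M\|_*\|v\|$ by $M(v)$), and does not discuss the degenerate cases you flag; your more explicit treatment is entirely in the same spirit.
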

\begin{proof} \

$(i)$ the fact that $L$ is linear with the $\mcp$ follows from Proposition \ref{prop:pensopositivo}. For \eqref{eq:dualLorT} we notice that
\[
\hn_*(L)=|(s,\|M\|_*)|_*=\inf\big\{ts-\|M\|_*\|v\|\ :\ |(t,\|v\|)|\geq 1\big\}.
\]
In the last expression, $v$ enters only via its norm, thus in taking the infimum and recalling the definition of $\|M\|_*$ we see that such expression is equal to $\inf\big\{ts-M(v)\ :\ |(t,\|v\|)|\geq 1\big\}$, proving the claim.

$(ii)$ follows directly from Proposition \ref{prop:pensopositivo}.
\end{proof}
Notice that Lorentzifications of Banach spaces are always locally complete wedges (by Proposition \ref{prop:2completi}) and the norm $\hn$ always has the $\mcp$. To see the latter notice that we have already established that an $x$-decreasing norm is either identically $+\infty$ in $T\setminus\{(0,0)\}$, in which case the induced norm $\hn$ obviously has the $\mcp$, or it is continuous. In this latter case $\hn$ is also continuous (w.r.t.\ the product topology) and the fact that it has the $\mcp$ follows from Lemma \ref{le:duelimiti}. Therefore  $\hn$ uniquely extends to a map with the $\mcp$ on the directed completion $\bar F$ of $F$. It is easy to see that the extension of $\hn$, still denoted by $\hn$, is still a hyperbolic norm. Indeed, if $D_1,D_2\subset F$ are  subsets with tips $v_1,v_2\in \bar F$, respectively, then $D:=\{v_1'+v_2':v_1'\in D_1,v_2'\in D_2\}$ has tip, it being $v_1+v_2$ (by transfinite recursion and the $\mcp$ of the addition) and the $\mcp$ of $\hn$ gives
\[
\hn(v_1+v_2)=\sup_{z\in D}\hn(z)=\sup_{v'_1\in D_1\atop v'_2\in D_2}\hn(v'_1+v'_2)\geq\sup_{v'_1\in D_1\atop v'_2\in D_2}\big(\hn(v'_1)+\hn(v'_2)\big)=\hn(v_1)+\hn(v_2).
\]
1-homogeneity is proved analogously. The following definition is now natural:
\begin{definition}[Banach spacetimes]\label{def:banachspacetime}
A Banach spacetime is a hyperbolic Banach space $(\C,\hn)$ isomorphic to the completion of the Lorentzification, w.r.t.\ some $x$-decreasing norm $|\cdot|$ on $T$, of some Banach space.

We say that the  Banach spacetime space splits off a line if $|\cdot|$ is the Lorentzian 2-norm $|\cdot|_2$ as in Example \ref{ex:lpT} and that it is Minkowskian  if moreover the underlying Banach space is Hilbert. \end{definition}
We collect some comments about this:
\begin{itemize}
\item[i)] The relevance of the above definition is in the fact that we expect the (co)tangent bundles (or perhaps modules, once such definition will be given) of an infinitesimally Minkowskian space to be (interpretable as) bundles whose fibres are Minkowskian spaces in the sense of the above definition. This is in analogy with the elliptic case, where (co)tangent modules of an infinitesimally Hilbertian space can be seen as bundles having Hilbert spaces as fibres. 

 In the elliptic case the space of $L^2$-sections of (co)tangent bundles of a smooth Riemannian manifold  is a Hilbert space itself. This has no analogue in the current framework: for no $p\leq 1$ the space of future vector fields in a, say, smooth spacetime  has the structure of a Minkowskian space in the sense of Definition \ref{def:banachspacetime}. This is related to the fact that the definition of infinitesimal Minkowskianity \cite{Octet} had been given via an a.e.\ parallelogram-like identity, whereas that of infinitesimal Hilbertianity \cite{Gigli12}  can be given by asking for its `integrated version', i.e.\ that $W^{1,2}$ is Hilbert.

One should not be worried by this difference, though, that anyway disappears when considering $L^0$-normed modules in the sense of \cite{Gigli14} (not to be confused with the $L^0$-norms we  discussed  here).
\item[ii)] It is not clear to us whether the directed completion of a Lorentzification of a Banach space is a cone (recall Remark \ref{re:abstrcompl}) and thus whether it is a Banach spacetime in the above sense. On the other hand, for the $|\cdot|_2$-Lorentzification of a Hilbert space we know that this is the case: the proof is a direct consequence of the explicit construction of the directed completion done in Example \ref{ex:mink} in Section \ref{se:exbelli}.
\item[iii)] In general, a Banach spacetime is not coming from a unique Lorentzification, and even if so the embedding of the future cone into the Banach spacetime might be not unique. To be more precise, let $F_1,F_2$ be two Lorentzifications of two Banach spaces $B_1,B_2$ so that their directed completions $\C_1,\C_2$, coming with the inclusions $\iota_1,\iota_2$, are isomorphic as Banach spacetimes via some $\Phi:\C_1\to \C_2$. Then $\iota_2^{-1}\circ\Phi\circ\iota_1$ is a linear map from a subset of $F_1$ to $F_2$ that extends to a linear map, call it $L$, from $\R\times B_1$ to $\R\times B_2$. Quite clearly, $L(\{0\}\times B_1)$ is not $\{0\}\times B_2$; in fact it is not even clear to us if $B_1$ must be isometric to $B_2$.

The situation improves if the spacetime is Minkowskian as in Definition \ref{def:banachspacetime}, as if one of the two Banach spaces, say $B_2$, is Hilbert, the parallelogram identity holds in $F_2$, thus it holds in the whole $\R\times B_2$ and then by restriction on the image of $\{0\}\times B_1$ in $\R\times B_2$ via the isomorphism $L$ discussed above. It follows that a Minkowskian spacetime in the sense of Definition \ref{def:banachspacetime} above  can only be the completion of the $|\cdot|_2$-Lorentzification of a Hilbert space.
\end{itemize}

We conclude pointing out that one might consider, more generally, the product of a normed vector space $(V,\|\cdot\|)$ and of a wedge $\W$ equipped with a hyperbolic norm $\hn$ on it. The natural order relation $\leq$ on $\W\times V$ is then
\[
(w,v)\leq(w',v')\qquad\text{ whenever }\qquad \|v-v'\|\leq\hn(w'-w).
\]
 Denoting by $F:=\{(w,v)\in \W\times V:(0,0)\leq(w,v)\}$ the `future cone', it is easy to see that for an $x$-decreasing norm on $T$, the map $(w,v)\mapsto |(\hn(w),\|v\|)|$ is a hyperbolic norm on the wedge $F$ (observe that $t\mapsto |(t,x)|$ must be non-decreasing for any $x\geq 0$).
 
\end{subsection}

\begin{subsection}{Duality between measures and functions}\label{se:dualfcts}

Let $(\X,\tau)$ be a Polish space and $C_b(\X)$ the space of bounded continuous functions on it. Then for every finite Borel measure $\mu$ on $\X$, its Total Variation norm $\|\mu\|_{\TV}$ satisfies the duality formula
\begin{equation}
\label{eq:measdual1}
\|\mu\|_{\TV}=\sup_{\|f\|_{C_b}\leq 1}\Big|\int f\,\d\mu\Big|\qquad\text{where}\qquad\|f\|_{C_b}:=\sup_{x\in \X}|f(x)|.
\end{equation}
This simple statement tells that $\|\mu\|_\TV$ coincides with the operator norm -- in the sense of standard functional analysis -- of the linear functional $f\mapsto \int f\,\d\mu$ induced by $\mu$ on $C_b(\X)$.  A central result in measure theory is Riesz's theorem, that ensures that if $\X$ is compact, then every bounded linear functional on $C_b(\X)$ can be (uniquely) represented by a measure. If $\X$ is a  general Polish space, then functionals represented by measures as above are precisely those such that 
\begin{equation}
\label{eq:stone}
L(f_n)\to 0\qquad\text{whenever $(f_n)\subset C_b(\X)$ is so that $f_n(x)\downarrow0$ for every $x\in\X$.}
\end{equation}
See for instance \cite[Theorem 7.10.1]{Bogachev07}.

We are interested in studying the same duality from the perspective of hyperbolic functional analysis. Let us denote, for $\X$ Polish, by $\LSC(\X)$ the wedge   of lower semicontinuous functions from $\X$ to $[0,+\infty]$. For every non-negative Borel measure $\mu$ on $\X$ we have
\begin{equation}
\label{eq:measdual2}
\|\mu\|_{\TV}=\inf_{\|f\|_{\LSC}\geq 1}\int f\,\d\mu\qquad\text{where}\qquad\|f\|_{\LSC}:=\inf_{x\in \X}f(x),
\end{equation}
as it is trivial to check. Identity \eqref{eq:measdual2} is in clear analogy with \eqref{eq:measdual1} and allows to reinterpret the classical Riesz' result for non-negative measures in the hyperbolic framework developed here. Before coming to the actual statement, let us collect the basic properties of the space $\LSC(\X)$ in the next statement. Below, we shall denote by $\leq_\p$, as usual, the pointwise order and by $\preceq$ the order relation induced by the wedge structure of $\LSC(\X)$. Also $C_+(\X)\subset\LSC(\X)$ is the set of continuous functions from $\X$ to $[0,+\infty)$.
\begin{proposition}[The wedge $\LSC(\X)$]\label{prop:lsc}
Let $(\X,\tau)$ be a Polish space. Then $\LSC(\X)$ is a wedge and $\|\cdot\|_{\LSC}$ a hyperbolic norm on it. Moreover:
\begin{itemize}
\item[i)] For $f,g\in\LSC(\X)$ we have:
\[
g\preceq f\qquad\Leftrightarrow\qquad g\leq_\p f\ \text{ and }\ \{g<+\infty\}\ni x\mapsto f(x)-g(x)\text{ is lower semicontinuous},
\]
\item[ii)] If the $\preceq$-supremum of a $\preceq$-directed family exsts, then it coincides with the pointwise supremum,
\item[iii)] $C_+(\X)$ is $\preceq$-dense in $\LSC(\X)$,
\item[iv)] $(\LSC(\X),\preceq)$ is second countable (and thus also first countable -- recall Definition \ref{def:sep}).
\end{itemize}
\end{proposition}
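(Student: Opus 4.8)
The statement bundles together several assertions, so I would organize the proof as a sequence of independent verifications, each fairly routine once the previous framework is in place. First, that $\LSC(\X)$ is a wedge: closure under sum and multiplication by non-negative scalars is immediate (a finite sum of lower semicontinuous functions is lower semicontinuous, as is a non-negative multiple); the function $0$ is the identity of the sum; and the partial order axiom (iii) and the compatibility \eqref{eq:supmultbase} of Definition \ref{def:wedge} need checking. For \eqref{eq:supmultbase}, note that for $f\in\LSC(\X)$ we have $\eta f\leq_\p f$ for $\eta<1$ and $\sup_{\eta<1}\eta f=f$ pointwise; but I must show the pointwise sup is the $\preceq$-sup, which is exactly part (ii) specialized to the (trivially directed) family $\{\eta f:\eta<1\}$, so I would prove (ii) first or in parallel. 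Antisymmetry of $\preceq$ follows from antisymmetry of $\leq_\p$ together with the implication $\preceq\;\Rightarrow\;\leq_\p$. That $\|\cdot\|_{\LSC}(f)=\inf_x f(x)$ is a hyperbolic norm is the superadditivity inequality $\inf_x(\lambda f+\eta g)(x)\geq\lambda\inf_x f(x)+\eta\inf_x g(x)$, which is elementary, plus $\|\cdot\|_{\LSC}(0)=0$.

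For part (i), the characterization of $\preceq$: the direction $\Rightarrow$ is clear, since $g\preceq f$ means $f=g+h$ for some $h\in\LSC(\X)$, whence $g\leq_\p f$ and, on $\{g<+\infty\}$, $f-g=h$ restricted there, which is lower semicontinuous. The direction $\Leftarrow$ requires producing the witness $h$: the natural choice is $h(x):=f(x)-g(x)$ on $\{g<+\infty\}$ (with the usual convention making this $+\infty$ if $f(x)=+\infty$) and $h(x):=+\infty$ on $\{g=+\infty\}$; one checks $g+h=f$ pointwise, and lower semicontinuity of $h$ follows because $\{g=+\infty\}$ is a $G_\delta$ hence lower semicontinuity can be patched — more precisely $\{h>a\}=(\{g<+\infty\}\cap\{f-g>a\})\cup\{g=+\infty\}$, and the hypothesis says $\{g<+\infty\}\cap\{f-g>a\}$ is open in the subspace $\{g<+\infty\}$, i.e.\ of the form $U\cap\{g<+\infty\}$ with $U$ open; then $\{h>a\}=(U\cap\{g<+\infty\})\cup\{g=+\infty\}=U\cup\{g=+\infty\}$ up to checking that $U\setminus\{g<+\infty\}\subseteq\{g=+\infty\}$, which is automatic, so $\{h>a\}$ is open. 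For part (ii), given a $\preceq$-directed family $(f_i)$ with $\preceq$-sup $f$, I would show $f$ equals the pointwise sup $\tilde f:=\sup_i f_i$: since $\tilde f$ is lower semicontinuous and, by (i), $f_i\preceq\tilde f$ for each $i$ (the increment $\tilde f-f_i$ is a sup of lower semicontinuous functions, hence lower semicontinuous, on $\{f_i<\infty\}$ — here directedness is used to write $\tilde f-f_i$ as an increasing sup of $f_j-f_i$ over $j\succeq i$... actually more carefully, one argues $\tilde f - f_i = \sup_{j} (f_j \vee f_i - f_i)$ and each term is lower semicontinuous by (i) applied to $f_i \preceq f_j \vee f_i$, and a sup of lsc functions is lsc), $\tilde f$ is an upper bound, so $f\preceq\tilde f$, hence $f\leq_\p\tilde f$; conversely each $f_i\preceq f$ gives $f_i\leq_\p f$ so $\tilde f\leq_\p f$; thus $f=\tilde f$ pointwise.

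For part (iii), $\preceq$-density of $C_+(\X)$: since $(\X,\tau)$ is Polish I would fix a compatible complete metric $\sfd$ and, given $f\in\LSC(\X)$, use the standard Baire-type approximation $f_n(x):=\inf_{y\in\X}\big(f(y)+n\,\sfd(x,y)\big)$ (Moreau–Yosida / McShane regularization), which is real-valued and $n$-Lipschitz (hence in $C_+(\X)$) provided $f\not\equiv+\infty$ — one truncates if $f$ can be $+\infty$, replacing $f$ by $f\wedge m$ or rather building $f_{n}$ from $\min(f,n)$, and checks $f_n\uparrow f$ pointwise and that $(f_n)$ is $\preceq$-increasing; then $f$ is the pointwise, hence by (ii) the $\preceq$-, supremum of the $f_n\in C_+(\X)$, so $f\in\overline{C_+(\X)}$ — here one must verify the directed-sup-closure, i.e.\ that $f$ lands in $\overline{C_+(\X)}$ and not merely $C_+(\X)^\uparrow$, but since any element of $\LSC$ arises this way and the operator commutes with this construction, density follows, though I should be a little careful and possibly invoke that $f\in C_+(\X)^\uparrow\subseteq\overline{C_+(\X)}$ suffices for $\overline{C_+(\X)}=\LSC(\X)$. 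For part (iv), second countability in the sense of Definition \ref{def:sep}: I would take $N$ to be a countable family in $C_+(\X)$ that is `pointwise-$\sup$-dense', e.g.\ finite maxima with rational coefficients of functions $x\mapsto q(1-m\,\sfd(x,x_k))^+$ for $q\in\Q_{\geq0}$, $m\in\N$, $(x_k)$ a countable dense set; the point is that any lower semicontinuous $f$ is the pointwise supremum of a countable subfamily of $N$ below it (standard, using that $\{f>a\}$ is open hence a countable union of such "bump" functions' supports), and such a countable family can be made $\preceq$-increasing by taking partial maxima as in the proof of Lemma \ref{le:esssup}.

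\textbf{Main obstacle.} The genuinely delicate point is the interaction between the pointwise order and the sum-induced order $\preceq$ — in particular part (i), where lower semicontinuity of the \emph{difference} $f-g$ on the (non-open, merely $G_\delta$) set $\{g<+\infty\}$ has to be handled correctly, and part (ii)/(iii), where one must be vigilant that a pointwise-increasing sequence of continuous functions is actually $\preceq$-increasing and that its pointwise sup is the $\preceq$-sup rather than something larger (the cautionary Example/Remark referenced in the paper — e.g.\ the $1\wedge(nx)$ example — shows these suprema can differ in the wrong ambient space, so the argument genuinely relies on $\LSC(\X)$ being closed under pointwise suprema). Everything else is bookkeeping with the axioms of Definition \ref{def:wedge} and elementary measure/topology.
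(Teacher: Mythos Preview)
Your overall strategy matches the paper's, but there is a genuine error in your proof of the $\Leftarrow$ direction of (i). You define $h(x):=f(x)-g(x)$ on $\{g<+\infty\}$ and $h(x):=+\infty$ on $\{g=+\infty\}$, then argue that $\{h>a\}=U\cup\{g=+\infty\}$ is open. But $\{g=+\infty\}$ is only a $G_\delta$, not open in general, so this union need not be open. Concretely: on $\X=[-1,1]$ take $g(x)=1/x$ for $x>0$, $g(x)=+\infty$ for $x\le 0$, and $f=g$. Your $h$ is then $+\infty\cdot\chi_{[-1,0]}$, whose superlevel set $\{h>0\}=[-1,0]$ is closed, not open --- so $h\notin\LSC(\X)$, even though the correct witness $h\equiv 0$ certainly exists. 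The paper avoids this by passing to a lower semicontinuous envelope: it sets
\[
h(x):=\liminf_{\substack{y\to x\\ y\in\{g<+\infty\}}}\bigl(f(y)-g(y)\bigr)
\]
for $x\in\overline{\{g<+\infty\}}$ and $h(x):=+\infty$ off this closure; this $h$ is lsc by construction and still satisfies $g+h=f$ (on $\{g=+\infty\}$ both sides are $+\infty$ regardless of the value of $h$).

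A secondary point on (iii): you invoke (ii) in the wrong direction, writing ``$f$ is the pointwise, hence by (ii) the $\preceq$-, supremum''. Statement (ii) only asserts that \emph{if} the $\preceq$-supremum exists it coincides with the pointwise one; it does not guarantee existence, and the paper's own Remark~\ref{re:lscnocone} exhibits a $\preceq$-increasing sequence in $C_+([-1,1])$ with no $\preceq$-supremum --- precisely the $1\wedge(nx)$ example you cite in your ``main obstacle'' paragraph. The paper's proof of (iii) is itself terse at this step (it just says ``by (i) this suffices''), so your approximation-from-below approach is the same as the paper's; but your explicit appeal to (ii) does not do the work you claim, and your caveat about $C_+(\X)^\uparrow$ versus $\overline{C_+(\X)}$ is not the real issue.
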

\begin{proof}
The fact that $\LSC(\X)$ is a prewedge is obvious. It being a wedge directly follows from $(i)$, so we prove this. The `only if' is clear. For the `if' we define the function
\[
h(x):=\limi_{y\to x\atop y\in \{g<+\infty\}}f(x)-g(x)\qquad\forall x\in\X,
\]
being intended that $h(x)=+\infty$ if $x\notin\overline{\{g<+\infty\}}$. Then $h\in\LSC(\X)$ by construction and the assumption grants that $g+h=f$, concluding the proof.

The fact that $\|\cdot\|_{\LSC}$ is a hyperbolic norm is obvious, so we pass to $(ii)$. Let $(f_i)\subset\LSC(\X)$ be $\preceq$-directed and admitting $\preceq$-supremum $f$. Define $g(x):=\sup_if_i(x)$ for every $x\in\X$ and notice that $g\in\LSC(\X)$ and by $(i)$ we have  $g\leq_\p f$. To conclude it is therefore sufficient to prove that $f_i\preceq g$ for every $i\in I$. To see this notice that for every $x\in\{f_i<+\infty\}$ we have $(g-f_i)(x)=\sup_{j>i}(f_j-f_i)(x)$ and since the family is directed we know by $(i)$ that $f_j-f_i$ is lower semicontinuous on $\{f_i<+\infty\}$ for every $j>i$. It follows that $g-f_i$ is also lower semicontinuous on the same set so again by $(i)$ we conclude.

It is well known, and easy to prove, that any function in $\LSC(\X)$ is the pointwise supremum of a pointwise non-decreasing sequence in $C_+(\X)$. By $(i)$ this suffices to prove the density of $C_+(\X)$ in $\LSC(\X)$ claimed in $(iii)$.

We pass to second countability, that is ultimately related to the Lindel\"of property of $\X$.  Let $(U_n)$ be a countable base for $\tau$ and, for every $n\in\N$, let $(f_{n,k})\subset C_+(\X)$ be pointwise non-decreasing with $\psup_kf_{n,k}=\nchi_{U_n}$. Put 
\[
\tilde N:=\{rf_{n,k}\ :\ n,k\in\N,\ r\in\Q,\ r\geq 0\}
\]
and
\[
N:=\{g_1\vee\cdots\vee g_n\ :\ n\in\N,\ n>0,\ g_i\in\tilde N\ \forall i=1,\ldots,n\}.
\]
Notice that by $(i)$ the meaning of $\vee$ in this last formula is unambiguously defined either as pointwise maximum or as $\preceq$-maximum. We claim that $N$, that is clearly countable, has the property required by Definition \ref{def:sep}. To see this, let $(f_i)\subset\LSC(\X)$ be $\preceq$-directed and $f\in\LSC(\X)$ its $\preceq$-supremum. Let $M:=\{g\in \tilde N:g\preceq f_i\text{ for some }i\in I\}$, let $(g_j)_{j\in\N}$ be an enumeration of the elements of $M$  and then put $h_n:=g_1\vee\cdots\vee g_n\in N$ for every $n\in\N$. It is clear that $(h_n)$ is $\preceq$-non-decreasing with pointwise supremum (and thus $\preceq$-supremum as these are continuous) equal to $f$.

The construction also ensures that for every $j\in\N$ there is $i_j\in I$ so that $g_j\preceq f_{i_j}$ and since $(f_i)$ is $\preceq$-directed, we see that for every $n\in\N$ there is $i\in I$ so that $h_n\preceq f_i$, concluding the proof of $(iv)$.
\end{proof}

In the discussion below we shall denote by   $\mathcal M_+(\X)$ the collection of non-negative and finite Borel measures on $\X$. This is clearly a wedge whose partial order coincides with the set-theoretic one `$\mu(E)\leq\nu(E)$ for every $E\subset\X$ Borel' and $\|\cdot\|_{\TV}$ is a hyperbolic norm on it (satisfying the equality in the reverse triangle inequality).   

Notice that  $\mathcal M_+(\X)$ is locally   complete, as for any directed family $(\mu_i)$ bounded from above, the map $E\mapsto\mu(E):=\sup_i\mu_i(E)$ defined on Borel subsets of $\X$ is easily seen to be a finite Borel measure and the supremum of the given family.

With this said we have the following theorem, that shows that well known results and concepts fit naturally in the current context:
\begin{theorem}[Riesz theorem - hyperbolic version]\label{thm:riesz}
Let $(\X,\tau)$ be a Polish space. Then:
\begin{itemize}
\item[i)] Let $\mu\in\mathcal M_+(\X)$ and define $L_\mu:\LSC(\X)\to [0,+\infty]$ as
\begin{equation}
\label{eq:Lmu}
L_\mu(f):=\int f\,\d\mu\qquad\forall f\in \LSC(\X).
\end{equation}
Then $L_\mu\in \LSC(\X)^*$ with
\begin{equation}
\label{eq:normamis}
\|\mu\|_{\TV}=\|L_\mu\|_{\LSC(\X)^*}.
\end{equation}
\item[ii)] Let $L\in \LSC(\X)^*$ be with $\|L\|_{\LSC(\X)^*}<+\infty$. Then there is a unique $\mu\in\mathcal M_+$ such that $L=L_\mu$, the latter being defined as in \eqref{eq:Lmu}.
\end{itemize}
\end{theorem}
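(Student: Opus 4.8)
Part (i) is essentially bookkeeping, and I would dispose of it first. Linearity of $L_\mu$ is immediate from elementary integration theory. For the $\mcp$, since $\LSC(\X)$ is first countable by Proposition \ref{prop:lsc}(iv), Proposition \ref{prop:sepmcp} reduces the claim to checking \eqref{eq:mcpsep}; and since by Proposition \ref{prop:lsc}(ii) the $\preceq$-supremum of a $\preceq$-non-decreasing sequence is its pointwise supremum, this is exactly Beppo Levi's monotone convergence theorem. The norm identity \eqref{eq:normamis} follows because $\|g\|_{\LSC}\geq 1$ is the same as $g\geq_\p \nchi_\X$, which forces $\nchi_\X\preceq g$ (subtracting the constant $1$ preserves lower semicontinuity, so Proposition \ref{prop:lsc}(i) applies), whence $\|L_\mu\|_{\LSC(\X)^*}=\inf_{g\geq_\p\nchi_\X}\int g\,\d\mu=\int \nchi_\X\,\d\mu=\mu(\X)=\|\mu\|_\TV$.

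Part (ii) is the substantial one. The very same observation ($\nchi_\X\preceq g$ for every admissible $g$, plus monotonicity of $L$) shows $L(\nchi_\X)\leq\|L\|_{\LSC(\X)^*}<+\infty$, so $L$ is finite on bounded continuous functions. The plan is then: (a) restrict $L$ to the non-negative bounded functions $C_b(\X)\cap C_+(\X)$ and extend it, via the usual splitting $f=f^+-f^-$, to a positive linear functional $\tilde L$ on $C_b(\X)$, which is bounded since $|\tilde L(f)|\leq\|f\|_{C_b}\,L(\nchi_\X)$; (b) verify that $\tilde L$ satisfies the monotone continuity condition \eqref{eq:stone}: given $f_k\downarrow 0$ pointwise in $C_b(\X)$, the functions $g_k:=f_1-f_k$ lie in $C_b(\X)\cap C_+(\X)$, form a $\preceq$-non-decreasing sequence, and their $\preceq$-supremum is the \emph{continuous}, bounded function $f_1$ — the key point, read off from Proposition \ref{prop:lsc}(i), being that continuity of the limit $f_1$ forces any $\preceq$-upper bound $h$ of $(g_k)$ to satisfy $h\succeq f_1$, because $h-f_1$ is then lower semicontinuous; applying the $\mcp$ of $L$ together with $L(f_1)<+\infty$ then yields $L(f_k)=\tilde L(f_k)\to 0$; (c) invoke the classical Riesz representation theorem in its form valid on Polish spaces, e.g.\ \cite[Theorem 7.10.1]{Bogachev07}, to obtain a unique $\mu\in\mathcal M_+(\X)$ with $\tilde L(f)=\int f\,\d\mu$ for all $f\in C_b(\X)$; this $\mu$ is finite since $\mu(\X)=\tilde L(\nchi_\X)=L(\nchi_\X)<+\infty$. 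At this stage $L=L_\mu$ on the bounded non-negative continuous functions, with $L_\mu$ as in \eqref{eq:Lmu}.

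It then remains to upgrade $L=L_\mu$ from $C_b(\X)\cap C_+(\X)$ to all of $\LSC(\X)$. First I would pass to $C_+(\X)$: for $f\in C_+(\X)$ the truncations $\min(f,n)$ are bounded, continuous and $\preceq$-non-decreasing, and — again because $f$ is continuous — their $\preceq$-supremum is $f$, so $L(f)=\sup_nL(\min(f,n))=\sup_nL_\mu(\min(f,n))=L_\mu(f)$ by monotone convergence. Finally, $C_+(\X)$ is $\preceq$-dense in $\LSC(\X)$ by Proposition \ref{prop:lsc}(iii), and both $L$ and $L_\mu$ have the $\mcp$; and two maps with the $\mcp$ agreeing on a $\preceq$-dense subset agree everywhere. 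This last assertion is a transfinite induction along the iterated closures $C_+(\X)^{\uparrow^\alpha}$, which exhaust $\LSC(\X)$ by \eqref{eq:barordinali} and the density just quoted: the identity $L=L_\mu$ propagates through $\preceq$-directed suprema (both maps respect them) and through limit ordinals (unions), so it holds on the whole of $\LSC(\X)$. Uniqueness of $\mu$ is inherited from the classical statement — or, alternatively, from the fact that two representing measures agree on $C_b(\X)$, hence on indicators of open sets obtained as monotone pointwise limits of continuous functions, hence everywhere.

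The main obstacle is step (b): the $\mcp$ of $L$ is a priori only compatibility with those $\preceq$-directed suprema that \emph{happen to exist}, and one has to turn this into the classical condition $f_k\downarrow 0\Rightarrow\tilde L(f_k)\to 0$. The structural fact that makes this possible is precisely that in the relevant monotone sequences the limit is a genuine continuous (hence bounded) function, so that the $\preceq$-supremum of the approximants exists and coincides with the pointwise one; for merely lower semicontinuous limits the $\preceq$-supremum of an increasing sequence of continuous functions need not equal its pointwise supremum — indeed $(\LSC(\X),\preceq)$ is not even directed complete — and the argument would collapse. A secondary point deserving care is the validity of the density-plus-transfinite-induction step, but this is routine once one has Proposition \ref{prop:lsc}(iii) and \eqref{eq:barordinali} in hand.
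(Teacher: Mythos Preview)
Your proof is correct and takes essentially the same route as the paper's: both reduce part (ii) to the classical Riesz representation theorem on Polish spaces (via \cite[Theorem 7.10.1]{Bogachev07}) by deriving the Stone continuity condition \eqref{eq:stone} from the $\mcp$ of $L$, and then extend the identity $L=L_\mu$ from continuous to lower semicontinuous functions using the $\preceq$-density of $C_+(\X)$ and the $\mcp$ of both sides. You supply considerably more detail than the paper---notably the explicit passage from bounded to unbounded continuous functions via truncation, and the transfinite-induction argument for the final extension---but the architecture is identical.
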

\begin{proof} \

$(i)$ The linearity of $L_\mu$ is clear from the definition. For the $\mcp$ property we use Proposition \ref{prop:sepmcp}, that can be applied thanks to $(iv)$ of Proposition \ref{prop:lsc} above: then \eqref{eq:mcpsep} is, again, nothing but Beppo Levi's monotone convergence theorem. Finally, identity \eqref{eq:normamis} is a restatement of \eqref{eq:measdual2}.

$(ii)$ This is a restatement of classical variants of Riesz's theorem in non-compact spaces, as given for instance in \cite[Theorem 7.10.1]{Bogachev07}:  the already recalled assumption \eqref{eq:stone}  follows from the assumed MCP of $L$ applied to $1-f_n$ and $(i)$ of Proposition \ref{prop:lsc}. Notice that once  we established that  $L_\mu(f)=L(f)$ for $f$ continuous,   the conclusion for lower semicontinuous ones follows  by the fact that both sides have the  $\mcp$ and the density of continuous functions in $\LSC(\X)$.
%
%
% This follows from Cavalieri's formula, as $\{f>t\}$ is open for every $f\in\LSC(\X)$ and $t\in[0,+\infty)$ and it holds:
%\[
%\begin{split}
%\int f\,\d\mu&=\int_0^{+\infty}\mu(\{f>t\})\,\d t=\int_0^{+\infty}L(\nchi_{\{f>t\}})\,\d t\\
%&=\lim_{n\to\infty}\tfrac1n\sum_{i\in\N}L(\nchi_{\{f>\tfrac{i+1}n\}})=\lim_{n\to\infty}L\big(\tfrac1n\sum_{i\in\N}\nchi_{\{f>\tfrac{i+1}n\}}\big)=L(f),
%\end{split}
%\]
%where in the last two equalities we used the MCP of $L$.
\end{proof}
\begin{remark}{\rm The above result would not hold if we replace $\LSC(\X)$ with the dense subset $C_+(\X)$ of continuous functions from $\X$ to $[0,+\infty)$, nor with its completion  (as discussed in  Example \ref{ex:c} in Section \ref{se:exbelli}). The problem with these wedges is that the supremum of an increasing sequence might be different from the pointwise supremum, making the evaluation at points (i.e.\ Dirac masses) not possessing the MCP. 

For instance, consider as in the Example \ref{ex:c} just mentioned the case of $\X:=[0,1]$ and  $f_n(x):=1\wedge(nx)$ for every $n\in\N$. Then in the wedge $C_+(\X)$ the supremum of the increasing sequence  $(f_n)$ is the function identically 1, as it is immediate from the fact that the partial order induced by the wedge structure coincides with the pointwise one (as difference of continuous functions in continuous),
making the map $f\mapsto f(0)=\int f\,\d\delta_0$ not possessing the MCP.
}\fr\end{remark}
\begin{remark}{\rm
A direct consequence of Theorem \ref{thm:riesz} is that any linear functional as in the statement has the $\mcp$ also w.r.t.\ the pointwise order $\leq_\p$, a property that according to $(ii)$ in Proposition \ref{prop:lsc} is stronger than the $\preceq$-$\mcp$. If one knew in advance that the linear $L:\LSC(\X)\to[0,+\infty]$ had the $\leq_\p$-$\mcp$, then the conclusion would follow more directly along the following lines.

Under this assumption,  we can apply Carath\'eodory's construction of outer measures via coverings plus  extraction of measurable sets: this   ensures that a map $\mu$ from the open subsets of $\X$ to $[0,+\infty]$ is the restriction of a (not necessarily unique if $\mu(\X)=+\infty$) Borel measure provided
\begin{equation}
\label{eq:carathe}
\begin{split}
\mu&\text{ is monotone},\\
\mu(U\cup V)&\leq\mu(U)+\mu(V)\quad\text{ with equality if }U\cap V=\emptyset,\\
\mu(\cup_nU_n)&=\sup_n\mu(U_n)\quad\text{ whenever $(U_n)$ is increasing.}
\end{split}
\end{equation}
Then we can observe that for $U\subset\X$ open the characteristic function $\nchi_U$ is lower semicontinuous and define $\mu(U):=L(\nchi_U)$. The above properties are trivially verified, the continuity from below being a direct consequence of the $\leq_\p$-$\MCP$ of $L$. Hence such $\mu$ extends to a Borel measure and we are left to prove that $\int f\,\d\mu=L(f)$ for any $f\in\LSC(\X)$. By definition this is clear if $f\in\LSC(\X)$ is a characteristic function, then by linearity this holds also for functions with finite range, and being these functions (easily seen to be) $\leq_\p$-dense in $\LSC(\X)$, the conclusion follows from the $\mcp$ of $L$ and Beppo Levi's theorem.
}\fr\end{remark}
\begin{remark}\label{re:lscnocone}{\rm
$\LSC(\X)$ is in general not a cone in the sense of our Definition \ref{def:cone}. Consider for instance $\X:=[-1,1]$ and the $\preceq$-increasing sequence $(f_n)\subset \LSC(\X)$ defined as $f_n(x):=0\vee (nx)\wedge1$. The pointwise supremum of this sequence is clearly $f_\infty:=\nchi_{(0,1]}$, thus if the $\preceq$-supremum exists it must, by $(ii)$ of Proposition \ref{prop:lsc} above, coincide with $f_\infty$.

However, $f_\infty$ is not the $\preceq$-supremum, because the function $g\equiv 1$ is clearly $\succeq f_n$ for every $n\in\N$, but $f_\infty\not\preceq g$, as $g-f_\infty$ is not lower semicontinuous (recall $(i)$ of Proposition \ref{prop:lsc}).
}\fr\end{remark}
\begin{remark}[Wedge completion of $C_+(\X)$]\label{re:wedgecomplcpiu}{\rm
It is worth commenting on the considerations given in the previous remark from the perspective of wedge completion (recall Remark \ref{re:abstrcompl}): the fact that $\LSC(\X)$ is not a cone hints at the possibility -- without an actual proof -- that the directed completion of $C_+(\X)$ is not a cone, either.

Let us discuss this and recall that Example \ref{ex:c} in Section \ref{se:exbelli} shows that the directed completion $\overline{C_+(\X)}$ of $C_+(\X)$ can be realized as
\[
\overline{C_+(\X)}:=\{f\in\LSC(\X)\ :\ f=f^{\us \ls}\}
\]
together with the inclusion. Here the operators $f\mapsto f^\us$ and  $f\mapsto f^\ls$ are those sending an arbitrary function to its upper and lower semicontinuous envelopes, respectively. The partial order on $\overline{C_+(\X)}$ is the restriction of that in $\LSC(\X)$, i.e.\ the pointwise one, but the sum is not, as $f,g\in \overline{C_+(\X)}$ does not imply $f+g\in\overline{C_+(\X)}$ (take $\X:=[-1,1]$, $f:=\nchi_{(0,1]}$ and $g:=\nchi_{[-1,0)}$).

We claim that the sum operation $\oplus$ coming from extending the sum in $C_+(\X)$ via the $\mcp$ to its completion is given by
\begin{equation}
\label{eq:aa}
f\oplus g=(f+g)^{\us\ls}\qquad\forall f,g\in \overline{C_+(\X)}.
\end{equation}
Let us prove this. Fix $f,g\in \overline{C_+(\X)}$ and let $(f_i),(g_j)\subset C_+(\X)$ be directed families having $f,g$ as supremum, respectively (by Propositions \ref{prop:complunostep} and \ref{prop:compljoin} these exist). By \eqref{eq:eqnum} we thus have that $f=(\psup_i f_i)^{\us\ls}$ and $g=(\psup_jg_j)^{\us\ls}$. Also, quite clearly, the family $(f_i+g_j)$ is directed and by definition of $\oplus$ its supremum, namely $(\psup_{ij}(f_i+g_j))^{\us\ls}$, is equal to $f\oplus g$. Since $\psup_i f_i,\psup_jg_j\in\LSC(\X)$, we thus see that to prove \eqref{eq:aa} it is sufficient to prove
\begin{equation}
\label{eq:bb}
(f+g)^{\us\ls}=(f^{\us\ls}+g^{\us\ls})^{\us\ls},\qquad\forall f,g\in\LSC(\X).
\end{equation}
The trivial bound $f\leq_\p f^{\us\ls}$ valid for any $f\in\LSC(\X)$ shows that $\leq_\p$ holds in \eqref{eq:bb}. For the opposite inequality notice that the trivial bound $\lims(a_n+b_n)\geq \lims a_n+\limi b_n$ yields $(f+g)^{\us}\geq_\p f^\us+g^\ls=f^\us+g$. Similarly, from $\limi(a_n+b_n)\geq\limi a_n+\limi b_n$ we get $(f^\us+g)^\ls\geq_\p f^{\us\ls}+g^\ls=f^{\us\ls}+g$. By the $\leq_\p$ monotonicity of $f\mapsto f^\us,f^\ls$ it follows that $(f+g)^{\us\ls}\geq_\p f^{\us\ls}+g$, and swapping the roles of $f,g$ we conclude the proof of \eqref{eq:bb}.

Now consider $\X=[-1,1]$ and the same functions $f_\infty=\nchi_{(0,1]}$ and $g\equiv 1$ of Remark \ref{re:lscnocone} above. Notice that these belong to $\overline{C_+(\X)}$ and recall that $g$ is not an upper bound of $f_\infty$ in $\LSC(\X)$ (as $g-f_\infty$ is not lower semicontinuous). However, $g$ is an upper bound of $f_\infty$ in $\overline{C_+(\X)}$, because $\nchi_{[-1,0)}\in \overline{C_+(\X)}$ and
\[
f_\infty\oplus\nchi_{[-1,0)}\stackrel{\eqref{eq:aa}}=(\nchi_{(0,1]}+\nchi_{[-1,0)})^{\us\ls}=\nchi_{[-1,1]}=g.
\]
It follows that the same example of Remark \ref{re:lscnocone} does not disprove $\overline{C_+(\X)}$ to be a cone. We shall not investigate further the topic.
}\fr\end{remark}

\begin{remark}[Functionals with infinite norm]\label{rmk:rieszinfinito}{\rm Theorem \ref{thm:riesz} admits a natural extension to the case of non-finite measures/functionals with infinite norm that we briefly outline here.

We shall consider the wedge
\begin{equation}
\label{eq:defbarm}
\widetilde{\mathcal M}_+(\X):={\mathcal M}_\infty(\X)/\sim\qquad\text{where}\qquad {\mathcal M}_\infty(\X):=\big\{\text{non-negative Borel measures on $\X$}\big\},
\end{equation}
where $\mu\sim\nu$ iff $\mu(U)=\nu(U)$ for every open set $U\subset\X$. It is clear that if $\mu$ is finite, then it is the only element in its equivalence class. The natural operations of addition and multiplication by positive scalar pass to the quotient and give $\widetilde{\mathcal M}_+(\X)$ the structure of prewedge. We claim that the induced partial order is the one obtained by evaluation on open sets, proving also that  $\widetilde{\mathcal M}_+(\X)$ is a wedge, i.e.\ we claim that 
\begin{equation}
\label{eq:potildem}
\text{there is $[\xi]\in\widetilde{\mathcal M}_+(\X)$ such that }[\nu]+[\xi]=[\mu]\qquad\Leftrightarrow\qquad \nu(U)\leq\mu(U)\quad\forall U\subset\X\ \text{open}
\end{equation}
(see also \eqref{eq:claimmpp} below for the analog in ${\mathcal M}_\infty(\X)$). The implication $\Rightarrow$ is obvious. For the converse one let $U_{[\mu]}$ be the domain of local finiteness of $[\mu]$, defined as the union of all the open subsets $V$ of $\X$ with  $\mu(V)<+\infty$. By the Lindel\"of property of open subsets of the Polish space $\X$ we see that $U_{[\mu]}$ is the countable union of such $V$'s, hence $\mu\restr {U_{[\mu]}}$ is $\sigma$-finite and thus $\nu\restr {U_{[\mu]}}$ is $\sigma$-finite as well. The existence of a $\sigma$-finite Borel measure $\xi$ on $U_{[\mu]}$ such that $\nu\restr {U_{[\mu]}}+\xi=\mu\restr {U_{[\mu]}}$ is now obvious. We then extend $\xi$ to all $\X$ by setting $\xi(E)=+\infty$ if $E\not\subset U_{[\mu]}$, observe that $\xi$ is still a measure and that $[\nu]+[\xi]=[\mu]$ holds trivially.

It is now easy to see that $\widetilde{\mathcal M}_+(\X)$ is in fact a cone. Indeed, if $(\mu_i)$ is a directed family of (equivalence classes of) measures in  $\widetilde{\mathcal M}_+(\X)$, then we can define $\mu:\{\text{open subsets of $\X$}\}\to[0,+\infty]$ by putting
\begin{equation}
\label{eq:supsuaperti}
\mu(U):=\sup_i\mu_i(U)\qquad\forall U\subset\X\ {\rm open}.
\end{equation}
It is easy to check that this satisfies the three properties in \eqref{eq:carathe} (for continuity from below notice that $\mu(\cup_nU_n)=\sup_i\mu_i(\cup_nU_n)=\sup_i\sup_n\mu_i(U_n)=\sup_n\sup_i\mu_i(U_n)=\sup_n\mu(U_n)
$), hence by Carath\'eodory's criterion we see that $\mu$ is the restriction to open sets of a (not necessarily unique if $\mu(\X)=+\infty$) Borel measure and the equivalence class of this measure is, by \eqref{eq:potildem}, the supremum of the given family. Hence $\widetilde{\mathcal M}_+(\X)$ is a cone.

A non-negative Borel measure $\mu$ on $\X$ induces a (trivially linear) functional $L_\mu$ on $\LSC(\X)$ via the formula \eqref{eq:Lmu}. It is easy to see that $L_\mu=L_\nu$ iff $\mu\sim\nu$ (one implication follows from Cavalieri's formula $\int f\,\d\mu=\int_0^{+\infty}\mu(\{f>t\})\,\d t$ noticing that $\{f>t\}$ is open, while the other from evaluating the functionals at $\nchi_U$, which is in $\LSC(\X)$ for $U\subset\X$ open). Thus $\mu\mapsto L_\mu$ passes to the quotient and induces a map, still denoted $\mu\mapsto L_\mu$, from $\widetilde{\mathcal M}_+(\X)$ to functionals on $\LSC(\X)$.

Then the statement is:
\[
\mu\quad\mapsto \quad L_\mu\quad\text{ is an isomorphism from $\widetilde{\mathcal M}_+(\X)$ to $(\LSC(\X))^*$ satisfying \eqref{eq:normamis}}.
\]
Here the only non-trivial thing is to show that any $L\in\LSC(\X)^*$ comes from (an equivalence class of) a measure $\mu\in\mathcal M_\infty(\X)$. To see this, for given $L$ defined the domain of local finiteness $U_L$ of $L$ as the union of all open subsets $V\subset\X$ such that $L(\nchi_V)<+\infty$. Again the Lindel\"of property ensures that $U_L$ is the union of a countable number of these $V$'s. Applying Theorem \ref{thm:riesz} to each of these $V$'s and via a patching argument we obtain a $\sigma$-finite Borel measure $\mu$ on $U_L$ such that  \eqref{eq:Lmu} holds for all $f\in\LSC(\X)$ that are 0 outside $U_L$. Extending $\mu$ by setting it to be $+\infty$ on Borel sets $E\subset\X$ not contained in $U_L$ we get the desired surjectivity.
}\fr\end{remark}
\begin{remark}[The completion of $\mathcal M_+(\X)$]\label{re:complmx}{\rm
It is worth to point out that in general $\widetilde{\mathcal M}_+(\X)$ is \emph{not} the directed completion of ${\mathcal M}_+(\X)$ (being intended that the map is  the natural inclusion plus passage to the quotient). Intuitively, this might be guessed from the fact that  $\mathcal M_+(\X)$ only depends on the Borel $\sigma$-algebra of $\X$, whereas the equivalence relation $\sim$ in \eqref{eq:defbarm} refers to its topology. More concretely, say $\X=\R$, let $(r_n)\subset\R$ be an enumeration of the rationals and $\tilde x\notin\Q $. Let $(\mu_n)\subset\mathcal M_+(\X)$ be the increasing sequence given by $\mu_n:=\sum_{i\leq n}\delta_{r_i}$ and notice that its supremum in $\widetilde{\mathcal M}_+(\X)$ is (the equivalence class of) the measure giving infinite mass to every non empty subset, call it $[\infty]$, which is the maximal element in $\widetilde{\mathcal M}_+(\X)$. In particular we have $\delta_{\tilde x}\leq[\infty]$. However it is clear -- see also the discussion below about the truncation property -- that the set $\widehat{\{\mu_n:n\in\N\}}\subset\mathcal M_+(\R)$ coincides with $\downarrow\!\{\mu_n:n\in\N\}$ and thus only consists of measures concentrated on $\Q$. In particular, $\delta_{\tilde x}\notin \widehat{\{\mu_n:n\in\N\}}$, showing that the implication $\Rightarrow$ in \eqref{eq:criterio} does not hold, thus proving our claim that $\widetilde{\mathcal M}_+(\X)$ is not the completion of ${\mathcal M}_+(\X)$.

To describe such completion, let us study the collection $ {\mathcal M}_\infty(\X)$ of all  non-negative Borel measures on $\X$. We claim that the partial order induced by the natural sum operation  coincides with that given by evaluation on Borel sets, proving also that $ {\mathcal M}_\infty(\X)$ is a wedge, i.e.\ we claim that for any $\mu,\nu\in {\mathcal M}_\infty(\X)$ we have
\begin{equation}
\label{eq:claimmpp}
\text{there is $\xi\in {\mathcal M}_\infty(\X)$ such that }\nu+\xi=\mu\ \qquad\Leftrightarrow\qquad\ \nu(E)\leq\mu(E)\quad\forall E\subset\X\text{ Borel}.
\end{equation}
 The implication $\Rightarrow$ is obvious. For the converse one let $\mathcal F_\mu$ be the collection of Borel sets $E\subset\X$ such that $\mu(E)<+\infty$ and ${\mathcal A}_\mu$ the $\sigma$-algebra it generates. Define $\xi:\mathcal F_\mu\to[0,+\infty)$ by putting $\xi(E):=\mu(E)-\nu(E)$. It is clear that $\xi$ is finitely additive and that if $(E_n)\subset\mathcal F_\mu$ is decreasing so that $\cap_nE_n=\emptyset$ then $\xi(E_n)=\mu(E_n)-\nu(E_n)\to 0$. Building the outer measure associated to such $\xi$ and then applying Caratheodory's criterion using the fact that $\mathcal F_\mu$ is stable by finite unions, intersections and relative difference one can check that  $\xi$ extends to a measure defined on the whole ${\mathcal A}_\mu$. Such $\sigma$-algebra ${\mathcal A}_\mu$ has the following property
 \begin{equation}
\label{eq:tildemathcala}
A\in {\mathcal A}_\mu,\ B\subset A,\ \text{Borel}\qquad\Rightarrow\qquad B\in {\mathcal A}_\mu,
\end{equation}
as can be quickly checked by a monotone class argument on the set of $A$'s for which this holds, noticing that this is trivially true for $A\in\mathcal F_\mu$.

We now extend $\xi$ to the whole Borel $\sigma$-algebra by putting $\xi(E):=+\infty$ for $E\subset \X$ Borel not in $ {\mathcal A}_\mu$. The fact that $\nu(E)+\xi(E)=\mu(E)$ for any $E$ Borel is trivial, so to conclude it suffices to prove that $\xi$ is $\sigma$-additive. Let thus $(E_n)$ be a disjoint sequence of Borel sets and notice that if $(E_n)\subset  {\mathcal A}_\mu$, then $\cup_nE_n\in {\mathcal A}_\mu$ and $\sigma$-additivity is already known. If instead we have $E_n\notin {\mathcal A}_\mu$ for some $n$, then by \eqref{eq:tildemathcala} we have that $\cup_nE_n\notin {\mathcal A}_\mu$ and thus we have $\sum_i\xi(E_i)=+\infty=\xi(\cup_nE_n)$, concluding the proof of \eqref{eq:claimmpp}.

From \eqref{eq:claimmpp} we see that the inclusion of  ${\mathcal M}_+(\X)$ in $ {\mathcal M}_\infty(\X)$ is an order isomorphism: we claim that 
\begin{equation}
\label{eq:dsclm+}
\text{the directed-sup-closure $\overline{\mathcal M}_+(\X)\subset {\mathcal M}_\infty(\X) $ of $\mathcal M_+(\X)$ is the completion of $\mathcal M_+(\X)$ }
\end{equation}
(together with the inclusion, obviously). To see this start recalling that ${\mathcal M}_\infty(\X)$ is a complete lattice: for $(\mu_i)_{i\in I}\subset {\mathcal M}_\infty(\X)$ the formulas
\begin{equation}
\label{eq:joinmeetmeas}
\begin{split}
(\sup_i\mu_i)(E)&:=\sup\big\{\sum\mu_{i_n}(E_n)\ :\ (i_n)\subset I\ (E_n)\text{ Borel partition of }E\big\},\\
(\inf_i\mu_i)(E)&:=\inf\big\{\sum\mu_{i_n}(E_n)\ :\ (i_n)\subset I\ (E_n)\text{ Borel partition of }E\big\},
\end{split}
\end{equation}
define the $\sup$ and $\inf$ of the family (recall \eqref{eq:claimmpp}). Since clearly $\mathcal M_+(\X)$ is a lower subset of  ${\mathcal M}_\infty(\X)$, by Proposition \ref{prop:trunccompl} our claim will be proved if we show that $\mathcal M_+(\X)$ truncates   ${\mathcal M}_\infty(\X)$.

To see this  let $(\mu_i)\subset  {\mathcal M}_\infty(\X)$ be a directed family, $\mu$ its supremum (that satisfies $\mu(E)=\sup_i\mu_i(E)$ for any $E\subset\X$ Borel) and let $\nu\in\mathcal M_+(\X)$ be with $\nu\leq\mu$. We need to prove that the supremum of the directed family $(\nu\wedge \mu_i)$  is $\geq\nu$. To see this,  notice that since $\nu$ is a finite measure and $\nu\wedge\mu_i\leq\nu$ for every $i\in I$, the Radon-Nikodym derivative $\rho_i:=\tfrac{\d(\nu\wedge\mu_i)}{\d \nu}$ is well defined for every $i\in I$ and so is its $\nu$-essential supremum $\rho$ (as in Lemma \ref{le:esssup}). The very definition of $\nu$-essential supremum ensures that $\tilde \nu:=\rho\nu$ is the supremum of $(\nu\wedge\mu_i)$. Proving that $\tilde\nu\geq\nu$ is equivalent to proving that $\rho\geq 1$ $\nu$-a.e.. Say not. Then for some $\eps>0$ there is $E\subset\X$ Borel with $\nu(E)>0$ and $\rho\leq 1-\eps$  $\nu$-a.e.\ on $E$. It is then obvious   that the restriction of $\nu\wedge\mu_i$ to $E$ coincides with the restriction of $\mu_i$ to the same set, implying that $\mu(E)=\sup_i\mu_i(E)\leq(1-\eps)\nu(E)$, which contradicts the assumption $\nu\leq\mu$.

\medskip

It is worth to point out that  $\overline{\mathcal M}_+(\X)$ is \emph{not} the whole ${\mathcal M}_\infty(\X)$. An example of measure in ${\mathcal M}_\infty(\X)\setminus \overline{\mathcal M}_+(\X)$ is the comeager measure $\mu_{\sf cm}$ defined as
\[
\mu_{\sf cm}(E):=\left\{
\begin{array}{ll}
0,&\quad\text{ if $E$ is meager},\\
+\infty,&\quad\text{ if not}
\end{array}
\right.
\]
(recall that a set is meager if it is a countable union of closed sets with empty interior). The problem with this measure is that any finite positive Borel measure that is 0 on all meager sets is the zero measure.  Proving this is the same as proving that for any  atomless Borel probability measure $\nu$ there is a closed set with empty interior and positive measure. This, however, is standard, as by inner regularity there is a compact set $K$ with $\nu(K)>0$ and the atomless condition ensures that $\nu(B_r(x))\downarrow0$ as $r\downarrow0$ for any $x\in\X$. Hence if $(x_n)\subset\X$ is countable and dense and $(r_n)\subset(0,1)$ are small enough we have $\sum_n\nu(B_{r_n}(x_n))<\nu(K)$ and thus $K\setminus \cup_nB_{r_n}(x_n)$ is compact, with empty interior and positive $\nu$-mass, as desired.

It is therefore interesting to look for ways to characterize measures in $\overline{\mathcal M}_+(\X)$. A sufficient condition for $\mu\in \mathcal M_\infty(\X)$ to be in  $\overline{\mathcal M}_+(\X)$  is that each $E\subset\X$ Borel with $\mu(E)=+\infty$  either contains a point $x$ with $\mu(\{x\})=+\infty$ or a Borel subset $F\subset E$ with $\mu(F)\in(0,+\infty)$. Indeed, since the collection of $\nu$'s in $\mathcal M_+(\X)$ with $\nu\leq\mu$ is directed (because the join of any two such measures is still $\leq\mu$ by the first formula in \eqref{eq:joinmeetmeas}) to prove the claim it suffices to show that for each $E\subset\X$ Borel and $M\in[0, \mu(E))$ there is $\nu\in\mathcal M_+(\X)$ with $\nu(E)\geq M$ and $\nu\leq\mu$. If $\mu(E)<+\infty$ this is obvious (just take $\nu:=\mu\restr E$) otherwise we use the assumption on $\mu$: if $E$ contains a point $x$ of infinite mass we pick $\nu:=M\delta_x$, if instead contains a subset $F\subset E$ with $\mu(F)\in(0,+\infty)$ we pick $\nu:=\tfrac{M}{\mu(F)}\mu\restr F$.

All this clarifies that  the property $\mu\in\overline{\mathcal M}_+(\X)$ is strictly related to properties of the $\sigma$-ideal  $\mathcal I_\mu$ generated by sets of finite $\mu$-measure. A notable example of measure in  $\overline{\mathcal M}_+(\X)$ not covered by what said in the previous paragraph is the cocountable measure $\mu_{\sf cc}$ defined as
\begin{equation}
\label{eq:mucc}
\mu_{\sf cc}(E):=\left\{
\begin{array}{ll}
0,&\quad\text{ if $E$ is countable},\\
+\infty,&\quad\text{ if not}.
\end{array}
\right.
\end{equation}
To see that $\mu_{\sf cc}\in \overline{\mathcal M}_+(\X)$, up to scaling we need to prove that if $E\subset\X$ is Borel and uncountable, then there is a probability measure $\nu$ concentrated on $E$ without atoms. %Referring to  for the results we recall, we notice 
To see this, recall (we refer  to  \cite[Theorem 13.6]{Kec95} for these results) that   since $E$ is Borel, then there is a Polish topology $\tau'\supset\tau$ inducing the same Borel structure  for which $E$ is both open and closed. Since $E$ is uncountable, it contains a $\tau'$-perfect set and thus a $\tau'$-homeomorphic copy of the Cantor set (hence also a $\tau$-homeomorphic copy, as $\tau'\supset\tau$). Then  we can take $\nu$ to be   the push-forward via such homeomorphism of the standard atomless probability measure concentrated on the Cantor set.

It might be worth to add that the completion of ${\mathcal M}_+(X)$ contains the finite part of the cone ${\mathcal M}_\infty(X)$, proving also that   ${\mathcal M}_\infty(X)$ is not the completion of its finite part. To prove the claim above it suffices to prove that if a non-negative measures $\mu$ is so that $\eps \mu=0$, then $\mu$ must be $\sigma$-finite, as clearly $\sigma$-finite measures are in the completion of finite ones. Thus let  $\mu\in{\mathcal M}_\infty(X)$ be arbitrary, consider as above the family  $\mathcal F_\mu$ of Borel sets to which $\mu$ assigns finite measure and then let $\mathcal I_\mu$ be the $\sigma$-ideal generated by it, i.e.\ the collection of Borel sets that can be covered by a countable collection of sets in  $\mathcal F_\mu$. Define the Borel measure $\nu$ by putting $\nu(E):=0$ if $E\in\mathcal I_\mu$ and $\nu(E):=+\infty$ otherwise. It is clear that this is a measure and that $\eps\mu=\nu$, showing that $\eps\mu=0$  if and only if $\X\in \mathcal I_\mu$, i.e.\ if and only if $\mu$ is $\sigma$-finite, as claimed.
}\fr\end{remark}

\begin{remark}[First countability and truncation do not pass to completion]\label{re:noalcomplet}{\rm
We  notice that\linebreak $\mathcal M_+(\X)$ is first countable (recall Definition \ref{def:sep}) and truncates its completion (recall Definition \ref{def:trunc}), while its completion $\overline{\mathcal M}_+(\X)$ is not first countable and does not truncate itself.

We briefly discuss these claims. The fact that  $\mathcal M_+(\X)$ is first countable follows from Lemma \ref{le:esssup}  noticing that  a directed family  $(\nu_i)$ with $\nu_i\leq\nu\in\mathcal M_+(\X)$ has $\nu$ as supremum if and only if the $\nu$-essential supremum of the Radon-Nikodym densities $\tfrac{\d\nu_i}{\d\nu}$ is $\nu$-a.e.\ equal to 1. The fact that $\mathcal M_+(\X)$ truncates $\overline{\mathcal M}_+(\X)$ has been proved in Remark \ref{re:complmx} above.

For the claims about $\overline{\mathcal M}_+(\X)$ let   $\X:=[0,1]$ be with the standard topology and $D$ be the collection of `finite counting measures', i.e.\ measures of the form $\sum_{j=1}^n\delta_{x_j}$ for some $n\in\N$ and some disjoint collection of points $x_1,\ldots,x_n$. This is clearly directed with supremum the counting measure. Clearly, any measure in $\downarrow\!D$ is concentrated on a finite  set and thus the sup of a non decreasing sequence $(\mu_n)\subset\,\downarrow\!D$ is concentrated on a countable set, and thus must differ from the sup of $D$.

To see that  $\overline{\mathcal M}_+(\X)$  does not truncate itself we pick the same $D$ as above and consider the measure $\mu_{\sf cc}\in \overline{\mathcal M}(\X)$ defined in \eqref{eq:mucc}. Then clearly $\mu_{\sf cc}\leq\sup D$, but for every $\mu\in D$ we have $\mu_{\sf cc}\wedge\mu=0$, as  if $F$ is the finite set  where $\mu$ is concentrated, for each $E\subset[0,1]$ we have $(\mu_{\sf cc}\wedge\mu)(E)\leq\mu_{\sf cc}(F)+\mu(E\setminus F)=0$.
}\fr\end{remark}

\begin{remark}{\rm We mentioned in Section \ref{se:seqcompl} the possibility of imposing only the existence of the supremum of increasing sequences and the related sequential $\mcp$. One can then interpret the definition of cone and morphism with this nuance. With this choice a natural example that comes out is: the cone of measurable $[0,+\infty]$-valued functions in a given measurable space $(\X,\mathcal A)$. Its dual (i.e.\ collection of linear maps that respect the supremum of increasing sequences) is then the collection of all non-negative measures on $\mathcal A$: to the functional $L$ we associate the measure $\mu$ defined as $\mu(E):=\nchi_E$ for every $E\in\mathcal A$, its $\sigma$-additivity being a consequence of the sequential $\mcp$ of $L$.
}\fr\end{remark}

\begin{remark}{\rm  So far we have only discussed examples of cones of functions with 0-th order  regularity imposed, but in fact there are very natural examples of wedges/cones of functions defined via first or second order bounds.

For first order, consider the wedge of real valued causal functions on Minkowski spacetime (or any other causal spacetime) modulo constants. Its completion should likely be related to  causal $[-\infty,+\infty]$-valued functions, a topic investigated, precisely for closure/stability considerations, in \cite{Octet} (but the link between these studies and the directed completion is unclear).

For second order, consider the cone of $[0,+\infty]$-valued convex and lower semicontinuous functions on $\R^d$ (or any other geodesic metric space).

It is easy to see that the duals of these wedges/cones contains functionals representable via first/second order distributions, as expected. The precise description of such duals, though, will not be pursued here.
}\fr\end{remark}

\end{subsection}

\begin{subsection}{Matrix considerations}\label{se:dario}
This example is due to D.\ Trevisan.

\bigskip

For  $d\in\N$, $d\geq1$ we shall denote by $\O(\mathbb C^d)$ the space of $d\times d$ Hermitian matrices, by $\O_+(\mathbb C^d)\subset \O(\mathbb C^d)$ the cone of non-negative ones (w.r.t.\ the standard order $A\leq B$ iff $B-A$ has positive spectrum) and by $\O_{++}(\mathbb C^d)\subset \O_+(\mathbb C^d)$ the subset of invertible ones. Similar  examples emerge by considering real non-negative symmetric matrices or other more general structures where spectral calculus is available.

It is clear that $\O_+(\mathbb C^d)$ is a wedge and that it is locally complete. The latter can be seen for instance noticing that to each $A\in \O(\mathbb C^d)$ we can associate the real-valued quadratic form $Q_A:\mathbb C^d\to\R$ defined as
\[
\mathbb C^d\ni v\quad\mapsto\quad Q_a(v):=\la v,Av\ra=\sum_{ij}\bar v_ia_{ij}v_j
\]
and then noticing that $A\leq B$ if and only if $Q_A(v)\leq Q_B(v)$ for any $v\in\mathbb C^d$.
\begin{remark}{\rm
One can use the order preserving  correspondence $A \leftrightarrow Q_A$ to identify the completion of $\O_+(\mathbb C^d)$ as the cone of quadratic forms from $\mathbb C^d$ to $[0,+\infty]$, i.e.\ as the collection  of those maps $Q:\mathbb C^d\to[0,+\infty]$ such that
\[
\begin{split}
Q(\eta v)&=|\eta|^2 Q(v),\\
Q(v+w)+Q(v-w)&=2(Q(v)+Q(w)),
\end{split}
\]
for every $v,w\in\mathbb C^d$ and $\eta\in\mathbb C$.

Alternatively, to the same end we can use spectral analysis and characterize the completion as the collection of formal sums $\sum_i\lambda_ie_i\otimes e_i$ for $(e_i)$ orthonormal base of $\mathbb C^d$ and $\lambda_i\geq 0$. Here two such sums are declared equivalent if they coincide `after evaluation' in the natural sense. Writings of this form can be used to give a meaning to expressions, that we might encounter below, such as $A^p$ for $A\in \O_+(\mathbb C^d)$ not invertible and $p<0$. We will not follow this route.
}\fr
\end{remark}
For  $A\in \O(\mathbb C^d)$ we write $A=\sum_i\lambda_ie_i\otimes e_i$ for its spectral decomposition, i.e.\ with this we intend that $(e_i)$ is an orthonormal base of $\mathbb C^d$ and $(\lambda_i)\subset\R$. If  $A\in \O_{++}(\mathbb C^d)$, hence $\lambda_i>0$ for every $i$, spectral calculus can  be used to give a meaning to $A^p$ for any $p\in(-\infty,1)\setminus\{0\}$: the spectral decomposition gives
\[
A^p=\sum_i\lambda_i^pe_i\otimes e_i\qquad\text{ whenever }\qquad A=\sum_i\lambda_ie_i\otimes e_i. 
\]
We have:
\begin{lemma}\label{le:youngmatr}
Let $p,q\in(-\infty,1)\setminus\{0\}$ be with $\tfrac1p+\tfrac1q=1$. Then for every $A,B\in\O_{++}(\mathbb C^d)$ we have
\begin{equation}
\label{eq:youngmatr}
\Tr ( AB)\geq\tfrac1p\Tr(A^p)+\tfrac1q\Tr(B^q)
\end{equation}
with equality if and only if $A^p=B^q$.
\end{lemma}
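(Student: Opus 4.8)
The plan is to reduce the matrix inequality \eqref{eq:youngmatr} to the scalar Young inequality, which in the form adapted to exponents $p,q<1$ with $\tfrac1p+\tfrac1q=1$ reads
\begin{equation}
\label{eq:youngscalar}
st\geq\tfrac1p s^p+\tfrac1q t^q\qquad\forall s,t\in(0,+\infty),
\end{equation}
with equality if and only if $s^p=t^q$. (This is the standard concavity/convexity statement behind the reverse H\"older inequality \eqref{eq:revholder}, just in its additive `Young' incarnation; for $p\in(0,1)$ it is the usual concave Young inequality applied to $u_p$ as in \eqref{eq:up}, and for $p<0$ one of $p,q$ is in $(0,1)$ and the other negative, and one checks \eqref{eq:youngscalar} by differentiating $t\mapsto st-\tfrac1ps^p-\tfrac1qt^q$ exactly as in Example \ref{ex:lpT}.) First I would establish \eqref{eq:youngscalar} with its equality case, since everything reduces to it.

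The key step is to choose the right basis. The natural temptation is to diagonalize $A$ and $B$ simultaneously, but in general $A$ and $B$ do not commute, so this is impossible; instead I would use the substitution $A\mapsto B^{-1/2}AB^{-1/2}$, which is the standard device for such trace inequalities. Concretely, write $C:=B^{1/2}$ (well defined and in $\O_{++}(\mathbb C^d)$ by spectral calculus) and $\tilde A:=C A C\in\O_{++}(\mathbb C^d)$. Then by cyclicity of the trace $\Tr(AB)=\Tr(AC^2)=\Tr(C A C)=\Tr(\tilde A)$. Diagonalize $\tilde A=\sum_i\mu_i e_i\otimes e_i$ with $\mu_i>0$, so $\Tr(AB)=\sum_i\mu_i$. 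Applying \eqref{eq:youngscalar} with $s=\mu_i$, $t=1$ gives $\mu_i\geq\tfrac1p\mu_i^p+\tfrac1q$ — but that is too crude; instead I would apply \eqref{eq:youngscalar} slightly more cleverly, splitting $\mu_i=\la e_i,\tilde A e_i\ra=\la C e_i, A C e_i\ra$ and writing $v_i:=C e_i$. The cleanest route: for each unit vector $e$, set $v:=Ce$, and bound $\la v, A v\ra\cdot 1$ from below; one wants to land on $\la v, A^p v\ra$-type and $\la e, (C^2)^{q/2}\cdots\ra$-type terms. The honest way to make this work is the operator-convexity/majorization route: the function $\Phi(A,B):=\Tr(AB)-\tfrac1p\Tr(A^p)-\tfrac1q\Tr(B^q)$, after the reduction $B=I$ via $A\mapsto C A C$ and the identities $\Tr((CAC)^p)$ vs.\ $\Tr(A^p B^{?})$, is controlled by Klein's inequality / the operator Jensen inequality for the concave (resp.\ convex) scalar function $u_p$. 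So the second main step is to invoke Klein's inequality: for a concave function $f$ and Hermitian $X,Y$ with appropriate spectra, $\Tr(f(X))\leq\Tr(f(Y))+\Tr(f'(Y)(X-Y))$, with equality iff $X=Y$. Applying this with $f=u_p$ (concave when $p\in(0,1)$; in the $p<0$ case one applies the convex version to $u_q$ with $q\in(0,1)$ and uses $\tfrac1p+\tfrac1q=1$ to symmetrize), $X=AB$-related and $Y$ chosen so that $f'(Y)$ produces the cross term $\Tr(AB)$, yields \eqref{eq:youngmatr} together with the equality characterization $X=Y$, which unwinds to $A^p=B^q$.

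The main obstacle will be the noncommutativity bookkeeping: getting the trace identities to line up so that $\Tr((CAC)^p)$ (or the analogous expression) equals the `right' thing and the Klein-inequality cross term is exactly $\Tr(AB)$, rather than $\Tr(A^pB^{1-p})$ or some other near-miss. I expect the cleanest presentation is: (i) prove the scalar \eqref{eq:youngscalar} with equality case; (ii) reduce to $B=I$ by the congruence $A\mapsto B^{-1/2}AB^{-1/2}$, absorbing the $B^q$ term via $\Tr(B^q)=\Tr((B^{-1/2}B B^{-1/2})^{\text{stuff}})$ — here one must be careful that the substitution changes $\Tr(A^p)$ into $\Tr((B^{1/2}\tilde A B^{1/2})^p)$, which is \emph{not} $\Tr(\tilde A^p B^{p/2}\cdots)$, so the reduction is not purely formal and one genuinely needs the Lieb--Thirring / Araki trace inequality $\Tr((B^{1/2}\tilde AB^{1/2})^p)\leq\Tr(B^{p/2}\tilde A^p B^{p/2})$ (valid for $p\in[0,1]$, reversed for the relevant range when $p<0$) to push through; and (iii) finish with the diagonalized scalar computation using \eqref{eq:youngscalar} termwise, tracking equality. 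Alternatively, if one wants to avoid Araki--Lieb--Thirring entirely, the fully self-contained path is the joint-convexity argument of Lieb: the map $(A,B)\mapsto\Tr(A^pB^{q})$-type expressions are jointly concave/convex by Lieb's concavity theorem, and \eqref{eq:youngmatr} then follows by the same variational trick used for the scalar case (tangent line at the diagonal $A^p=B^q$), with the equality case coming from strict concavity on the non-degenerate part. I would present whichever of these the surrounding text's machinery makes shortest, flagging that the matrix equality case $A^p=B^q$ is exactly the rigidity in Klein's (or Lieb's) inequality, mirroring the scalar equality case $s^p=t^q$.
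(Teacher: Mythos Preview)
Your plan wanders through several candidate arguments without actually landing on one, and the one that would work is left unspecified at the crucial step. The congruence $A\mapsto B^{1/2}AB^{1/2}$ does \emph{not} reduce the problem to $B=\Id$, precisely because $(B^{1/2}AB^{1/2})^p\neq B^{p/2}A^pB^{p/2}$ in general; you noticed this, but then gestured at Araki--Lieb--Thirring without checking the direction of the inequality for your range of $p$ (for $p\in(0,1)$ ALT gives $\Tr((B^{1/2}AB^{1/2})^p)\geq\Tr(B^{p/2}A^pB^{p/2})$, which goes the wrong way for what you need), so that branch does not close. Lieb's concavity is heavy machinery well beyond what the lemma requires. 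Your Klein's-inequality branch is the right idea, but ``$X=AB$-related and $Y$ chosen so that $f'(Y)$ produces the cross term'' is not a proof: you need to say $f(t)=-\tfrac1pt^p$ (convex on $(0,\infty)$ for all $p\in(-\infty,1)\setminus\{0\}$), $X=A$, $Y=B^{1/(p-1)}=B^{q/p}$, so that $f'(Y)=-Y^{p-1}=-B$ and Klein yields exactly $\Tr(AB)\geq\tfrac1p\Tr(A^p)+\tfrac1q\Tr(B^q)$ with equality iff $A=Y$, i.e.\ $A^p=B^q$.

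The paper's proof is precisely this last argument, phrased without invoking Klein by name: fix $B$, observe via Davis's theorem that $A\mapsto f(A):=\Tr(AB)-\tfrac1p\Tr(A^p)$ is convex on $\O(\mathbb C^d)$ (since $t\mapsto -\tfrac1pt^p$ is convex on $(0,\infty)$ in the whole range), compute $\d f(A)[C]=\Tr(C(B-A^{p-1}))$ directly from $(\Id+\eps C)^p=\Id+p\eps C+o(\eps)$, conclude the unique critical point is $A^{p-1}=B$ (equivalently $A^p=B^q$), and verify that at this point the value of $f$ is $\tfrac1q\Tr(B^q)$. Convexity then gives the inequality and the strict equality case simultaneously. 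No noncommutative trace inequalities, no substitution, no joint-convexity theorems --- just one-variable convexity plus a derivative computation.
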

\begin{proof} Fix $B\in \O_{++}(\mathbb C^d)$ and notice that by the general result  of Davis \cite{Davis57} the map $\O(\mathbb C^d)\ni A\mapsto f(A):=\Tr(AB)-\tfrac1p\Tr(A^p)$, set to $+\infty$ if $A\notin \O_{++}(\mathbb C^d)$, is convex. The restriction of $f$ to $\O_{++}(\mathbb C^d)$ is smooth: we shall show that in there the differential is 0 if and only if $A^p=B^q$. Since for such $A$ we have equality in \eqref{eq:youngmatr}, the conclusion then follows.

The computation of the  differential of $f$ is standard.  To start, notice that for arbitrary $C\in\O(\mathbb C^d)$ the spectral decomposition $C=\sum_i\lambda_ie_i\otimes e_i$ gives  $\Id+\eps C=\sum_i(1+\eps\lambda_i)e_i\otimes e_i$ and thus  
\begin{equation}
\label{eq:aallap}
(\Id+\eps C)^p=\sum_i(1+p\eps\lambda_i+o(\eps))e_i\otimes e_i=\Id+p\eps C+o(\eps).
\end{equation}
Hence for $A\in \O_{++}(\mathbb C^d)$ and $C\in \O(\mathbb C^d)$, recalling that $\Tr(AC)=\Tr(CA)$ we have
\[
\Tr((A+\eps C)^p)=\Tr(A^p(\Id+\eps A^{-\frac12}CA^{-\frac12})^p)\stackrel{\eqref{eq:aallap}}=
%\Tr(A^p(\Id+\eps pA^{-\frac12}CA^{-\frac12}+o(\eps)))=
\Tr(A^p)+\eps p\Tr(A^{p-1}C)+o(\eps).
\]
It follows that
\[
f(A+\eps C)=f(A)+\eps(\Tr(CB)-\Tr(A^{p-1}C))+o(\eps)=f(A)+\eps\Tr\big(C(B-A^{p-1})\big)+o(\eps)
\]
and the claim follows.
\end{proof}
For $A\in \O_+(\mathbb C^d)$, $A=\sum_i\lambda_ie_i\otimes e_i$, and $p\in[-\infty,1]$ we define $\|A\|_p\in[0,+\infty)$ as
\begin{equation}
\label{eq:defap}
\|A\|_p:=\text{$L^p$-norm of $\lambda_\cdot:\{1,\ldots,d\}\to[0,+\infty)$ w.r.t.\ the measure $\tfrac1d\sum_{i=1}^d\delta_i$}
\end{equation}
(notice that the $L^{0_+}$ and $L^{0_-}$ norms coincide here). If either $A$ is invertible or $p\in[0,1]$ we have
\[
\begin{array}{ll}
\|A\|_p\!\!\!&=\big(\tfrac1d\,\Tr(A^p)\big)^{\frac1p}\qquad\text{ for }p\in(-\infty,1]\setminus\{0\},\\
\|A\|_0\!\!\! &=(\det(A))^{\tfrac1d}\\
\|A\|_{-\infty}\!\!\!&=\text{minimal eigenvalue}.
\end{array}
\]
We then have:
\begin{proposition}[Duality formula]
Let $p,q\in[-\infty, 1]$ be with $\tfrac1p+\tfrac1q=1$ (the choice $p=q=0$ is admissible in here). Then
\begin{equation}
\label{eq:ineqmatr}
\tfrac1d\Tr(AB)\geq \|A\|_p\|B\|_q\qquad\forall A,B\in\O_+(\mathbb C^d)
\end{equation}
and more precisely
\begin{equation}
\label{eq:dualmatr}
\|A\|_p=\inf_{\|B\|_q\geq1}\tfrac1d\Tr(AB)\qquad\forall A\in\O_+(\mathbb C^d).
\end{equation}
\end{proposition}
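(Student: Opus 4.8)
\textbf{Proof plan for the duality formula \eqref{eq:dualmatr}.}

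The plan is to follow closely the scheme already used for scalar $L^p$--$L^q$ duality in Proposition \ref{prop:lplq}, transplanting the one-variable Young-type inequality into the matrix world via Lemma \ref{le:youngmatr}. First I would establish the inequality \eqref{eq:ineqmatr}. Fix $A,B\in\O_+(\mathbb C^d)$. By the positive $1$-homogeneity of both $\|\cdot\|_p$ and of the pairing in the first variable, and by \eqref{eq:ineqmatr}'s invariance under $A\mapsto\lambda A$, $B\mapsto \mu B$, it suffices to treat the normalized case; the standard trick is to reduce to the situation $\|A\|_p=\|B\|_q=1$ (when these are finite and positive), in which case \eqref{eq:ineqmatr} becomes $\tfrac1d\Tr(AB)\ge 1=\tfrac1p+\tfrac1q$, and this is exactly Lemma \ref{le:youngmatr} once we are in $\O_{++}(\mathbb C^d)$; the general case in $\O_+(\mathbb C^d)$ follows by approximating $A,B$ with $A+\varepsilon\Id$, $B+\varepsilon\Id$ and letting $\varepsilon\downarrow0$, using continuity of $X\mapsto\Tr(XY)$ and of $X\mapsto\|X\|_p$ on $\O_+(\mathbb C^d)$ (the latter being the continuity of $L^p$-norms of the eigenvalue vectors, for which one invokes continuous dependence of eigenvalues on the matrix). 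The degenerate cases where $\|A\|_p=0$ or $+\infty$ (only possible for $p<0$ when $A$ is not invertible, and $\|A\|_p$ is actually finite here by \eqref{eq:defap}, or $=0$) are handled separately exactly as in the scalar proof: if $\|A\|_p=0$ the right-hand side of \eqref{eq:ineqmatr} is $0$ and there is nothing to prove; the cases $p=1$ (where $\|A\|_1=\tfrac1d\Tr A$, $q=\pm\infty$, $\|B\|_{-\infty}=\lambda_{\min}(B)$ and $\Tr(AB)\geq\lambda_{\min}(B)\Tr(A)$ by Hermitian-positive calculus) and $p=-\infty$ are elementary.

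Having \eqref{eq:ineqmatr}, the inequality $\|A\|_p\le \inf_{\|B\|_q\ge1}\tfrac1d\Tr(AB)$ in \eqref{eq:dualmatr} is immediate. For the reverse inequality I would exhibit, for each $A$, a near-optimal $B$. When $A\in\O_{++}(\mathbb C^d)$ and $p\in(-\infty,1)\setminus\{0\}$ one takes $B:=\|A\|_p^{-(p/q)}A^{p/q}$ — this is exactly the equality case $A^p=B^q$ (up to the scalar normalization) identified in Lemma \ref{le:youngmatr}; a direct computation using $\Tr(AB)=\|A\|_p^{-p/q}\Tr(A^{1+p/q})=\|A\|_p^{-p/q}\Tr(A^p)$ (since $1+p/q=p$) and $\|B\|_q=\|A\|_p^{-p/q}\|A^{p/q}\|_q=\|A\|_p^{-p/q}\|A\|_p^{p/q}\cdot\|A\|_p=\|A\|_p$-type bookkeeping shows $\tfrac1d\Tr(AB)=\|A\|_p$ and $\|B\|_q\geq 1$, giving equality. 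For the boundary cases $p=1$ (take $B:=\Id$), $p=\pm\infty$, and $p=0$ (take $B:=A^{-1}\det(A)^{1/d}$, mirroring the scalar $L^{0^{\pm}}$ argument and \eqref{eq:0p0mnew}), one argues directly. When $A$ is merely in $\O_+(\mathbb C^d)$, not invertible, and $p\in(0,1]$ the same $B:=\|A\|_p^{-p/q}A^{p/q}$ still makes sense (since $p/q>0$ there); for $p<0$ and $A$ non-invertible, $\|A\|_p$ may be $0$ (if $A$ has a zero eigenvalue, since a zero eigenvalue contributes $0^p=+\infty$ in the harmonic-type mean, so actually $\|A\|_p=0$) — again the left side is $0$ and the statement is trivial, or one approximates by $A+\varepsilon\Id$, picks the corresponding $B_\varepsilon$, and passes to the limit, checking that the constraint $\|B_\varepsilon\|_q\geq 1$ survives (possibly after renormalizing $B_\varepsilon$ by a factor tending to $1$) and that $\tfrac1d\Tr(AB_\varepsilon)\to\|A\|_p$.

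I expect the main obstacle to be the careful treatment of the non-invertible and infinite/zero cases, together with the approximation arguments needed to pass from $\O_{++}(\mathbb C^d)$ (where Lemma \ref{le:youngmatr} and smooth spectral calculus live comfortably) to all of $\O_+(\mathbb C^d)$: one must make sure the optimal or near-optimal competitor $B$ is a genuine element of $\O_+(\mathbb C^d)$ (not just a formal spectral sum with infinite eigenvalues) and that the normalization constraint $\|B\|_q\geq1$ is not lost in the limit. Everything else is a transcription of the scalar proof of Proposition \ref{prop:lplq} and \ref{prop:l0}, with the eigenvalue map $A\mapsto(\lambda_1,\dots,\lambda_d)$ and the normalized counting measure $\tfrac1d\sum_i\delta_i$ playing the role of the underlying probability space, and with the trace $\tfrac1d\Tr(AB)$ bounding the "integral of the product" $\int fg\,\d\mm$ from above — the one genuinely new input being the matrix Young inequality of Lemma \ref{le:youngmatr}, itself a consequence of Davis' convexity theorem.
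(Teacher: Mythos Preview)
Your plan is correct and for the inequality \eqref{eq:ineqmatr} it matches the paper's proof exactly: reduce to $\O_{++}$ by continuity, normalize by homogeneity, then invoke Lemma~\ref{le:youngmatr}.

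For the equality in \eqref{eq:dualmatr} your route and the paper's diverge. You construct the explicit optimizer $B=\|A\|_p^{-p/q}A^{p/q}$ (the equality case of Lemma~\ref{le:youngmatr}) and then deal with the endpoint exponents $p\in\{1,0,-\infty\}$ and the non-invertible case one at a time. The paper instead observes that since the explicit optimizer is a function of $A$, one may from the outset restrict the infimum to matrices $B$ simultaneously diagonalizable with $A$; in a common eigenbasis the statement becomes literally the scalar duality $\|(\lambda_i)\|_p=\inf_{\|(\eta_i)\|_q\ge1}\tfrac1d\sum_i\lambda_i\eta_i$ on the finite probability space $\{1,\dots,d\}$, which is Proposition~\ref{prop:lplq} (and Proposition~\ref{prop:l0} for $p=q=0$). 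This handles all exponents and the non-invertible case in one stroke, with no further case analysis or approximation arguments. Your approach buys an explicit identification of the optimizer in the generic case; the paper's buys uniformity and brevity by recycling the scalar result wholesale.
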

\begin{proof} We start with \eqref{eq:ineqmatr}. To see this, notice that by continuity we can assume $A,B\in\O_{++}(\mathbb C^d)$ and $p,q\in(-\infty,1)\setminus \{0\}$ and then by homogeneity we can replace $A,B$ with $\tfrac{A}{\|A\|_p},\tfrac{B}{\|B\|_p}$ respectively. Then the claim directly follows from Lemma \ref{le:youngmatr}.

We shall show that  equality holds in \eqref{eq:dualmatr} even if we restrict the collection of $B$'s to those that are diagonalizable together with $A$. Fixing an orthonormal base on which $A$, and thus also $B$, is diagonalizable and calling $(\lambda_i),(\eta_i)$ the eigenvalues of $A,B$ respectively, we want to prove that
\begin{equation}
\label{eq:lplqagain}
\|(\lambda_i)\|_{p}=\inf_{\|(\eta_i)\|_q\geq 1}\tfrac1d\sum_i\lambda_i\eta_i,
\end{equation}
where the $p,q$-norms are taken in the set $\{1,\ldots,d\}$ equipped with the probability measure  $\tfrac1d\sum_{i=1}^d\delta_i$. Then formula  \eqref{eq:lplqagain} is just a special case of  formula \eqref{eq:lpdual}.
\end{proof}

\begin{corollary}
For any $p\in[-\infty,1]$ the $p$-norm $\|\cdot\|_p$ is a hyperbolic norm on $\O_+(\mathbb C^d)$.
\end{corollary}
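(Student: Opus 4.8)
The plan is to deduce the corollary directly from the duality formula \eqref{eq:dualmatr}, using the observation that an infimum of superadditive maps is superadditive. Concretely, fix $p\in[-\infty,1]$ and let $q$ be the conjugate exponent. For each fixed $B\in\O_+(\mathbb C^d)$ with $\|B\|_q\geq 1$, the map $A\mapsto \tfrac1d\Tr(AB)$ is, by linearity of the trace, additive and positively homogeneous on $\O_+(\mathbb C^d)$; in particular it is superadditive (indeed linear) in the sense of \eqref{eq:defnorm}. Since $\|A\|_p=\inf_{\|B\|_q\geq1}\tfrac1d\Tr(AB)$ by \eqref{eq:dualmatr}, and the pointwise infimum of a family of superadditive, positively homogeneous maps is again superadditive and positively homogeneous, we conclude that $\|\cdot\|_p$ satisfies
\[
\|\lambda_1 A_1+\lambda_2 A_2\|_p\geq \lambda_1\|A_1\|_p+\lambda_2\|A_2\|_p\qquad\forall A_1,A_2\in\O_+(\mathbb C^d),\ \lambda_1,\lambda_2\in[0,+\infty),
\]
which is exactly the defining inequality of a hyperbolic norm. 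The normalization $\|0\|_p=0$ is immediate from \eqref{eq:defap} (the zero matrix has all eigenvalues zero, and the relevant $L^p$-norm of the zero function is $0$ for every $p\in[-\infty,1]$, given the conventions $0^p=0$ for $p>0$ and the definition of $\|\cdot\|_{0}$, $\|\cdot\|_{-\infty}$), completing the verification.

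For the two routine sub-steps I would check: first, that the family over which we take the infimum in \eqref{eq:dualmatr} is nonempty for every relevant $p$ — e.g.\ $B=\Id$ has $\|B\|_q=1$ for all $q$ — so the infimum is a genuine map into $[0,+\infty)$ rather than being vacuously $+\infty$; and second, the elementary fact that if $(f_i)_{i\in I}$ are maps $\W\to[0,+\infty]$ each satisfying $f_i(\lambda_1 v_1+\lambda_2 v_2)\geq \lambda_1 f_i(v_1)+\lambda_2 f_i(v_2)$, then $f:=\inf_i f_i$ satisfies the same inequality, since $f(\lambda_1 v_1+\lambda_2 v_2)=\inf_i f_i(\lambda_1 v_1+\lambda_2 v_2)\geq \inf_i(\lambda_1 f_i(v_1)+\lambda_2 f_i(v_2))\geq \lambda_1\inf_i f_i(v_1)+\lambda_2\inf_i f_i(v_2)$, the last step being superadditivity of $\inf$ over a common index set (compare \eqref{eq:opnorm} and the verification that $\hn_{\rm op}$ is a hyperbolic norm, which this argument mirrors).

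There is no real obstacle here: the corollary is a formal consequence of the duality formula in exactly the same way that Proposition \ref{prop:lplq} and Proposition \ref{prop:l0} derive that the $L^p$-norms are hyperbolic norms from their dual representations, and the argument is essentially the one already written out just after Definition of the operator norm. If one preferred a self-contained argument avoiding \eqref{eq:dualmatr}, the alternative would be to reduce to the scalar case: for $A_1,A_2\in\O_{++}(\mathbb C^d)$ one can diagonalize neither simultaneously in general, so this route actually requires the full strength of operator-convexity (via Davis's theorem, as in Lemma \ref{le:youngmatr}) or an integral representation — which is precisely why the duality-formula proof is the clean one and the one I would present.
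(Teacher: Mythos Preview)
Your proof is correct and is essentially identical to the paper's: both deduce the reverse triangle inequality directly from the duality formula \eqref{eq:dualmatr} via the observation that the infimum (over the common index $B$) of the additive maps $A\mapsto\tfrac1d\Tr(AB)$ is superadditive. The extra checks you include (nonemptiness of the index set via $B=\Id$, and $\|0\|_p=0$) are welcome details the paper leaves implicit.
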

\begin{proof}
Homogeneity is clear, thus we need only to prove the reverse triangle inequality. This is a direct consequence of the duality formula \eqref{eq:dualmatr}, as for any $A_1,A_2\in\O_+(\mathbb C^d)$ we have
\[
\begin{split}
\|A_1+A_2\|_p&=\inf_{\|B\|_q\geq 1}\tfrac1d\Tr((A_1+A_2)B)=\inf_{\|B\|_q\geq 1}\Big(\tfrac1d\Tr(A_1B)+\tfrac1d\Tr(A_2B)\Big)\\
&\geq\inf_{\|B\|_q\geq 1}\tfrac1d\Tr(A_1B)+\inf_{\|B\|_q\geq 1}\tfrac1d\Tr(A_2B)=\|A_1\|_p+\|A_2\|_p,
\end{split}
\]
being intended that $\tfrac1p+\tfrac1q=1$.
\end{proof}
Notice that in the case $p=0$ the reverse triangle inequality reduces to the well known inequality
\[
\det(A+B)^{\tfrac1d}\geq\det(A)^{\tfrac1d}+\det(B)^{\tfrac1d}\qquad\forall A,B\in \O_+(\mathbb C^d).
\]

\end{subsection}

\begin{subsection}{The Brunn-Minkowski inequality as reverse triangle inequality}\label{se:antonio}

This example is due to A.\ Lerario.

\bigskip

Let  $d\in\N$, $d\geq 1$, and let $\W$ be the   any of the following sets:
\[
\begin{split}
&\{\text{Souslin subsets of }\R^d\},\\
&\{\text{compact subsets of }\R^d\},\\
&\{\text{open subsets of }\R^d\}\cup\big\{\{0\}\big\},\\
&\{\text{convex subsets of }\R^d\},\\
&\{\text{open convex subsets of }\R^d\}\cup\big\{\{0\}\big\}.
\end{split}
\]
We equip $\W$ with the operations
\begin{equation}
\label{eq:minkadd}
\begin{split}
A+B&:=\{a+b\ :\ a\in A,\ b\in B\},\\
\lambda A&:=\{\lambda a\ :\ a\in A\},
\end{split}
\end{equation}
that clearly take values in $\W$.  Also, we  equip $\W$  with the `norm' 
\[
\|A\|:=\big(\mathcal L^d(A)\big)^{\tfrac1d}.
\]
It is  clear that such norm is homogeneous and that the reverse triangle inequality
\[
\|A+B\|\geq \|A\|+\|B\|
\]
is nothing but the classical Brunn-Minkowski inequality, that in particular takes this new interpretation in the current framework.

Notice that with the exception of the collection of convex or of  open convex sets (and $\{0\}$), the structures $\W$ above are not those of prewedges. Indeed, the distributive law 
\begin{equation}
\label{eq:aconvesso}
(\lambda+\eta )A=\lambda A+\eta A.
\end{equation}
fails to hold in general if $A$ is not convex, the problem being the inclusion $\supset$ (if instead $A$ is convex then \eqref{eq:aconvesso} holds as with a scaling argument we can  reduce to the case $\lambda+\eta=1$ and then use the convexity of $A$ to conclude that for $x,y\in A$ we have $\lambda x+\eta y\in A$).

Notice also that  the Minkowski sum does not induce a partial order in the case of Souslin / compact / convex sets. Indeed, for any $v,w\in\R^d$ we obviously have $\{v\}+\{w-v\}=\{w\}$ and $\{w\}+\{v-w\}=\{v\}$, so that $\{v\}\leq \{w\}$ and  $\{w\}\leq \{v\}$ without having $\{v\}=\{w\}$. 
%To actually induce wedge structures one should either quotient out translations or remove singletons other than $\{0\}$ from $W$.
%
%We are not going to study the resulting structures, nor their completeness properties, but we believe that starting from  open/convex/Souslin subsets one should obtain  directed complete partial orders, while starting from compact subsets one should obtain locally complete wedges.

\end{subsection}

\begin{subsection}{Some finite dimensional examples}
\label{se:finitedim}

Recall that given a commutative monoid $M$, its Grothendick group $(G,\iota)$  is defined as a commutative group $G$ together with a monoid morphism $\iota:M\to G$ universal in the following sense:  for any commutative group $G'$ and monoid morphism $\iota':M\to G'$ there is a unique  group homeomorphism $\Psi:G\to G'$ making the following diagram commute:
\begin{equation}
\label{eq:diagramminoGrot}
\begin{tikzcd}[node distance=1.5cm, auto]
M\arrow{dr}[swap]{\iota'} \arrow{r}{\iota}&G\arrow{d}{\Psi}\\
& G'
\end{tikzcd}
\end{equation}
Uniqueness up to unique isomorphism comes from the definition and existence from simple constructions (e.g.\ take the free abelian group generated by $M$ and quotient it by the subgroup generated by $[a+b]-[a]-[b]$ as $a,b$ vary in $M$, $[a]$ being the generator corresponding to $a$, see for instance \cite[Section I.7]{lang84} for more details).

A wedge $\W$ is, in particular, a commutative monoid under sum, hence it admits a Grothendick group $V$. It is easy to see that such $V$ carries also a natural structure of vector space, the product by a scalar $\lambda\in\R$ being defined as follows. If $\lambda\geq 0$ (resp.\ $\lambda\leq 0$) we pick $G':=V$ and $\iota'(v):=\iota(\lambda v)$ (resp.\ $\iota'(v):=-\iota((-\lambda )v)$) for $v\in \W$. Then $\iota'$ is a monoid morphism and thus the universal property gives the existence of a unique (additive) group morphism $\Psi_\lambda:V\to V$ making \eqref{eq:diagramminoGrot} commute. Routine manipulation show that the given sum and the product by scalar defined as $\lambda v:=\Psi_\lambda(v)$ give $V$ the structure of a vector space.

It is not hard to see that the map $\iota:M\to G$ coming with the definition of Grothendick group is injective if and only if the monoid is cancellative (see e.g.\ \cite[Section I.7]{lang84} again for one implication, the other is obvious). In our setting, this surely is not necessarily the case: for instance, if $\W$ is a cone then $\infty$ is an absorbing element for the addition (i.e.\ \eqref{eq:abs} holds) and thus its Groethendick group $V$ reduces to the sole $\{0\}$, as it is immediate to check. 

Hence, in light of \eqref{eq:cancellationtrue} if we want to associate a vector space to a wedge it is better to restrict to elements in the finite part, i.e.\ elements $v$ such that $\eps v=0$ (see also Remark \ref{re:finitepart}). We are therefore led to the following definition (see also \cite{KO25}):
\begin{definition}[Vector space associated to a wedge]
Let $\W$ be a wedge and assume that its finite part $\W_{\rm finite}$ is also a wedge (equivalently: it is closed by sum) and has the cancellation property (by  \eqref{eq:cancellationtrue} this holds automatically if $\W$ is a cone with joins).

Then we define the vector space associated to $\W$ as the Groethendick group $V$ of $\W_{\rm finite}$.

The dimension of such $\W$ is, by definition, that of the real vector space $V$.
\end{definition}
Recall that we don't have examples of wedges whose finite part is not a wedge. The definition of dimension as just given is necessary as the concepts of linear independence and generating set work well only when one is allowed to freely take differences of vectors, something that in wedges is not allowed. For instance: there is no finite set $S$ in the future cone in the standard 4-dimensional Minkowski spacetime  such  that any vector of the cone is a linear combination with positive coefficients of the elements of $S$. This should not distract us into thinking that such cone in infinite dimensional, rather than  4-dimensional, as it is also according to the definition just given.

From standard linear algebra we know that up to isomorphism there is only  one vector space of given finite dimension $n$. This is far from true for wedges, even if we restrict the attention to those coinciding with their finite part. For instance, already in dimension 3 we can find several convex cones in $\R^3$ that are not isomorphic as wedges (notice that  any wedge isomorphism extends to an  invertible linear map on $\R^3$).

There is less freedom on two dimensional wedges, but still there are a few non-isomorphic ones. We collect the examples below in the spirit of guiding possible future investigation and axiomatizations, also pointing out at potential pathologies. In what follows, operations will always be defined componentwise and to $(\lambda,\eta)\in[0,+\infty]^2$ we shall associate the linear functional $L_{\lambda,\eta}$ sending $(a,b)$ to $\lambda a+\eta b$.
\begin{enumerate}[label=\emph{\alph*)}]
\item  Let $\C:=[0,+\infty]^2$. Then this is a cone and $\C^*\sim [0,+\infty]^2$ (via the map $(\lambda,\eta)\mapsto L_{\lambda,\eta}$). This is trivial to check.
\item Let $\C:=\{0\}\cup(0,+\infty]^2$. Then this is a cone and $\C^*\sim [0,+\infty]^2$ (via the map $(\lambda,\eta)\mapsto L_{\lambda,\eta}$). This is trivial to check and, together with the previous example, shows that two non isomorphic cones can have isomorphic duals. For the same reason,  finite dimensional cones that are the completion of their finite part can be non-reflexive. 

\item Let $\C:=[0,+\infty)^2\cup\{+\infty\}$. Then this is a cone and $\C^*\sim\{0\}\cup (0,+\infty)^2\cup\{+\infty\}$ (via the map $(\lambda,\eta)\mapsto L_{\lambda,\eta}$; notice in particular that $\C^*$ is a cone). Indeed, clearly linear functionals on $\C$ must be of the form $L_{\lambda,\eta}$ for some $\lambda,\eta\in[0,+\infty]$ (properly defined at $+\infty\in \C$). We check when they have the $\mcp$: for non-zero functionals this reduces to checking whether $L_{\lambda,\eta}(a_n,b_n)\to+\infty$ whenever $\sup_i(a_i,b_i)=+\infty$, i.e.\ whenever either $\sup_ia_i=+\infty$ or $\sup_ib_i=+\infty$ (or both). If $\lambda,\eta>0$ this is clear, so to conclude by symmetry and scaling it suffices to check  that $L_{0,1}$ has not the $\mcp$. To see this pick $a_n:=n$ and $b_n:=0$ for every $n$: then $n\mapsto(a_n,b_n)$ is increasing in $\C$ with $\sup_n(a_n,b_n)=+\infty$ but $L_{0,1}(a_n,b_n)=0$ for every $n$.
\item Let $\C:=\{0\}\cup (0,+\infty)^2\cup\{+\infty\}$. Then $\C^*\sim \{0\}\cup (0,+\infty)^2\cup\{+\infty\}$ (via the map $(\lambda,\eta)\mapsto L_{\lambda,\eta}$). To prove the claim we need to check, as before, that $L_{0,1}$ does not have the $\mcp$. To see this let $a_n:=n$ and  $b_n:=1-\tfrac1n$. Then $n\mapsto(a_n,b_n)$ is increasing w.r.t.\ the partial order of $\C$ and has supremum $+\infty$ but $L_{0,1}(a_n,b_n)= 1-\tfrac1n \to1 <+\infty$.
\item Let $\W:=\{0\}\cup (\R\times(0,+\infty)\} $. Then $\W$ is a two dimensional locally complete wedge whose dual is one dimensional. Indeed, a linear operator on $\W$ clearly extends to a linear operator on $\R^2$ and thus is of the form $L(a,b)=\lambda a+\eta b$ for every $(a,b)\in \W$ and some $\lambda,\eta\in\R$. Then positivity on $\W$ forces $\lambda=0$.

It is then clear that the bidual is also 1-dimensional, and thus that the natural map $\Phi:\W\to\W^{**}$ defined in \eqref{eq:defPhi} might be not injective. In our case, $\Phi(a,b)=\Phi(a',b')$ if and only if $b=b'$.

Notice that the topological closure of $\W$ in $\R^2$ is not a wedge, as it contains the vectors $(-1,0)$ and $(1,0)$ whose sum is the zero element of $\W$.
\item\label{ex:Roman} This example is due to R. Oleinik. Let $\W:=\big(\{0\}\times[0,+\infty)\big) \cup\big((0,+\infty)\times\R\big)\subset\R^2$ be equipped with the natural operations. It is not hard to check that  the induced partial order is the lexicographic one (and in particular it is actually a partial order), i.e.\ that
\begin{equation}
\label{eq:roman}
(a_1,b_1)\leq(a_2,b_2)\text{ in }\W\qquad\Leftrightarrow\qquad \text{either $a_1<a_2$ \ or \ $(a_1=a_2\text{ and }b_1\leq b_2)$.}
\end{equation}
The interest of this example is in that $(1,0)$ is \emph{not} the supremum of $\{\lambda(1,0):\lambda\in(0,1)\}$. Indeed, while clearly $(1,0)$ is an upper bound for the given set, the same holds for  $(1,b)$ for any $b\in\R$. In particular, this holds for $b<0$, and since in this case we have $(1,b)\leq(1,0)$, our claim is proved.

This is an example of structure satisfying $(i),(ii)$ and $(iii)$ in Definition \ref{def:wedge} but not $(iv)$.
\end{enumerate}
We conclude pointing out that for a sane development of the theory it seems natural to restrict the attention to cones with dense finite part. Among other things, if this fails we can encounter situations where we have multiple `layers of infinity', as we discuss now. Let $\W:=\N$ be equipped with the operations:
\[
\begin{array}{rll}
n+m&\!\!\!:=\max\{n,m\},\qquad&\forall n,m \in\N\\
\lambda n&\!\!\!:=n\qquad&\forall \lambda\in(0,+\infty),\ n\in\N.
\end{array}
\]
Then quite clearly this is a wedge, its finite part being $\{0\}$. More generally, let $(\W_n)$ be a sequence of wedges and equip $\W:=\cup_{n\in\N}\{n\}\times \W_n$ with the operations:
\[
\begin{array}{rll}
(n,v)+(m,w)&\!\!\!:=\left\{\begin{array}{ll}
(n,v+w),&\qquad\text{ if }n=m,\\
(n,v),&\qquad\text{ if }m<n,
\end{array}
\right.\\
\lambda (n,v)&\!\!\!:=(n,\lambda v)
\end{array}
\]
for every $(n,v),(m,w)\in \W$ and $\lambda\in(0,+\infty)$. Then once again $\W$ is a wedge whose finite part is $\{0\}\times`\text{finite part of $\W_0$}$'. In particular, if $\W_0$ is finite dimensional then so is $\W$ according to the definition above, even though $\W_n$ might be infinite dimensional for every $n>0$. Notice also that if all the $\W_n$'s are cones, then $\W\cup\{\infty\}$ is also a cone. Observe that here $\N$ can be replaced by any other total order admitting minimum.
\end{subsection}

\end{section}

%
%\bibliographystyle{siam}
%
%\bibliography{biblio}
\def\cprime{$'$} \def\cprime{$'$}

\end{document}